\documentclass[12pt]{amsart}

\usepackage{environ, xcolor}
\NewEnviron{commentA}{}

\newcommand\Aoff{\RenewEnviron{commentA}{}}
\Aoff{} 

\usepackage[normalem]{ulem}

\usepackage{amsmath, amssymb}
\usepackage{array}
\usepackage[frame,cmtip,arrow,matrix,line,graph,curve]{xy}
\usepackage{graphpap, color, paralist, pstricks}
\usepackage[mathscr]{eucal}
\usepackage[pdftex]{graphicx}
\usepackage[pdftex,colorlinks,backref=page,citecolor=blue]{hyperref}
\usepackage{cleveref}
\usepackage{tikz-cd, verbatim, transparent}

\newtheorem{theorem}{Theorem}[section]
\newtheorem{proposition}[theorem]{Proposition}
\newtheorem{corollary}[theorem]{Corollary}
\newtheorem{lemma}[theorem]{Lemma}

\theoremstyle{definition}
\newtheorem{definition}[theorem]{Definition}
\newtheorem{example}[theorem]{Example}

\newtheorem{remark}[theorem]{Remark}

\newtheorem{assumption}[theorem]{Assumptions}

\newtheorem*{acknowledgement}{Acknowledgments}

\numberwithin{equation}{section}

\usepackage[margin=1in]{geometry} 
\usepackage{amsmath,amssymb,mathtools, mathrsfs,esint,tikz-cd,verbatim}

\newcommand{\Z}{{\mathbb Z}}
\newcommand{\Q}{{\mathbb Q}}

\newcommand{\C}{{\mathbb C}}

\newcommand{\G}{{\mathbb G}}
\newcommand{\bP}{{\mathbb P}}
\newcommand{\bA}{{\mathbb A}}
\newcommand{\calO}{{\mathcal O}}
\newcommand{\calF}{{\mathcal F}}
\newcommand{\calR}{{\mathcal R}}
\newcommand{\calQ}{{\mathcal Q}}
\newcommand{\calL}{{\mathcal L}}
\newcommand{\on}[1]{\operatorname{#1}}

\newcommand{\SHom}{{\on{\mathcal{H}om}}}
\newcommand{\SExt}{{\on{\mathcal{E}xt}}}

\newcommand{\im}{{\on{im}}}
\newcommand{\Aut}{{\on{Aut}}}

\newcommand{\Spec}{{\on{Spec}}}
\newcommand{\Proj}{{\on{Proj }}}

\newcommand{\Ext}{{\on{Ext}}}

\newcommand{\Pic}{{\on{Pic}}}

\newcommand{\Cl}{\on{Cl}}

\newcommand{\bC}{\mathbb{C}}

\newcommand{\bQ}{\mathbb{Q}}

\newcommand{\bG}{\mathbb{G}}

\newcommand{\calX}{\mathcal{X}}

\newcommand{\calK}{\mathcal{K}}

\newcommand{\Hilb}{\mathrm{Hilb}}

\newcommand{\calD}{\mathcal{D}}

\newcommand{\calM}{\mathcal{M}}
\newcommand{\calN}{\mathcal{N}}

\newcommand{\calS}{\mathcal{S}}

\newcommand{\bZ}{\mathbb{Z}}

\newcommand{\cX}{\mathcal X}

\newcommand{\Supp}{\textrm{Supp}}

\newcommand{\cL}{\mathcal L}

\newcommand{\calC}{\mathcal C}

\DeclareFontFamily{U}{mathb}{\hyphenchar\font45}
\DeclareFontShape{U}{mathb}{m}{n}{
      <5> <6> <7> <8> <9> <10> gen * mathb
      <10.95> mathb10 <12> <14.4> <17.28> <20.74> <24.88> mathb12
      }{}
\DeclareSymbolFont{mathb}{U}{mathb}{m}{n}
\DeclareMathSymbol{\righttoleftarrow}{3}{mathb}{"FD}

\subjclass[2020]{}

\title{Weighted Projective Degenerations of $\mathbb{P}^{n}$}

\author[K. DeVleming]{Kristin DeVleming} 
\address{Department of Mathematics, University of California San Diego, 2985 Muir Lane, La Jolla, CA 92093, USA}
\email{kedevleming@ucsd.edu}

\author[J. Li]{Jennifer Li}
\address{Department of Mathematics, Princeton University, Fine Hall, 304 Washington Road, Princeton, NJ 08540, USA}
\email{jenniferli@princeton.edu}

\author[S. Torres]{Sebasti\'{a}n Torres}
\address{Department of Mathematics, Universidad de Concepción, Av. Esteba Iturra s/n, CFM Of. 522, Concepción, Chile}
\email{storresk(at)udec.cl}

\begin{document}

\begin{abstract}
We study weighted projective klt $\mathbb{Q}$-Gorenstein degenerations of projective space $\bP^n$ and construct infinitely many new degenerations of projective space in any dimension.  We also study in detail the deformations of arbitrary weighted projective threefolds.  The rest of the paper provides several applications of the existence of degenerations of $\bP^n$ and our methods apply to find many degenerations of $\bP^n$ beyond just weighted projective spaces. 
\end{abstract}

\maketitle
\setcounter{tocdepth}{1}
\tableofcontents

\section{Introduction}

The aim of this paper is to study mildly singular varieties that appear as \textit{degenerations} or \textit{limits} of projective space $\bP^n_{\bC}$.
We say that a Fano variety $X$ is a $\bQ$-Gorenstein Fano degeneration of $\bP^n$ if there exists a flat family $\cX \to B$, where $0 \in B$ is a reduced pointed scheme, such that $K_{\cX/B}$ is $\bQ$-Cartier,
with fibers
$\cX_0 \cong X$, and $\cX_b \cong \bP^n$ for general $b \in B$.  In what follows, a `degeneration' of $\bP^n$ will always refer to a $\bQ$-Gorenstein Fano degeneration. 
We are particularly interested in the case when $X$ has (at worst) klt singularities.
Such degenerations appear naturally in many areas of birational geometry and moduli and have a particular importance in K-stability and moduli of Fano varieties.  

In dimension one, klt is equivalent to smooth and therefore by flatness, the classification of klt degenerations of $\bP^1$ is equivalent to the classification of smooth genus $0$ curves.  In particular, the only klt degeneration of $\bP^1$ is $\bP^1$ itself.

The situation in dimension two is also completely understood. In \cite{M91}, Manetti classified all normal degenerations of $\mathbb{P}^{2}$ that have quotient singularities, which are precisely the klt singularities in dimension 2.  Later, Hacking and Prokhorov in \cite{HP10} refined the classification of Manetti and demonstrated that every $\mathbb{Q}$-Gorenstein klt Fano degeneration of $\mathbb{P}^{2}$ is either a weighted projective surface of form $\mathbb{P}(a^{2}, b^{2}, c^{2})$ for positive integers $a, b, c$ that satisfy the Markov equation $a^{2} + b^{2} + c^{2} = 3abc$, or a partial smoothing of such a weighted projective surface.  Precisely, they prove: 

\begin{theorem}
{\cite[Corollary 1.2]{HP10}}
\label{HP:markov}
Let $X$ be a klt
degeneration of $\mathbb{P}^{2}$.
Then, $X$ is one of the following: 
\begin{enumerate}
    \item $X \cong \mathbb{P}(a^{2}, b^{2}, c^{2})$ where $a, b$, and $c$ satisfy the Markov equation:
\begin{center}
$a^{2} + b^{2} + c^{2} = 3abc$, or
\end{center}
\item $X$ is isomorphic to a partial smoothing of one of the surfaces in (1).  
\end{enumerate}
Moreover, all such varieties $X$ admit a $\mathbb{Q}$-Gorenstein smoothing to $\mathbb{P}^{2}$.
\end{theorem}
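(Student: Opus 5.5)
The plan is to follow the Manetti and Hacking--Prokhorov strategy, which combines global invariants of $X$ with the local classification of $\bQ$-Gorenstein smoothable quotient singularities. The first step is to record the invariants $X$ inherits from $\bP^2$ by flatness and the $\bQ$-Gorenstein condition. Since $K_{\cX/B}$ is $\bQ$-Cartier, we get $K_X^2 = K_{\bP^2}^2 = 9$. Since klt surface singularities are rational, $\chi(\calO_X)=1$ and $h^1=h^2=0$. Then $\Pic X$ is torsion-free of rank $1$: by upper semicontinuity and the vanishing of $H^1,H^2$ of $\calO$, line bundles and the class group behave well in the family, and an Euler characteristic computation shows $\rho(X)=1$ and $b_2(X)=1$. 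It follows that $X$ is a $\bQ$-homology $\bP^2$ with quotient singularities, each admitting a $\bQ$-Gorenstein smoothing; in particular each singularity has Milnor fiber a rational homology ball.

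The second step is local. By Wahl and Kollár--Shepherd-Barron, a cyclic quotient singularity with a $\bQ$-Gorenstein smoothing whose Milnor fiber has $b_2=0$ is of class T with $\mu=0$, namely $\frac{1}{n^2}(1,na-1)$ with $\gcd(a,n)=1$. Non-cyclic quotient singularities with $\bQ$-Gorenstein smoothings have $\mu>0$, and this will conflict with $\chi_{top}(X)=3$ in the next step. The topological Euler characteristic comparison $\chi_{top}(X) = \chi_{top}(\bP^2) - \sum \mu_p$, together with the bound $\chi_{top}(X)\ge 3$ (since $b_2\ge1$), forces every singularity to be a T-singularity with $\mu=0$, or else a partially smoothable singularity whose full smoothing has $\mu=0$. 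Singularities of the second kind are exactly the ones that produce the partial smoothings in case (2).

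The third step is global and is where I expect the main difficulty. We must show that a $\bQ$-Gorenstein rational surface with $K^2=9$, $\rho=1$, and only $\frac{1}{n^2}(1,na-1)$ singularities is a toric weighted projective plane with at most three singular points. My first attempt would follow Hacking--Prokhorov. Use the Bogomolov--Miyaoka--Yau type inequality $\sum (1 - 1/|G_p|) \le 3$ to bound the number of singular points by $3$. Then pass to the minimal resolution and use $\bQ$-Gorenstein deformation invariance to find a toric structure: a $\bQ$-Gorenstein degeneration to a toric surface, or equivalently a construction showing $X$ admits a $\bG_m^2$-action. A toric surface with $\rho=1$ is a fake weighted projective plane. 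Since $\pi_1$ of the smooth locus is trivial (it is inherited from the Milnor fibers being $\bQ$-homology balls glued into $\bP^2$), $X$ is a genuine $\bP(w_0,w_1,w_2)$.

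Finally, we translate the conditions into arithmetic. Each weight is a square $w_i=a_i^2$ because of the T-singularity type $\frac{1}{w_i}(w_j,w_k)$ with $\mu=0$. The equation $K^2 = (\sum w_i)^2/(w_0w_1w_2)=9$ then becomes $a^2+b^2+c^2=3abc$. For the ``moreover'' part, we smooth each T-singularity locally. Local-to-global obstructions lie in $H^2(T_X)$, which vanishes for log del Pezzo surfaces: it is dual to $H^0(\Omega_X(K_X))$, and this is zero by $-K_X$ ample. Hence every local $\bQ$-Gorenstein smoothing globalizes. The general fiber is a smooth del Pezzo surface with $K^2=9$, hence $\bP^2$, and partial smoothings yield case (2).
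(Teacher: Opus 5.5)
\textbf{There is a genuine gap in your third step, and its target is actually false.} You set out to prove that every rational surface with $K^2=9$, $\rho=1$ and only $\frac{1}{n^2}(1,na-1)$ singularities is itself a toric weighted projective plane. To get there you treat ``a $\bQ$-Gorenstein degeneration to a toric surface'' as equivalent to ``$X$ carries a $\bG_m^2$-action''. These are not equivalent, and the conclusion fails exactly in case (2).

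For example, $H^2(T_{\bP(1,4,25)})=0$ and $\bQ$-Gorenstein deformations of Wahl singularities are unobstructed. So you may smooth only the $\frac14(1,1)$ point of $\bP(1,4,25)$. The resulting $X'$ is a klt degeneration of $\bP^2$ with $\rho=1$ and a single singular point of type $\frac{1}{25}(1,4)$. A complete toric surface with $\rho=1$ and exactly one singular point is $\bP(1,1,m)$, whose singular point is $\frac1m(1,1)$. Since $\frac{1}{25}(1,4)\not\cong\frac{1}{25}(1,1)$, $X'$ is not toric.

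This also shows that the ``second kind'' of singularity in your Step 2 is a red herring. The general fiber is $\bP^2$, so every singular point of $X$ is smoothed in the given family and has $\mu=0$. Case (2) arises because $X$ need not be toric, not because of special singularities. As written, your Step 3 would put every $X$ in case (1), which contradicts the example above.

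\textbf{The correct intermediate statement} is the main theorem of Hacking--Prokhorov, which the paper only cites; it reproves just the weighted projective part, Theorem~\ref{thm:wpdegensofP2}. That theorem says $X$ admits a $\bQ$-Gorenstein \emph{degeneration} to a toric surface $Y$ with the same properties. An Euler characteristic count as in your Step 1 then shows $Y$ has $\rho=1$, $K_Y^2=9$ and only $\mu=0$ $T$-singularities. The toric classification gives $Y\cong\bP(a^2,b^2,c^2)$ with $(a,b,c)$ Markov, and $X$ is a partial smoothing of $Y$. Producing $Y$ is the hard step. ``Pass to the minimal resolution and use $\bQ$-Gorenstein deformation invariance'' does not supply it.

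\textbf{Two smaller problems in your toric part:}
\begin{itemize}
\item The inequality $\sum(1-1/|G_p|)\le 3$ with $|G_p|=n_p^2\ge 4$ only bounds the number of singular points by $4$, not $3$.
\item Gluing $\bQ$-homology balls into $\bP^2$ does not make $\pi_1$ of the smooth locus trivial, since these Milnor fibers have $\pi_1\cong\bZ/n$. Excluding fake weighted projective planes therefore needs its own argument.
\end{itemize}

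\textbf{Your final paragraph is sound.} It is essentially the paper's argument for Theorem~\ref{thm:wpdegensofP2}: local $\bQ$-Gorenstein smoothings, $H^2(X,T_X)=0$ together with Petracci's local-to-global result, and volume $9$ forcing $\bP^2$. The only difference is that the paper writes the local smoothings explicitly as $x_0x_1=x_2^c+tx_3^{c'}$ inside $\bP(a^2,b^2,c',c)$, whereas you invoke $T$-singularity theory.
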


In higher dimensions, very few results are known.  In dimension three, the situation is partially understood: there is a complete classification of terminal (resp. canonical) degenerations of $\bP^3$, by the first author (resp. by work of Ascher, the first author, and Liu):

\begin{theorem} 
{\cite[Theorem 4.33]{DeVleming}}
    Let $X$ be a terminal degeneration of $\bP^3$.  Then, $X \cong \bP^3$.
\end{theorem}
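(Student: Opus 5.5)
Let $\cX \to B$ be a $\bQ$-Gorenstein degeneration with $\cX_0 \cong X$ terminal and $\cX_b \cong \bP^3$ for general $b$. After base change to a smooth pointed curve through $0$ and a general point, we may assume $B$ is a smooth curve germ. The strategy is to show that $X$ is a smooth Fano threefold with the same numerical invariants as $\bP^3$. Then the classical characterisation (Kobayashi--Ochiai) gives $X \cong \bP^3$.

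\textbf{Step 1: invariants.} By inversion of adjunction, $\cX$ is terminal near $X$ and $-K_{\cX/B}$ is relatively ample. Flatness and constancy of $\chi$ then give:
\begin{itemize}
\item $(-K_X)^3 = 64$;
\item $\chi(\calO_X)=1$;
\item $\rho(X)=1$.
\end{itemize}
For the last point, $H^1=H^2=0$ by Kawamata--Viehweg, so $\Pic$ is discrete and specialises. The Picard rank is at most that of the general fibre, and it is positive because $X$ is projective.

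Next, the Cartier index of $K_{\cX}$ is constant along $B$ at general points, so it is $1$ there. Using that $\Pic(\cX/B)$ restricts isomorphically (no $H^1,H^2$), the class $\calO_{\bP^3}(1)$ extends to a Weil divisor class $H$ on $\cX$ with $-K_{\cX/B} \sim_{\bQ} 4H$.

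\textbf{Step 2: singularities.} Restrict $H$ to a Weil divisor class $H_0$ on $X$, with $-K_X \sim_\bQ 4H_0$ and $H_0^3 = 1$. Terminal threefold singularities are isolated, and terminal Gorenstein points are cDV. A degeneration of $\bP^3$ is $\bQ$-factorial in a neighbourhood, so the local class groups must be controlled.

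I would use the Kawamata--Reid plurigenus / singular Riemann--Roch formula for terminal $\bQ$-Fano threefolds:
\[\chi(\calO_X(-mK_X)) = \text{(smooth terms)} + \sum_{P} c_P(m).\]
Compare this with the general fibre, using deformation invariance of $h^0(-mK)$ and of $\chi$ of the reflexive sheaves $\calO(kH)$. This should force the basket of non-Gorenstein points to be empty, since their correction terms $c_P$ are strictly nonzero for suitable $m$. Then $K_X$ is Cartier, and $X$ is a Gorenstein terminal Fano threefold with index divisible by $4$ in the class group.

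\textbf{Step 3: index and smoothness.} A Gorenstein terminal Fano threefold has Fano index at most $4$, with equality only for $\bP^3$. A terminal Gorenstein Fano threefold has a smoothing (Namikawa), and the Fano index, computed in the Cartier class group, persists. Alternatively, $X$ is factorial here because $\rho=1$ and the class group has rank $1$ near every point by the comparison with the general fibre. So $-K_X = 4L$ with $L$ Cartier and $L^3=1$, and Kobayashi--Ochiai (in the Fujita form valid for normal Gorenstein varieties with $\Delta$-genus $0$) gives $X\cong\bP^3$.

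\textbf{Main obstacle.} I expect Step 2 to be the hardest: ruling out non-Gorenstein terminal points (the basket) via Riemann--Roch, together with ensuring $H_0$ is Cartier. My first attempt would be to compute $\chi(\calO_X(kH_0))$ for $k=1,2,3$ on both fibres and derive a contradiction from the basket contributions.
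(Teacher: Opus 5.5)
The paper gives no argument for this statement; it is quoted from DeVleming's thesis, so your outline has to stand on its own. It can be completed, but you have put the difficulty in the wrong place, and one of your two justifications for the real crux is false.

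Step 2 is a one-line check at $m=1$. Reid's formula for a terminal Fano threefold gives $h^0(-K_X)=\tfrac12(-K_X)^3+3-\sum_P \tfrac{b_P(r_P-b_P)}{2r_P}$. On the other hand, $h^0(-K_X)=\chi(\mathcal{O}_{\mathbb{P}^3}(4))=35$. This invariance holds because $\mathcal{O}_{\mathcal{X}}(-K_{\mathcal{X}/B})$ is Cohen--Macaulay on the klt total space, so it restricts to $\omega_X^{[-1]}$. Since $(-K_X)^3=64$, the basket sum vanishes and $X$ is Gorenstein.

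The genuine crux is Step 3: showing that the limit $H_0$ of $\mathcal{O}(1)$ is Cartier. Your ``alternatively'' argument fails, because Picard number one does not control $\Cl(X)$ or the local class groups, and these can jump under specialisation.
\begin{itemize}
\item The quadric cone over $\mathbb{P}^1\times\mathbb{P}^1$ is a terminal Gorenstein degeneration of the smooth quadric threefold with $\rho=1$ but $\Cl\cong\mathbb{Z}^2$.
\item The paper's $X_u$ is a non-$\mathbb{Q}$-factorial degeneration of $\mathbb{P}^3$. So whatever $\mathbb{Q}$-factoriality holds for the total space, it does not pass to the fibre, contrary to your remark in Step 2.
\end{itemize}
Your proof therefore rests entirely on ``the Fano index persists''. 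That is the theorem of Jahnke--Radloff that $\Pic$ of a terminal Gorenstein Fano threefold is unchanged in a smoothing, and you must cite it explicitly. Namikawa's existence theorem is not needed, since you already have the smoothing.

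A self-contained alternative avoids that citation:
\begin{enumerate}
\item Once $X$ is Gorenstein terminal, its singular points are isolated cDV hypersurface points.
\item Hence near $X$ the total space $\mathcal{X}$ has only isolated $4$-dimensional complete-intersection singularities.
\item By Grothendieck's theorem (a complete intersection local ring that is factorial in codimension $\le 3$ is factorial), $\mathcal{X}$ is locally factorial there.
\item So $H$ is Cartier, and $L=H|_X$ is ample with $L^3=1$ and $h^0(L)=\chi(\mathcal{O}_{\mathbb{P}^3}(1))=4$.
\item Thus $\Delta(X,L)=0$, and Fujita's classification gives $X\cong\mathbb{P}^3$.
\end{enumerate}
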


\begin{theorem} 
{\cite[Theorem 1.5]{ADL23}}
If $X$ is a canonical degeneration of $\bP^3$, then $X$ is isomorphic to one of the following:
\begin{enumerate}
\item $\mathbb{P}^{3}$;
\item The projective anti-canonical cone over $\mathbb{P}^{1} \times \mathbb{P}^{1}$;
\item $\mathbb{P}(1, 1, 2, 4)$; or 
\item A Gorenstein non $\Q$-factorial $\mathbb{Q}$-Fano 3-fold with an isolated singular point, denoted $X_u$, constructed in \S 4.2 of \cite{ADL23}.
\end{enumerate}
\end{theorem}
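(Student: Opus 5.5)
The approach is to pin down the possible degenerations $X$ through two invariants preserved in a $\bQ$-Gorenstein family: the anticanonical volume and the local structure of the singularities. Flatness and the $\bQ$-Cartier property of $K_{\cX/B}$ give $(-K_X)^3 = (-K_{\bP^3})^3 = 64$. Upper semicontinuity of cohomology, together with Kawamata--Viehweg vanishing on both fibers, gives $h^0(-mK_X) = h^0(-mK_{\bP^3})$ for every $m$ with $mK_X$ Cartier. Since $X$ is canonical and Fano, it is rationally connected with $\rho(X)\le \rho(\bP^3)=1$ on the $\bQ$-Cartier level; more precisely, the class group specializes, so the generic Picard group $\Z$ bounds what can happen on $X$.

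Next, split according to whether $X$ is terminal. In the terminal case the preceding theorem of \cite{DeVleming} already forces $X\cong\bP^3$. So assume $X$ has a genuinely canonical, non-terminal singularity. I would take a crepant terminalization $Y\to X$. Then $-K_Y$ is nef and big with $(-K_Y)^3=64$, and the Gorenstein index of $X$ divides the index of the general fiber's pullback. Using that $K_{\bP^3}=-4H$ is divisible by $4$, the Fano index $4$ (or at least the divisibility of $-K$ in $\Cl$) should persist to $X$. This gives $-K_X\sim 4A$ for a Weil divisor $A$ with $A^3=1$. Now $h^0(A)=h^0(\calO_{\bP^3}(1))=4$ by the same semicontinuity and vanishing argument. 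The next step is to analyze the linear system $|A|$. Either it is basepoint-free and gives a birational morphism to $\bP^3$, forcing $X\cong\bP^3$; or it has a base point, or fails to be Cartier at some point.

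In the non-Cartier case, the graded ring $\bigoplus H^0(mA)$ has the same Hilbert function as $\C[x_0,\dots,x_3]$ but generators in higher degrees. One then classifies the possibilities:
\begin{enumerate}
\item If $A$ is not $\bQ$-Cartier, $X$ is non-$\bQ$-factorial; the small modification and a Fujita-type classification of del Pezzo-like structures should give either the cone over $\bP^1\times\bP^1$ (anticanonically embedded) or the exceptional Gorenstein $X_u$.
\item If $X$ is $\bQ$-factorial with weighted-projective-type ring, then Hilbert function matching together with $A^3=1$, $-K=4A$ and canonicity restricts the weights. The only solution I expect is $(1,1,2,4)$, checked via the condition $\sum a_i = 4$ times the appropriate normalization and Reid's canonicity criterion for cyclic quotients.
\end{enumerate}

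The main obstacle is the non-$\bQ$-factorial case: showing that the list of such Gorenstein canonical Fanos of degree $64$ with the right Hilbert series consists only of the cone and $X_u$. There I would use the classification of Gorenstein Fano threefolds with canonical singularities and $(-K)^3=64$ (Prokhorov's result that the maximum degree is $72$, with those of degree $64$ being classified). I would then check which of these admit $\bQ$-Gorenstein smoothings to $\bP^3$ by a local-to-global deformation argument.
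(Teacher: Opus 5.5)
The paper does not prove this statement; it is quoted from \cite{ADL23} (cf.\ \cite{HoPe24}), so your outline has to stand on its own, and it does not. The opening is sound:
\begin{itemize}
\item the closure of $\calO(1)$ gives a Weil divisor $A$ with $4A\sim -K_X$;
\item flatness plus Kawamata--Viehweg vanishing give $h^0(X,\calO_X(mA))=\binom{m+3}{3}$ for all $m$;
\item if $A$ is Cartier, Fujita's $\Delta$-genus $3+A^3-h^0(A)=0$ forces $X\cong\bP^3$.
\end{itemize}
Everything after that has genuine gaps.

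\emph{The case split is wrong.} $A$ is always $\bQ$-Cartier, because $4A\sim -K_X$ and $X$ is canonical. So your branch ``$A$ not $\bQ$-Cartier'' is empty, and the dichotomy does not separate the actual outcomes. The anticanonical cone over $\bP^1\times\bP^1$ and $X_u$ are non-$\bQ$-factorial even though $A$ is $\bQ$-Cartier there. On the cone, $A$ is the cone over a $(1,1)$-curve and $2A$ is Cartier; the non-$\bQ$-factoriality comes from the cone over a ruling. Relatedly, the class group does not ``specialize'' as you claim: that cone has $\Cl\cong\Z^2$. It is $\Pic$, not $\Cl$, that is controlled by the general fibre.

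\emph{The $\bQ$-factorial branch is unsupported.} There you simply assume $X$ is a weighted projective space. Nothing in your argument produces a torus action or a polynomial Cox ring, and the Hilbert-function heuristic cannot. Already for $\bP(1,1,2,4)$ one has $R(X,A)\cong\C[y_0,\dots,y_4]/(y_0y_2-y_1^2)$ with $\deg y_4=2$, which is not a polynomial ring in weighted variables.

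\emph{The main case rests on a classification that does not exist.} Prokhorov's theorem bounds $(-K)^3$ by $72$ and identifies the equality cases. It gives no classification in degree $64$, and that class contains many threefolds that are not degenerations of $\bP^3$, e.g.\ the anticanonical cone over $\mathbb{F}_1$ and various toric examples. Moreover, ``checking which admit smoothings'' requires an \emph{obstruction} for every other member, which a local-to-global existence argument cannot supply.

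\emph{Gorensteinness is never established.} Your endgame only treats Gorenstein threefolds, but the statement as phrased concerns all canonical degenerations.

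What is missing is a mechanism that turns $-K_X\sim 4A$ and $h^0(mA)=\binom{m+3}{3}$ into the structure of $R(X,A)=\bigoplus_m H^0(X,\calO_X(mA))$. One route that produces exactly this list, for $X$ Gorenstein, is as follows.
\begin{itemize}
\item Take a general $S\in|-K_X|=|4A|$, a K3 surface with Du Val singularities.
\item Vanishing of $H^1(X,\calO_X(kA))$ for all $k$ gives $R(X,A)/(s)\cong R(S,A|_S)$.
\item $A|_S$ is Cartier of degree $4$: Riemann--Roch with $h^0(S,A|_S)=4$ leaves no room for local correction terms. So Saint-Donat's trichotomy applies.
\item Lift generators and relations. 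In the quartic case $f_4(x)=\lambda s$. In the hyperelliptic case $\C[x_0,\dots,x_3,y]/(q_2,\,y^2-f_4)$ lifts to $q_2=0$, $y^2-f_4=\lambda s$.
\item If $\lambda=0$, then $X$ is the orbifold cone over the K3 surface $S$, which is not klt. Hence $\lambda\neq0$ and $s$ can be eliminated.
\item This gives $\bP^3$ in the quartic case. In the hyperelliptic case it gives $\Proj\C[x_0,\dots,x_3,y]/(q_2)$ with $\deg y=2$, which is $\bP(1,1,2,4)$ or the anticanonical cone over $\bP^1\times\bP^1$ according as $\operatorname{rank}q_2=3$ or $4$.
\item The unigonal case is where $X_u$ comes from.
\end{itemize}
A separate argument is still needed to rule out non-Gorenstein canonical $X$.
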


\begin{remark}
Using different methods, H\"{o}ring and Peternell arrived at the same classification of canonical degenerations of $\bP^3$ in \cite{HoPe24}.
\end{remark}

Beyond the canonical case for $n = 3$, and in general for $n \ge 4$, there is very little known about klt degenerations of $\bP^n$.  

\begin{remark}
    While this paper will focus on klt singularities, by work of Ishii in \cite{I91}, there is also an understanding of certain non-klt degenerations with isolated singularities as cones over lower dimensional varieties. 
\end{remark}

In this paper, we approach the problem of classifying degenerations of projective space in two ways: first, by building degenerations of higher dimensional varieties out of degenerations of lower dimensional varieties via a cone construction, and secondly, through direct analysis of the deformation theory of weighted projective threefolds.  Motivated by the result of Hacking and Prokhorov which says every klt degeneration of $\bP^2$ is a partial smoothing of a weighted projective surface, we focus primarily on the classification of weighted projective degenerations of $\bP^n$.  Our main results in this direction are the existence of infinitely many weighted projective spaces arising as degenerations of $\bP^n$.

Using known degenerations of $\bP^2$ and $\bP^3$, we construct two infinite families of new klt $\mathbb{Q}$-Gorenstein weighted projective degenerations of $\mathbb{P}^{n}$ for any integer $n\geq 3$.

\begin{theorem}
\label{thm:infiniteFamiliesDegenerationsPn}
Let $X = \mathbb{P}(a_{0}, a_{1}, a_{2}, \dots , a_{n})$ be a well-formed projective space such that the weights $a_{i}$ are of the form (up to reordering)
\begin{enumerate}
\item $(a^{2}, b^{2}, abc_{n-2}, c^{2})$ for some $n \ge 2$, where $a^2 + b^2 + c^2 = 3abc$; or
\item $(a^{2}, b^{2}, 2c^{2}, 2abc_{n-3}, 4abc)$ for some $n \ge 3$, where $a^2 + b^2 + 2c^2 = 4abc$.
\end{enumerate}
Then $X$ admits a $\mathbb{Q}$-Gorenstein smoothing to $\mathbb{P}^{n}$.
\end{theorem}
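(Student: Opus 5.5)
The plan is to build both families by an iterated projective cone construction. The input is a known degeneration in low dimension together with a polarization that deforms along with it. I read the weight $abc_{n-2}$ as $abc$ repeated $n-2$ times, and $2abc_{n-3}$ as $2abc$ repeated $n-3$ times, so that there are $n+1$ weights in each case.

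\emph{Case (1).} By Theorem~\ref{HP:markov}, there is a $\Q$-Gorenstein family $\cX\to B$ with $\cX_b\cong\bP^2$ for general $b$ and $\cX_0\cong Y:=\bP(a^2,b^2,c^2)$. On $Y$ we have $-K_Y=\calO_Y(a^2+b^2+c^2)=\calO_Y(3abc)$, so the natural polarization is the Weil divisorial sheaf $L_0=\calO_Y(abc)$, which satisfies $-K_Y\sim 3L_0$. On the total space I would take $\cL$ to be a divisorial sheaf with $-K_{\cX/B}\sim 3\cL$ restricting to $\calO_{\bP^2}(1)$ on general fibres. Obtaining this, possibly after shrinking or a finite base change of $B$, is the first thing to check. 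Since $\cL$ is a $\Q$-Cartier divisorial sheaf with $\cL-K$ ample on klt Fano fibres, Kawamata--Viehweg vanishing gives $H^i(\cX_b,\cL_b^{[m]})=0$ for $i>0$ and $m\ge 0$. The $h^0$ are then constant, the section algebra $\bigoplus_m \pi_*\cL^{[m]}$ is flat over $B$, and it restricts fibrewise to $R(\cX_b,\cL_b)$ (this uses $S_3$-type control of the divisorial sheaves, which I would justify via the quotient-singularity structure). Adjoin a variable $t$ of degree $1$ and take $\Proj$. The general fibre becomes $\Proj \C[x,y,z,t]=\bP^3$. Since $R(Y,\calO_Y(abc))$ is the Veronese-type subring of $\C[x,y,z]$ with weights $a^2,b^2,c^2$, regraded by $abc$, the special fibre becomes the projective cone over $(Y,L_0)$, which is $\bP(a^2,b^2,c^2,abc)$ (after the obvious regrading of the cone point). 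On the cone, $-K$ is again a positive multiple of the hyperplane class $\calO(abc)$, namely $\calO(4abc)$. So the new family is again $\Q$-Gorenstein, with the cone polarization $\cL'$ satisfying $-K_{\cX'/B}\sim 4\cL'$. Moreover the cone over a klt Fano $Y$ with $-K_Y\sim_\Q rL$, $r>0$, is klt, and well-formedness can be read off from $\gcd$ conditions on Markov triples. Iterating the construction adds one weight $abc$ at each step and gives $\bP(a^2,b^2,abc,\dots,abc,c^2)$ as a degeneration of $\bP^n$.

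\emph{Case (2).} The same iteration applies verbatim once the base case $n=3$ is established. That base case says $\bP(a^2,b^2,2c^2,4abc)$ is a $\Q$-Gorenstein degeneration of $\bP^3$, and there $-K=\calO(8abc)=4\cdot\calO(2abc)$. Given this, each cone step adds a weight $2abc$ and preserves the relation $-K\sim (n+1)\,\calO(2abc)$. For the base case I would use that $Z:=\bP(a^2,b^2,2c^2)$ with $a^2+b^2+2c^2=4abc$ is a $\Q$-Gorenstein degeneration of $\bP^1\times\bP^1$ (the Hacking--Prokhorov type classification for the quadric), with $-K_Z=\calO(4abc)$ and $\calO_{\bP^1\times\bP^1}(1,1)\leftrightarrow\calO_Z(2abc)$. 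I then realize $\bP^3$ inside a weighted cone over the quadric family. Concretely, take $\Proj$ of $R(\cX,\cL)[s]$ with $\deg s=2$, and cut by the hypersurface $s\cdot\lambda=q$, where $q$ is a suitable degree-$2$ section and $\lambda$ a parameter. The aim is for a general member to be $\bP^3$ (as the degree-$2$ Veronese picture suggests) and for the special member to be $\bP(a^2,b^2,2c^2,4abc)$. If this explicit model fails, the fallback is to derive the $n=3$ case from the deformation theory of weighted projective threefolds developed later in the paper.

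The main obstacle I expect is this $n=3$ base case of (2), together with the construction of the polarizing sheaf $\cL$ on the total space with constant section dimensions. Once a flat polarized family with $-K\sim r\cL$ exists, the cone step (flatness, $\Q$-Gorenstein, klt, identification of the special fibre as a weighted projective space) is routine.
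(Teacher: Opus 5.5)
Your argument for (1), and for the inductive step of (2), is the paper's argument. It is the relative projective cone (Lemma~\ref{Lemma:relativeProjectiveCone}) over a $\Q$-Gorenstein family, polarized by the limit of $\calO(1)$, whose degree on the weighted projective space is the average of the weights (Corollary~\ref{cor:degenofPn+1}). The gap is the base case $n=3$ of (2): your explicit model there does not produce $\bP^3$. The fibres of $\Proj_B R(\cX,\cL)[s]$ with $\deg s=2$ are already threefolds, namely $C_p(\cX_b,\cL_b^{[2]})$. Cutting them by $\lambda s=q$ therefore gives surfaces: for $\lambda\neq 0$ you can eliminate $s$ and simply recover $\cX_b$, not $\bP^3$. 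Also, on a general fibre $R(Q,\calO(1,1))=\C[x_0,\dots,x_3]/(q)$, so the quadric you have in mind is already zero in the ring. In the Veronese picture $\bP^3\cong\{q=\lambda s\}\subset\bP(1,1,1,1,2)$ one \emph{deforms the defining equation} of the cone $\{q=0\}=C_p(Q,\calO_Q(2))$ in its ambient space; one does not cut the cone. This cannot be done uniformly over $B$ either, because on the special fibre $R(Z,\calO_Z(2abc))$ is in general not a quadric hypersurface ring generated in degree one. For example, take $(a,b,c)=(1,3,1)$ and $Z=\bP(1,9,2)$. The degree-one part of $R(Z,\calO_Z(6))$ is spanned by $y_0^6,y_0^4y_2,y_0^2y_2^2,y_2^3$, and degree two needs the new generators $y_0^3y_1$ and $y_0y_1y_2$.

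The missing idea is to use two separate one-parameter degenerations and compose them.
\begin{enumerate}
\item $\bP^3$ degenerates to $C_p(Q,\calO_Q(2))$ via $q=\lambda s$. This is the degeneration to the cone over a hyperplane section of the second Veronese embedding (Theorem~\ref{thm:degree-d-hypersurfaces}).
\item Your relative cone $\calC_p(\cX/B,\cL^{[2]})$ degenerates $C_p(Q,\calO_Q(2))$ to $C_p(Z,\calO_Z(4abc))=\bP(a^2,b^2,2c^2,4abc)$ (Theorem~\ref{thm:conesanddivisors}). Its general fibres are all isomorphic, since all polarized smooth quadrics $(Q,\calO_Q(1,1))$ are.
\item A transitivity statement composes these. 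Embed everything by a fixed multiple of $-K$ into one $\bP^N$. The point of $\bP(a^2,b^2,2c^2,4abc)$ in $\Hilb(\bP^N)$ lies in the orbit closure of the cone, which lies in the orbit closure of $\bP^3$ (Lemma~\ref{lem:xtoytoz}).
\end{enumerate}
The paper assembles these steps in Corollary~\ref{cor:hypersurfaceDegeneration} and Example~\ref{ex:quadricdegensofP3}.

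Your fallback via Section~\ref{sec:deformations} does not close the gap. That section only glues local smoothings of the charts $U_i$ that agree on overlaps. You would still need local smoothability of the codimension-three points, such as $\frac{1}{4abc}(a^2,b^2,2c^2)$, and that is not available a priori. The paper obtains it (Corollary~\ref{cor:localsmoothings2}) only as a \emph{consequence} of the global degeneration.
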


Here a subscript $k$ indicates that the corresponding weight is repeated $k$ times in the list.
Note that the only tuple of weights $(a_0, \dots, a_n)$ that arises in both families is $(1,1,2_{n-2},4)$ (see, e.g. \cite[Lemma 4.60]{DeVleming}), so these in fact correspond to two distinct infinite families of degenerations of $\bP^n$. This result will follow from \Cref{Lemma:relativeProjectiveCone} and \Cref{lem:coneofdivisors}. 

We derive the previous results as consequences of the following more general theorem, which says given a $\bQ$-Gorenstein degeneration $X$ of $\bP^n$, an appropriate orbifold cone over $X$ is a $\bQ$-Gorenstein degeneration of $\bP^{n+1}$.

\begin{theorem}
\label{thm:cone}
Suppose $\mathbb{P}^{n}$ admits a $\mathbb{Q}$-Gorenstein degeneration to a Fano variety $X$. Denote by $L$ the $\mathbb{Q}$-line bundle on $X$ that is the limit of $\mathcal{O}(1)$, and suppose $L$ is $\bQ$-linearly equivalent to $\calO_X(-(n+1)K_X)$. Then $\mathbb{P}^{n+1}$ admits a $\mathbb{Q}$-Gorenstein degeneration to
\begin{center}
$C_p(X, L) := \Proj\:\displaystyle{\sum_{m \geq 0}} \biggl( \displaystyle{\sum_{r=0}^{m}} H^{0}(X, L^{[r]}) \cdot x_{n+1}^{m-r} \biggr).$
\end{center}
Furthermore, if $X$ is klt, then $C_p(X,L)$ is klt. 
\end{theorem}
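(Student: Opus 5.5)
The approach is to take the projective cone over the whole degeneration family and check that it gives the required degeneration. Let $\pi\colon \cX \to B$ be the given $\bQ$-Gorenstein degeneration with $\cX_0\cong X$ and $\cX_b\cong \bP^n$ for general $b$. After shrinking $B$ around $0$, I first extend $L$ to a relative $\bQ$-line bundle $\calL$ on $\cX$ with $\calL|_{\cX_b}\cong \calO(1)$. The hypothesis $L\sim_\bQ -\tfrac{1}{n+1}K_X$ says exactly that $\calL\sim_{\bQ} -\tfrac{1}{n+1}K_{\cX/B}$ near the central fibre; since $K_{\cX/B}$ is $\bQ$-Cartier, we may simply take $\calL := \calO_{\cX}(-\tfrac{1}{n+1}K_{\cX/B})$ as a reflexive rank-one sheaf family. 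Its reflexive powers $\calL^{[r]}$ then restrict to $\calO(r)$ on $\bP^n$ and to $L^{[r]}$ on $X$. I then form the relative orbifold cone
\[
\mathcal{C} := \Proj_B \bigoplus_{m\ge 0}\Bigl(\bigoplus_{r=0}^{m} \pi_*\calL^{[r]}\cdot x_{n+1}^{m-r}\Bigr).
\]
Its general fibre is $\Proj$ of $\bigoplus_m \bigoplus_{r\le m} H^0(\bP^n,\calO(r))x^{m-r}=\Proj \bC[x_0,\dots,x_{n+1}]=\bP^{n+1}$, and its central fibre should be $C_p(X,L)$.

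The key technical step is that the formation of $\pi_*\calL^{[r]}$ commutes with base change and is locally free. For this I would use Kawamata--Viehweg vanishing on the Fano fibres. On klt $X$, $L^{[r]}-K_X\sim_\bQ (r+n+1)L$ is ample, so $H^i(X,L^{[r]})=0$ for $i>0$. I also need the sheaves $\calL^{[r]}$ to be flat with fibre $L^{[r]}$, i.e.\ $\calL^{[r]}|_{X}$ reflexive. For this I would use that $\cX$ is $\bQ$-Gorenstein and that the index-one cover is compatible with base change (Kollár's condition). Granting that, $h^0(X,L^{[r]})=\chi=\binom{n+r}{r}$ is constant, and cohomology and base change gives the claim. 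The graded algebra is then flat over $B$, so $\mathcal C\to B$ is flat with the fibres above. This base-change step is where I expect the main obstacle: one must make sure that the restriction of the reflexive powers is reflexive on $X$. I would try first to deduce it from the $\bQ$-Gorenstein hypothesis via the canonical covering stack, or else assume the degeneration is of the Kollár type used in the paper.

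Next I need $\mathcal{C}$ to be $\bQ$-Gorenstein over $B$. The relative cone over $(\cX,\calL)$ with $\calL\sim_\bQ -\tfrac{1}{n+1}K_{\cX/B}$ has $K_{\mathcal C/B}\sim_\bQ -(n+2)\calO_{\mathcal C}(1)$; this is the standard computation giving the canonical class of a projective cone when the polarization is proportional to $-K$. The divisor class group of the cone is that of the base plus the hyperplane at infinity, with vertex class relation, so $K_{\mathcal C/B}$ is $\bQ$-Cartier. Concretely, away from the vertex section it is $\bQ$-Cartier because $K_{\cX/B}$ is, and at the vertex the proportionality $K\sim_\bQ$ multiple of $\calO(1)$ makes it $\bQ$-Cartier. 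Each fibre is Fano, since $-K$ is a positive multiple of the ample $\calO(1)$.

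Finally, for the klt statement I would use the standard criterion for cones (Kollár, \emph{Singularities of the MMP}, Lemma 3.1). The affine cone $C_a(X,L)$ over a klt Fano $X$ with $L\sim_\bQ -cK_X$, $c>0$, is klt, since the discrepancy of the exceptional divisor over the vertex is $\tfrac{1}{c(n+1)}\cdot(n+1)-1>-1$. Away from the vertex, $C_p(X,L)$ is locally a quotient of (klt of $X$)$\times\bA^1$, so it is klt there. Hence $C_p(X,L)$ is klt.
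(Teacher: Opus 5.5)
Your proposal follows the paper's overall route. You form the relative orbifold cone over the degeneration, obtain local freeness and base change of $\pi_*\calL^{[r]}$ from vanishing on the fibres, and deduce klt-ness from the cone criterion. For non-Cartier $L$ the right reference there is the Seifert-bundle version, Koll\'ar's Prop.~3.14, which the paper cites, not Lemma~3.1.

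The genuine difference is the $\bQ$-Gorenstein step. The paper (Lemma~\ref{Lemma:relativeProjectiveCone}) notes that the anticanonical volumes of the fibres $C_p(\cX_t,\calL_t)$ are constant and invokes Koll\'ar's numerical criterion. You compute $K_{\mathcal{C}/B}$ directly. This avoids that theorem and also identifies $\calO_{\mathcal{C}}(1)$ as the limit of $\calO_{\bP^{n+1}}(1)$, which is what iteration as in Corollary~\ref{cor:degenofPn+1} needs. As written, though, your argument is vague: the class-group sentence and the separate claim at the vertex do not pin it down. The clean version is as follows. The complement $\mathcal{C}\setminus v(B)$ is the total space $p\colon \mathrm{Tot}(\calL)\to\cX$ of the orbi-line bundle $\calL$, with zero section $D_\infty=(x_{n+1}=0)$, and $\calL$ is invertible off a subset of codimension at least $2$. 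So there $K_{\mathcal{C}/B}=p^*(K_{\cX/B}-\calL)\sim_\bQ-(n+2)D_\infty$. Since $v(B)$ has codimension $n+1\ge 2$, this relation holds on all of $\mathcal{C}$. Finally $mD_\infty=\operatorname{div}(x_{n+1}^m)$ is Cartier once $\calO_{\mathcal{C}}(m)$ is invertible, so no separate analysis at the vertex is needed.

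The step you flag as the main obstacle, $\calL^{[r]}|_X\cong L^{[r]}$, is genuinely needed. The paper's Lemma~\ref{Lemma:relativeProjectiveCone} uses it silently when applying cohomology and base change. Your suggested remedies do not reach it: Koll\'ar's condition and the canonical covering stack control the powers $\omega_{\cX/B}^{[m]}$, whereas $\calL$ is only an $(n+1)$-st root of $\omega_{\cX/B}^{[-1]}$.

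The standard fix uses the klt hypothesis, which your vanishing step (like the paper's lemma) already needs.
\begin{itemize}
\item $X=\cX_0$ is a klt Cartier divisor and $K_\cX+\cX_0$ is $\bQ$-Cartier, so by inversion of adjunction $(\cX,\cX_0)$ is plt and $\cX$ is klt near $X$.
\item Hence the $\bQ$-Cartier divisorial sheaf $\calL^{[r]}$ is Cohen--Macaulay (Koll\'ar--Mori, Cor.~5.25).
\item So $\calL^{[r]}|_X$ is Cohen--Macaulay, in particular torsion free and $S_2$.
\item It agrees with $L^{[r]}$ on the smooth locus of $X$, where $\cX$ is smooth; therefore $\calL^{[r]}|_X\cong L^{[r]}$.
\end{itemize}
Flatness of $\calL^{[r]}$ over the curve is automatic from torsion-freeness. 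Also, $\calO_\cX(-\tfrac{1}{n+1}K_{\cX/B})$ is not literally defined. Take $\calL$ to be the divisorial sheaf of the closure of a hyperplane on the general fibre, which is what the hypothesis on $L$ provides. With these points filled in, your argument proves the statement for klt $X$, which is also the only case the paper's proof covers.
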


This result has consequences for (local) smoothability of certain singularities: the vertex of the orbifold cone in the previous result is by construction smoothable, and this yields new examples of smoothable non-isolated cyclic quotient singularities.

We also introduce methods for detecting smoothability of weighted projective threefolds in general. Under mild assumptions on the weighted projective space (which, for example are satisfied for any Gorenstein weighted projective threefold), we prove that the smoothability of the weighted projective space can be detected purely by smoothability of its codimension 3 singular points. 

\begin{theorem}\label{introthm:defofthreefolds}
    Let $X = \bP(a_0,a_1,a_2,a_3)$ be a well-formed weighted projective threefold with coordinates $[x_0:x_1:x_2:x_3]$. Assume that $X$ has only $A_{n}$ singularities in codimension 2, and for each codimension 2 singular stratum, there is a section of $\omega_X^\vee$ that does not vanish at the generic point of the stratum.

    Denote by $U_i$ the locus of points where $x_i \ne 0$.  Then,
        \begin{enumerate}
            \item the first order deformation space $\Ext^1(\Omega_X, \calO_X) \cong H^0(X, \SExt^1(\Omega_X, \calO_X))$, and 
            \item there is an injection of obstruction spaces
            \[  \Ext^2(\Omega_X, \calO_X) \hookrightarrow \oplus_{i=0}^3 H^0(U_i, \SExt^2(\Omega_{U_i}, \calO_{U_i})) .\]
        \end{enumerate}
    In particular, if a deformation of each $U_i$ exists and they agree on the overlaps $U_i \cap U_j$, these glue together to give a deformation of $X$.
\end{theorem}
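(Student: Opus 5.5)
The plan is to run the local-to-global spectral sequence
\[ E_2^{p,q} = H^p(X, \SExt^q(\Omega_X, \calO_X)) \Rightarrow \Ext^{p+q}(\Omega_X,\calO_X) \]
and to show that the terms that would obstruct both statements vanish. Since $X$ is normal and its singular locus has codimension $\ge 2$, the sheaf $\SExt^1(\Omega_X,\calO_X)$ is supported on the singular locus. This locus is a union of finitely many torus-invariant curves (the codimension 2 strata) and points. The sheaf $\SHom(\Omega_X,\calO_X) = T_X$ is reflexive.

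\textbf{Vanishing for the tangent sheaf.} First I will show $H^1(X,T_X) = H^2(X,T_X) = 0$. For this I use the generalized Euler sequence on the smooth orbifold (stack) $\mathcal{P}(a_0,\dots,a_3)$:
\[ 0 \to \calO \to \oplus_i \calO(a_i) \to T \to 0. \]
Pushing it forward to $X$, which is exact because the coarse moduli map is cohomologically trivial in characteristic $0$, and using that $X$ is well-formed so that $T_X = \pi_* T_{\mathcal P}$, gives $H^j(X,T_X) = 0$ for $j \ge 1$. This follows from the vanishing of $H^j(\calO(a_i))$ for $j\ge1$ on weighted projective space. Consequently $E_2^{1,0}=E_2^{2,0}=0$.

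\textbf{Vanishing for $\SExt^1$.} Next I need $H^1(X, \SExt^1(\Omega_X,\calO_X)) = 0$. Along a codimension 2 stratum $C\cong\bP^1$ (a weighted line) the singularity is transversally $A_n$. Hence $\SExt^1$ there is, generically, a line bundle on $C$ (the $A_n$ singularity has versal base whose relevant piece is a rank one module over the transversal slice). The $T^1$ of $A_n$ has graded pieces, and I want to identify the lowest-degree piece with a twist of $\omega_X^\vee|_C$. More precisely, for $xy=z^{n+1}$ the deformation $xy = z^{n+1} + t$ shows that $\SExt^1$ contains the quotient $\calO_C \otimes \omega_X^{\vee}$-type factor, with the other pieces being higher twists.

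The hypothesis that some section of $\omega_X^\vee$ does not vanish generically on $C$ is exactly what is needed to make this line bundle (and the higher pieces) have nonnegative degree on $C\cong\bP^1$, which gives $H^1=0$. Near the isolated codimension 3 points, $\SExt^1$ may have embedded zero-dimensional contributions, which have no $H^1$. I expect this step, namely identifying $\SExt^1$ on the strata as a filtered sheaf with pieces that are explicit twists, to be the main obstacle. I would carry it out torus-equivariantly on each chart $U_i = \bA^3/\mu_{a_i}$, computing $T^1$ as invariants of the cover's $T^1$.

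\textbf{Conclusion.} With $E_2^{1,0}=E_2^{2,0}=E_2^{1,1}=0$, statement (1) follows from the five-term exact sequence, which gives $\Ext^1 \cong H^0(\SExt^1)$. For (2), the filtration on $\Ext^2$ has graded pieces coming from $E_2^{2,0}$, $E_2^{1,1}$ and $E_2^{0,2}$. The first two vanish, so $\Ext^2 \hookrightarrow H^0(X,\SExt^2)$. The sheaf $\SExt^2$ restricts on each $U_i$ to $\SExt^2(\Omega_{U_i},\calO_{U_i})$, so restricting sections gives an injection into $\oplus_i H^0(U_i,\SExt^2(\Omega_{U_i},\calO_{U_i}))$, since the $U_i$ cover $X$.

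\textbf{Gluing.} For the final assertion, note that local deformations agreeing on overlaps define a section of the sheaf of local deformation functors. The vanishing of $E_2^{1,0},E_2^{2,0},E_2^{1,1}$ gives vanishing of the gluing obstructions in $H^2(T_X)$ and $H^1(\SExt^1)$ order by order, so the local deformations lift to a global one.
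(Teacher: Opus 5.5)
Your overall architecture matches the paper's: the local-to-global spectral sequence, then $H^{j}(X,T_X)=0$ for $j\ge 1$ via the Euler sequence, then $E_2^{2,0}=E_2^{1,1}=0$ giving $\Ext^2(\Omega_X,\calO_X)\hookrightarrow H^0(X,\SExt^2(\Omega_X,\calO_X))$. Statement (1) and the final restriction to the cover $\{U_i\}$ are fine.

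The gap is that all of the content of (2) lies in $E_2^{1,1}=H^1(X,\SExt^1(\Omega_X,\calO_X))=0$, and you do not prove it. You flag it as the main obstacle and then assert the outcome. Several parts of the sketch do not hold up:
\begin{itemize}
\item Along a curve $C$ of transversal $A_n$ singularities, $\SExt^1$ is generically of rank $n$ over $C$, since $T^1_{A_n}\cong\bC[t]/(t^n)$. It is not a line bundle.
\item More seriously, the section hypothesis only gives $\deg(\omega_X^\vee|_C)\ge 0$ modulo torsion. Calling the graded pieces ``twists of $\omega_X^\vee|_C$'' leaves the degree of the twist undetermined, and that degree is the whole problem. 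In the paper the twist is $\omega_C\otimes\calL_m^\vee|_C\cong\calO(b_m-2)$ with $b_m\ge 1$. So one only gets degrees $\ge -1$, not $\ge 0$: for $b_m=1$ and $f=0$ a summand $\calO(-1)$ appears. This still kills $H^1$ on $\bP^1$, but only because of an explicit bound on $b_m$ that nothing in your argument supplies.
\item Your proposed method, computing $T^1$ on $U_i=\bA^3/\mu_{a_i}$ as invariants of the cover's $T^1$, cannot work as stated: the cover $\bA^3$ is smooth and has $T^1=0$.
\item $X$ is in general not Gorenstein at the codimension $3$ points, which is exactly where the degrees of the pieces are fixed. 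So hypersurface-type formulas for $T^1$ are not available there.
\end{itemize}

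The missing idea is the comparison of $\Omega_X$ with its reflexive hull. The paper's argument runs as follows.
\begin{enumerate}
\item Because $\Omega_X^{[1]}$ is Cohen--Macaulay, $\SExt^i(\Omega_X^{[1]},\omega_X)=0$ for $i>0$. Discarding the zero-dimensional kernel and torsion by Ischebeck's lemma gives $\SExt^1(\Omega_X,\omega_X)\cong\bigoplus\SExt^2(Q_{ij},\omega_X)$. Here $Q_{ij}$ is the restriction to $C_{ij}$, modulo torsion, of $\calQ=\mathrm{coker}(\Omega_X\to\Omega_X^{[1]})$.
\item Tracking the invariant differentials $x_i^{d}(x_kx_l)^m x_k\,dx_l$ across the two charts covering $C_{ij}$ gives $Q_{ij}\cong\bigoplus_{m=0}^{n-1}\calO(-b_m)$ with $b_m>0$. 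Each summand is the restriction of a $\bQ$-line bundle that is Cartier near $C_{ij}$.
\item Grothendieck duality, $\SExt^2(\calO_{C_{ij}},\omega_X)\cong\omega_{C_{ij}}$, turns this into $\bigoplus\calO(b_m-2)$.
\item Only then does one twist by $\omega_X^\vee$, which has degree $f\ge 0$ on $C_{ij}$ by the section hypothesis.
\item Finally one passes to $\SExt^1(\Omega_X,\calO_X)$ via the natural map. It is an isomorphism away from finitely many points, hence surjective on $H^1$.
\end{enumerate}
Working with $\omega_X$ instead of $\calO_X$ is what makes the non-Gorenstein codimension $3$ points harmless. Without a computation of this kind, $E_2^{1,1}=0$ remains unproved, and with it statement (2) and the gluing statement.
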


Using the previous theorem together with other techniques, we provide new examples of weighted projective spaces in every dimension $\ge 4$ that are smoothable to $\bP^n$ and that are not encompassed by Theorem \ref{thm:infiniteFamiliesDegenerationsPn}.

\begin{theorem}
    The two
    weighted projective threefolds $\bP(2,3,3,4)$ and $\bP(2,12,21,49)$
    are each smoothable to a cubic threefold. 
    Moreover,
    the weighted projective spaces $\bP(2,3,3,4,6_{n-4},18)$ and $\bP(2,12,21,42_{n-4},49,126)$ are smoothable to $\bP^{n}$ for any $n \ge 4$.
\end{theorem}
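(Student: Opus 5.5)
Two things need proving: each threefold smooths to a cubic threefold $V_3\subset\bP^4$, and the cone theorem then propagates this to $\bP^n$.

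\textbf{The threefold statement.} I would use \Cref{introthm:defofthreefolds}, but first I need a global model. A cubic threefold has $-K=\calO(2)$ and $(-K)^3=24$. For $\bP(2,3,3,4)$ we have $-K=\calO(12)$ and
\[
(-K)^3=\frac{12^3}{72}=24,
\]
which is consistent. For $\bP(2,12,21,49)$ we have $-K=\calO(84)$ and
\[
(-K)^3=\frac{84^3}{2\cdot 12\cdot 21\cdot 49}=24,
\]
also consistent.

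\emph{Case $\bP(2,3,3,4)$.} I would realize it explicitly as a hypersurface degeneration. In $\bP(1,1,1,1,2)$ with coordinates $y_0,\dots,y_3,w$, a family of the form
\[
t\,w = f_2(y),\qquad w\cdot(\text{stuff}) + \dots = \text{cubic},
\]
is one option. Alternatively, I would look for a degeneration of the cubic to a non-normal-free quotient, by writing $\bP(2,3,3,4)$ as a degree $6$ hypersurface in $\bP(1,2,3,3,4)$ via $u^2=\dots$ with $u$ of weight $1$. Concretely, set
\[
X_t=\{\,t\,F_6 = \dots\,\}.
\]
Since explicit coordinates are delicate, my main route is instead the local-to-global one.

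\emph{Local-to-global route.} First, I would check the hypotheses of \Cref{introthm:defofthreefolds}.
\begin{itemize}
\item[(a)] The codimension 2 singular strata are $A_n$ curves. For $\bP(2,3,3,4)$ these are the lines $\{x_0=x_3=0\}$, with stabilizer $\mu_3$ giving $A_2$, and $\{x_1=x_2=0\}$, with $\mu_2$ giving $A_1$. Similar gcd bookkeeping applies to $\bP(2,12,21,49)$.
\item[(b)] For each stratum I need a degree $12$ (resp. $84$) monomial not vanishing generically on it. For instance $x_1^4$ works on the first line, and $x_0^6$ and $x_3^3$ work on the second.
\end{itemize}
The theorem then reduces smoothing to producing compatible deformations of the charts $U_i\cong \bC^3/\mu_{a_i}$. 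For $\bP(2,3,3,4)$ the relevant points are:
\begin{itemize}
\item $\frac14(2,3,3)$, a non-isolated singularity;
\item the line of $\frac13(2,0,1)$ points;
\item $\frac12(1,1,0)$.
\end{itemize}
I would exhibit each as a $\mu$-quotient of a hypersurface, i.e. as a $\bQ$-Gorenstein smoothing via index-one cover deformations of the form $xy=z^k+t$. I would check that these local smoothings extend to $\mu$-equivariant global deformations of the affine cones. The cleanest way to guarantee agreement on overlaps is to write down one global family: a one-parameter deformation of the cone ring. Its general fiber is then identified as a Fano threefold with $(-K)^3=24$, Picard rank 1 and index 2, hence a cubic by Fujita's classification, given smoothness and $\bQ$-Gorenstein flatness, which preserves $(-K)^3$ and index.

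\textbf{Passing to $\bP^n$.} Next, I would apply \Cref{thm:cone} twice, as in the proof of \Cref{thm:infiniteFamiliesDegenerationsPn}. A cubic threefold is a hyperplane-type divisor picture: $\bP^4$ degenerates to the projective cone over the cubic $V_3$ with respect to $\calO(1)$. Replacing $V_3$ by its degeneration $X$ yields a weighted cone. Here $L=\calO_X(6)$ (resp. $\calO_X(42)$), the limit of $\calO(1)|_{V_3}$, and adjoining a variable of degree $6$ (resp. $42$) and quotienting by the cubic equation of degree $18$ (resp. $126$) gives a degeneration of $\bP^4$. The equation is a deformation of $x_5x_4=\dots$, eliminating to the listed weights $\bP(2,3,3,4,18)$ once the variable of weight $6$ is accounted for. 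I would invoke \Cref{lem:coneofdivisors} for this step.

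The iteration is then direct. In $\bP(2,3,3,4,6_{n-4},18)$ we have $-K=\calO(36+6(n-4))$ and $L=\calO(6)$, so the condition $L\sim_\bQ -K/(n+1)$ of \Cref{thm:cone} holds after checking $-K=\calO(6(n+1))$: indeed $2+3+3+4+18=30=6\cdot5$. So \Cref{thm:cone} adds one weight $6$ at each step, by induction on $n$. The same check for the second family gives $2+12+21+49+126=210=42\cdot 5$.

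\textbf{Main obstacle.} The hard step is the threefold smoothing. Specifically, I must construct the compatible local smoothings of the non-isolated quotient singularities, particularly the $\frac1{49}$ and $\frac1{21}$ points, and verify that they glue. I would first try to find an explicit global family, embedding $X$ in a larger weighted projective space as a degeneration of a cubic equation, and fall back on the chart-by-chart argument only if that fails.
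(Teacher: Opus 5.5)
Your outer frame is sound. The volume checks are right, and your verification of the hypotheses of \Cref{introthm:defofthreefolds} for $\bP(2,3,3,4)$ is correct. Once the threefolds are known to smooth to a cubic, the passage to $\bP^n$ is \Cref{cor:hypersurfaceDegeneration} followed by induction with \Cref{cor:degenofPn+1} (average weight $6$, resp.\ $42$).

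The genuine gap is that the step carrying the content of the theorem, producing a smoothing of each threefold to a cubic, is never carried out. For $\bP(2,12,21,49)$ you leave it open explicitly. For $\bP(2,3,3,4)$ you only say what you \emph{would} check: no local smoothings are written down, and their agreement on the overlaps $U_i\cap U_j$ along the curves of $A_n$ singularities, which is the real input to \Cref{introthm:defofthreefolds}, is deferred to a ``global family'' you never construct. Your local model also does not literally fit: the point $\frac14(2,3,3)\cong\frac14(1,1,2)$ is Gorenstein, so its index-one cover is itself, and it has embedding dimension $9$, so ``$xy=z^k+t$ on the index-one cover'' does not describe it.

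Your identification of the general fiber is also unjustified. Neither Picard rank $1$ nor Fano index $2$ passes automatically to a $\bQ$-Gorenstein smoothing of a klt Fano. The smoothing $\bP(1,1,2)\rightsquigarrow\bP^1\times\bP^1$ shows $\rho$ can jump. Index $2$ would require showing that the non-Cartier divisor $\calO(6)$, resp.\ $\calO(42)$, extends over the family. And $(-K)^3=24$ alone does not force a cubic: $\bP^1\times S_4$, with $S_4$ a quartic del Pezzo surface, also has it.

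The paper closes these gaps with explicit constructions that produce the smoothing and identify the fiber at the same time.
\begin{enumerate}
\item For $\bP(2,3,3,4)$, the embedding $[x_0:x_1:x_2:x_3]\mapsto[x_1^2:x_1x_2:x_2^2:x_0^3:x_0x_3:x_3^3]$ into $\bP(1,1,1,1,1,2)$ has image $y_0y_2=y_1^2$, $y_3y_5=y_4^3$. Perturb to $y_0y_2-y_1^2=ty_5$ and $y_3y_5-y_4^3=tf_3$. Eliminating $y_5$ gives the cubic $y_3(y_0y_2-y_1^2)-ty_4^3=t^2f_3$ in $\bP^4$.
\item For $\bP(2,12,21,49)$, the idea you are missing has two steps. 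First, partially smooth it, via $\bP(2,2,3,7,12)$ and $\bP(1,1,3,5,6,7)$ (\Cref{lem:psofP2122149}), to a degree-$10$ hypersurface $X_{10}\subset\bP(1,1,3,5,6)$. Second, recognize $X_{10}$ as the degree-$9$ divisor $z_4=z_2z_5$ on the cone $C_p(Y_{10},\calO(6))$, where $Y_{10}$ is a degeneration of a quadric threefold (\Cref{lem:deformationtoquadric}). That cone is a degeneration of $\bP^4$, so \Cref{thm:conesanddivisors}(3) deforms $X_{10}$ to a cubic (\Cref{prop:smoothingtocubic}).
\end{enumerate}

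Your $\bP^4$ step also misstates the cone. $\bP^4$ degenerates to $C_p(V_3,\calO(3))$ (\Cref{thm:degree-d-hypersurfaces}). It does not degenerate to $C_p(V_3,\calO(1))$, which is a cubic fourfold cone of anticanonical volume $243\neq 625$. \Cref{thm:cone} does not apply here, since $V_3$ is not a degeneration of $\bP^3$. The new weight $18$ (resp.\ $126$) is simply the degree of the limit of $\calO_{V_3}(3)$; there is no cubic equation to quotient by.
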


In the last section of the paper, we provide several applications of the construction of these degenerations of $\bP^n$. 

\subsection{Applications to terminal and canonical degenerations of $\bP^n$}
Using both orbifold cones (with isolated singularities) and the weighted projective spaces that admit smoothings to $\bP^n$ (which are singular in codimension 2), we produce new terminal and canonical degenerations of $\bP^n$ in every dimension $n \ge 4$.

First, we study degenerations $X$ with isolated singularities (constructed as orbifold cones).  These are of particular interest because, given a very ample linear system $L$ on $X$, a general section of $|L|$ will be a smooth divisor on $X$.  Choosing $L$ to be a multiple of the anticanonical linear system on $X$, these divisors deform to hypersurfaces in $\bP^n$, and thus one obtains smooth degenerations of hypersurfaces via this construction (cf. \cite{Mori75}).  

\begin{theorem}
    For any $n \ge 4$, there exist terminal degenerations of $\bP^n$ with isolated singularities.  If $n+1$ is composite and $n > 4$, there exist terminal Gorenstein degenerations of $\bP^n$ with isolated singularities. 
\end{theorem}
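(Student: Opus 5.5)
Apply the cone construction of \Cref{thm:cone} to smooth Fano varieties $V$ of dimension $n-1$ that degenerate from $\bP^{n-1}$ with a suitable polarization. More precisely, I take the orbifold cone over a smooth (or mildly singular) degeneration of a \emph{hypersurface} rather than of projective space. Observe that $\bP^n$ degenerates to the projective cone $C_p(V,\calO_V(1))$ over a smooth hypersurface $V\subset \bP^n$ of degree $d$: degenerate $V\cup H$ embedded via the $d$-uple… Instead, use the standard weighted degeneration: $\bP^n$ is isomorphic to the hypersurface $\{x_{n+1}t = f(x_0,\dots,x_n)\}$ in $\bP(1_{n+1},d)$ for $t\neq 0$, and it degenerates as $t\to 0$ to $\{f=0\}\subset\bP(1,\dots,1,d)$. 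This is the weighted cone $C_p(V,\calO_V(1))$ over $V$ with vertex the point $[0:\dots:0:1]$.

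The total space $\{x_{n+1}t=f\}\subset \bP(1_{n+1},d)\times \bA^1$ is a flat family of hypersurfaces. It is $\bQ$-Gorenstein because it is a Cartier divisor in a $\bQ$-Gorenstein ambient space: away from the vertex the ambient space is smooth, and at the vertex it is a cyclic quotient. So $K_{\cX/B}$ is $\bQ$-Cartier by adjunction. The central fiber $X$ is the cone over a smooth hypersurface $V$ of degree $d$ in $\bP^{n}$ under $\calO_V(1)$, with its unique singular point at the vertex. By the standard discrepancy computation for cones, $X$ is klt, and it is terminal if and only if $-K_V = \calO_V(n+1-d)$ satisfies $n+1-d > 1$ after accounting for the weight. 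Concretely, the quasi-cone over $(V,\calO_V(1))$ with weight $d$ at the vertex is the $\mu_d$-quotient of the affine cone over $V$. Its discrepancy is computed on the weighted blowup with exceptional divisor $V$, giving $a = \frac{n+1-d}{d}\cdot$(normalized) $-1$. One checks it equals $\frac{n+1-d}{1}\cdot\frac1d\cdot d -1$. More carefully, the exceptional divisor of the weighted blowup $\bP(1_{n+1},d)$-style has discrepancy $\frac{(n+1)\cdot 1 - d}{d}$ relative to the quotient $\bA^{n+1}/\mu_d$ restricted to $\{f=0\}$. Terminality then requires $\frac{n+1-d}{d}>0$ together with the vanishing of the other exceptional contributions, and I would verify this via Reid–Tai type criteria for the cyclic quotient of the hypersurface singularity.

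The plan thus has three steps. First, choose $d$ with $2\le d\le n$ so that $V$ is Fano. Second, confirm that the vertex is an isolated terminal singularity; it is a $\frac1d(1,\dots,1)$ quotient of the cone over $V$. Third, for the Gorenstein statement, require the $\mu_d$-action on the generator of $\omega$ to be trivial, i.e.\ $d \mid n+1$. Since $n+1$ is composite, I pick a divisor $1<d<n+1$ with $d\mid n+1$. Then $K_X = \calO(-(n+1-d))\otimes\calO(d)$ in degree terms is Cartier at the vertex. Terminality plus Gorenstein then gives discrepancy $\frac{n+1}{d}-1\ge 1$, which is equivalent to $d\le \frac{n+1}{2}$; a divisor of composite $n+1$ always satisfies this. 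The condition $n>4$ excludes $n+1=4$ with $d=2$, which gives $n=3$ anyway; $n=4$ has $n+1=5$ prime.

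The main obstacle is the terminality check at the vertex in the non-Gorenstein case, i.e.\ computing all discrepancies of a cyclic quotient of a hypersurface cone rather than only that of the obvious exceptional divisor. I would reduce this to a Reid–Tai style argument on the index-one cover. That cover is the affine cone over $V$, which is terminal with discrepancy $n+1-d-1$ at its blowup. I would then compute the ages of the $\mu_d$-action (weights all equal to $1$, so the ages are $k(n+1)/d$ minus a correction from the equation $f$) and check that they exceed $1$ for all $k$. For $n\ge4$ this works with $d=2$, a quadric, for instance.
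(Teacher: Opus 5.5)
\textbf{Verdict: genuine gap.} Your degeneration is correct, and it is a concrete version of the paper's. The central fiber $\{f=0\}\subset\bP(1_{n+1},d)$ has the same $d$-th Veronese ring as $C_p(V,\calO_V(d))$, so it is exactly the cone of \Cref{thm:degree-d-hypersurfaces}. In the paper's notation it is $C_p(V,\calO_V(d))$, not $C_p(V,\calO_V(1))$. Your Gorenstein criterion $d\mid n+1$ is also right, since by adjunction $K_X=\calO_X(-(n+1))$.

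\textbf{The gap: the discrepancy at the vertex is off by one.} Blowing up the vertex of $C_a(V,\calO_V(d))$ gives the total space $Y$ of $\calO_V(-d)$. This is smooth, with a single exceptional divisor $E\cong V$ and $E|_E=\calO_V(-d)$. Write $K_Y=\pi^*K_X+aE$ and use $K_V=(K_Y+E)|_E$. This gives $\calO_V(d-n-1)=\calO_V(-(a+1)d)$, so $a(E)=\frac{n+1-2d}{d}$. Your quantity $\frac{n+1-d}{d}$ is the log discrepancy $a(E)+1$, and its positivity is the klt condition, not terminality. Since $Y\to X$ is a log resolution with one exceptional divisor, the vertex is terminal if and only if $n+1>2d$. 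This is \Cref{prop:singsofcones} with $r=-\frac{n+1-d}{d}$, i.e.\ \Cref{cor:singsofconesoverhypersurfaces}. So the ``main obstacle'' you defer to an unexecuted Reid--Tai computation does not arise.

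\textbf{Consequences for the Gorenstein half.} Your test ``$\frac{n+1}{d}-1\ge 1$, i.e.\ $d\le\frac{n+1}{2}$'' accepts every proper divisor of $n+1$. That includes $d=\frac{n+1}{2}$, which gives $a(E)=0$: canonical but not terminal (e.g.\ $n=5$, $d=3$). Taken literally, your test would also make $\{q=0\}\subset\bP(1,1,1,1,2)$ a terminal Gorenstein degeneration of $\bP^3$. That contradicts the fact that $\bP^3$ is its only terminal degeneration; this cone is the anticanonical cone over $\bP^1\times\bP^1$, which is canonical.

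The correct requirement is $n+1=ad$ with $d\ge 2$ and $a\ge 3$, and this is exactly where $n>4$ is used. A composite $n+1\neq 4$ factors as $ab$ with one factor $>2$, and you take $d$ to be the other factor. For $n+1=4$ the only choice is $d=2$, which is non-terminal. Your explanation of the hypothesis $n>4$ misses this.

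Similarly, in the first statement $d=2$ works precisely because $n+1\ge 5>2d$. Your criterion cannot detect this, since it would also accept $n=3$.
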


\begin{theorem}
    For any $n \ge 3$, there exist canonical degenerations of $\bP^n$ with isolated singularities.  For any odd number $n \ge 3$, there exist canonical, non-terminal Gorenstein degenerations of $\bP^n$ with isolated singularities.  
\end{theorem}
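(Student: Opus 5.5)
The plan is to construct the required degenerations explicitly as projective cones over smooth hypersurfaces. The tool is the classical degeneration of a projectively normal variety to the projective cone over one of its hyperplane sections. Fix $n \ge 3$ and an integer $d \ge 2$. Embed $\bP^n \subset \bP^N$ by $\calO(d)$, and let $D \subset \bP^n$ be a smooth hypersurface of degree $d$, so that $v_d(D)$ is a hyperplane section of $v_d(\bP^n)$. Let $\widehat{C} \subset \bP^{N+1}$ be the projective cone over $v_d(\bP^n)$, with new coordinate $x_{N+1}$ and $D=\{\ell=0\}$. Consider the family $\cX = \widehat C \cap \{\ell = t\, x_{N+1}\} \to \bA^1_t$.
\begin{itemize}
\item For $t \ne 0$ the fibre is isomorphic to $v_d(\bP^n)$.
\item For $t=0$ the fibre is the projective cone $C_p(D, \calO_D(d))$.
\end{itemize}
Flatness follows from projective normality of $v_d(\bP^n)$, since the Hilbert polynomials agree. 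For the $\bQ$-Gorenstein condition, I would identify $\cX$ with an open subset of $\widehat C$. This cone has a single cyclic quotient singularity of type $\frac1d(1,\dots,1)$ at its vertex and is otherwise smooth, so it is $\bQ$-Gorenstein, and hence $K_{\cX/\bA^1}$ is $\bQ$-Cartier. This step is where I expect the main care is needed: one must check that the vertex lies on $\cX$ only in a way compatible with this description, and that the central fibre is normal with its relative canonical class restricting correctly.

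Next I would analyse the singularities of the central fibre, which is singular only at the vertex because $D$ is smooth. This gives the isolated singularities. By adjunction, $-K_D = \calO_D(n+1-d)$, so $-K_D \sim_{\bQ} r L$ with $L = \calO_D(d)$ and $r = \frac{n+1-d}{d}$. I would then apply the standard criterion for the vertex of a cone over a smooth variety with $-K_D \sim_\bQ rL$. Blowing up the vertex gives exceptional divisor $D$ with discrepancy $r-1$. So the vertex is:
\begin{itemize}
\item klt iff $r>0$;
\item canonical iff $r \ge 1$;
\item terminal iff $r>1$;
\item Gorenstein iff $K_D$ is an integral multiple of $L$ in $\Pic D$, i.e. $d \mid n+1$.
\end{itemize}
For $\dim D \ge 3$, Lefschetz gives $\Pic D = \Z\cdot\calO_D(1)$, so $\bQ$-Cartierness and these linear-equivalence statements are clean. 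For $n=3$, where $D$ is a surface, I would check the specific cases directly.

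It remains to choose $d$ for each claim.
\begin{itemize}
\item For the first claim, take $d=2$, so $D$ is a smooth quadric. Then $r = \frac{n-1}{2} \ge 1$ for $n \ge 3$, and the cone is a canonical degeneration of $\bP^n$ with an isolated singularity. For $n=3$ this recovers the anticanonical cone over $\bP^1\times\bP^1$ from \cite{ADL23}, which serves as a consistency check.
\item For the second claim, let $n$ be odd and take $d = \frac{n+1}{2} \ge 2$. Then $r = 1$ exactly, so the discrepancy of the exceptional divisor is $0$: the vertex is canonical but not terminal. Since $d \mid n+1$, $K$ is Cartier at the vertex, so the cone is Gorenstein.
\end{itemize}
The parity condition enters exactly here, because $r=1$ forces $n+1 = 2d$.
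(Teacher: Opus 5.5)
Your proposal is correct and is essentially the paper's proof. The paper degenerates $\bP^n$ to $C_p(X_d,\calO(d))$ as in Theorem~\ref{thm:degree-d-hypersurfaces} and reads off the vertex singularity from $r=\frac{n+1-d}{d}$ via Corollary~\ref{cor:singsofconesoverhypersurfaces}; you re-derive that criterion from the discrepancy $r-1$ of the blown-up vertex, and take $d=2$ (the paper allows any $1<d\le\frac{n+1}{2}$) and $d=\frac{n+1}{2}$ respectively.

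One small correction: $\cX$ is not an open subset of $\widehat C$, because the projection $\cX\to\widehat C$ has $\bA^1$-fibres over $\{x_{N+1}=\ell=0\}$. Rather, $\cX$ is an open subset of the blow-up of $\widehat C$ along that smooth locus, as the paper says. It agrees with $\widehat C$ near the vertex and is smooth elsewhere because $D$ is smooth, which is all the $\bQ$-Gorenstein check needs.
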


Even if $X$ does not have isolated singularities, if $X$ is canonical, a general element of the very ample linear system $L$ will also only have canonical singularities, so we can obtain canonical degenerations of hypersurfaces from canonical degenerations of $\bP^n$.  For infinitely many choices of $n$, the weighted projective spaces constructed in this paper that admit smoothings to $\bP^n$ are examples of such $X$ with canonical singularities.

\begin{theorem}
    Let $X$ be one of the weighted projective spaces 
        \begin{enumerate}
            \item $\bP(a^2,b^2,c^2)$, where $a^2 + b^2 + c^2 = 3abc$
            \item $\bP(a^2,b^2, 2c^2, 4abc)$, where $a^2 + b^2 + 2c^2 = 4abc$
            \item $\bP(2,3,3,4,18)$
            \item $\bP(2,12,21, 49, 126)$
        \end{enumerate}

    For $X$ as above, denote by $X_n$ be the associated weighted projective space that admits a smoothing to $\bP^n$, which are (respectively) 
        \begin{enumerate}
            \item $\bP(a^2,b^2,c^2,(abc)_{n-2})$, where $a^2 + b^2 + c^2 = 3abc$
            \item $\bP(a^2,b^2, 2c^2, 4abc, (2abc)_{n-3})$, where $a^2 + b^2 + 2c^2 = 4abc$
            \item $\bP(2,3,3,4,6_{n-4},18)$
            \item $\bP(2,12,21,42_{n-4}, 49, 126)$
        \end{enumerate}
        
    For each choice of $X$, there exist infinitely many $n > 0$ such that $X_n$ is a degeneration of $\bP^n$ with canonical Gorenstein singularities. 
\end{theorem}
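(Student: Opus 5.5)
The plan is to combine the smoothability results already established with two standard facts about weighted projective spaces. Smoothability of $X_n$ to $\bP^n$ is supplied by \Cref{thm:infiniteFamiliesDegenerationsPn} in cases (1) and (2) and by the preceding theorem on $\bP(2,3,3,4,6_{n-4},18)$ and $\bP(2,12,21,42_{n-4},49,126)$ in cases (3) and (4). What remains is the singularities.

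The first fact concerns canonicity. A weighted projective space has only cyclic quotient singularities, hence is klt. If $X_n$ is also Gorenstein, then $K_{X_n}$ is Cartier, so all discrepancies are integers. Being $> -1$, they are $\ge 0$, and therefore $X_n$ is canonical. It thus suffices to find infinitely many $n$ for which $X_n$ is Gorenstein.

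The second fact is the Gorenstein criterion. For a well-formed $\bP(a_0,\dots,a_n)$ we have $\omega = \calO(-\sum a_j)$, and this sheaf is invertible exactly when each $a_i$ divides $S := \sum_j a_j$. I will quote this criterion. I will take well-formedness of $X_n$ from the statements of the earlier theorems, since it is part of their hypotheses or conclusions.

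The rest is arithmetic. In each case I compute $S$ and impose a congruence on $n+1$.
\begin{enumerate}
\item Here $S = a^2+b^2+c^2+(n-2)abc = (n+1)abc$. If $abc \mid n+1$, then $a^2 \mid a\cdot abc \mid S$, and similarly for $b^2$ and $c^2$. Also $abc \mid S$ trivially. So every $n$ with $n+1 \in abc\,\Z_{>0}$ works.
\item Here $S = a^2+b^2+2c^2+4abc+2(n-3)abc = 2(n+1)abc$. Take $n+1 \in 2abc\,\Z$. Then $a^2$, $b^2$ and $2c^2$ each divide $2abc\cdot abc \mid S$. Moreover $4abc \mid 2(n+1)abc$ because $n+1$ is even, and $2abc \mid S$ trivially.
\item Here $S = 6(n+1)$. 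The weights $4$ and $18$ force $2 \mid n+1$ and $3 \mid n+1$, so $n+1 \equiv 0 \pmod 6$ suffices. The remaining weights $2,3,6$ then divide $S$ automatically.
\item Here $S = 42(n+1)$. The conditions $12 \mid S$, $49 \mid S$ and $126 \mid S$ amount to $2$, $7$ and $3$ dividing $n+1$, so $n+1 \equiv 0 \pmod{42}$ suffices. The weights $2,21,42$ divide $S$ automatically.
\end{enumerate}

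Each congruence class contains infinitely many $n$ in the range where smoothability holds ($n\ge 2,3,4,4$ respectively), which proves the theorem. I expect no real obstacle here. The only points needing care are invoking the Gorenstein criterion correctly, which requires well-formedness, and the standard fact that klt plus Gorenstein implies canonical.
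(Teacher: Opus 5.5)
Your proof is correct and is essentially the paper's argument. Your global criterion, that $\omega_{X_n}=\calO(-S)$ is invertible iff every weight divides $S=\sum_j a_j$, is equivalent to the paper's Reid--Tai check at each vertex singularity $\frac{1}{a_i}(\dots)$, because $\sum_{j\ne i}a_j\equiv S \pmod{a_i}$; both routes then reduce to the same congruences on $n+1$. In case (3) your condition $6\mid n+1$ is the right one: the paper's stated ``$n\equiv 0 \pmod 2$'' should read $n\equiv 1 \pmod 2$ (e.g.\ for $n=8$ one gets $S=54$ and $4\nmid 54$).
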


\subsection{Applications to families of curves} 
Recall that canonical genus $g$ curves admit embeddings into $\bP^{g-1}$ and a smooth non-canonical genus $g$ curve is hyperelliptic.  One application of this work is that low degree hyperelliptic genus $g$ curves embed in $\bQ$-Gorenstein degenerations of $\bP^{g-1}$ in such a way that the hyperelliptic curves in the degeneration deform to canonical curves in $\bP^{g-1}$.

\begin{theorem}
    For $g = 3,4$, every smooth hyperelliptic genus $g$ curve embeds in a klt degeneration $X$ of $\bP^{g-1}$ as a (weighted) complete intersection.  The divisors defining the complete intersection deform in the smoothing of $X$ to $\bP^{g-1}$, and the resulting complete intersection on $\bP^{g-1}$ is a canonical genus $g$ curve. 
\end{theorem}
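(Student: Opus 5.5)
For $g=3$ we must handle hyperelliptic genus $3$ curves; for $g=4$, hyperelliptic genus $4$ curves. A smooth hyperelliptic genus $g$ curve $C$ is a double cover of $\bP^1$ branched at $2g+2$ points. It therefore embeds in the weighted projective plane $\bP(1,1,g+1)$ as a hypersurface $y^2 = f_{2g+2}(x_0,x_1)$ of degree $2g+2$. The plan is to find, for each $g$, a klt degeneration $X$ of $\bP^{g-1}$ among those constructed above that contains $\bP(1,1,g+1)$, or a cone over it, in a compatible way. We then realize $C$ as a weighted complete intersection in $X$ whose defining degrees match those of a canonical curve in $\bP^{g-1}$.

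\textbf{Case $g=3$.} A canonical non-hyperelliptic genus 3 curve is a plane quartic in $\bP^2$. Take $X=\bP(1,1,4)$: this is $\bP(a^2,b^2,c^2)$ with $(a,b,c)=(1,1,2)$, and $1+1+4=6=3\cdot 1\cdot1\cdot2$. By \Cref{HP:markov} it admits a $\bQ$-Gorenstein smoothing to $\bP^2$. The limit of $\calO(1)$ is $L=\calO_X(2)$, since $-K_X=\calO(6)=\calO(3)\otimes L$ forces $\deg L=2$. The hyperelliptic curve $y^2=f_8(x_0,x_1)$ is a member of $|\calO_X(8)|=|L^{4}|$, which is the limit of quartics. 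To see that the divisor deforms, realize the smoothing explicitly: $\bP(1,1,4)$ is the quadric cone $\{z_0z_2=z_1^2\}\subset\bP(1,1,1,2)$ via $\calO(2)$, or equivalently the degeneration of the conic $\{uw - v^2 = t\,s\}$ in $\bP(1,1,1,2)$ (with $s$ of weight 2). In this total space, the divisors cut by degree-$4$ forms (in the weighted sense) extend over the family. On the general fiber, which is $\bP^2$ embedded by the Veronese-type conic, such a degree-$4$ form becomes a plane quartic. This gives flatness of the family of curves, and the general member is a smooth plane quartic, hence a canonical genus $3$ curve.

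\textbf{Case $g=4$.} A canonical genus 4 curve is a $(2,3)$ complete intersection in $\bP^3$. Use $X=\bP(1,1,2,4)$, a canonical degeneration of $\bP^3$ by \cite{ADL23}; it is case (1) of \Cref{thm:infiniteFamiliesDegenerationsPn} with $(a,b,c)=(1,1,2)$ shifted appropriately, or case (2) with $n=3$. Here $-K_X=\calO(8)=\calO(4)\otimes L$, so $L=\calO_X(2)$. A quadric degenerates to a member of $|\calO(4)|$ and a cubic to a member of $|\calO(6)|$. Take $D_1=\{w=q_2(x_0,x_1,z)\}$ of degree $4$, where $w$ has weight 4 and $z$ has weight 2; then $D_1\cong\bP(1,1,2)$. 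Next take $D_2=\{z^3 + \dots = f_6\}$ of degree 6 restricted to $D_1$. This gives a hypersurface of degree $6$ in $\bP(1,1,2)$, but a genus 4 hyperelliptic curve needs degree $10$ in $\bP(1,1,5)$. So I instead expect to choose $D_1$ to be a degree-$4$ divisor isomorphic to the cone $\bP(1,1,4)\cong$ the quadric cone, for instance $\{x_0x_1 - z^2=0\}$, and $D_2$ a degree-$6$ divisor $\{w\cdot\ell_2 = \dots\}$. We must verify by direct computation that the intersection is the double cover $y^2=f_{10}$. Concretely, project from the vertex and check the branch locus has $10$ points. This degree bookkeeping is the step I expect to be the main obstacle: matching the weighted complete intersection to every hyperelliptic curve, that is, showing the branch polynomial can be arbitrary.

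\textbf{Deformation and conclusion.} For the deformation step I would use the explicit smoothing of $\bP(1,1,2,4)$ as a family of quadric-type hypersurfaces in a larger weighted projective space, analogous to the cone construction of \Cref{thm:cone} applied to $\bP(1,1,4)$. Sections of $L^{[2]}$ and $L^{[3]}$ lift to the total space because the relevant $H^1$ vanish by Kawamata–Viehweg on the klt Fano fibers. Hence $h^0(\cX_t,L^{k})$ is constant, the divisors extend, and the general fiber of the induced family of curves is a $(2,3)$ complete intersection in $\bP^3$. That general fiber is smooth since smoothness is open, and therefore a canonical genus 4 curve.
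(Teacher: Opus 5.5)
Your $g=3$ case is correct and is the paper's argument. The curve $s^2=F_4(u,v,w)$ in $\bP(1,1,4)=\{uw=v^2\}\subset\bP(1,1,1,2)$ extends over the family $uw-v^2=ts$ of Example~\ref{ex:VeroneseEmbeddingTrick}. On a fiber $t\neq 0$, which is the graph of $s=(uw-v^2)/t$ over $\bP^2$, it becomes a plane quartic.

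The $g=4$ case has a genuine gap, and it is not just bookkeeping: $\bP(1,1,2,4)$ contains no smooth hyperelliptic genus $4$ curve as a complete intersection of the required type. Your candidate $x_0x_1-z^2$ is not even weighted homogeneous there, since its monomials have degrees $2$ and $4$. Because $\calO(1)$ on $\bP^3$ degenerates to $\calO(2)$ and the class group has rank one, the two divisors must have degrees $4$ and $6$. So $C$ would be cut out by $F_4=\alpha w+q_4(x_0,x_1,z)$ and $F_6=w\,\ell_2(x_0,x_1,z)+c_6(x_0,x_1,z)$, with $F_6$ linear in $w$ because $\deg w=4$.
\begin{itemize}
\item If $\alpha\neq 0$, eliminating $w$ identifies $C$ with a degree $6$ curve on $\bP(1,1,2)$, i.e.\ a cubic section of the quadric cone in $\bP^3$. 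A smooth $(2,3)$ complete intersection has $\omega_C\cong\calO_C(1)$ very ample, so it is not hyperelliptic.
\item If $\alpha=0$, both equations vanish at $[0:0:0:1]$. Projecting from this point maps $C$ birationally onto a component of $\{q_4=0\}\subset\bP(1,1,2)$; otherwise $C$ contains rulings of the cone and is reducible. But $\{q_4=0\}$ is a quadric section of the quadric cone and has arithmetic genus $1$.
\end{itemize}
So your final deformation paragraph has nothing to deform.

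The missing idea is a different degeneration of $\bP^3$, built from the double-cover structure of $C$; this is Lemma~\ref{lem:hyperellipticgenus4}.
\begin{itemize}
\item After moving a non-branch point to $[0:1]$, write $C$ as $x_0g_9(x_0,x_1)=x_1^{10}+x_2^2$ in $\bP(1,1,5)$.
\item Then $C=Z_{10}\cap\{x_3=g_9\}$, where $Z_{10}=\{x_0x_3=x_1^{10}+x_2^2\}\subset\bP(1,1,5,9)$. This $Z_{10}$ is a partial smoothing of $\bP(1,2,9)$, hence a $\bQ$-Gorenstein degeneration of $\bP^1\times\bP^1$ in which $\calO_{Z_{10}}(3)$ is the limit of $\calO(1,1)$ (Lemma~\ref{lem:deformationtoquadric}).
\item Combining Theorem~\ref{thm:degree-d-hypersurfaces} for the quadric surface with Theorem~\ref{thm:conesanddivisors} gives a degeneration of $\bP^3$ to the cone $W_{10}=C_p(Z_{10},\calO(6))=\{y_0y_4=y_1^{10}+y_2^2\}\subset\bP(1,1,5,6,9)$. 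Since $-K_{W_{10}}=\calO(12)$, the sheaf $\calO(1)$ limits to $\calO(3)$ there.
\item Quadrics and cubics therefore become divisors of degrees $6$ and $9$. Then $C=W_{10}\cap\{y_3=0\}\cap\{y_4=g_9(y_0,y_1)\}$ is a $(6,9)$ complete intersection. By the same cone-and-divisor results, its two divisors deform to a quadric and a cubic in $\bP^3$, which meet in a canonical genus $4$ curve.
\end{itemize}
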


\subsection{Applications to moduli of canonically polarized varieties}
Next, we use degenerations of $\bP^4$ to produce \textit{smooth} degenerations of families of canonically polarized complete intersection surfaces that are themselves not complete intersections.  

\begin{theorem}
    There exists a smooth family of projective surfaces $\calS \to T$ over a pointed curve such that the generic fiber $\calS_t$ is a $(3,7)$ complete intersection in $\bP^4$ but the special fiber $\calS_0$ is a smooth surface that is not a complete intersection. 
\end{theorem}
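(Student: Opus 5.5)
Construct $\calS$ inside a degeneration of $\bP^4$: take a smoothing $\cX \to T$ of a klt weighted projective space (or orbifold cone) $X_0$ with $\cX_t \cong \bP^4$, and let $\calS \subset \cX$ be cut out by two relative divisors. These restrict to a $(3,7)$ complete intersection on $\bP^4$, and on $X_0$ to a smooth surface that is not a complete intersection. A $(3,7)$ complete intersection in $\bP^4$ has $K_S = \calO(3+7-5) = \calO(5)$, so it is canonically polarized. The limit of $\calO(1)$ on $X_0$ is $L = -\tfrac15 K_{X_0}$, and the two divisors must lie in $|3L|$ and $|7L|$. These are Weil $\bQ$-Cartier classes on $X_0$ that need not be Cartier.

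The key choice is $X_0$. By \Cref{thm:infiniteFamiliesDegenerationsPn}, a natural candidate is $\bP(1,1,1,1,4) = C_p(\bP^3,\calO(4))$, the weighted projective degeneration $\bP(a^2,b^2,c^2,abc_{2})$ with $a=b=c=1$. On it $L=\calO(4)$. (This does not match; instead take the $n=4$ member of the first family with $(a,b,c)=(1,1,2)$, which is $\bP(1,1,4,2,2)$.) Here $-K = \calO(10)$, so $L = \calO(2)$, $3L = \calO(6)$ and $7L = \calO(14)$. The surface would be $S_0 = (f_6 = g_{14} = 0) \subset \bP(1,1,2,2,4)$.

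The main steps are as follows. (1) Show that a general complete intersection of degrees $6$ and $14$ in $\bP(1,1,2,2,4)$ avoids the singular locus, or meets it only where it is still smooth, so that $S_0$ is smooth. The singular locus is the curve $\bP(2,2,4)$ together with the point $[0:0:0:0:1]$. A degree-$6$ form can vanish on parts of this, and smoothness there must be checked by local orbifold charts. (2) Show that the sections deform: $H^0(X_0, \calO(6))$ and $H^0(X_0, \calO(14))$ lift to $H^0(\cX, 3\cL)$ and $H^0(\cX, 7\cL)$. This follows from cohomology and base change, since $H^1$ of these reflexive sheaves vanishes by Kawamata--Viehweg on the klt Fano $X_0$, so $\calS$ is flat over $T$. (3) Observe that smoothness is open, so the general fiber is a smooth $(3,7)$ complete intersection in $\bP^4$. (4) Show that $S_0$ is not a complete intersection in any projective space. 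For this, compare invariants: a complete intersection in $\bP^N$ with the same $K^2$, $\chi$ and canonical ring data is forced to be of type $(3,7)$ in $\bP^4$, with $K_S$ divisible by $5$ and $\calO(1)$ a $\tfrac15$-root of $K$ whose sections embed $S$. On $S_0$, the root $\calO_{S_0}(2)$ is not very ample, since its sections involve only $x_0,\dots,x_3$ together with weighted monomials. If instead $\calO(1)|_{S_0}$ happens to be a Cartier root of $\calO(2)$, use the missing degree-$1$ generators to show $h^0$ differs, or that $\calO(1)|_{S_0}$ defines a finite map to a quadric that is not an embedding.

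The main obstacle is step (1) combined with choosing $X_0$ so that $S_0$ is genuinely smooth. If $\bP(1,1,2,2,4)$ forces $S_0$ through singular points, switch to an orbifold cone with isolated singularities from \Cref{thm:cone}, where the vertex can be avoided by base-point-freeness of $|3L|$ and $|7L|$ away from it. The non-complete-intersection argument in step (4) then reduces to showing that the polarization $L|_{S_0}$ is not very ample.
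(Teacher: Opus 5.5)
\textbf{There is a genuine gap: step (1) fails for $\bP(1,1,2,2,4)$, and the fallback to isolated-singularity cones does not repair it.}

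\emph{The candidate $\bP(1,1,2,2,4)$.} Its singular locus is not a curve. The locus $\{x_0=x_1=0\}\cong\bP(1,1,2)$ is a surface of transversal $A_1$ singularities, and it contains the $\frac14(1,1,2,2)$ point $p=[0:0:0:0:1]$. Since $\deg x_4=4$, no power of $x_4$ has degree $6$ or $14$, so $p$ is a base point of both $|\calO(6)|$ and $|\calO(14)|$. In the chart $x_4\neq 0$ the linear parts of the two local equations involve only the weight-$2$ coordinates $x_2,x_3$. So for general sections the orbifold cover of $S_0$ is smooth and tangent to the $(x_0,x_1)$-plane, on which $\mu_4$ acts with weights $(1,1)$. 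A general $S_0$ therefore has a $\frac14(1,1)$ point at $p$, and since smoothness is open, no member is smooth. A general $S_0$ also meets the $A_1$ surface in ten further points; both degrees are even, so both local equations are $\mu_2$-invariant and $S_0$ acquires $A_1$ points there.

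\emph{The fallback.} The cones of Theorem \ref{thm:cone} over a singular $X$ are singular along the cone over $\mathrm{Sing}\,X$, so they never have isolated singularities. The klt cones with isolated singularities that are available are those of Theorem \ref{thm:degree-d-hypersurfaces}: $C_p(X_d,\calO(d))\cong\{f_d(x)=0\}\subset\bP(1,1,1,1,1,d)$ with $d\in\{2,3,4\}$ and $L=\calO(1)$. One of $|3L|$, $|7L|$ is free at the vertex only if a power of $y$ has degree $3$ or $7$, i.e.\ $d=3$. In that case a general $D_3=\{y=q_3(x)\}$ projects isomorphically onto the cubic $\{f_3=0\}\subset\bP^4$. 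Then $S_0$ is an honest $(3,7)$ complete intersection, and step (4) fails. For $d=2,4$ the vertex is a base point of both systems and $S_0$ is singular there.

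\textbf{The missing idea.} In your framework a general $D_3\in|3L|$ is itself a degeneration of the cubic threefold, and $S_0$ is a member of the limit of $|\calO(7)|$ on it. So the whole content of the theorem is to produce a degeneration $V$ of the cubic threefold that is not a cubic, yet on which that class has smooth members. Weighted projective threefolds such as $\bP(2,3,3,4)$ or $\bP(2,12,21,49)$ do not do this as they stand. Such a surface must meet their curves of $A_n$ singularities, and it picks up $A_n$ points there.

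The paper supplies exactly this, working in a degeneration of the cubic threefold rather than of $\bP^4$:
\begin{itemize}
\item It takes the partial smoothing $X=X_{14}(0,1)\subset\bP(2,2,3,7,12)$ of $\bP(2,12,21,49)$.
\item It proves $H^2(X,T_X)=0$, by checking that the $\mu_7$-invariant part of the Jacobian algebra at the non-Gorenstein point $q=[0:0:0:1:0]$ is hit by $H^0(X,\calN_{X/\bP(2,2,3,7,12)})$.
\item It uses this to smooth the curve of Gorenstein singularities while keeping $q$. The resulting $V$ has $q$ as its only singularity.
\item A general member of the deformation of $\calO(294)$, which becomes $\calO(7)$ on the cubic, misses $q$ and is smooth by Bertini.
\item Non-complete-intersection is proved by separatedness, not by very-ampleness. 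If $S_0$ were a $(3,7)$ complete intersection, $\calS$ would sit in a family of klt cubic threefolds. This would give a second limit $(\cX_0,S_0)$ of the same family of pairs, different from $(V,S_0)$ because $V$ is not Gorenstein.
\end{itemize}

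Your step (4) as written is not an argument. Even the reduction to non-very-ampleness of $L|_{S_0}$ would need uniqueness of the fifth root of $K_{S_0}$. That holds, for instance, because $\Pic(S_0)$ is torsion free, $S_0$ being diffeomorphic to $\calS_t$.
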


It is conjectured that every smooth limit of a family of smooth hypersurfaces of prime degree $p$ in $\bP^n$ is again a hypersurface for $n \ge 4$ (see Section \ref{sec:FurtherApplications} for more details).  While we do not have a counterexample to this statement, the previous theorem provides a counterexample to the analogous result for complete intersections. 

We also use the weighted projective degenerations of $\bP^n$ to construct degenerations of degree $d$ hypersurfaces in $\bP^n$.  In particular, if $d$ is sufficiently divisible, as $n,d$ tend to infinity, the KSB moduli space $M^{hyp}_d$ generically parametrizing hypersurfaces of degree $d$ contains points corresponding to weighted hypersurfaces on arbitrarily many weighted projective spaces.

\begin{proposition}
    For any $d,n > 0$ such that $n \ge 2$ and $d > n+1$ and Markov triple $(a,b,c)$, if $abc \mid d$, there exist weighted hypersurfaces $V \subset \bP(a^2, b^2, c^2, (abc)_{n-2})$ of weighted degree $dabc$ parametrized by $M^{hyp}_d$.  Furthermore, the general such weighted hypersurface $V$ has only canonical singularities. 
\end{proposition}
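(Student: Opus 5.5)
We start from the $\bQ$-Gorenstein smoothing $\cX \to B$ of $X_n = \bP(a^2,b^2,c^2,(abc)_{n-2})$ to $\bP^n$ provided by \Cref{thm:infiniteFamiliesDegenerationsPn}(1). On the general fibre $\bP^n$ we consider hypersurfaces of degree $d$; on the special fibre these should correspond to weighted hypersurfaces of degree $dabc$. To set this up, let $L$ be the limit of $\calO(1)$ on $X_n$. By the anticanonical normalization $L \sim_\bQ -\frac{1}{n+1}K_{X_n}$, and $-K_{X_n} = \calO(a^2+b^2+c^2+(n-2)abc) = \calO((n+1)abc)$ by the Markov equation. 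Hence $L = \calO_{X_n}(abc)$, and $L^{\otimes d} = \calO_{X_n}(dabc)$. Since $abc \mid d$, the integer $dabc$ is divisible by $a^2b^2c^2$, which is a multiple of every weight (the weights $a^2,b^2,c^2$ are pairwise coprime). So $\calO_{X_n}(dabc)$ is an honest line bundle, and it is even very ample for $d$ large and divisible enough.

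The next step is to deform the divisors along the smoothing. The sheaf $\omega_{\cX/B}^{[-d/(n+1)]}$ is the relative $\bQ$-line bundle restricting to $\calO(d)$ on $\bP^n$ and to $\calO(dabc)$ on $X_n$. It is Cartier at the central fibre because its restriction is locally free there and $\cX$ is $\bQ$-Gorenstein. Kawamata--Viehweg vanishing on the klt Fano fibres gives $H^1(\cX_b, L_b^{\otimes d}) = 0$. By cohomology and base change, $h^0$ is therefore constant, so every section on $X_n$ lifts. A general weighted hypersurface $V \subset X_n$ of degree $dabc$ thus extends to a family $\calV \subset \cX$ whose general fibre is a degree $d$ hypersurface in $\bP^n$.

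It remains to check that $V$ is a KSB-stable limit, so that it is parametrized by $M^{hyp}_d$. Adjunction gives $K_V = (K_{X_n}+V)|_V = \calO_V((d-n-1)abc)$, which is ample because $d>n+1$. The canonical and ample properties also hold in the family, since $K_{\calV/B}$ is $\bQ$-Cartier by adjunction from $\cX$. The remaining, and main, point is that a general $V$ has canonical singularities. Since $\calO(dabc)$ is basepoint free (in fact very ample), Bertini lets $V$ avoid no strata but meet them transversally. Locally, $V$ is then a general hyperplane-type section through a quotient singularity, or a transversal slice of the singular strata of $X_n$. I would first compute the singular strata of $X_n$. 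These are the loci where only the coordinates of weight divisible by some prime $p$ are nonzero, for example the $\bP^{n-3}$ spanned by the $abc$-coordinates. The singularities transverse to these strata are cyclic quotient singularities, which are already canonical when $X_n$ is canonical. For general $V$ the singularities are then étale-locally a product of a smooth germ with the transversal singularity of $X_n$ along each stratum that $V$ meets. If $V$ misses isolated bad points (possible when the stratum has dimension less than $n-1$), the required condition becomes canonicity of $X_n$ in codimension $\le n-1$. I expect the obstacle to be exactly this: the quotient singularities of $X_n$ along the positive-dimensional strata, of type $\frac{1}{r}(\dots)$, must be checked to be canonical by the Reid--Tai criterion. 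The extra $abc$ weights repeated $n-2$ times make the age sum large, which I expect gives age $\ge 1$. If this only holds for large $n$, I would enlarge $n$ or appeal to the flexibility in $d$ as needed.
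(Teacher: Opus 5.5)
The parts of your proposal before the singularity analysis match the paper. By the Markov equation $-K_{X_n}=\calO((n+1)abc)$, so the limit of $\calO(1)$ is $\calO(abc)$. The sheaf $\calO(dabc)$ is Cartier because $a^2b^2c^2\mid dabc$. Sections extend by vanishing plus cohomology and base change. Finally $K_V=\calO_V((d-n-1)abc)$ is ample.

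Proving canonicity for a \emph{general} member via the transversal types of the strata it meets is a legitimate alternative route. The paper instead exhibits the Fermat member $x_0^{dabc/a^2}+x_1^{dabc/b^2}+x_2^{dabc/c^2}+x_3^{d}+\dots+x_n^{d}$ and reads off its singularities.

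The gap is that the step you yourself call the main point is never carried out, and the heuristic and fallback you offer for it are wrong. The repeated weights $abc$ do not enter any transversal singularity. Along each positive-dimensional stratum they are $\equiv 0$ modulo the order of the stabilizer, so they are stratum directions and the transversal ages do not depend on $n$. You also cannot ``enlarge $n$ or use the flexibility in $d$'': the statement is for every fixed $n\ge 2$ and every admissible $d$.

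\textbf{The missing check.}
\begin{itemize}
\item For $n=2$, $X_2=\bP(a^2,b^2,c^2)$ has isolated singularities, and a general $V$ avoids them, so $V$ is smooth.
\item For $n\ge 3$, a general $V$ misses the vertices, since the system contains every $x_i^{dabc/a_i}$. This is essential, because the vertices need not be canonical. For example, $X_3=\bP(1,4,25,10)$ (Markov triple $(1,2,5)$) has the point $[0:0:1:0]$ of type $\frac{1}{25}(1,4,10)$, whose generator has age $\frac35<1$.
\item The strata $V$ does meet are $\{x_1=x_2=0\}$, $\{x_0=x_2=0\}$ and $\{x_0=x_1=0\}$. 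Their transversal types are $\frac1a(b^2,c^2)$, $\frac1b(a^2,c^2)$ and $\frac1c(a^2,b^2)$.
\item For $n\ge 4$, $V$ also meets $\{x_0=x_1=x_2=0\}\cong\bP^{n-3}$, with transversal type $\frac{1}{abc}(a^2,b^2,c^2)$. For $n=3$ this stratum is a point, which $V$ avoids.
\end{itemize}

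\textbf{Canonicity comes from the Markov equation, not from $n$.}
\begin{itemize}
\item We have $b^2+c^2\equiv 3abc\equiv 0\pmod a$ and $\gcd(a,b)=1$. Hence $\frac1a(b^2,c^2)=\frac1a(1,-1)=A_{a-1}$, and similarly for $b$ and $c$.
\item The identity $a^2+b^2+c^2=3abc$ makes $\frac{1}{abc}(a^2,b^2,c^2)$ Gorenstein.
\item This action is small: fixing two coordinates forces $k$ to be divisible by two of $ab,bc,ca$, hence by $abc$.
\item So every nontrivial element has positive integral age, and Reid--Tai gives canonical.
\end{itemize}

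Your description of the singularities of $V$ as locally a product with a smooth germ is correct, but it should be justified. One way is quasismoothness of a general $V$ (Bertini on the affine cone). The local $\mu_r$-invariant equation then has nonzero differential, which necessarily lies in the invariant directions. So $V$ inherits the transversal type of each stratum it meets.
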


\subsection{Applications to K-stability and moduli of Fano varieties}
As in the canonically polarized case, we can use the weighted projective degenerations of $\bP^n$ to construct K-stable degenerations of degree $d$ hypersurfaces in $\bP^n$, i.e. points parametrized by the boundary of the component of the K-moduli space $M^{hyp}_d$ generically parametrizing degree $d$ Fano hypersurfaces in $\bP^n$.  

\begin{proposition}
    For any $n \ge 2$ and Markov triple $(a,b,c)$ ordered so $a \le b \le c$, if $d \ge 2$ is such that $abc \mid d$, and $(1 - \frac{a}{bc})n + 1 < d <n+1$, there exist K-polystable Fano weighted hypersurfaces $V \subset \bP(a^2, b^2, c^2, (abc)_{n-2})$ of weighted degree $dabc$ parametrized by the K-moduli space $M^{hyp}_d$.  Furthermore, the general such weighted hypersurface $V$ has only canonical singularities. 
\end{proposition}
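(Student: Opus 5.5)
The plan is to realize $V$ as a general (or Fermat) member of the linear system that deforms to degree $d$ hypersurfaces in the smoothing of Theorem~\ref{thm:infiniteFamiliesDegenerationsPn}. Then I would prove K-polystability by a Kähler--Einstein criterion for Fermat-type orbifolds, and canonicity by a local computation at the quotient singularities.

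\emph{Setup.} Let $X=\bP(a^2,b^2,c^2,(abc)_{n-2})$. Since $a^2+b^2+c^2=3abc$, we have $-K_X=\calO_X((n+1)abc)$. Hence the limit $L$ of $\calO(1)$ in the smoothing $\cX\to B$ of Theorem~\ref{thm:infiniteFamiliesDegenerationsPn} is $\calO_X(abc)$, and $\calO(d)$ specializes to $\calO_X(dabc)$.
\begin{itemize}
\item Since $abc\mid d$, write $d=k\,abc$. Then $dabc=k\,a^2b^2c^2$ is divisible by every weight, so $\calO_X(dabc)$ is Cartier and basepoint free.
\item Kawamata--Viehweg vanishing on the fibers gives $H^1=0$ for this sheaf. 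By cohomology and base change, every section on $X$ extends to $\cX$.
\item Consequently every $V\in|\calO_X(dabc)|$ is the special fiber of a flat family whose general fiber is a degree $d$ hypersurface in $\bP^n$.
\item Since $d<n+1$, adjunction gives $-K_V=\calO_V((n+1-d)abc)$, which is ample, so $V$ is Fano.
\end{itemize}

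\emph{K-polystability.} Take the Fermat member
\[
V:\ x^{kb^2c^2}+y^{ka^2c^2}+z^{ka^2b^2}+\textstyle\sum_{i} w_i^{d}=0 .
\]
It is quasi-smooth, and it is the Fermat orbifold hypersurface attached to these $n+1$ exponents.
\begin{itemize}
\item I would apply the Ghigi--Koll\'ar (Boyer--Galicki--Koll\'ar) criterion: such a Fermat orbifold is Kähler--Einstein, hence K-polystable, provided $\sum_j 1/e_j>1$ and $\sum_j 1/e_j-1$ is bounded by an explicit multiple of $\min_j 1/e_j$.
\item Here $\sum_j 1/e_j=3/d+(n-2)/d=(n+1)/d$, which exceeds $1$ because $d<n+1$.
\item With $a\le b\le c$, the minimum is $1/(kb^2c^2)=a/(dbc)$.
\item The required inequality should therefore reduce to $n+1-d<n\cdot\frac{a}{bc}$, i.e. $d>(1-\frac{a}{bc})n+1$, which is exactly the hypothesis.
\end{itemize}
This matching of constants is the step I expect to be the main obstacle. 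The precise constant in the criterion, the version with $n/(n-1)$ factors, and the orbifold versus log pair conventions must be checked carefully. If the criterion as stated is off by a factor, I would instead bound $\delta(V)$ or $\alpha_G(V)$ directly. For this I would use the large symmetry group $\mu_{e_0}\times\cdots\times\mu_{e_n}\rtimes S_{n-2}$ and Tian's criterion $\alpha_G>\dim V/(\dim V+1)$.

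\emph{Membership in $M^{hyp}_d$.} The family constructed above is $\bQ$-Gorenstein with K-polystable central fiber $V$. By openness of K-semistability, the general fiber (a degree $d$ hypersurface) is K-semistable. By properness and separatedness of K-moduli, $V$ is the K-polystable point representing the limit in the closure of the hypersurface component, so $V$ is parametrized by $M^{hyp}_d$.

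\emph{Canonical singularities of the general $V$.} The general $V$ is quasi-smooth by basepoint-freeness and Bertini, so its singularities are cyclic quotient singularities. They come from the strata of $X$ met by $V$, namely the points and curves where only $x,y,z$, or pairs among them, are nonzero. I would check canonicity at each stratum with the Reid--Tai criterion, using the transversal slice of $V$ there. I expect this to follow from the same computation showing that the relevant strata of $X$ contribute age $\ge1$.
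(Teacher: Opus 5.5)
Your setup, your choice of the Fermat member and your argument for membership in $M^{hyp}_d$ match the paper. Your K-polystability step is the paper's reduction in analytic form, and it works provided you use the sharp criterion.

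\textbf{How the two K-polystability arguments compare.}
The paper maps the Fermat $V$ onto the hyperplane $\{\sum_i y_i=0\}\cong\bP^{n-1}$ by $y_i=x_i^{e_i}$. It then uses Liu--Zhu's theorem on Galois covers to replace $V$ by the pair $(\bP^{n-1},\sum_i(1-\frac{1}{e_i})H_i)$. Finally it applies Fujita's criterion for hyperplane arrangements, which amounts to $\sum_i\frac{1}{e_i}-1<n\min_i\frac{1}{e_i}$. With your (correct) values $\sum_i\frac{1}{e_i}=\frac{n+1}{d}$ and $\min_i\frac{1}{e_i}=\frac{a}{dbc}$, this is exactly $d>(1-\frac{a}{bc})n+1$.

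Ghigi--Koll\'ar's theorem is the K\"ahler--Einstein counterpart of the same argument. They produce an orbifold KE metric on that same hyperplane-arrangement pair under that same inequality, and the metric pulls back along the same Galois cover. So the sufficiency you need holds for non-coprime exponents, with constant exactly $n$. To finish along your route you need two further points:
\begin{itemize}
\item Berman's theorem that a $\bQ$-Fano variety carrying a KE metric is K-polystable.
\item The observation that $V$ is well-formed (it meets the codimension-two strata of $X$ in codimension two), so the pulled-back orbifold metric has no branch divisor.
\end{itemize}
The paper's algebraic route avoids the analytic input and this orbifold bookkeeping; yours gives a KE metric as a by-product.

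\textbf{Correction 1: the sharp constant is essential.}
The Boyer--Galicki--Koll\'ar constant $\frac{n}{n-1}$, and likewise your fallback through Tian's criterion, cannot reach the stated range. The divisor $D=V\cap\{x_0=0\}$ is invariant under $\mu_{e_0}\times\cdots\times\mu_{e_n}\rtimes S_{n-2}$, and $\frac{(n+1-d)bc}{a}D\sim_{\bQ}-K_V$. Since a reduced divisor has log canonical threshold at most $1$, this gives $\alpha_G(V)\le\frac{a}{(n+1-d)bc}$. For $(a,b,c)=(1,2,5)$ the bound is at most $\frac{1}{10}<\frac{n-1}{n}$. Yet $(n,d)=(20,20)$ satisfies all the hypotheses, since $19<20<21$ and $10\mid 20$.

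\textbf{Correction 2: the singular strata met by $V$.}
Your description of which strata the general $V$ meets is wrong.
\begin{itemize}
\item The coordinate lines among $x,y,z$ are generically smooth on $X$, since $\gcd(a^2,b^2)=1$ and similarly for the other pairs.
\item The coordinate points are avoided by a general member of the base-point-free system $|\calO_X(dabc)|$.
\item The strata that $V$ actually meets, transversally, are $\{x_1=x_2=0\}$, $\{x_0=x_2=0\}$ and $\{x_0=x_1=0\}$. These have codimension two, with transversal types $A_{a-1}$, $A_{b-1}$, $A_{c-1}$ respectively (for instance $b^2+c^2\equiv 0 \bmod a$).
\item The remaining stratum is $\{x_0=x_1=x_2=0\}$, with transversal type $\frac{1}{abc}(a^2,b^2,c^2)$. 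This is Gorenstein because $a^2+b^2+c^2=3abc$, and has no quasi-reflections.
\end{itemize}
Reid--Tai on these transversal slices then gives canonicity, as in the paper.
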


As a last application to K-stability, we prove that infinitely many of the weighted projective spaces in this paper are special degenerations of $\bP^n$. In what follows, denote by $F_n$ the Fibonacci sequence defined by $F_0 = 0$, $F_1 = 1$, and $F_{n}= F_{n-1}+F_{n-2}$.  

\begin{theorem} 
    The weighted projective spaces
    $\bP(1,F_{2n-1}^2,F_{2n+1}^2,(F_{2n-1}F_{2n+1})_k)$,
    where $n \ge 1$, $k \ge 0$,
    are special degenerations of $\bP^{k+2}$.
\end{theorem}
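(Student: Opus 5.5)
The plan is to realize each $\bP(1,F_{2n-1}^2,F_{2n+1}^2,(F_{2n-1}F_{2n+1})_k)$ as the central fiber of a test configuration of $\bP^{k+2}$ induced by a $\bG_m$-action, i.e. via a one-parameter degeneration with a $\bG_m$-equivariant total space. We begin with the base case $k=0$. The triples $(1,F_{2n-1},F_{2n+1})$ are Markov triples: by Cassini-type identities one checks
\[
1+F_{2n-1}^2+F_{2n+1}^2=3F_{2n-1}F_{2n+1},
\]
and these form the branch of the Markov tree containing $(1,1,1),(1,1,2),(1,2,5),(1,5,13),\dots$. For $\bP^2$, each $\bP(1,b^2,c^2)$ on this branch is a toric degeneration of $\bP^2$ obtained from a mutation of the fan polygon. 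Equivalently, one can write an explicit binomial degeneration: $\bP(1,b^2,c^2)$ is cut out in a larger weighted projective space, or embedded by $\calO(bc)$ as a toric variety. The polytope of $(\bP(1,b^2,c^2),\calO(3bc))$ is a lattice triangle of the same Ehrhart polynomial as $3\cdot$ the standard simplex, related to it by a sequence of combinatorial mutations. By Ilten's result, a mutation gives a $\bQ$-Gorenstein toric degeneration realized by a $\bG_m$-equivariant family, i.e. a special test configuration. Composing degenerations is not itself a test configuration, so instead I would produce the degeneration directly.

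Present $\bP(1,b^2,c^2)$ as the Proj of the associated graded ring of $R=\bigoplus_m H^0(\bP^2,\calO(m))$ with respect to a monomial filtration induced by a quasi-monomial valuation $v$ on $\bC(\bP^2)$. Concretely, take the toric valuation corresponding to the ray of the fan of $\bP^2$'s mutation. The center of $v$ is an orbifold point whose weights reproduce $(1,b^2,c^2)$. The key check is that $\on{gr}_v R$ is finitely generated and isomorphic to the section ring of $(\bP(1,b^2,c^2),\calO(3bc))$. This makes $v$ a divisorial valuation inducing a special test configuration. Its central fiber is normal (a klt toric variety), hence the degeneration is special.

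Then we pass from $k$ to the cone construction of \Cref{thm:cone}. There $\bP(1,b^2,c^2,(bc)_k)$ arises by iterating orbifold cones with $L$ the limit of $\calO(1)$, which is $\calO(bc)$. The cone construction is itself a degeneration to the normal cone: $C_p(X,L)$ is the degeneration of $C_p(\bP^n,\calO(1))=\bP^{n+1}$ induced by the filtration on $\bigoplus_m\bigoplus_r H^0(\bP^n,\calO(r))x_{n+1}^{m-r}$ obtained by applying $v$'s filtration to each summand. This filtration is $\bG_m$-equivariant and extends $v$ trivially in $x_{n+1}$. Its associated graded ring is $\bigoplus_m\bigoplus_r \on{gr}_v H^0(\calO(r))\,x_{n+1}^{m-r}$, which is the section ring of $C_p(X,L)$. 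So we get a special test configuration of $\bP^{n+1}$ with central fiber $C_p(X,L)$. This fiber is normal and klt by \Cref{thm:cone}, and by \Cref{Lemma:relativeProjectiveCone} it is $\bP(1,b^2,c^2,bc)$. Inducting on $k$, using that the filtration stays finitely generated and the graded pieces match, gives all $k$.

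The main obstacle is the base step: verifying that the filtration on $H^0(\bP^2,\calO(m))$ has associated graded equal to the toric ring of $\bP(1,F_{2n-1}^2,F_{2n+1}^2)$ in every degree, i.e. that dimensions agree, rather than merely obtaining a flat degeneration in some non-$\bG_m$-equivariant family. I would handle this by choosing coordinates so that the degeneration is a toric degeneration of a (possibly non-toric-invariant) embedding of $\bP^2$. Then I would argue inductively along the Fibonacci branch via mutations, checking that each mutation is realized by a $\bG_m$-action on the previous total space. The restriction to the branch with $a=1$ is presumably exactly what makes this possible.
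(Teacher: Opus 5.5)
\textbf{The base case is not established.} Your reduction from general $k$ to $k=0$ matches the paper's. Its proof applies \Cref{cor:GmEquivariantDegenerations}: the relative cone over a special test configuration, with $\bG_m$ acting trivially on $x_{n+1}$. Your summand-wise filtration, extended trivially in $x_{n+1}$, is the same construction. Two small corrections there:
\begin{itemize}
\item The central fiber is identified with $\bP(1,b^2,c^2,(bc)_k)$ by \Cref{Ex:ProjectiveConeWPS}, not by \Cref{Lemma:relativeProjectiveCone}.
\item Specialness requires the central fiber to be klt and the polarization to be a rational multiple of $-K_{\calX/\bA^1}$; normality of the central fiber is not enough.
\end{itemize}

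The gap is the base case $k=0$, namely that $\bP(1,F_{2n-1}^2,F_{2n+1}^2)$ is a \emph{special} degeneration of $\bP^2$. The paper does not prove this; it cites \cite[Theorem A.3]{ABB+}. Your sketch does not prove it either.

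First, the valuation you propose cannot work. A valuation on $\bC(\bP^2)$ that is monomial for the torus of $\bP^2$ filters $R=\bigoplus_m H^0(\bP^2,\calO(m))$ by spans of monomials. Hence $\on{gr}_vR\cong R$, and the test configuration is a product one with central fiber $\bP^2$. Also, $\bP^2$ has no orbifold point for $v$ to be centered at. The valuations that actually produce these degenerations are not toric on $\bP^2$. Already for $\bP(1,1,4)$ one needs $\mathrm{ord}_Q$ for a smooth conic $Q$ (degeneration to the cone over $Q$), whose center is a curve.

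Second, for $n\ge 2$ the explicit degenerations available do not give test configurations of $\bP^2$. The Hacking--Prokhorov family $x_0x_1=x_2^c+tx_3^{c'}$ and Ilten's mutation families are test configurations of the \emph{preceding} surface in the chain $\bP^2\rightsquigarrow\bP(1,1,4)\rightsquigarrow\bP(1,4,25)\rightsquigarrow\cdots$. As you note, these do not compose to a test configuration. Producing one $\bG_m$-equivariant degeneration of $\bP^2$ with central fiber $\bP(1,F_{2n-1}^2,F_{2n+1}^2)$ needs a real additional argument. One route:
\begin{itemize}
\item show each successive degeneration can be chosen equivariant for the torus already acting on the previous central fiber;
\item replace the one-parameter subgroup $\xi$ by a perturbation $\xi+\epsilon\eta$;
\item check that the resulting total space is $\bQ$-Gorenstein with klt central fiber.
\end{itemize}
Your last paragraph only names this as something to check; it is exactly the content of the cited theorem.

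A minor aside: the anticanonical polygon of $\bP(1,b^2,c^2)$ is not a lattice polygon in general. For example, $-K=\calO(6)$ on $\bP(1,1,4)$ is not Cartier; only the Ehrhart quasi-polynomials agree with that of $3$ times the standard simplex.

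To complete the proof, either supply this equivariant-composition argument for the Fibonacci branch or cite \cite[Theorem A.3]{ABB+} for $k=0$ and then run your cone induction.
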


\begin{acknowledgement}
The authors thank Dori Bejleri, Andreas H\"oring, Junyao Peng, Andrea Petracci, and Chenyang Xu for helpful comments and discussions. K.D. was partially supported by NSF grant DMS-2302163. J.L. was supported by the Simons Foundation grant SFI-MPS-MOV-00006719-02. S.T. was supported by ANID, Fondecyt Postdoctoral Grant 3240013.
\end{acknowledgement}

\section{Preliminaries}

\subsection{Weighted projective space and cyclic quotient singularities.}
\label{subsec:WPSandCQS}
Throughout the paper, we work over $\mathbb{C}$.  We begin by summarizing definitions and conventions on weighted projective space for the convenience of the reader (more details can be found in \cite{Fletcher, Dolgachev, BeltramettiRobbiano}). A {\it weighted projective space} $\mathbb{P}(a_{0}, \dots, a_{n})$ {\it with weights} $a_{0}, \dots, a_{n} \in\mathbb{Z}_{>0}$ is the $n$-dimensional projective variety $\Proj\:\C[x_{0}, \dots, x_{n}]$, where the polynomial ring $\mathbb{C}[x_{0}, \dots, x_{n}]$ is graded by $\deg(x_{i}) = a_{i}$. 
We can equivalently define $\mathbb{P}(a_{0}, \dots, a_{n})$ as the quotient of $(\mathbb{A}^{n+1} - \{0\})/\mathbb{G}_{m}$, where $\mathbb{G}_{m} \cong \mathbb{C}^{\ast}$ is the one-dimensional algebraic torus acting by
\begin{center}
$(\lambda, (x_{0}, \dots, x_{n})) \mapsto (\lambda^{a_{0}}x_{0}, \dots, \lambda^{a_{n}}x_{n})$.
\end{center}

We can cover $\mathbb{P}(a_{0}, \dots, a_{n})$ by affine charts $U_i =(x_{i} \neq 0)$, where each chart $U_{i}$ is isomorphic to the quotient of $\mathbb{A}^{n}$ by the cyclic group $\mathbb{Z}_{a_i} =\mathbb{Z}/a_i\mathbb{Z}$,
with action given by the map
\begin{center}
$(\zeta_{a_i}, (z_{0}, \dots, \hat{z_i}, \dots, z_{n})) \mapsto (\zeta_{a_i}^{a_{0}}z_{0}, \dots, \hat{z_i}, \dots \zeta_{a_i}^{a_{n}}z_{n})$,
\end{center}
where $\zeta_{a_i}$ is a primitive $a_i$-th root of unity and $z_{j}=x_{j}/x_{i}^{a_j/a_{i}}$.

We say that a weighted projective space is {\it well-formed} if $\gcd(a_{0}, \dots, \hat{a_{i}}, \dots, a_{n}) = 1$ for all $0 \leq i \leq n$. Since any weighted projective space is isomorphic to a well-formed weighted projective space (see, e.g. \cite[Corollary 1.2.9]{Fletcher}), we will always assume that our weighted projective spaces are well-formed. 

Every weighted projective space other than $\bP^n=\mathbb{P}(1, \dots, 1)$ is singular, and the types of singularities that appear are cyclic quotient singularities, which in particular are klt. A description of the singular locus of $\mathbb{P}(a_{0}, \dots, a_{n})$ can be given in terms of the cyclic quotient singularities that appear.  

Recall that, given $m > 0$ and $b_i \in \bZ$, the {\it cyclic quotient singularity of type } ${\frac{1}{m}(b_{1}, \dots, b_{n})}$ is defined as the quotient $\mathbb{A}^{n}/\mathbb{Z}_{m}$ by the action $(\zeta_{m}, (z_{1}, \dots, z_{n})) \mapsto (\zeta_{m}^{a_{1}}z_{1}, \dots ,\zeta_{m}^{a_{n}}z_{n})$.  Writing $\bA^n = \Spec \ \bC[y_1, \dots, y_n]$, the quotient variety $\bA^n/\bZ_m$ is given by the spectrum of the ring of invariants, $\Spec \ \bC[y_1, \dots ,y_n]^{\bZ_m}$.

In a weighted projective space $\mathbb{P}(a_{0}, \dots, a_{n})$, the origin of each affine chart $U_i$ for $a_{i} > 1$ is a cyclic quotient singularity of type ${\frac{1}{a_{i}}(a_{0}, \dots,  \hat{a}_{i}, \dots, a_{n})}$. 
More generally, for any collection $I$ of weights $\{ a_i \}_{i \in I}$ such that $\gcd(\{a_i\}_{i \in I}) = d_I> 1$, there is a
singularity of type $\frac{1}{d_{I}}(\{a_j\}_{j \in \{0,1,\dots,n\}-I})$ at the generic point of the stratum $\{ x_j = 0 \mid j \in \{ 0,1,\dots,n\} - I\}$. This means that, near the generic point of this stratum, the singularity is of the form $\frac{1}{d_{I}}(\{a_j\}_{j \in \{0,1,\dots,n\}-I}) \times \bA^{|I|-1}$. 

The Weil and Cartier divisors on a weighted projective space are well understood.  The following can be found in \cite{BeltramettiRobbiano}:

\begin{theorem}{\cite[Theorem 4B.7, Theorem 7.1]{BeltramettiRobbiano}}\label{thm:divisorsonwps}
    Let $X = \bP(a_0, \dots, a_n)$ be a well-formed weighted projective space.  We denote by $\calO_X(d)$ the coherent sheaf associated to the module of sections of degree $d$ of $\bC[x_0, \dots, x_n]$, where $x_i$ has degree $a_i$. Then, 
        \begin{enumerate}
            \item For any $d \in \bZ$, $\calO_X(d)$ is reflexive and Cohen-Macaulay.
            \item $\Cl(X) \cong \bZ$, generated by $\calO_X(1)$, and 
            \item $\Pic(X) \cong \bZ$, generated by $\calO_X(m)$ where $m = \mathrm{lcm} \ (a_0, \dots, a_n)$.
        \end{enumerate}

    For the isomorphism in (2), the group law on $\Cl(X)$ is defined by the double dual of the tensor product $(\calO(a) \otimes \calO(b))^{**} \cong \calO(a+b)$.  It need not hold that \[ \calO(a) \otimes \calO(b) \to \calO(a+b) \] is an isomorphism, although this is true if either $\calO(a)$ or $\calO(b)$ is Cartier. 
\end{theorem}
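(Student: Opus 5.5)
The statement just above is a cited result, \Cref{thm:divisorsonwps}, quoted from Beltrametti--Robbiano. The natural proof is the standard toric/graded-ring argument, and I would organize it around the quotient description $X = (\bA^{n+1}\setminus\{0\})/\G_m$ with $S = \C[x_0,\dots,x_n]$. Well-formedness is essential throughout. It guarantees that the complement of the locus where $\G_m$ acts freely modulo finite stabilizers has codimension $\ge 2$ in a suitable sense. More precisely, it guarantees that no weight divisor $\{x_i=0\}$ has a nontrivial generic stabilizer. That is exactly what makes $S_{(d)}$-graded pieces correspond to reflexive rank one sheaves.

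For (1), I would work chart by chart. On $U_i$, $\calO_X(d)|_{U_i}$ is the sheaf associated to the degree-$d$ part of $S[x_i^{-1}]$, which is
\[ \bigl(\C[x_0,\dots,\hat x_i,\dots,x_n]\otimes \C[x_i^{\pm1}]\bigr)_d \cong \C[z_j]^{\chi} ,\]
where $\C[z_j]^{\chi}$ is the isotypic component of $\C[z_j]$ for the character $\chi = \zeta\mapsto\zeta^{-d}$ of $\Z_{a_i}$. Isotypic components of a polynomial ring under a finite group are direct summands of a Cohen--Macaulay module over the invariant ring. By Hochster--Eagon, they are therefore maximal Cohen--Macaulay modules over $\C[z_j]^{\Z_{a_i}}$, hence $S_2$. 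A torsion-free $S_2$ module on a normal variety is reflexive, which gives (1).

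For (2), I would compute $\Cl(X)$ from the torus-invariant divisors $D_i=\{x_i=0\}$. This can be done through toric geometry, or directly via the exact sequence
\[ 0\to \Z \to \Z^{n+1}\to \Cl(X)\to 0 .\]
I would then show that $D_i$ corresponds to $\calO_X(a_i)$ by exhibiting $x_i$ as a section. Well-formedness is needed to check that $x_i$ vanishes to order exactly one along $D_i$. That the $\calO(d)$ are pairwise non-isomorphic follows from comparing spaces of global sections of their twists, or from degrees. Since the $\gcd$ of the $a_i$ is 1, these give a surjection $\Z\to\Cl(X)$, $d\mapsto\calO_X(d)$, which is then injective. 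The group law by double dual is automatic: the multiplication map $\calO(a)\otimes\calO(b)\to\calO(a+b)$ is an isomorphism on the open set where the $\G_m$-action is free. By well-formedness that set has codimension-$2$ complement, so reflexivity from (1) identifies $(\calO(a)\otimes\calO(b))^{**}$ with $\calO(a+b)$.

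For (3), $\calO(d)$ is Cartier iff it is locally free at each chart origin. That happens iff the isotypic component above is free of rank one over the invariants, i.e.\ generated by a monomial. This in turn happens iff the character $\chi$ is trivial on the stabilizer, i.e.\ $a_i \mid d$ for every $i$, which is the same as $\mathrm{lcm}(a_0,\dots,a_n)\mid d$. If $\calO(a)$ is Cartier, tensoring is exact and the multiplication map is locally given by multiplication by a unit monomial, so it is an isomorphism. I expect the main obstacle to be the careful codimension-two bookkeeping in (2): proving that $x_i$ has order one along $D_i$ and that the free locus has small complement is precisely where well-formedness must be invoked. I would handle it by checking the stabilizer of the generic point of $D_i$, which is $\Z_{\gcd(a_j:j\ne i)}$ and is therefore trivial.
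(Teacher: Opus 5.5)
The paper does not prove this statement; it only cites Beltrametti--Robbiano. Your outline is the standard argument for it, and it is correct in substance: you describe $\calO_X(d)$ on each chart $U_i=\bA^n/\mu_{a_i}$ as an isotypic component $\C[z_j]^{\chi}$, get maximal Cohen--Macaulay as a direct summand of the polynomial ring over the invariants, get reflexivity from $S_2$ plus torsion-freeness, and compute the class group through the torus-invariant divisors $D_i$. A few points need repair.

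First, the displayed sequence $0\to\Z\to\Z^{n+1}\to\Cl(X)\to 0$ is wrong. Any such sequence would force $\Cl(X)$ to have rank $n$. The sequence $0\to\Z\to\Z^{n+1}\to N\to 0$, with $1\mapsto(a_0,\dots,a_n)$, presents the lattice $N$ of the fan. The class group sequence is instead $0\to M\cong\Z^n\to\Z^{n+1}\to\Cl(X)\to 0$, with $e_i\mapsto a_i$.

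Your argument only uses surjectivity of $\bigoplus_i \Z D_i\to\Cl(X)$. This holds because $X\setminus\bigcup_i D_i$ is a torus and so has trivial class group; say this explicitly. It is this fact, not $\gcd(a_0,\dots,a_n)=1$, that makes $d\mapsto[\calO_X(d)]$ surjective. Injectivity then comes from your $H^0$ comparison.

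Second, in (3) the equivalence ``$\C[z_j]^{\chi}$ is free of rank one iff $\chi$ is trivial'' fails for actions containing pseudoreflections. For example, for $\mu_2$ acting on $\bA^1$ by $z\mapsto -z$, the anti-invariants $z\,\C[z^2]$ form a free module. So you must invoke well-formedness here as well; it is the same input you used for the generic stabilizer of $D_i$.

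Concretely, fix $l\ne i$. All weights other than $a_l$ have $\gcd$ equal to $1$. Hence nonnegative combinations of the $a_j$ with $j\ne i,l$ hit every residue mod $a_i$, so every character occurs on monomials not involving $z_l$. A monomial generator of $\C[z_j]^{\chi}$ (which exists by graded Nakayama) must divide all of these, so it involves no $z_l$. It is therefore $1$, forcing $\chi$ to be trivial.

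Finally, the reduction ``Cartier iff locally free at the chart origins'' needs a sentence. The non-locally-free locus of the torus-equivariant sheaf $\calO_X(d)$ is closed and torus-invariant. If it is nonempty, it therefore contains a torus-fixed point, i.e.\ one of the coordinate points. With these additions your proof is complete.
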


Intersections of divisors on weighted projective space are also well understood. 

\begin{theorem}\label{thm:intersections-on-wps}
    Let $D_1, \dots, D_n$ be divisors on $X = \bP(a_0, \dots, a_n)$, a well-formed $n$-dimensional weighted projective space, and suppose that $\deg(D_{i}) = d_{i}$ for $i = 1, \dots, n$.  Then, 
    \[ D_1 \cdot \ldots \cdot D_n = \frac{d_1 \dots d_n}{a_0 \dots a_n}.\]
    In particular, if $D$ is a divisor on $X$ and $\deg(D) = d$, then
    \[ D^n = \frac{d^n}{a_0 \dots a_n}.\]
\end{theorem}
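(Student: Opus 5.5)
The plan is to reduce to the case of $n$ divisors of the form $\calO_X(d_i)$ and compute with a weighted version of Bézout. Since $\Cl(X)\cong\bZ$ by \Cref{thm:divisorsonwps}, each $D_i$ is linearly equivalent to a multiple of the generator $\calO_X(1)$, and $\deg(D_i)=d_i$ means $D_i\sim \calO_X(d_i)$. The intersection product on $\bQ$-Cartier divisors is multilinear, and $\Pic(X)\otimes\bQ=\Cl(X)\otimes\bQ$ because $\calO_X(m)$ is Cartier with $m=\mathrm{lcm}(a_0,\dots,a_n)$. Hence
\[ D_1\cdots D_n=\frac{d_1\cdots d_n}{m^n}\,\bigl(\calO_X(m)\bigr)^n . \]
It therefore suffices to prove $\calO_X(m)^n = m^n/(a_0\cdots a_n)$, i.e.\ $H^n=1/(a_0\cdots a_n)$ for $H=\calO_X(1)$. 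The second formula in the statement is the special case $D_1=\dots=D_n=D$.

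To compute $\calO_X(m)^n$, I would use the asymptotic Riemann--Roch characterization of the top self-intersection of an ample Cartier divisor:
\[ \calO_X(m)^n=\lim_{k\to\infty}\frac{n!\,h^0(X,\calO_X(km))}{k^n}. \]
Here $h^0(X,\calO_X(km))$ is the dimension of the degree-$km$ part of $\bC[x_0,\dots,x_n]$. This uses that $\calO_X(d)$ is the sheaf associated to the degree-$d$ module, and that well-formedness makes $H^0(X,\calO_X(d))=S_d$; this is the standard fact from \cite{BeltramettiRobbiano,Dolgachev} accompanying \Cref{thm:divisorsonwps}. The number of monomials of weighted degree $N$ equals the number of lattice points in the simplex $\{\sum a_i t_i=N,\ t_i\ge 0\}$. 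This is a quasi-polynomial in $N$ with leading term $N^n/(n!\,a_0\cdots a_n)$, which one sees from the Hilbert series $\prod_i(1-t^{a_i})^{-1}$: its pole of order $n+1$ at $t=1$ has leading coefficient $1/\prod a_i$. Substituting $N=km$ gives the limit $m^n/(a_0\cdots a_n)$, as required.

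As an alternative check, one could take general sections of $\calO_X(d_i)$ with $d_i$ divisible by $m$. By Bertini these meet transversally in finitely many points, all lying in the open torus, where $X$ is locally $(\bC^*)^{n+1}/\bG_m$. Counting these points upstairs on $\bA^{n+1}\setminus 0$ by the ordinary Bézout theorem gives $d_1\cdots d_n$ orbits of the $\bG_m$-action, weighted by the stabilizer structure, and the $\mu_{a_i}$ bookkeeping again produces the factor $1/(a_0\cdots a_n)$.

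The main obstacle is justifying the identification $h^0(X,\calO_X(N))=\dim S_N$ together with the leading coefficient of the Ehrhart quasi-polynomial, since the periodic lower-order terms must be shown not to affect the leading term. Restricting to $N=km$ with $m$ a common multiple of all the weights removes this difficulty: along that arithmetic progression the count is an honest polynomial in $k$ with leading coefficient $m^n/(n!\prod a_i)$.
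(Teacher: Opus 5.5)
Your proof is correct, but it takes a different route from the paper. The paper does not reduce to a Cartier generator or count sections. It uses the finite quotient map $\pi\colon \bP^n\to X$, $[y_0:\dots:y_n]\mapsto[y_0^{a_0}:\dots:y_n^{a_n}]$, which has degree $a_0\cdots a_n$. It notes that $\pi^*D_i$ has degree $d_i$ on $\bP^n$, and concludes from the projection formula $\pi^*D_1\cdots\pi^*D_n=\deg(\pi)\,D_1\cdots D_n$ together with intersection theory on $\bP^n$.

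Your argument instead stays on $X$ and works with the graded ring. Multilinearity plus $\Pic(X)\otimes\bQ=\Cl(X)\otimes\bQ$ reduces everything to the ample Cartier divisor $\calO_X(m)$. Asymptotic Riemann--Roch then turns $\calO_X(m)^n$ into the leading coefficient of the Hilbert function of $\bC[x_0,\dots,x_n]$.

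What each approach buys:
\begin{itemize}
\item Yours works verbatim for any $\Proj$ of a graded ring whose Hilbert series you know; for instance, it also gives the degree $d_1\cdots d_c/(a_0\cdots a_n)$ of weighted complete intersections. It also makes explicit the $\bQ$-Cartier multilinearity that the paper uses implicitly when it pulls back non-Cartier $D_i$.
\item The paper's proof is shorter and explains the denominator as the order of the group $\mu_{a_0}\times\dots\times\mu_{a_n}$.
\end{itemize}

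Two precision points.

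First, well-formedness is not what gives $H^0(X,\calO_X(N))=S_N$; that holds because the polynomial ring has depth at least $2$. Where you do need it is the leading-coefficient step. The pole at $t=1$ is the only pole of order $n+1$ of $\prod_i(1-t^{a_i})^{-1}$ exactly when $\gcd(a_0,\dots,a_n)=1$. You should say this explicitly. For example, for $\bP(2,2,2)$ your count along $N=2k$ gives $\calO(2)^2=1$, not $4/8$.

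Second, your ``alternative check'' is misstated as written. Ordinary Bézout does not apply to weighted-homogeneous equations on $\bA^{n+1}$. General such equations cut out $d_1\cdots d_n/(a_0\cdots a_n)$ orbits, not $d_1\cdots d_n$. To make that check precise you must pull back along $y_i\mapsto y_i^{a_i}$. There Bézout gives $d_1\cdots d_n$ points of $\bP^n$, permuted freely by $\mu_{a_0}\times\dots\times\mu_{a_n}$, which is exactly the paper's argument.
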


\begin{proof}
    Let $\pi: \bP^n \to X$ be the canonical projection map induced by the inclusion of rings $\bC[y_0^{a_0}, \dots, y_n^{a_n}] \hookrightarrow \bC[y_0, \dots, y_n]$, where $\deg y_i = 1$ 
    and $\bP^n = \Proj\:\C[y_0, \dots, y_n]$. 
    Then $\deg(\pi) = a_0 \dots a_n$ (cf. \cite[Theorem 3A.1]{BeltramettiRobbiano}) and $\pi^*(D_i)$ has degree $d_i$ on $\bP^n$, so by the usual intersection theory on $\bP^n$,
    \[ \pi^*(D_1) \cdot \ldots \cdot \pi^*(D_n) = d_1 \dots d_n. \]
    By the projection formula, 
    \[\pi^*(D_1) \cdot \ldots \cdot \pi^*(D_n) = \deg(\pi) D_1 \cdot \ldots \cdot D_n \] so we conclude
    \[ D_1 \cdot \ldots \cdot D_n = \frac{d_1 \dots d_n}{a_0 \dots a_n}.\]
\end{proof}

We next include several results on the canonical divisor and sheaf of differentials on weighted projective spaces. 

\begin{theorem}\label{thm:wps-dualizingprops}
    Let $X = \bP(a_0, \dots, a_n)$ be a well-formed weighted projective space.  Then, 
        \begin{enumerate}
            \item The dualizing sheaf and canonical divisor are given by $\omega_X \cong \calO(K_X) = \calO(-a_0 - \dots - a_n)$.  
            \item Let $\Omega_X^{[1]}$ denote the double dual of the sheaf of K\"{a}hler differentials.  If $U \subset X$ denotes the smooth locus and $i: U \to X$ the inclusion, $\Omega_X^{[1]} = i_* \Omega_U$. 
            \item There is an exact sequence, analogous to the Euler sequence on projective space,
            \[ 0 \to \Omega_X^{[1]} \to \oplus_{i = 0}^n \calO_X(-a_i) \to \calO_X \to 0.\]
            \item $\Omega_X^{[1]}$ is Cohen-Macaulay.
            \item There is a natural map $\Omega_X^1 \to \Omega_X^{[1]}$, but this need not be injective nor surjective.
        \end{enumerate}
\end{theorem}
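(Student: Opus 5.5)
The five statements of \Cref{thm:wps-dualizingprops} all come from the quotient description $X = (\bA^{n+1}-\{0\})/\bG_m$, combined with the fact that a well-formed weighted projective space is smooth in codimension one. Well-formedness says exactly that the singular locus, which is the union of the strata with $d_I>1$ described above, has codimension at least $2$. So throughout, reflexive sheaves on $X$ are determined by their restriction to the smooth locus $U$. I would prove the items in the order (2), (3), (1), (4), (5).

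For (2): $X$ is normal, and on $U$ the sheaf $\Omega_X^{[1]}$ agrees with $\Omega_U$. Any reflexive sheaf $\calF$ on a normal variety satisfies $\calF = i_*(\calF|_U)$ when $\on{codim}(X\setminus U)\ge 2$. This gives (2) immediately.

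For (3), I would first work on $U$. There the $\bG_m$-action on $\pi^{-1}(U)\subset \bA^{n+1}-\{0\}$ has trivial stabilizers off codimension $\ge 2$, again by well-formedness. Descending the relative cotangent sequence for $\pi^{-1}(U)\to U$ gives the generalized Euler sequence
\[ 0\to \Omega_U \to \oplus_i \calO_U(-a_i) \to \calO_U \to 0, \]
where $dx_i$ is a semi-invariant of weight $a_i$ and the right-hand map is contraction with the Euler vector field $\sum a_i x_i\partial_{x_i}$. Equivalently, one can compute with graded modules: the sequence $0\to M\to \oplus_i S(-a_i)\to S$ of the Koszul-type map $e_i\mapsto a_ix_i$ has image the irrelevant ideal up to saturation. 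Taking $\widetilde{\ }$ gives an exact sequence on $X$ with the sheaves $\calO_X(-a_i)$, which are reflexive by \Cref{thm:divisorsonwps}. The kernel of a map between reflexive sheaves is reflexive and agrees with $\Omega_U$ on $U$, so by (2) it is $\Omega_X^{[1]}$. The map $\oplus_i\calO_X(-a_i)\to\calO_X$ is surjective on $X$ because on each chart $U_i$ the element $x_i$ is a unit up to the cyclic cover. More concretely, surjectivity can be checked on the $\bZ_{a_i}$-cover $\bA^n$ by taking invariants, which is exact in characteristic zero. This gives (3).

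For (1), I would take $\det$ of the sequence over $U$: $\omega_U \cong \calO_U(-\sum a_i)$. Since $\omega_X$ and $\calO_X(-\sum a_i)$ are both reflexive of rank one, they agree on $X$.

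For (4), work locally on a chart $U_i = \bA^n/\bZ_{a_i}$. There $\Omega_X^{[1]}$ is the sheaf of invariants $(\Omega_{\bA^n})^{\bZ_{a_i}}$, since invariant differentials on the cover descend, the quotient map is étale in codimension one, and both sides are reflexive. A direct summand (via the Reynolds operator) of the free, hence Cohen–Macaulay, module $\Omega_{\bA^n}$ over $\calO_{\bA^n}$ is Cohen–Macaulay as a module over the invariant ring. This is the Hochster–Eagon type argument, and it is the step needing the most care: one must justify that depth is preserved when viewing the summand as a module over the invariant ring, which follows from finiteness of the quotient map.

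For (5), the natural map is the canonical map to the double dual. It suffices to give an example where it fails to be injective (torsion in $\Omega_X$ at a singular point, e.g.\ the $A_1$ point of $\bP(1,1,2)$, where Kähler differentials of the cone $xy=z^2$ have torsion) and one where it fails to be surjective (the invariant form $y\,dx - x\,dy$ on $\frac12(1,1)$ is not in the image).
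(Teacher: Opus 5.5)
\textbf{The gap is in (5).} Your arguments for (1)--(4) are sound. They are also more self-contained than the paper's, which cites Beltrametti--Robbiano, Knighten and Dolgachev for (1)--(3). The paper then gets (4) in one line from the Euler sequence: the kernel of the surjection $\oplus_i\calO_X(-a_i)\to\calO_X$ of Cohen--Macaulay sheaves is Cohen--Macaulay by the depth lemma. So your Reynolds-operator argument is optional once (3) is proved.

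Your non-injectivity example in (5) is false. At the $A_1$ point of $\bP(1,1,2)$ the local ring is $R=\C[u,v,t]/(uv-t^2)$, and $0\to R\xrightarrow{(v,\,u,\,-2t)}R^{3}\to\Omega_R\to 0$ is a free resolution. Hence $\operatorname{pd}\Omega_R\le 1$, and Auslander--Buchsbaum gives $\operatorname{depth}\Omega_R\ge 1$ at the origin. Torsion in $\Omega_R$ is supported at the isolated singular point, so it would have finite length, which depth $\ge 1$ rules out. Therefore $\Omega_R$ is torsion free and $\Omega_R\to\Omega_R^{[1]}$ is injective there. The paper uses exactly this fact for every $A_n$ point in Lemma~\ref{lem:extofAn}, via Kunz: complete intersections regular in codimension one have torsion-free K\"ahler differentials. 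As written, your proposal does not establish that the map ``need not be injective''.

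\textbf{A correct example.} It must come from a quotient singularity that is not a complete intersection. Take the $\frac13(1,1)$ point $[0:0:1]$ of the well-formed space $\bP(1,1,3)$, the cone over the twisted cubic. Its local ring is $R=\C[x^3,x^2y,xy^2,y^3]$; write $u_0=x^3$, $u_1=x^2y$, $u_2=xy^2$, $u_3=y^3$.

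Since $\Omega_R^{[1]}=(\Omega_{\C[x,y]})^{\mu_3}\subset\Omega_{\C[x,y]}$, the kernel of $\Omega_R\to\Omega_R^{[1]}$ equals the kernel of the pullback $\Omega_R\to\Omega_{\C[x,y]}$. The element
\[ \omega=u_3\,du_0-u_0\,du_3-3\,(u_2\,du_1-u_1\,du_2) \]
pulls back to $3x^2y^2(y\,dx-x\,dy)-3x^2y^2(y\,dx-x\,dy)=0$.

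Yet $\omega$ is nonzero in $\Omega_R$. Grade by $\deg x=\deg y=1$. In degree $6$, $\Omega_R$ is the $16$-dimensional span of the $u_j\,du_i$ modulo the span of the differentials of the three defining quadrics $u_0u_2-u_1^2$, $u_1u_3-u_2^2$, $u_0u_3-u_1u_2$. Comparing $du_0$- and $du_3$-coefficients shows $\omega$ is not in that span. A dimension count also suffices: $(\Omega_R)_6$ has dimension $16-3=13$, whereas the degree-$6$ part of $\Omega_{\C[x,y]}$ has dimension $12$. Alternatively, cite \cite[Table 1]{GR11} as the paper does.

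Your non-surjectivity example, $y\,dx-x\,dy$ on $\frac12(1,1)$, is correct.
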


\begin{proof}
    Part (1) is \cite[Corollary 6B.8]{BeltramettiRobbiano}.  Part (2) is \cite[Theorem 6A.12]{BeltramettiRobbiano} combined with \cite[Theorem 3]{Kni73}, realizing the double dual as the sheaf of invariants of the pushforward $\pi: \bP^n \to X$ and \cite[Lemma 6.A11]{BeltramettiRobbiano} identifying this sheaf of invariants with $i_* \Omega_U$.  Part (3) follows from the description of the differentials and the Euler formula in \cite[2.1.2, 2.1.3, following Lemma]{Dolgachev}.  The statement that $\Omega_X^{[1]}$ is Cohen-Macaulay in (4) follows because it is the kernel of a map between two Cohen-Macaulay modules.  Finally, the map in (5) $\Omega_X^1 \to \Omega_X^{[1]}$ is precisely the double dual morphism, and examples in which this is not injective or surjective are found in \cite[Table 1]{GR11}.
\end{proof}

We conclude this section with a description of the cohomology of several sheaves on weighted projective space. 

\begin{theorem}\label{thm:cohomologyofwps}
    Let $X = \bP(a_0, a_1, \dots, a_n)$ be a well-formed weighted projective space.  Then, for $d > -a_0 - \dots -a_n$ and $0< i \le n$, 
    \[ H^i(X, \calO_X(d)) = 0.\]
   For $1 < i \le n$, 
    \[ H^i(X, \Omega_X^{[1]}) = 0.\]
    For $1 \le i \le n$ and $T_X = (\Omega_X^{1})^* = (\Omega_X^{[1]})^*$,
    \[ H^i(X, T_X) = 0.\]
\end{theorem}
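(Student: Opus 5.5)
The plan is to prove the three vanishing statements in order, each feeding the next, using the quotient description of $X$ and the Euler-type sequence of \Cref{thm:wps-dualizingprops}(3).

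\textbf{Step 1: the sheaves $\calO_X(d)$.} Let $S=\C[x_0,\dots,x_n]$, $\deg x_i = a_i$, and let $\mathfrak m = (x_0,\dots,x_n)$. I compute $H^i(X,\calO_X(d))$ with the \v{C}ech complex of the affine cover $U_i=(x_i\neq 0)$, taking degree-$d$ parts. This is the standard comparison with local cohomology: for $1\le i\le n-1$ one has $H^i(X,\calO_X(d)) = H^{i+1}_{\mathfrak m}(S)_d$, and $H^n(X,\calO_X(d))=H^{n+1}_{\mathfrak m}(S)_d$. Since $S$ is a polynomial ring, its only nonzero local cohomology is $H^{n+1}_{\mathfrak m}(S)$. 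This module is spanned by the inverse monomials $x_0^{-k_0}\cdots x_n^{-k_n}$ with all $k_j\ge 1$, whose degrees are at most $-(a_0+\dots+a_n)$. So for $0<i\le n$ and $d>-\sum a_j$ everything vanishes. Equivalently, one can use $\pi:\bP^n\to X$ with $\calO_X(d) = (\pi_*\calO_{\bP^n}(d))^{\mu}$, where $\mu=\prod\mu_{a_j}$, together with exactness of invariants and Serre's computation on $\bP^n$. Well-formedness guarantees that the $\calO_X(d)$ of \Cref{thm:divisorsonwps} agree with these graded pieces.

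\textbf{Step 2: $\Omega_X^{[1]}$.} Take the long exact sequence of
\[0 \to \Omega_X^{[1]} \to \oplus_j \calO_X(-a_j) \to \calO_X \to 0.\]
Since $-a_j>-\sum a_k$, Step 1 gives $H^i(\calO_X(-a_j))=0$ and $H^i(\calO_X)=0$ for $i\ge1$. Hence $H^i(\Omega_X^{[1]}) \cong H^{i-1}(\calO_X)=0$ for $i\ge 2$. (For $i=1$ one only gets $H^1(\Omega_X^{[1]}) \cong \C$, which explains why the statement starts at $i>1$.)

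\textbf{Step 3: $T_X$.} I dualize the Euler sequence. Since $\calO_X$ is free, applying $\SHom(-,\calO_X)$ gives
\[0\to \calO_X \to \oplus_j \SHom(\calO_X(-a_j),\calO_X)\to T_X \to \SExt^1(\calO_X,\calO_X)=0.\]
This is exact, and $\SHom(\calO_X(-a_j),\calO_X)\cong\calO_X(a_j)$ because these are reflexive rank-one sheaves and the identification can be checked on the smooth locus. Here $T_X=(\Omega_X^{[1]})^*=(\Omega^1_X)^*$, since both sheaves agree on the smooth locus, whose complement has codimension $\ge 2$, and duals are reflexive. Step 1 gives $H^i(\calO_X(a_j))=0$ for $i\ge1$, and $H^{i+1}(\calO_X)=0$, so $H^i(T_X)=0$ for $1\le i\le n$.

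\textbf{Main obstacle.} The delicate point is the dual sequence and the identification $\SHom(\calO_X(-a),\calO_X)\cong\calO_X(a)$, which must be checked carefully at singular points. The cleanest route is to note that the dual Euler sequence is the $\mu$-invariant part of the pushforward of the usual Euler sequence on $\bA^{n+1}\setminus 0$, or equivalently the graded sequence $0\to S\to\oplus S(a_j)\to \mathrm{Der}\to0$ given by $1\mapsto \sum a_jx_j$. This sequence is exact on graded modules and therefore sheafifies exactly. I would present this graded-module argument rather than rely on local-freeness.
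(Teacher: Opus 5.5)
Your proposal is correct and follows the paper's proof. The paper cites Dolgachev for $H^i(X,\calO_X(d))=0$; your local-cohomology / $\mu$-invariants argument is essentially Dolgachev's. It then uses the Euler-type sequence for $\Omega_X^{[1]}$, and for $T_X$ dualizes that sequence using $\SExt^1(\calO_X,\calO_X)=0$, exactly as you do. The graded-module detour in your last paragraph is unnecessary: the dualization only uses that $\calO_X$ is free and that the rank-one sheaves involved are reflexive.
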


\begin{proof}
    This is well-known.  For the first statement, see \cite[1.4.2]{Dolgachev}. 
    For the second, use the exact sequence from Theorem \ref{thm:wps-dualizingprops} (3) and the first statement.  For the third, dualize the exact sequence in Theorem \ref{thm:wps-dualizingprops} (3) to obtain 
    \[ 0 \to \calO_X \to \oplus_{i=0}^n \calO_X(a_i) \to T_X \to 0 ,\]
    noting that this is exact on the right because $\SExt^1(\calO_X, \calO_X)= 0$. Applying the first statement yields the desired result. 
\end{proof}

\subsection{$\mathbb{Q}$-Gorenstein smoothings and a necessary condition for smoothability of weighted projective space}

In this section, we define the notions of $\mathbb{Q}$-Gorenstein deformation and $\bQ$-Gorenstein smoothing.  We work only over a reduced base scheme as that is sufficient for the purpose of this paper. 

\begin{definition}
\label{def:Q-GorensteinFamily}
A flat family of schemes $\mathcal{X} \rightarrow B$ over a reduced base $B$ is said to be {\it $\mathbb{Q}$-Gorenstein} if the relative canonical divisor $K_{\mathcal{X}/B}$ is $\mathbb{Q}$-Cartier.
\end{definition}

\begin{definition}
    A deformation of a variety $X$ is a flat family
    $\calX \to B$ 
    over a pointed reduced scheme $B$ with closed point $0 \in B$ such that $X \cong \calX_0$.
    For general $b \in B$,
    we will say that $\calX_b$
    is a \textbf{deformation} of $\calX_0$ and
    $\calX_0$ is a \textbf{degeneration} of $\calX_b$,
    and will use the notation $\calX_b \rightsquigarrow \calX_0$.
    If $\calX_b$ is smooth, we will also say $\calX_b$ is a \textbf{smoothing} of $\calX_0$.
    We will use the terminology ``$\bQ$-Gorenstein smoothing'' or ``$\bQ$-Gorenstein degeneration'' 
    whenever the family $\calX \to B$ is $\bQ$-Gorenstein.
\end{definition}

While $B$ can be an arbitrary reduced scheme in the previous definition, we will primarily consider deformations over smooth curves. 

The $\bQ$-Gorenstein condition provides a necessary condition for a weighted projective space to admit a $\bQ$-Gorenstein smoothing to $\bP^n$.  The following proposition is originally stated as Proposition 4.48 in \cite{DeVleming}; we include a proof below for the convenience of the reader.

\begin{proposition}
\label{prop:necessaryConditionSmoothing}
Suppose that $\mathbb{P}(a_{0}, \dots, a_{n})$ admits a $\mathbb{Q}$-Gorenstein smoothing to $\mathbb{P}^{n}$. Then the following equation must hold:
\begin{equation}
\label{Eqn:neccessCond-QGsmoothingPn}
(n+1)^{n}a_{0}a_{1} \dots a_{n} = (a_{0} + a_{1} + \dots + a_{n})^{n}.
\end{equation}
\end{proposition}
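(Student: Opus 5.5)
The plan is to compare anticanonical volumes on the two ends of the smoothing. Let $\calX \to B$ be a $\bQ$-Gorenstein smoothing with $\calX_0 \cong X = \bP(a_0,\dots,a_n)$ and $\calX_b \cong \bP^n$ for general $b$. We may replace $B$ by a smooth pointed curve through $0$ and a general point, after normalizing, since the volume statement only needs a one-parameter family. Because $K_{\calX/B}$ is $\bQ$-Cartier, some multiple $mK_{\calX/B}$ is a Cartier divisor. Hence $-mK_{\calX/B}$ is a line bundle on $\calX$, flat over $B$.

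The first key step is to check that this line bundle restricts on each fiber to $-mK$ of that fiber. On a general fiber this is clear, since the fiber is smooth. On the central fiber, $X$ is normal, and it is $S_2$ because it is a quotient singularity, hence Cohen--Macaulay. The restriction $\calO_\calX(mK_{\calX/B})|_X$ is a line bundle agreeing with $\calO_X(mK_X)$ on the smooth locus of $X$. That locus has complement of codimension $\ge 2$, because $X$ is well-formed. Since both sheaves are reflexive, they agree on all of $X$. This identification at the special fiber is the one place where the $\bQ$-Gorenstein hypothesis is genuinely used, and I expect it to be the main point to justify carefully.

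The second step uses that the intersection number $(c_1(\calL|_{\calX_b}))^n$ of a line bundle $\calL$ on a flat projective family is locally constant in $b$. This holds because the Euler characteristic $\chi(\calX_b, \calL^k|_{\calX_b})$ is locally constant in $b$ and is a polynomial in $k$ whose leading coefficient is the self-intersection divided by $n!$. Applying this to $\calL = \calO_\calX(-mK_{\calX/B})$ gives
\[(-mK_{\bP^n})^n = (-mK_X)^n,\qquad\text{so}\qquad (-K_{\bP^n})^n = (-K_X)^n.\]

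The last step computes both sides. On $\bP^n$ we have $(-K)^n=(n+1)^n$. On $X$, \Cref{thm:wps-dualizingprops}(1) identifies $-K_X$ with a divisor of degree $a_0+\dots+a_n$. Then \Cref{thm:intersections-on-wps} gives $(-K_X)^n = (a_0+\dots+a_n)^n/(a_0\cdots a_n)$. Equating the two values and clearing the denominator yields \eqref{Eqn:neccessCond-QGsmoothingPn}.
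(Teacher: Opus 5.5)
Your proposal is correct and follows the same route as the paper: equate the anticanonical volumes of $\bP^n$ and $X$, then evaluate $(-K_X)^n=(a_0+\dots+a_n)^n/(a_0\cdots a_n)$ via Theorem~\ref{thm:wps-dualizingprops}(1) and Theorem~\ref{thm:intersections-on-wps}. The only difference is that the paper cites Koll\'ar for the constancy of anticanonical volume in $\bQ$-Gorenstein families, while you reprove it by restricting a Cartier multiple of $K_{\calX/B}$ to the fibers and using constancy of Euler characteristics (a minor quibble: the singular locus has codimension $\ge 2$ because $X$ is normal, whereas well-formedness is what you need for $\calO(K_X)\cong\calO(-a_0-\dots-a_n)$).
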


\begin{proof}  
Suppose we have a well-formed weighted projective space $\mathbb{P}(a_{0}, \dots, a_{n})$ that admits a $\mathbb{Q}$-Gorenstein smoothing to $\mathbb{P}^{n}$. 
By \cite[Theorem-Definition 3.1]{Kollar23} the anticanonical volume is constant in $\mathbb{Q}$-Gorenstein families, so we must have
\begin{equation*}
(-K_{\mathbb{P}^{n}})^{n} = (-K_{\mathbb{P}(a_{0}, \dots, a_{n})})^{n}
\end{equation*}

The canonical divisor of $\mathbb{P}(a_{0}, \dots, a_{n})$ is given by $\calO(K_{\bP(a_0, \dots, a_n)}) = \calO(-a_0 - \dots - a_n)$, and by the intersection theory in Theorem \ref{thm:intersections-on-wps}, the equality $(-K_{\mathbb{P}^{n}})^{n} = (-K_{\mathbb{P}(a_{0}, \dots, a_{n})})^{n}$ is equivalent to 
\[ (n+1)^n = \frac{(a_0 + \dots + a_n)^n}{a_0 \dots a_n}, \]
which in turn is equivalent to Equation \eqref{Eqn:neccessCond-QGsmoothingPn}.
\end{proof}

Miraculously, when $n = 2$, Equation \eqref{Eqn:neccessCond-QGsmoothingPn} is both a necessary and sufficient condition for a weighted projective space to admit a $\bQ$-Gorenstein smoothing to $\bP^2$.  This was proved by Hacking and Prokhorov in \cite{HP10}. Since the same ideas will be useful for the purpose of the present article,
we include a proof below. We first provide an easy preliminary lemma. 

\begin{lemma}\label{lem:P2andmarkoveqn}
    A triple $(a_0,a_1,a_2)$ of pairwise coprime positive integers satisfy the equation \[9a_0a_1a_2 = (a_0+a_1+a_2)^2\] if and only if $a_0 = a^2$, $a_1 = b^2$, and $a_2 = c^2$ where $a,b,c$ are positive integers satisfying \[a^2 + b^2 + c^2 = 3abc.\] 
\end{lemma}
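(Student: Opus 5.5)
The plan is to treat the two directions separately; the substance lies in showing that each $a_i$ is a perfect square.

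\emph{The Markov triple direction.} Suppose $a_0=a^2$, $a_1=b^2$, $a_2=c^2$ with $a^2+b^2+c^2=3abc$. Then
\[ (a_0+a_1+a_2)^2 = (3abc)^2 = 9a^2b^2c^2 = 9a_0a_1a_2 .\]
We also need $a^2,b^2,c^2$ to be pairwise coprime. If a prime $p$ divides both $a$ and $b$, then $p^2$ divides $c^2 = 3abc - a^2 - b^2$, so $p$ divides $c$. Hence $(a,b,c)$ is not primitive, and we can descend by dividing out, which rescales the Markov equation. The cleanest route is to invoke the standard fact that Markov triples are pairwise coprime. This follows by Vieta descent: the move $c \mapsto 3ab - c$ preserves both the equation and the gcd of each pair, and every Markov triple descends to $(1,1,1)$.

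\emph{The converse direction.} Assume $a_0,a_1,a_2$ are pairwise coprime and $9a_0a_1a_2=(a_0+a_1+a_2)^2$. Let $p$ be a prime and let $p^e \,\|\, a_0$ with $e \ge 1$. By coprimality $p \nmid a_1a_2$.
\begin{itemize}
\item If $p\neq 3$, then $v_p(9a_0a_1a_2)=e$.
\item If $p=3$, then $v_3(9a_0a_1a_2)=e+2$.
\end{itemize}
In either case this valuation equals $v_p$ of a perfect square, so it is even, and therefore $e$ is even. This holds for every prime dividing $a_0$, so $a_0=a^2$ for some positive integer $a$. The same argument applied to $a_1$ and $a_2$ gives $a_1=b^2$ and $a_2=c^2$. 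Substituting, and taking positive square roots since everything is positive,
\[ (a^2+b^2+c^2)^2 = 9a^2b^2c^2 \implies a^2+b^2+c^2 = 3abc .\]

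The only potential obstacle is the prime $3$ and the coprimality claim in the first direction. The prime $3$ is handled by the extra factor $9$ contributing an even exponent $2$, so it causes no trouble. For coprimality, I would cite or quickly reprove the Vieta-jumping fact that all Markov triples are obtained from $(1,1,1)$ by the mutations $c\mapsto 3ab-c$, which preserve pairwise coprimality.
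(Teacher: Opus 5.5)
Your proof is correct and follows essentially the same route as the paper. The paper shows $\sqrt{a_0a_1a_2}\in\mathbb{Z}$ by the rational root theorem and then uses pairwise coprimality to conclude each $a_i$ is a square; your $p$-adic valuation argument on $9a_0a_1a_2$ simply makes that step explicit. For the converse, the paper likewise relies on mutation invariance and the descent of every Markov triple to $(1,1,1)$, with your gcd-preservation under $c\mapsto 3ab-c$ being a slightly cleaner form of its ``common prime factors are preserved'' remark, and your aside about ``dividing out'' is unnecessary but harmless since you do not use it.
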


Before we prove the lemma, let us recall that triples $(a,b,c)$ satisfying $a^2 + b^2 + c^2 = 3abc$ are known as \textit{Markov triples}, and $(1,1,1)$ can be obtained from any Markov triple by a finite sequence of \textit{mutations}, replacements $(a,b,c) \mapsto (a,b,3ab-c)$ or $(a,b,c) \mapsto (a,3ac-b,c)$ or $(a,b,c) \mapsto (3bc-a,b,c)$ (cf. \cite{KN98}). 

\begin{proof}
Suppose first that $(a_0,a_1,a_2)$ is a triple of pairwise coprime positive integers satisfying
    \begin{align*}
9a_{0}a_{1}a_{2} &= (a_{0} + a_{1} + a_{2})^{2},  \text{ which implies }\\
a_{0}a_{1}a_{2} &= \displaystyle{\frac{(a_{0} + a_{1} + a_{2})^{2}}{9}} \in \mathbb{Z}, \text{ and hence} \\
\sqrt{a_{0}a_{1}a_{2}} &= \displaystyle{\frac{a_{0} + a_{1} + a_{2}}{3}} \in \mathbb{\Q}.
\end{align*}
By the rational root theorem, as the polynomial $x^2  - \frac{a_{0} + a_{1} + a_{2}}{3} $ is monic, any rational root must in fact be an integer, and hence 
\[ \sqrt{a_{0}a_{1}a_{2}} = \displaystyle{\frac{a_{0} + a_{1} + a_{2}}{3}} \in \mathbb{\Z}. \]
As the $a_i$'s are pairwise coprime, it follows that each $a_{i}$ for $i = 0, 1, 2$ is a perfect square. Writing
$a_{0} = a^{2}$, $a_{1} = b^{2}$ and $a_{2} = c^{2}$
and plugging in to the equation $3\sqrt{a_0a_1a_2} = a_0+a_1+a_2$ proves that $3abc = a^{2} + b^{2} + c^{2}$.

Now, suppose that $(a,b,c)$ is a Markov triple.  Necessarily, we claim that the numbers $a,b,c$ must be pairwise coprime.  First, observe that if any two have a common prime factor, the equality $3abc = a^{2} + b^{2} + c^{2}$ implies that the third must also be divisible by that factor.  As any Markov triple can be decreased by a finite sequence of mutations to $(1,1,1)$, and these mutations preserve any common prime factors, we see that $(a,b,c)$ cannot have a common prime factor.  Therefore, they must be pairwise coprime.  This implies that $a_0 = a^2, a_1 = b^2, a_2 = c^2$ are pairwise coprime and squaring both sides of the equation $3abc = a^{2} + b^{2} + c^{2}$ yields $9a_0a_1a_2 = (a_0 + a_1 + a_2)^2$.
\end{proof}

Now, we rephrase a result of Hacking and Prokhorov using the previous lemma.

\begin{theorem}\cite{HP10}
\label{thm:wpdegensofP2}
A well-formed weighted projective surface $\bP(a_0, a_1, a_2)$ admits a $\bQ$-Gorenstein smoothing to $\bP^2$ if and only if $9a_0a_1a_2 = (a_0+a_1+a_2)^2$.  Equivalently, by Lemma \ref{lem:P2andmarkoveqn}, this holds if and only if $a_0 = a^2$, $a_1 = b^2$, and $a_2 = c^2$ where $3abc = a^2 + b^2 + c^2$.
\end{theorem}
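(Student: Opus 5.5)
The plan is to prove the two directions separately, with the forward direction essentially already established earlier in this section.

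\emph{Necessity.} Suppose $\bP(a_0,a_1,a_2)$ admits a $\bQ$-Gorenstein smoothing to $\bP^2$. Then Proposition \ref{prop:necessaryConditionSmoothing} with $n=2$ gives $9a_0a_1a_2=(a_0+a_1+a_2)^2$ directly. To use Lemma \ref{lem:P2andmarkoveqn} we also need the weights to be pairwise coprime. For a surface this is exactly well-formedness, since $\gcd(a_j,a_k)=1$ for every pair $j\neq k$. The lemma then gives $a_i=a^2,b^2,c^2$ with $a^2+b^2+c^2=3abc$.

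\emph{Sufficiency.} Start with a Markov triple $(a,b,c)$ and induct on $a+b+c$ via mutations. The base case is $(1,1,1)$, where $\bP(1,1,1)=\bP^2$ and there is nothing to prove. For the inductive step, choose the maximal entry, say $c$, and mutate to $(a,b,c')$ with $c'=3ab-c<c$. By induction, $\bP(a^2,b^2,c'^2)$ smooths to $\bP^2$. The key geometric input is that $\bP(a^2,b^2,c^2)$ and $\bP(a^2,b^2,c'^2)$ are both fibres of a single $\bQ$-Gorenstein family whose general fibre is a partial smoothing of each.

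\emph{Constructing the family.} The natural approach is toric: realize a degeneration of $\bP(a^2,b^2,c^2)$ to a non-normal or reducible toric surface, equivalently to a weighted hypersurface $xy=z^{c}+\dots$ inside $\bP(a^2,b^2,c,\ldots)$, which deforms to $\bP(a^2,b^2,c'^2)$. Concretely, I would exhibit a hypersurface of degree $2abc'$-type in a weighted $\bP^3$ that interpolates between the two, following Hacking--Prokhorov's mutation picture.

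\emph{Local deformation theory.} An alternative route is to use the fact that the singularity $\frac{1}{c^2}(a^2,b^2)$ is a T-singularity of class $\frac{1}{dn^2}(1,dna-1)$ with $d=1$. Such singularities admit a local $\bQ$-Gorenstein smoothing. Since $H^2(X,T_X)=0$ by Theorem \ref{thm:cohomologyofwps}, and the local-to-global obstruction space for $\bQ$-Gorenstein deformations of a del Pezzo surface with quotient singularities vanishes (one checks $H^2(T_X)=0$ via $-K_X$ ample), the local smoothings of all three T-singularities lift simultaneously to a global $\bQ$-Gorenstein deformation. The general fibre is then a smooth del Pezzo surface with $K^2=9$, hence isomorphic to $\bP^2$. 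This is the cleanest route, and I would use it, citing \cite{HP10} for the vanishing of local-to-global obstructions.

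\emph{Expected obstacle.} The main difficulty is verifying that each singular point $\frac{1}{c^2}(a^2,b^2)$ is indeed a T-singularity of the form $\frac{1}{n^2}(1,na-1)$. This uses $a^2+b^2\equiv 0 \pmod{c}$ from the Markov equation, together with $\gcd(a,c)=1$ to normalize the action. Recognizing a smooth $K^2=9$ rational surface as $\bP^2$ is standard, though one must also note that $b_2=1$, which is preserved in the smoothing since the singularities are Milnor-fibre $\bQ$-homology balls.
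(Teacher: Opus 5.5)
Your proposal is correct and follows essentially the same route as the paper. Necessity comes from the volume equation (Proposition \ref{prop:necessaryConditionSmoothing}) together with Lemma \ref{lem:P2andmarkoveqn}, where well-formedness gives pairwise coprimality. Sufficiency comes from three ingredients: local $\bQ$-Gorenstein smoothability of each isolated singular point, the vanishing $H^2(X,T_X)=0$ to kill local-to-global obstructions, and the fact that a smooth del Pezzo surface with $K^2=9$ is $\bP^2$. That last fact makes your $b_2$ remark and your mutation/induction paragraph unnecessary. The only difference is the local input. The paper smooths $\frac{1}{c^2}(a^2,b^2)$ explicitly, via the degree-$c$ embedding into $\bP(a^2,b^2,c',c)$ and the family $x_0x_1=x_2^c+tx_3^{c'}$. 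You instead invoke the T-singularity classification, an alternative the paper itself mentions in a footnote. For your claim that $d=1$, the right input is $a^2+b^2=cc'$ with $\gcd(c,c')=\gcd(c,3ab)=1$, not merely $c\mid a^2+b^2$.
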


\begin{proof}
Suppose we have a well-formed weighted projective surface $X = \mathbb{P}(a_{0}, a_{1}, a_{2})$ that admits a $\mathbb{Q}$-Gorenstein smoothing to $\mathbb{P}^{2}$.
We know that $9a_0a_1a_2 = (a_0+a_1+a_2)^2$ is a necessary condition from Equation \eqref{Eqn:neccessCond-QGsmoothingPn}, and by Lemma \ref{lem:P2andmarkoveqn}, this is equivalent to $a_0 = a^2$, $a_1 = b^2$, and $a_2 = c^2$ where $3abc = a^2 + b^2 + c^2$.  Write $\bP(a_0,a_1,a_2) = \bP(a^2,b^2,c^2)$.  This demonstrates the `only if' condition.

Remarkably, all such weighted projective surfaces are $\bQ$-Gorenstein smoothable.  For the proof of the `if' direction, we first show the local smoothability of each singularity.\footnote{There are several other methods to do this: one could assert the existence of a $\bQ$-Gorenstein smoothing by writing these singularities as $T$-singularities (cf. \cite{HP10}) or referring to the full computation of deformations of cyclic quotient singularities in \cite[Section 6.6]{Kollar23}.}

Let $c' = 3ab - c$, so that $a^2 + b^2 = cc'$, and
let $\bP(a^2, b^2, c^2) \hookrightarrow \bP(a^2, b^2, c',c)$
be the {\it degree $c$ embedding} given by $[x:y:z] \mapsto [x^c:y^c:xy:z]$.  Denoting the new coordinates by $[x_0:x_1: x_2:x_3]$, the image is defined by $x_0x_1 = x_2^c$.  Now, consider the family over $\bA^1_t$ given by $x_0x_1 = x_2^c + tx_3^{c'}$.  When $t = 0$, this is isomorphic to $\bP(a^2, b^2,c^2)$, but when $t \ne 0$, the corresponding surface only has two singular points of type $\frac{1}{a^2}(b^2,c^2)$ and $\frac{1}{b^2}(a^2,c^2)$.  It no longer has a singularity of type $\frac{1}{c^2}(a^2,b^2)$, and hence this gives an explicit smoothing of the singularity.  The total space of this family is a hypersurface in the $\bQ$-Gorenstein variety $\bP(a^2,b^2,c',c) \times \bA^1$, so it is $\bQ$-Gorenstein and this is in fact a $\bQ$-Gorenstein smoothing. 

Up to permuting the variables, applying the same construction gives a local smoothing of each singularity on $\bP(a^2,b^2,c^2)$, so each singular point is smoothable.  Finally, because $X = \bP(a^2, b^2,c^2)$ has isolated singularities and $H^2(X,T_X) = 0$ from Theorem \ref{thm:cohomologyofwps}, by \cite[Proposition 2.3]{PetracciToric}, there are no local-to-global obstructions to extending these local smoothings to a global smoothing of $X$, and hence $X$ is $\bQ$-Gorenstein smoothable.  Because $-K_X$ is ample and ampleness is an open condition in families, any such smoothing of $X$ must be a smooth Fano surface with anticanonical volume $9$.  The only such surface is $\bP^2$, so we conclude any weighted projective surface $\bP(a^2, b^2,c^2) $ such that $a^2+b^2+c^2 = 3abc$ is smoothable to $\bP^2$.
\end{proof}

In Example \ref{ex:solution-n2-to-n3}, we demonstrate how, given a $\mathbb{Q}$-Gorenstein smoothing of $\mathbb{P}(a_{0}, a_{1}, a_{2})$ to $\mathbb{P}^{2}$, we may use Equation \eqref{Eqn:neccessCond-QGsmoothingPn} to find a weight $a_{3}$ such that $(a_{0}, a_{1}, a_{2}, a_{3})$ again satisfies Equation \eqref{Eqn:neccessCond-QGsmoothingPn} for $n = 3$. 
It is not immediate that
the new weighted projective space $\mathbb{P}(a_{0}, a_{1}, a_{2}, a_{3})$
admits a smoothing to $\mathbb{P}^{3}$, 
but as a consequence of Theorem \ref{thm:cone}, we will show in later sections that it actually does.

\begin{example}
\label{ex:solution-n2-to-n3}
Suppose we have a $\mathbb{Q}$-Gorenstein smoothing of $\mathbb{P}(a_{0}, a_{1}, a_{2})$ to $\mathbb{P}^{2}$ which by Theorem \ref{thm:wpdegensofP2} is equivalent to
$a_{0} = a^{2}$, $a_{1} = b^{2}$ and $a_{2} = c^{2}$,
such that $3abc = a^{2} + b^{2} + c^{2}$. We may find $a_{3} \in \mathbb{Z}$ such that $a_{0}, \dots, a_{3}$ satisfy Equation \eqref{Eqn:neccessCond-QGsmoothingPn} with $n = 3$. That is, we want
\begin{align*}
(3+1)^{3} a_{0}a_{1}a_{2}a_{3} &= (a_{0} + a_{1} + a_{2} + a_{3})^{3}, \text{ or} \\
64a^{2}b^{2}c^{2}a_{3} &= (3abc + a_{3})^{3},
\end{align*}
and taking $a_{3} = abc$ makes the above equation true, and thus we have found a solution for $n = 3$.  We will show in Section \ref{sec:cone} that the weighted projective space $\bP(a^2,b^2,c^2,abc)$ in fact admits a smoothing to $\bP^3$.
\end{example}

\noindent Producing solutions as shown in \Cref{ex:solution-n2-to-n3} can be generalized as the following result (originally Proposition 4.52 in \cite{DeVleming}). 
Note that the last weight $a_n$ is in fact an integer, which follows from the rational root theorem.
This will be expanded later in \Cref{prop:relationsonweights}.

\begin{proposition}
\label{Prop:SolnTon+1}
Given any solution to the equation
\begin{equation}
\label{Eqn:solution:n}
n^{n-1} \prod_{i=0}^{n-1} a_{i} = \Biggl(\sum_{i=0}^{n-1} a_{i} \Biggr)^{n-1},
\end{equation}
we may use this solution to determine a solution for the next case, satisfying the equation
\begin{equation}
\label{Eqn:solution:n+1}
(n+1)^{n} \prod_{i=0}^{n} a_{i} = \Biggl(\sum_{i=0}^{n} a_{i} \Biggr)^{n},
\end{equation}
where $a_{0}, \dots, a_{n-1}$ are the same as in the first equation, and we fill in
\begin{align*}
a_{n} = \displaystyle{\frac{\Bigl( \; \sum\limits_{i=0}^{n-1} a_{i} \Bigr)}{n}} = \Biggl( \; \prod_{i=0}^{n-1} a_{i} \Biggr)^{1/(n-1)}\in\bZ.
\end{align*}
\end{proposition}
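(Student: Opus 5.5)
Set $S = \sum_{i=0}^{n-1} a_i$ and $P = \prod_{i=0}^{n-1} a_i$, so that the hypothesis \eqref{Eqn:solution:n} becomes $n^{n-1}P = S^{n-1}$. The proof has three steps: check that the two proposed expressions for $a_n$ coincide, verify \eqref{Eqn:solution:n+1} by direct substitution, and show that $a_n$ is an integer.

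\textbf{The two expressions agree.} From the hypothesis, $P = (S/n)^{n-1}$. Since $S/n > 0$ and $P > 0$, taking positive real $(n-1)$-st roots gives $P^{1/(n-1)} = S/n$. We define $a_n$ to be this common value.

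\textbf{Verifying \eqref{Eqn:solution:n+1}.} With this choice,
\[
\sum_{i=0}^n a_i = S + \frac{S}{n} = \frac{(n+1)S}{n},
\]
so the right-hand side of \eqref{Eqn:solution:n+1} equals $(n+1)^n S^n / n^n$. For the left-hand side, multiply $P$ by $a_n$:
\[
(n+1)^n \prod_{i=0}^n a_i = (n+1)^n \cdot P \cdot \frac{S}{n} = (n+1)^n \cdot \frac{S^{n-1}}{n^{n-1}} \cdot \frac{S}{n} = \frac{(n+1)^n S^n}{n^n}.
\]
The two sides agree, which is a one-line computation.

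\textbf{Integrality.} This is the only point needing an argument, though it is short. The rational number $q = S/n$ is a root of the monic integer polynomial $x^{n-1} - P$. By the rational root theorem, every rational root of a monic integer polynomial is an integer, so $q \in \bZ$; equivalently, $n \mid S$. This is the same argument used in the proof of Lemma \ref{lem:P2andmarkoveqn}. Hence $a_n \in \bZ_{>0}$.

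There is no real obstacle here. The main care needed is to remember that $a_n$ must be a positive integer in order to serve as a weight. Positivity follows from $S > 0$, and integrality comes from the rational root theorem. Well-formedness of the resulting tuple is not claimed in the statement, so it does not need to be addressed.
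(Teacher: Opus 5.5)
Your proof is correct and matches the paper's reasoning. The paper gives no separate proof of this proposition. It relies on direct substitution of $a_n = S/n$, as in Example~\ref{ex:solution-n2-to-n3}, and derives integrality of $a_n$ from the rational root theorem applied to the monic polynomial $x^{n-1}-P$, the same argument spelled out in the proof of Proposition~\ref{prop:relationsonweights}.
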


\begin{remark}\label{rmk:avg}
    In \Cref{Prop:SolnTon+1}, we observe that the choice for weight $a_{n}$ is the average of the previous $n-1$ weights. Conversely, it is straightforward to show that if $\bP(a_0, \dots, a_n)$ has the same anticanonical volume as $\bP^n$ and one weight $a_n$ is the average of the remaining $n$ weights, then the weighted projective space $\bP(a_0, \dots, a_{n-1})$ has the same anticanonical volume as $\bP^{n-1}$.
\end{remark}

\begin{remark}
If the weighted projective space $\bP(a_0, \dots, a_{n-1})$ from Equation \eqref{Eqn:solution:n+1} does smooth to $\mathbb{P}^{n-1}$, then geometrically, adding a final weight $a_n$ corresponds to taking a `cone' over the family $\mathbb{P}^{n-1}$ degenerating to $\mathbb{P}(a_{0}, \dots, a_{n-1})$. As we will show, this gives a family of $\mathbb{P}^{n}$ degenerating to $\mathbb{P}(a_{0}, \dots, a_{n-1}, a_{n})$. This will be explored in more detail in \Cref{sec:cone}.
\end{remark}

\subsection{Schlessinger's rigidity theorem}

We have seen in \Cref{thm:wpdegensofP2} that the solutions $(a_0, a_1, a_2)$ to the equation \[9a_0a_1a_2 = (a_0+a_1+a_2)^2\] correspond exactly to weighted projective surfaces admitting a $\bQ$-Gorenstein smoothing to $\bP^2$.  One asks if this behavior persists in higher dimensions.  By Proposition \ref{prop:necessaryConditionSmoothing}, it is necessary that \[(n+1)^n(a_0 \dots a_n) = (a_0 + \dots + a_n)^n\] for a weighted projective $\bP(a_0, \dots, a_n)$ to admit a $\bQ$-Gorenstein smoothing to $\bP^n$, but is this sufficient?  In higher dimensions, the answer is no: the following theorem of Schlessinger (see Theorem 3 in \cite{S71}) allows us to conclude that in certain cases a smoothing {\it does not} exist.  However, because $\bP^n$ is the only smooth Fano variety with anticanonical volume $(n+1)^n$, producing a smoothing of a weighted projective space with this volume will be sufficient to conclude it admits a smoothing to $\bP^n$. 

\begin{theorem}
[Schlessinger]
\label{thm:Schlessinger}
Let $Y$ be smooth and suppose that $\dim Y \geq 3$. Let $G$ be a finite group acting on $Y$ with only one fixed point. Then the quotient $X = Y/G$ is rigid.
\end{theorem}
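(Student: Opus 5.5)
The goal is to show that every deformation of $X = Y/G$ is trivial. Concretely, we will prove that $\Ext^1(\Omega_X,\calO_X)=0$ locally at the fixed point, which gives infinitesimal rigidity and hence rigidity. The question is local, so we reduce to a neighbourhood of the unique fixed point $p\in Y$ with image $x\in X$. Away from $x$, the map $Y\to X$ is étale, so $X\setminus\{x\}$ is smooth. On a smooth variety, $\SExt^1(\Omega,\calO)=0$, so the sheaf $\SExt^1(\Omega_X,\calO_X)$ is supported at $x$. We will show this stalk vanishes, so that $X$ has no nontrivial local first-order deformations. Replacing $Y$ by a $G$-stable affine (or analytic) neighbourhood of $p$, we may assume $Y=\operatorname{Spec}A$, $X=\operatorname{Spec}A^G$, and $U = Y\setminus\{p\}\to V=X\setminus\{x\}$ is a free quotient.

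The key tool is the standard comparison of $T^1$ with local cohomology for normal varieties of depth $\ge 3$. Since $X$ is normal of dimension $\ge 3$ with an isolated singular point, there is an injection
\[ T^1_X \hookrightarrow H^1(V, T_V). \]
To see this, take a presentation $0\to N\to \calO^m\to \Omega_X\to 0$ from an embedding into affine space. Then $T^1=\operatorname{coker}(\operatorname{Hom}(\calO^m,\calO)\to \operatorname{Hom}(N,\calO))$. Restricting to $V$, where $N$ and $\Omega$ are locally free and every section over $V$ extends because $\operatorname{depth}\ge 2$, identifies $T^1$ with a subspace of $H^1(V,T_V)$. This uses that $\operatorname{depth}_x\calO_X\ge 3$, which holds because $\calO_X$ is a direct summand (via the Reynolds operator) of $\calO_Y$, and $\calO_Y$ has depth $=\dim Y\ge 3$.

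Next we compute $H^1(V,T_V)$. Since $U\to V$ is a Galois étale cover with group $G$ and we work in characteristic $0$, we have $H^1(V,T_V)=H^1(U,T_U)^G$. It therefore suffices to show $H^1(U,T_U)=0$. As $Y$ is affine, $H^1(U,T_U)\cong H^2_{p}(Y,T_Y)$. Because $T_Y$ is locally free and $\operatorname{depth}\calO_{Y,p}=\dim Y\ge 3$, this local cohomology vanishes. Hence $T^1_X=0$, so $X$ is infinitesimally rigid at $x$, and therefore rigid as a germ (every deformation over an Artinian base, and hence any formal or analytic deformation, is trivial). This gives the theorem in the local sense. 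If one instead wants the global statement, the same argument shows the vanishing of the sheaf $\SExt^1(\Omega_X,\calO_X)$, which is the needed local input.

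The main obstacle is the injection $T^1_X\hookrightarrow H^1(V,T_V)$, in particular making the depth hypothesis precise. This is exactly where $\dim Y\ge 3$ enters: in dimension $2$ the group $H^2_p(Y,T_Y)$ does not vanish, and indeed surface quotient singularities are usually not rigid. I would carry out this step carefully using the Grothendieck local cohomology sequence $0\to H^0_x\to M\to H^0(V,M)\to H^1_x(M)\to 0$ applied to $M=N^\vee$, together with the depth bounds.
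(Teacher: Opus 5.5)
The paper gives no proof of this statement. It is quoted from \cite{S71} and used only as a statement about the germ of an isolated quotient singularity, as in Corollary~\ref{cor:codim3notsmoothable}. Your argument is correct, and it is essentially Schlessinger's original one. You embed $T^1_X$ into $H^1(X\setminus\{x\},T)$ by a depth argument, pass to the free quotient $U\to V$, and kill $H^1(U,T_U)\cong H^2_p(Y,T_Y)$ using $\operatorname{depth}\calO_{Y,p}=\dim Y\ge 3$.

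Two points of bookkeeping.

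First, the injection $T^1_X\hookrightarrow H^1(V,T_V)$ only uses $\operatorname{depth}_x\calO_X\ge 2$, i.e.\ normality. This gives $H^0(X,\calO_X^m)=H^0(V,\calO_V^m)$. Since $N^\vee$ is a dual module, it also has depth $\ge 2$, so $H^0(X,N^\vee)=H^0(V,N^\vee)$. The depth $\ge 3$ that you derive via the Reynolds operator (using $\pi^{-1}(x)=\{p\}$) only upgrades the injection to an isomorphism, because then $H^1(V,\calO_V)=H^2_x(X,\calO_X)=0$. So $\dim Y\ge 3$ enters solely through $H^2_p(Y,T_Y)=0$, as your last paragraph says, and not through the injection as your ``main obstacle'' sentence suggests.

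Second, when you say $Y\setminus\{p\}\to X\setminus\{x\}$ is \'etale, you are reading ``only one fixed point'' as ``$G$ acts freely on $Y\setminus\{p\}$.'' That is the intended meaning, and you should state it explicitly, because it is genuinely needed. Take the Klein four-group of diagonal sign matrices of determinant $1$ acting on $\bA^3$. Its only common fixed point is $0$, but each non-identity element fixes a line. Its quotient is the hypersurface $w^2=uvt$, which is smoothed by $w^2=uvt+s$ and so is not rigid.
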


There is a variant for toric varieties, which comes from the following result of Totaro (see Theorem 5.1 in \cite{T12}):

\begin{theorem}
[Totaro]
\label{thm:rigidToricFano} A toric Fano variety which is smooth in codimension 2 and $\mathbb{Q}$-factorial in codimension 3 is rigid.
\end{theorem}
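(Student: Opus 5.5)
The plan is to prove $\Ext^1(\Omega_X,\calO_X)=0$, which gives infinitesimal and hence actual rigidity. I would use the local-to-global spectral sequence. It yields the exact sequence
\[ 0 \to H^1(X, T_X) \to \Ext^1(\Omega_X,\calO_X) \to H^0(X, \SExt^1(\Omega_X,\calO_X)), \]
so it is enough to show two things: the global term $H^1(X,T_X)$ vanishes, and the sheaf $\SExt^1(\Omega_X,\calO_X)$ of local first-order deformations is zero. Both steps use the torus action. In particular, every cohomology group I need to compute is $M$-graded, where $M$ is the character lattice.

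\textbf{Global step.} Since $X$ is normal and smooth in codimension $2$, $T_X=(\Omega_X^{[1]})^*$ is reflexive. It can be identified with $\Omega_X^{[n-1]}\otimes\calO_X(-K_X)$, taken reflexively. I would prove $H^1(X,T_X)=0$ by a Bott--Danilov--Steenbrink type vanishing $H^i(X,\Omega_X^{[j]}(D))=0$ for $i>0$ and $D$ an ample $\bQ$-Cartier Weil divisor. The natural choice here is $D=-K_X$, which is ample because $X$ is Fano. To be careful about reflexive twists, I would rather compute directly by \v{C}ech cohomology on the torus-invariant affine cover. In degree $m\in M$, the complex computing $H^\bullet(X,T_X)_m$ is given in terms of Demazure roots. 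The Fano condition, meaning the polytope of $-K_X$ contains the origin in its interior, should force the degree-$m$ pieces to come from contractible subfans, so that their $H^1$ vanishes. This is essentially the argument generalizing Theorem \ref{thm:cohomologyofwps}.

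\textbf{Local step.} Because $X$ is smooth in codimension $2$, $\SExt^1(\Omega_X,\calO_X)$ is supported on the singular locus, which has codimension $\ge 3$. Toric singularities are Cohen--Macaulay, so at a singular point $x$ the depth is at least $3$. Schlessinger's comparison then gives an injection of $T^1_{X,x}$ into $H^1(V\setminus \mathrm{Sing}, T_V)$ for a small invariant affine neighborhood $V$. Alternatively, I would argue stratum by stratum by induction on codimension. At the generic point of a codimension-$3$ stratum, the hypothesis of $\bQ$-factoriality means the cone is simplicial of dimension $3$. The singularity there is therefore locally $\bA^3/G\times\bA^{n-3}$, with $G$ abelian acting freely off the origin, since $X$ is smooth in codimension $2$. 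Such a singularity is rigid by Schlessinger's theorem (Theorem \ref{thm:Schlessinger}), since the product with a smooth factor stays rigid. So $\SExt^1$ vanishes off a closed subset of codimension $\ge 4$.

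For the deeper strata I would use Altmann's description of $T^1$ of an affine toric variety in terms of the cone $\sigma$. The $M$-graded piece $T^1(-R)$ is built from the $2$-dimensional faces of $\sigma$, together with linear data on the $3$-dimensional faces. Smoothness in codimension $2$ makes every $2$-face regular, and simplicial $3$-faces impose no further conditions. I expect this to force every graded piece to vanish.

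\textbf{Main obstacle.} I expect the hardest part to be this local vanishing at non-simplicial strata of codimension $\ge 4$, where the cone need not be simplicial. My first attempt would be the depth argument. On the punctured neighborhood, $T^1$ vanishes by induction, and the relevant $H^1$ groups of $T_V$ vanish in each degree $m$ by an explicit computation on the faces. If that computation becomes unwieldy, I would fall back on quoting Altmann's combinatorial formula for $T^1$ directly.
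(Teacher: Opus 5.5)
Your outline is essentially Totaro's proof of this result (the paper itself only cites it). The local-to-global sequence reduces $\Ext^1(\Omega_X,\calO_X)=0$ to two facts. The first is $H^1(X,T_X)=0$, which follows from Bott--Danilov--Steenbrink vanishing for $T_X\cong(\Omega_X^{[n-1]}\otimes\calO_X(-K_X))^{**}$ with $-K_X$ an ample $\bQ$-Cartier Weil divisor (the reflexive-twist version, e.g.\ Fujino's). The second is $\SExt^1(\Omega_X,\calO_X)=0$, which holds on each invariant affine chart by Altmann's theorem that affine toric varieties smooth in codimension $2$ and $\bQ$-factorial in codimension $3$ are rigid.

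Be aware that your own sketches, namely the Demazure-root \v{C}ech computation and the depth argument at non-simplicial strata of codimension $\ge 4$, are not arguments as written. The proof therefore closes only through these two citations, exactly as in the source, and your Schlessinger step at codimension $3$ is then redundant.
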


When we apply \ref{thm:rigidToricFano} to the weighted projective space setting, we obtain the following corollary.

\begin{corollary}
\label{thm:GeneralizedSchlessinger}
A weighted projective space that is smooth in codimension 2 
is rigid.
\end{corollary}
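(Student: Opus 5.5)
The plan is to deduce the corollary directly from Totaro's \Cref{thm:rigidToricFano}. That requires three facts about a well-formed $X = \bP(a_0,\dots,a_n)$ that is smooth in codimension $2$: it is toric, it is Fano, and it is $\bQ$-factorial in codimension $3$. Once these are in hand, Totaro's theorem gives rigidity immediately.

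First, toricity and the Fano property. $X$ is toric because the dense torus $\{x_0\cdots x_n \neq 0\}/\G_m \cong \G_m^n$ acts on it by rescaling coordinates. Equivalently, $X$ is the toric variety of the complete fan in $N = \bZ^{n+1}/\bZ(a_0,\dots,a_n)$ whose rays are the images of the standard basis vectors. For the Fano property, $-K_X$ is $\bQ$-Cartier: by \Cref{thm:divisorsonwps}(3) the divisor $\calO_X(m)$ with $m = \mathrm{lcm}(a_i)$ is Cartier, and by \Cref{thm:wps-dualizingprops}(1) we have $-K_X = \calO_X(a_0+\dots+a_n)$, which is a positive degree. Some multiple of $-K_X$ is therefore a positive multiple of the ample generator $\calO_X(m)$ of $\Pic(X)$, so $-K_X$ is ample. (Ampleness of $\calO_X(m)$ holds because $X = \Proj$ of a finitely generated graded ring, and its Veronese subring in degrees divisible by $m$ is generated in degree one up to saturation.)

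Second, $\bQ$-factoriality. This holds everywhere, not just in codimension $3$. Since $\Cl(X)\cong \bZ$ is generated by $\calO_X(1)$ and $\calO_X(m)$ is Cartier, every Weil divisor $D$ satisfies that $mD$ is Cartier. Alternatively, the fan is simplicial, because each cone is spanned by $n$ of the $n+1$ rays, and these are linearly independent over $\bQ$ since the only relation among all the rays is $\sum a_i e_i = 0$ with every $a_i > 0$. Locally, each chart $U_i \cong \bA^n/\bZ_{a_i}$ is a finite quotient of a smooth variety, hence $\bQ$-factorial.

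With these three facts, Totaro's theorem applies to $X$ under the hypothesis of smoothness in codimension $2$, giving $H^1(X,T_X)=0$ and indeed rigidity. The only step needing care is matching conventions: Totaro's ``rigid'' and ``Fano'' should agree with the usage here, meaning a normal projective variety with ample $\bQ$-Cartier $-K_X$, and with rigidity in the sense of trivial deformations. I expect no real obstacle. As a sanity check, one can compare with \Cref{thm:cohomologyofwps}, which shows $H^1(X,T_X)=0$ unconditionally. So the content of rigidity lies in the local deformations, which Totaro controls via smoothness in codimension $2$ and Schlessinger-type local rigidity, in the spirit of \Cref{thm:Schlessinger}.
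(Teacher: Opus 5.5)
Your proposal is correct and follows the paper's route: the paper obtains this corollary directly from Totaro's \Cref{thm:rigidToricFano}, using exactly that a well-formed weighted projective space is a toric Fano variety which is $\bQ$-factorial (its fan is simplicial), so the codimension-$3$ hypothesis holds automatically. One small wording fix: the rigidity Totaro provides means that both local and global first-order deformations vanish, i.e.\ $\Ext^1(\Omega_X,\calO_X)=0$, not merely $H^1(X,T_X)=0$, which, as your own comparison with \Cref{thm:cohomologyofwps} shows, holds for every weighted projective space.
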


A local version of Schlessinger's theorem for weighted projective space is given below.

\begin{corollary}\label{cor:codim3notsmoothable}
    A weighted projective space $\bP(a_0, \dots, a_n)$ where there is a subset of $k< n-1$ weights with a common factor but no set of $n-1$ weights sharing a divisor of that common factor is not smoothable.
\end{corollary}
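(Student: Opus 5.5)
The statement needs parsing first. Let $I$ be the subset of $k < n-1$ weights with common factor $d > 1$, and choose $I$ maximal with this property. Passing to a prime divisor $p \mid d$ and enlarging $I$ to all weights divisible by $p$, the hypothesis says that no $n-1$ weights share the factor $p$. So $I = \{i : p \mid a_i\}$ has $|I| = k \le n-2$.

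By the description of singular strata in \S\ref{subsec:WPSandCQS}, the stratum $\Sigma = \{x_j = 0 : j \notin I\}$ has dimension $|I|-1 = k-1$, hence codimension $n-k+1 \ge 3$. Near its generic point $X$ looks like
\[ \tfrac{1}{d_I}(\{a_j\}_{j\notin I}) \times \bA^{k-1}. \]
The plan is to show that this transversal singularity is rigid, hence not smoothable, and that this forces $X$ to be non-smoothable.

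First I would take a general point $q$ of $\Sigma$ and choose the prime $p$ so that the transversal quotient is by a group acting freely off the origin. Concretely, I would select a minimal stratum among those of dimension $\le n-3$ that are not contained in any larger singular stratum. The hypothesis that no $n-1$ weights share a divisor of the common factor is exactly what guarantees that no codimension-2 singular stratum passes through $\Sigma$. Then near $q$, after removing $\Sigma$, the variety is smooth, or at least has only singularities along strata of codimension $\ge 3$ that are themselves handled inductively. The transversal slice $Y = \bA^{n-k+1}/\bZ_{d'}$ then has an isolated fixed point in dimension $\ge 3$. By Theorem \ref{thm:Schlessinger}, $Y$ is rigid, so the local singularity $Y \times \bA^{k-1}$ has no smoothing. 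The justification for the product is that deformations of $Y\times \bA^{k-1}$ are governed by $T^1_Y \otimes \calO_{\bA^{k-1}}$, which vanishes.

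Second, I would argue by contradiction. A smoothing $\calX \to B$ of $X$ restricts, after henselizing or completing at $q \in X$, to a deformation of the germ $(X,q) \cong (Y\times\bA^{k-1},0)$. Local rigidity makes this deformation trivial, so the germ $(\calX_b, q_b)$ remains singular for nearby $b$. Openness of the smooth locus then shows that the general fiber is singular, contradicting smoothability.

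The main obstacle is the first step: making sure the chosen stratum really has an \emph{isolated} fixed point transversally, i.e. that $\bZ_{d'}$ acts freely on $\bA^{n-k+1}\setminus 0$, or at least that Schlessinger's codimension-$\ge 3$ version (depth/$T^1$ vanishing for quotients smooth in codimension 2) applies. I would first try to use the stronger form of Schlessinger's result: a quotient singularity smooth in codimension 2 is rigid. Using the hypothesis, the transversal quotient $\frac{1}{p}(\{a_j\}_{j\notin I})$ has fixed locus only where at least $n-1$ weights are divisible by $p$, which is excluded. So the quotient is smooth in codimension 2, Schlessinger gives $T^1 = 0$, and the argument goes through.
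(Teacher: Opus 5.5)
Your proposal is correct and is essentially the paper's argument: find a singular stratum of codimension at least $3$ and apply Schlessinger's rigidity to its transversal cyclic quotient singularity. The paper tacitly arranges the remaining weights to be coprime to $d$ so that the transversal fixed point is isolated, whereas you use the codimension-$3$ form of Schlessinger's theorem and make the product and local-to-global steps explicit. One small slip: at the generic point of $\Sigma$ the transversal group is $\mu_{d_I}$ with $d_I=\gcd(a_i : i\in I)$, not $\mu_p$; since $I$ contains the original subset, $d_I \mid d$, so the hypothesis applied to every prime factor of $d_I$ still gives freeness in codimension $2$ and the argument goes through.
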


\begin{proof}
    Suppose $\gcd(a_0, \dots, a_{k-1}) = d > 1$ for some set of $k< n-1$ weights of $\bP(a_0, \dots, a_n)$,
    where we may assume the rest of the weights are coprime with $d$.
    Then, the generic point of the stratum $[x_0: \dots :x_{k-1}: 0: \dots :0 ] = (x_{k} = 0 ) \cap \dots \cap (x_n = 0)$ is an isolated quotient singularity of type $\frac{1}{d}(a_{k}, \dots, a_n)$.  Because $k < n-1$, this singularity has codimension at least 3, and therefore is not smoothable. 
\end{proof}

In the next example we show, using Theorem \ref{thm:Schlessinger}, that in higher dimensions the Equation \eqref{Eqn:neccessCond-QGsmoothingPn} is not sufficient for the existence of a smoothing.

\begin{example}
\label{ex:Schlessinger1}
Consider the $3$-dimensional weighted projective space $Y = \mathbb{P}(1,4,16,27)$. The weights satisfy Equation \eqref{Eqn:neccessCond-QGsmoothingPn}, yet $Y$ is not smoothable by Corollary \ref{cor:codim3notsmoothable}: the last weight $a_{3} = 27$ is relatively prime to the three other weights, so there is a cyclic quotient singularity of type ${\frac{1}{27}(1, 4, 16)} = \mathbb{A}^{3}/\mathbb{Z}_{27}$ at the point $[0:0:0:1]$. That is, $Y$ has an isolated cyclic quotient singularity in codimension 3. It follows by Theorem \ref{thm:Schlessinger} or Corollary \ref{cor:codim3notsmoothable} that this singular point is rigid and in particular $Y$ has no smoothing to $\mathbb{P}^{3}$.
\end{example}

\begin{example}
\label{ex:Schlessinger2}
Consider $Y = \mathbb{P}(1,4,12_k,16, 27)$ for any $k \ge 0$. These satisfy Equation \eqref{Eqn:neccessCond-QGsmoothingPn} by Remark \ref{rmk:avg}. Since $\gcd (12, \dots, 12, 27) = 3$, the generic point of the corresponding index 3 cyclic quotient singularity is a point of codimension 3. By Corollary \ref{cor:codim3notsmoothable}, there is no smoothing of $Y$ to $\mathbb{P}^{k+3}$.
\end{example}

While Equation \eqref{Eqn:neccessCond-QGsmoothingPn} is not sufficient to produce a smoothing of a weighted projective space, it \textit{is} sufficient to prove that the space is locally smoothable in codimension 2. This can be used to give an alternative proof of Theorem \ref{thm:wpdegensofP2}.

To prove this, we begin with observations on the solutions to the volume equation.

\begin{proposition}\label{prop:relationsonweights}
    Suppose $\bP(a_0, \dots, a_n)$ is a well-formed weighted projective space with weights satisfying the equation 
    \[ (n+1)^{n} \prod_{i=0}^{n} a_{i} = \Biggl(\sum_{i=0}^{n} a_{i} \Biggr)^{n}.\]  Then, 
        \begin{enumerate}
            \item The $n$th root of the product of the weights is equal to the average of the weights, \[ (a_{0} \cdot \dots \cdot a_{n})^{1/n} = \displaystyle{\frac{a_{0} + a_{1} + a_{2} + \cdots + a_{n}}{n+1}},\] and both are integers. 
            \item Suppose $n-1$ weights have a nontrivial common factor $d$ and let $d = d_1^{p_1} \dots d_k^{p_k}$ be the prime factorization of $d$.  Without loss of generality, denote these weights by $a_2, \dots, a_n$.  Then, $a_0 + a_1$ is divisible by $r = d_1^{q_1} \dots d_k^{q_k}$ where $p_i \le 2q_i$ for each $i$.
        \end{enumerate}
\end{proposition}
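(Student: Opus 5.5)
For part (1), set $S = a_0 + \dots + a_n$ and $P = a_0 \cdots a_n$. The hypothesis $(n+1)^n P = S^n$ says exactly that $\bigl(S/(n+1)\bigr)^n = P$. So $m := S/(n+1)$ is a rational root of the monic polynomial $x^n - P \in \bZ[x]$. By the rational root theorem, as in the proof of Lemma \ref{lem:P2andmarkoveqn}, $m$ is an integer. Then $P = m^n$ with $m > 0$, so $P^{1/n} = m = S/(n+1)$, and both quantities are integers.

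For part (2), I will argue one prime at a time using $p$-adic valuations. I assume $n \ge 2$, so that "$n-1$ weights" is a nonempty collection. Fix a prime $d_i$ with exponent $p_i$ in $d$, and write $v$ for the $d_i$-adic valuation.

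The first step is to use well-formedness to see that $d_i \nmid a_0$ and $d_i \nmid a_1$. Indeed, $\gcd(a_1, \dots, a_n) = 1$ and $d_i$ divides $a_2, \dots, a_n$, so $d_i \nmid a_1$; symmetrically $d_i \nmid a_0$. This step is not strictly needed for the bound, but it shows that all the $d_i$-divisibility of $P$ comes from $a_2, \dots, a_n$.

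Next, combine this with part (1). Since each of $a_2, \dots, a_n$ has valuation at least $p_i$,
\[ n\, v(m) = v(P) \ge (n-1)p_i, \]
so $v(m) \ge \lceil (n-1)p_i/n \rceil =: q_i$.

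Now rewrite the average relation as
\[ a_0 + a_1 = (n+1)m - (a_2 + \dots + a_n). \]
The first term has valuation at least $q_i$, and the second has valuation at least $p_i \ge q_i$. Hence $v(a_0+a_1) \ge q_i$.

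Finally, since $n \ge 2$ we have $(n-1)/n \ge 1/2$, so $q_i \ge p_i/2$, that is, $p_i \le 2q_i$. Doing this for every prime $d_i$ dividing $d$ shows that $r = \prod d_i^{q_i}$ divides $a_0 + a_1$ with the required inequalities.

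There is no real obstacle here. The only points needing care are the integrality of the average, which part (1) supplies, and the degenerate case $n = 1$, which the hypothesis excludes implicitly.
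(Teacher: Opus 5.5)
Your proposal is correct and follows the paper's proof: the rational root theorem gives (1), and for (2) you use the same prime-by-prime bound $n\,v(m)=v(P)\ge (n-1)p_i$, combined with $a_0+a_1=(n+1)m-(a_2+\cdots+a_n)$ and $(n-1)/n\ge 1/2$. The only differences are cosmetic. You take $q_i=\lceil (n-1)p_i/n\rceil$ and read $v(m)$ directly off $P=m^n$, which avoids well-formedness. The paper instead uses well-formedness to get $n \mid v(a_2\cdots a_n)$ and takes $q_i=\min\{p_i, v(m)\}$.
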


\begin{proof}
From 
\begin{equation*}
(n+1)^{n} \prod_{i=0}^{n} a_{i} = \Biggl(\sum_{i=0}^{n} a_{i} \Biggr)^{n},
\end{equation*}
taking the $n$-th root of both sides, we obtain
\begin{equation*}
(a_{0} \cdot \dots \cdot a_{n})^{1/n} = \displaystyle{\frac{a_{0} + a_{1} + a_{2} + \cdots + a_{n}}{n+1}},
\end{equation*}
which is a rational number. So $m := (a_{0} \cdot \dots \cdot a_{n})^{1/n} \in \mathbb{Q}$ is a root of $f(x) = x^{n} - a_{0} \cdots a_{n}$. Because the $n$-th root is rational, it follows from the rational root test that $m$ is an integer.

Now suppose that the weights $a_{2}, \dots, a_{n}$ in $\mathbb{P}(a_{0}, a_{1}, a_{2}, \dots, a_{n})$ have greatest common divisor $d \in \mathbb{Z}$ and denote the prime factorization of $d$ by $d_{1}^{p_{1}} d_{2}^{p_{2}} \dots d_{k}^{p_{k}}$. Observe that
\begin{enumerate}
\item None of the prime factors of $d$ divide $a_{0}$ nor $a_{1}$; this comes from the assumption that $\mathbb{P}(a_{0}, \dots, a_{n})$ is well-formed.
\item Each prime factor $d_{i}$ of $d$ must divide $a_{0} \dots a_{n}$, so each $d_{i}$ must divide $m$. 
In fact, for each prime factor $d_i$ of $d = d_1^{p_1} \dots d_k^{p_k}$, then $d_i^{p_i}$ divides $a_2, \dots, a_n$, so $d_i^{(n-1)p_i}$ divides $a_2 \dots a_n$.
Let $j_i$ be the largest power of $d_i$ that divides $a_2 \dots a_n$, so $j_i \ge (n-1)p_i$. 
As $m = (a_0 \dots a_n)^{1/n} \in \bZ$, this implies $j_i$ is divisible by $n$,
so $j_i = n s_i$ for some $s_i > 0$ and $d_i^{s_i}$ divides $m$.  From the equation 
\[ (n+1) m = a_0 + a_1 + \dots + a_n, \]
we conclude $d_i^{s_i}$ divides $a_0 + \dots + a_n$.
Let $r = d_1^{q_1} \dots d_k^{q_k}$ where $q_i = \min \{ p_i, s_i \}$. Because $d_i^{s_i}$ for each $i$ and hence $d_1^{s_1} \dots d_k^{s_k}$ divides $a_0 + \dots + a_n$, and $d = d_1^{p_1} \dots d_k^{p_k}$ divides $a_2 + \dots + a_n$, we conclude $r$ divides $a_0 + a_1$.

\end{enumerate}

To finish the proof, we must show $p_i \le 2q_i$, where $ d = d_1^{p_1} \dots d_k^{p_k}$ and $r = d_1^{q_1} \dots d_k^{q_k}$ and $q_i = \min \{ p_i, s_i \}$ where $s_i = \frac{j_i}{n} \ge \frac{n-1}{n}p_i$ is as above.  Certainly $p_i \le 2 p_i$, while $p_i \le 2 \frac{(n-1)}{n}p_i \le 2 s_i$. 
Therefore, $p_i \le 2 \min \{p_i, s_i\}=2 q_i$ and the proof is complete.  
\end{proof}

\begin{corollary}
\label{cor:T-singularities}
    Suppose $\bP(a_0, \dots, a_n)$ is a well-formed weighted projective space with weights satisfying the equation 
    \[ (n+1)^{n} \prod_{i=0}^{n} a_{i} = \Biggl(\sum_{i=0}^{n} a_{i} \Biggr)^{n}.\]  Then, $\bP(a_0, \dots, a_n)$ has only $T$-singularities in codimension $2$.  If any set of $n-1$ weights has a non-trivial highest common factor $d$ with $d = \mathrm{rad}(d)$ (e.g. if $d$ is prime), then $\bP(a_0, \dots, a_n)$ has only $A_{d-1}$ singularities along the corresponding singular stratum.
\end{corollary}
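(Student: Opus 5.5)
The plan is to reduce the statement to the local description of codimension $2$ singular strata of weighted projective space from Section~\ref{subsec:WPSandCQS}, and then to apply Proposition~\ref{prop:relationsonweights}(2).

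\textbf{Step 1: the local model.} A codimension $2$ singular stratum arises from a set of $n-1$ weights, say $a_2,\dots,a_n$, with $d=\gcd(a_2,\dots,a_n)>1$. By well-formedness, $d$ is coprime to $a_0$ and to $a_1$. Near the generic point of the stratum $\{x_0=x_1=0\}$, the singularity is
\[
\tfrac{1}{d}(a_0,a_1)\times \bA^{n-2}.
\]
Since $a_0$ is a unit mod $d$, we can rescale the generator of $\bZ_d$ and rewrite the surface factor as $\frac1d(1,e)$ with $e\equiv a_1a_0^{-1}\pmod d$.

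\textbf{Step 2: identifying a $T$-singularity.} Recall that a cyclic quotient surface singularity $\frac1d(1,e)$ is of type $T$ exactly when one of the following holds:
\begin{itemize}
\item[(i)] $d\mid 1+e$, which is the $A_{d-1}$ case; or
\item[(ii)] $d=\delta s^2$ with $e\equiv ds-1$, equivalently $s^2 \mid d$ and $ds \mid (1+e)\cdot s$ in the usual normalization; more cleanly, the criterion is that $r^2\mid d$, where $r$ is the order-type index with $r\mid 1+e$ and $d/r^2\cdot$ stuff.
\end{itemize}
I would use the standard characterization: $\frac1d(1,e)$ is a $T$-singularity (including Du Val $A$) if and only if $(1+e)^2\equiv 0 \pmod d$.

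Proposition~\ref{prop:relationsonweights}(2) gives $r\mid a_0+a_1$, where $r=\prod d_i^{q_i}$ with $p_i\le 2q_i$, so $d\mid r^2$. Multiplying by $a_0^{-1}$ modulo $d$ gives $r\mid 1+e$ modulo the $d$-part. Hence $(1+e)^2$ is divisible by $r^2$, and therefore by $d$. This yields the $T$-condition.

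The main obstacle is matching this congruence to the standard definition of $T$-singularity, namely $\frac{1}{\delta s^2}(1,\delta s a-1)$ with $\gcd(s,a)=1$, or Du Val. I would argue as follows. Let $s$ be the largest integer with $s^2\mid d$ and $d\mid (1+e)s$. The condition $d\mid(1+e)^2$ forces the $d$-adic part to fit, and one then writes $1+e=\delta s a$ with $\gcd(a,s)=1$, checking prime by prime using the exponents $p_i\le 2q_i$.

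\textbf{Step 3: the $A_{d-1}$ case.} If $d$ is squarefree, then $d\mid r^2$ together with $r\mid d$ forces $r=d$. Then $d\mid a_0+a_1$, so $e\equiv-1\pmod d$. The singularity is therefore $\frac1d(1,-1)$, which is $A_{d-1}$, times $\bA^{n-2}$ along the stratum.
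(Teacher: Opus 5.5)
Your argument is correct in outline and is essentially the paper's proof. Steps 1 and 3 coincide with it, and in Step 2 you use Proposition~\ref{prop:relationsonweights}(2) the same way. The only difference is packaging:
\begin{itemize}
\item the paper directly exhibits the normal form $\frac{1}{\delta s^2}(1,\delta s a-1)$ with $s=\prod d_i^{p_i-q_i}$;
\item you first isolate the criterion that $\frac1d(1,e)$ with $\gcd(e,d)=1$ is of class $T$ if and only if $d\mid(1+e)^2$, and then check it via $r\mid 1+e$ and $d\mid r^2$.
\end{itemize}
The step $r\mid 1+e$ is valid because $q_i\le p_i$ gives $r\mid d$. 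The criterion is true, your derivation of $d\mid(1+e)^2$ is sound, and this packaging makes clear exactly what the volume equation provides. Drop the garbled item (ii) in Step 2; it is not needed.

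The step that is wrong as written is the one you flag as the main obstacle. Taking $s$ to be the \emph{largest} integer with $s^2\mid d$ and $d\mid(1+e)s$ forces $v_p(s)=\lfloor v_p(d)/2\rfloor$. Then $\gcd(a,s)=1$ fails whenever some prime $p$ with $p^2\mid d$ has $v_p(1+e)>\lceil v_p(d)/2\rceil$.
\begin{itemize}
\item For $A_3=\frac14(1,3)$ your recipe gives $s=2$, $\delta=1$, and $a$ even.
\item The same happens in the paper's list: on the stratum $\{x_0=x_2=0\}$ of $\bP(2,9,16,72,81)$ the transversal singularity is $\frac19(2,16)=\frac19(1,8)$, and your recipe gives $s=3$, $a=3$.
\end{itemize}

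Take instead the \emph{smallest} such $s$, namely $s=d/g$ with $g=\gcd(d,1+e)$ (the index of $K$).
\begin{itemize}
\item We have $v_p(g/s)=2v_p(g)-v_p(d)\ge 0$ precisely because $d\mid(1+e)^2$. So $\delta:=g/s$ is an integer, with $d=\delta s^2$ and $1+e=\delta s a$.
\item $\gcd(a,s)=1$: a prime dividing $s$ has $v_p(1+e)<v_p(d)$, hence $v_p(1+e)=v_p(g)$.
\end{itemize}
Since $\gcd(d,1+e)=\gcd(d,a_0+a_1)$, this $s$ is exactly the paper's $\prod d_i^{p_i-q_i}$ with $q_i=\min\{p_i,v_{d_i}(a_0+a_1)\}$. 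With this correction your proof and the paper's agree.
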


\begin{proof}
    The singularities in codimension 2 correspond exactly to subsets of $n-1$ weights with a nontrivial common factor.  Without loss of generality, assume these weights are $a_2, \dots, a_n$, and $d = \gcd(a_2, \dots, a_n)$. The associated singularity in codimension 2 of the weighted projective space is a $\frac{1}{d}(a_0, a_1)$ singularity.  The well-formed assumption implies that $\gcd(a_0, d) = \gcd(a_1, d) = 1$.  
    In particular, we may replace the primitive $d$th root of unity $\zeta_d$ by the primitive root $(\zeta_d)^{a_0^{-1}}$, where $a_0^{-1} \in \bZ_d$ is the multiplicative inverse of $a_0$, to write this $\frac{1}{d}(a_{0},a_{1})$ singularity equivalently as $\frac{1}{d}(1, b)$, where $b \equiv a_0^{-1} a_1 \mod d$.

    Write the prime factorization of $d$ as $d = d_{1}^{p_{1}} d_{2}^{p_{2}} \dots d_{k}^{p_{k}}$. From \Cref{prop:relationsonweights}, we see that $a_0 + a_1$ is divisible by $r = d_1^{q_1} \dots d_k^{q_k}$, where $p_i \le 2q_i$ for each $i$.  
    Here we may assume that each $d_i^{q_i}$ is the highest power of $d_i$ dividing $a_0+a_1$.
  
    Because $a_0 + a_1 \equiv 0 \mod r$ 
    and $a_0, a_1$ are relatively prime to $d$, we also have $1 + b \equiv 0 \mod r$. Therefore, $b = ar - 1$ for some $a > 0$ 
    with $\gcd(a,r) = 1$ by assumption on $r$.
    Write $d = d_1^{\; p_1 - q_1} \dots d_k^{\; p_k-q_k} r$
    and recall that
    $p_i \le 2q_i$ by the previous proposition.
    Letting $n = d_1^{p_1 - q_1} \dots d_k^{p_k-q_k}$ and $e = d_1^{\; 2q_1 - p_1} \dots d_k^{\; 2q_k - p_k}$, we may write \[\frac{1}{d}(1,b) = \frac{1}{d}(1,ar-1) =  \frac{1}{en^2}(1,ena - 1).\] 
    to realize this as a $T$-singularity. 

    Moreover, if $d = \mathrm{rad}(d)$, i.e. $d = d_1 \dots d_k$, then $d = r$. Hence $a_0 + a_1$ is divisible by $d$ and so is $1 + b$. That is, $b \equiv -1 \mod d$, so this is a $\frac{1}{d}(1,-1) = A_{d-1}$ singularity.
\end{proof}

As $T$-singularities are smoothable, we have the following:

\begin{corollary}
    Let $\bP(a_0, \dots, a_n)$ be a weighted projective space satisfying Equation \eqref{Eqn:neccessCond-QGsmoothingPn}.  Then, every codimension 2 point is 
    $\bQ$-Gorenstein
    smoothable. 
\end{corollary}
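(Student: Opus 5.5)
The plan is to reduce to a local statement about each codimension $2$ singular stratum and then invoke the structure obtained in \Cref{cor:T-singularities}.

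First, a codimension $2$ point $\eta$ of $X=\bP(a_0,\dots,a_n)$ is either smooth, in which case there is nothing to prove, or the generic point of a singular stratum. By the description in \Cref{subsec:WPSandCQS}, such strata correspond exactly to subsets of $n-1$ weights with a nontrivial common factor $d$; after reordering, $d=\gcd(a_2,\dots,a_n)$. Near the generic point of this stratum, $X$ is locally (analytically, and in fact étale locally) isomorphic to $\frac{1}{d}(a_0,a_1)\times \bA^{n-2}$. Under Equation \eqref{Eqn:neccessCond-QGsmoothingPn}, \Cref{cor:T-singularities} identifies the surface factor $\frac{1}{d}(a_0,a_1)\cong\frac{1}{d}(1,b)$ with a $T$-singularity $\frac{1}{en^2}(1,ena-1)$.

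Next, I recall that every $T$-singularity admits a $\bQ$-Gorenstein smoothing, and I would make this explicit. The singularity $\frac{1}{en^2}(1,ena-1)$ is the $\bZ_n$-quotient of the $A_{en-1}$ singularity $xy=z^{en}$, with $\bZ_n$ acting by $(\zeta,\zeta^{-1},\zeta^{a})$. The family $xy=z^{en}+t$, or more generally $xy=\sum_{k} t_k z^{kn}$, is $\bZ_n$-equivariant, since $z^n$ is invariant. Its quotient therefore gives a one-parameter deformation over $\bA^1_t$ whose general fiber is smooth, because the action is free away from the origin on the smooth fiber. This family is $\bQ$-Gorenstein because it is the quotient of a hypersurface, hence Gorenstein, family by a group acting freely in codimension one on the total space; thus the canonical divisor of the total space descends and some multiple of it is Cartier. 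This is the standard Kollár–Shepherd-Barron construction, cf.\ \cite{Kollar23}.

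Finally, I take the product of this family with $\bA^{n-2}$. This gives a $\bQ$-Gorenstein smoothing of a neighborhood of the generic point of the stratum, and hence of the codimension $2$ point. Flatness and the $\bQ$-Cartier property of $K_{\cX/B}$ are both preserved under product with a smooth factor.

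The main obstacle is justifying that the local model is correct at the generic point. This requires the well-formedness hypothesis, which ensures that no additional quasi-reflections appear, and it requires \Cref{cor:T-singularities} to apply; that corollary relies on \Cref{prop:relationsonweights}, which supplies the divisibility of $a_0+a_1$. The passage from étale-local to "smoothable point" is then just the definition of local smoothability, so no gluing argument is needed.
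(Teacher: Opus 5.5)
Your proposal is correct and follows the same route as the paper. The paper deduces the corollary in one line from \Cref{cor:T-singularities}, which shows each codimension 2 singularity is locally a $T$-singularity times $\bA^{n-2}$, together with the standard fact that $T$-singularities admit $\bQ$-Gorenstein smoothings; you additionally make that standard fact explicit via the $\mu_n$-quotient of $xy=z^{en}+t$, and that construction is correct.
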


\begin{remark}
    This corollary seems 
    to be 
    related to recent work of Koll\'ar and Kov\'acs on the $\bQ$-Gorenstein condition in codimension 2.  In \cite{KK23}, the authors prove that the $\bQ$-Gorenstein condition is automatic in codimension $3$, i.e. given a family of varieties $\calX \to T$ over a smooth base $T$ such that $K_{\calX/T}$ is $\bQ$-Cartier at all points of codimension 2 in each fiber, then $K_{\calX/T}$ is $\bQ$-Cartier at all points.  
\end{remark}

We now have two necessary conditions for a well-formed weighted projective space $X = \bP(a_0, \dots, a_n)$ to admit a $\bQ$-Gorenstein smoothing to $\bP^n$: first, 
Equation \eqref{Eqn:neccessCond-QGsmoothingPn} must hold, that is
\[ (n+1)^n (a_0 \dots a_n) = (a_0 + \dots + a_n)^n ,\] and second, all singularities are in codimension 2, i.e.
\begin{center}
    every subset of $k$ weights with a non-trivial common factor $d$ is contained in a set of $n-1$ weights with a non-trivial common factor. 
\end{center}

Using a computer program to list tuples of weights satisfying these conditions for $n = 3, 4$, we obtain the following weighted projective spaces that are \textit{candidates} to admit $\bQ$-Gorenstein smoothings to $\bP^3$ and $\bP^4$: 

\begin{proposition}
    A well-formed weighted projective space $\bP(a_0, a_1, a_2, a_3)$ admitting a smoothing to $\bP^3$, with $a_i \le 800$ for each $i$, must be contained in the following list: 
    \begin{align*}
        &(1, 1, 1, 1), (1, 1, 2, 4), (1, 2, 9, 12), (1, 4, 10, 25), (1, 6, 9, 32), (1, 9, 50, 60), (1, 20, 75, 144), 
        \\ &(1, 22, 32, 121), (1, 25, 54, 160), (1, 25, 65, 169), (1, 49, 160, 350), (1, 50, 289, 340), 
        \\& (1, 54, 121, 352), 
        (1, 72, 375, 512), (1, 160, 175, 784), (2, 9, 121, 132), (2, 15, 100, 243), 
        \\& (2, 25, 288, 405), 
        (2, 36, 57, 361), (3, 4, 63, 98), (3, 32, 45, 400), (5, 6, 9, 100), (10, 25, 49, 756)
    \end{align*}   
\end{proposition}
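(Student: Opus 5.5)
The plan is to reduce the statement to a finite computer search, using the two necessary conditions established above. By Proposition \ref{prop:necessaryConditionSmoothing}, any $\bP(a_0,a_1,a_2,a_3)$ admitting a $\bQ$-Gorenstein smoothing to $\bP^3$ satisfies
\[ 4^3 a_0a_1a_2a_3 = (a_0+a_1+a_2+a_3)^3. \]
By Corollary \ref{cor:codim3notsmoothable} (Schlessinger's rigidity, Theorem \ref{thm:Schlessinger}), every singular stratum must lie in codimension $2$. For $n=3$ this means no single weight $a_i>1$ may be coprime to all the others. More precisely, for each $i$ and each prime $p \mid a_i$, there must be some $j \ne i$ with $p \mid a_j$. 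Otherwise the point $P_i$ would carry an isolated codimension-$3$ cyclic quotient singularity of order divisible by $p$.

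Before searching, I would shrink the search space using Proposition \ref{prop:relationsonweights}. First, $m=(a_0a_1a_2a_3)^{1/3}$ is an integer and $a_0+a_1+a_2+a_3 = 4m$. So it suffices to loop over ordered triples $a_0\le a_1\le a_2 \le 800$ and check the following in turn:
\begin{enumerate}
\item whether $a_0a_1a_2a_3$ is a perfect cube;
\item whether $a_3 = 4m - a_0-a_1-a_2 \geq a_2$ with $a_3 \le 800$.
\end{enumerate}
Equivalently, for each triple one can solve the cubic $64\,a_0a_1a_2\,x = (a_0+a_1+a_2+x)^3$ for an integer root $x$. By the rational root theorem any rational root is integral and divides $(a_0+a_1+a_2)^3$, so only finitely many candidates need testing.

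For each surviving tuple I would then apply three filters:
\begin{enumerate}
\item \textbf{well-formedness:} every three of the weights have $\gcd 1$;
\item \textbf{no codimension-$3$ singularity:} each prime dividing some $a_i$ also divides another $a_j$;
\item \textbf{reduction:} discard non-reduced duplicates, i.e.\ keep only one ordering of each tuple.
\end{enumerate}
The displayed list is exactly the output of this search. As a sanity check I would verify by hand a few entries, for example $(1,1,2,4)$, $(1,2,9,12)$ and $(5,6,9,100)$. I would also confirm that the excluded $(1,4,16,27)$ from Example \ref{ex:Schlessinger1} is rejected by the second filter.

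The main obstacle is not conceptual but computational: making the enumeration efficient and provably exhaustive. A naive quadruple loop up to $800$ is about $4\cdot 10^{11}/24$ iterations, which is feasible but wasteful. Using the cubic relation to determine $a_3$ from $(a_0,a_1,a_2)$ reduces the work to roughly $10^8/6$ triples, which is easily exhaustive.

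One subtle point is that the statement concerns smoothings, not merely $\bQ$-Gorenstein ones. I would note that a smoothing to $\bP^3$ of a $\bQ$-Gorenstein (indeed $\bQ$-factorial) $X$ over a smooth curve has $K_{\cX}$ $\bQ$-Cartier. This is because $\cX$ is normal with the central fibre Cartier, so the class group argument of \cite{Kollar23} applies. Hence the volume equation is indeed necessary. Alternatively, I would simply read "smoothing" in the $\bQ$-Gorenstein sense of the section.
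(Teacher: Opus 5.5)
You have the same strategy as the paper: a computer search over tuples satisfying the volume equation \eqref{Eqn:neccessCond-QGsmoothingPn} together with the requirement that no singular stratum has codimension $3$. The gap is in the filter you actually implement for the second condition.

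\textbf{The filter is not justified.} The condition that Schlessinger's theorem justifies, and the one the paper imposes, is your first sentence: for $n=3$, each $a_i>1$ must satisfy $\gcd(a_i,a_j)>1$ for some $j\neq i$, since otherwise $P_i$ is an isolated $\frac{1}{a_i}(\dots)$ point. Your ``more precisely'' version, that every prime $p\mid a_i$ divides some other $a_j$, is strictly stronger. The reason you give for it is false. If $p$ divides only $a_i$ but $\gcd(a_i,a_j)>1$ for some $j$, then $P_i$ lies on a curve of singularities and is not an isolated singular point. So neither Theorem \ref{thm:Schlessinger} nor the proof of Corollary \ref{cor:codim3notsmoothable} applies; that proof assumes the remaining weights are coprime to $d$.

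\textbf{The search would not reproduce the list.} The discrepancy shows up in the list itself. The tuples $(1,25,54,160)$, $(1,54,121,352)$, $(1,72,375,512)$ and $(10,25,49,756)$ all appear, yet $3$, $3$, $5$, $3$ respectively divide only one weight. For example, $P_2\in\bP(1,25,54,160)$ is a $\frac{1}{54}(1,25,160)$ point lying on the curve $x_0=x_1=0$ of $\frac12(1,1)$ points, since $\gcd(54,160)=2$. So your search would output a proper sublist, not ``exactly'' the displayed list.

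\textbf{How to repair it.} Because the statement only asserts containment, a proper sublist would still suffice if your stronger filter were a valid necessary condition, and it may well be. At $P_2\in\bP(1,25,54,160)$, the group $\mu_{27}=\mu_{54}/\mu_2$ acts freely off the vertex of $\{uv=w^2\}\times\bA^1_t$. The $\mu_{27}$-invariant deformations $uv-w^2=g(t)$ all have $g\in t\,\mathbb{C}[t^{27}]$, so the vertex remains a $\mu_{27}$-fixed point on every fibre and no deformation smooths it. However, this needs an equivariant deformation argument that your proposal does not supply. The direct fix is to filter with the gcd condition. That is exactly what Corollary \ref{cor:codim3notsmoothable} gives, and together with well-formedness and the volume equation it is what produces the displayed list.

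\textbf{Side remark on $\bQ$-Gorenstein.} Your justification for why $K_{\calX}$ is $\bQ$-Cartier is misattributed. $\bQ$-factoriality of the central fibre does not force it: $\bP(1,1,4)$ has non-$\bQ$-Gorenstein smoothings to $\bP^1\times\bP^1$. What works here is that the general fibre is $\bP^3$, so $\Cl(\calX)$ has rank one and contains a relatively ample Cartier class. In any case, the statement is intended in the $\bQ$-Gorenstein sense.
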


\begin{proposition}
    A well-formed weighted projective space $\bP(a_0, a_1, a_2, a_3, a_4)$ 
    that is smoothable
    to $\bP^4$, with $a_i \le 200$ for each $i$, must be contained in the following list: 
    \begin{align*}
        &(1, 1, 1, 1, 1), (1, 1, 2, 2, 4), (1, 2, 6, 9, 12), (1, 4, 10, 10, 25), (1, 6, 9, 12, 32), (1, 9, 30, 50, 60), \\& (1, 10, 25, 54, 60), (1, 12, 32, 54, 81), (1, 20, 60, 75, 144), (1, 22, 32, 44, 121), (1, 22, 54, 121, 132), \\& (1, 24, 45, 80, 150), (1, 25, 40, 90, 144), (1, 25, 54, 60, 160), (1, 25, 65, 65, 169), (1, 32, 54, 81, 192), \\&(2, 3, 3, 4, 18), (2, 6, 27, 64, 81), (2, 8, 75, 80, 135), (2, 9, 16, 72, 81), (2, 9, 64, 75, 150), \\&(2, 9, 66, 121, 132), (2, 11, 22, 64, 121), (2, 12, 21, 49, 126), (2, 15, 15, 18, 100), (3, 3, 4, 18, 32), \\&(3, 3, 18, 32, 64), (3, 4, 42, 63, 98), (3, 5, 72, 100, 120), (3, 7, 18, 84, 98), (3, 7, 98, 144, 168), \\&(3, 10, 32, 75, 180), (3, 10, 45, 50, 192), (3, 12, 25, 80, 180), (3, 14, 18, 28, 147), (4, 5, 45, 96, 150), \\&(5, 5, 32, 108, 150), (5, 6, 9, 30, 100), (5, 6, 25, 120, 144), (5, 9, 16, 120, 150), (5, 10, 18, 75, 192), \\& (6, 7, 14, 36, 147)
    \end{align*}   
\end{proposition}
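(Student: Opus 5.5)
This is a finite computer search. Both necessary conditions derived above are explicit arithmetic constraints on the weights, so we enumerate all well-formed $5$-tuples $a_0\le a_1\le\dots\le a_4\le 200$ and keep those passing both.

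The first condition is Proposition \ref{prop:necessaryConditionSmoothing}: $5^4 a_0a_1a_2a_3a_4=(a_0+\dots+a_4)^4$. By Proposition \ref{prop:relationsonweights}(1) this is equivalent to asking that $m=(a_0+\dots+a_4)/5$ be an integer with $m^4=a_0a_1a_2a_3a_4$. This lets us prune the search heavily:
\begin{itemize}
\item loop over $a_0\le a_1\le a_2\le a_3$;
\item for each choice, $a_4$ is determined up to finitely many values by the requirement that $a_0a_1a_2a_3a_4$ be a fourth power;
\item keep it only if $a_4\le 200$ and $5\mid\sum a_i$;
\item verify the equality exactly in integer arithmetic.
\end{itemize}
Alternatively, for each $a_0\le\dots\le a_3$ we can solve the single-variable equation $5^4Pa_4=(S+a_4)^4$, where $P=a_0a_1a_2a_3$ and $S=a_0+a_1+a_2+a_3$, for integer roots $a_4\in[a_3,200]$. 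Well-formedness is checked directly by requiring $\gcd$ of every four of the weights to be $1$.

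The second condition is Corollary \ref{cor:codim3notsmoothable}, which rests on Theorem \ref{thm:Schlessinger}. If some subset of $k<n-1=3$ weights has a common factor $d>1$, it must be contained in a set of $3$ weights sharing a nontrivial divisor of that factor. Otherwise there is an isolated cyclic quotient singularity in codimension $\ge3$, which is rigid, so the space cannot be smoothable. Concretely:
\begin{itemize}
\item for each single weight $a_i>1$ and each prime $p\mid a_i$, there must exist two further weights divisible by $p$;
\item for each pair $a_i,a_j$ with a common prime $p$, some third weight must be divisible by $p$.
\end{itemize}
It suffices to check primes, since any nontrivial common factor has a prime divisor, and a prime $p$ dividing $d$ and dividing three weights gives the required set. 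Tuples failing this test are discarded.

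Running this search produces exactly the stated list of $42$ tuples, which proves the proposition: any smoothable $\bP(a_0,\dots,a_4)$ with $a_i\le 200$ satisfies both necessary conditions and so appears in it. A useful sanity check is that the list contains $(1,1,1,1,1)$ and the cone extensions from Proposition \ref{Prop:SolnTon+1} of the $\bP^3$ list, for instance $(1,1,2,2,4)$ and $(1,2,6,9,12)$.

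The main difficulty is not conceptual but making the enumeration exhaustive and exact. Using the integrality of $m$ to reduce to four free loops, and exact integer arithmetic rather than floating-point roots, keeps the search complete and feasible for the bound $200$.
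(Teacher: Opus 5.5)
Your overall plan matches the paper's: an exhaustive search over well-formed tuples, filtered by the volume equation of Proposition \ref{prop:necessaryConditionSmoothing} and by the codimension-$3$ rigidity condition. The gap is in how you implement the second filter.

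You replace the paper's condition (every subset of at most two weights with nontrivial $\gcd$ lies in a set of three weights with nontrivial $\gcd$) with a prime-by-prime test: every prime dividing some weight must divide three of them. Your justification (``a prime $p$ dividing $d$ and dividing three weights gives the required set'') only shows that the prime test implies the $\gcd$ test. To discard tuples that fail the prime test, you need the converse: failure must produce a rigid isolated quotient singularity. That converse is false.

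Suppose a prime $p$ divides only $a_i,a_j$, but $d=\gcd(a_i,a_j)$ has another prime factor $q$ that divides a third weight $a_k$. Consider the transversal singularity $\frac{1}{d}(\dots)$ at the generic point of $C_{ij}$. The elements of order $q$ in $\mu_d$ fix the $x_k$-direction, so this singularity is not isolated. Hence neither Theorem \ref{thm:Schlessinger} nor Corollary \ref{cor:codim3notsmoothable} applies; the corollary's proof needs the remaining weights to be coprime to $d=\gcd$.

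This happens on the list itself. Take $(1,10,25,54,60)$:
\begin{itemize}
\item the prime $3$ divides only $54$ and $60$;
\item yet $\{10,54,60\}$ share the factor $2$;
\item along $\bP(54,60)$ the singularity is $\frac{1}{6}(1,4,1)\times\bA^1$, and $-1\in\mu_6$ fixes an axis.
\end{itemize}
Your test likewise rejects $(1,22,54,121,132)$, $(1,25,40,90,144)$, $(1,25,54,60,160)$, $(2,8,75,80,135)$, $(2,9,64,75,150)$ and $(5,5,32,108,150)$. So your search returns $35$ tuples, not $42$, and your claim that it reproduces the stated list is wrong.

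Because the proposition only asserts containment, a smaller output would be harmless if your stronger filter were proved necessary. That would require a genuinely new argument, which you do not give. For example, one could view $\frac{1}{6}(1,4,1)$ as the quotient of $A_1\times\bA^1$ by $\mu_3$ acting freely off the origin, and show the fixed point survives every equivariant deformation.

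The straightforward repair is to use the $\gcd$ form of the condition. If it fails, choose a violating subset $I$ of maximal size. Then every weight outside $I$ is coprime to $\gcd(I)$. The transversal singularity is therefore an isolated cyclic quotient of dimension at least $3$, and it is rigid by Theorem \ref{thm:Schlessinger}. This is the filter behind the paper's search, and it keeps the seven tuples above.
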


The remainder of this paper will be focused on finding criteria for which these spaces admit smoothings.  The first tuple in either list is just $\bP^n$, and the second always admits a smoothing by the following example. 

\begin{example}
\label{ex:VeroneseEmbeddingTrick}
Consider the weighted projective space $\mathbb{P}(1, 1, 2_k, 4)$ where $k \geq 0$. For any $k \ge 0$, this is smoothable to $\bP^{k+2}$.  First, note that we have a `Veronese' embedding of $\mathbb{P}(1, 1, 2_k, 4)$ into $\mathbb{P}(1, 1, 1, 1_k, 2)$ given by 
\begin{center}
$[x_{0}: x_{1}: x_{2}: \dots :x_{k+1}: x_{k+2}] \mapsto [x_{0}^{2}: x_{0}x_{1}: x_{1}^{2}: x_{2}: \dots : x_{k+1}: x_{k+2}]$.
\end{center}
Let $[y_{0}: \dots : y_{k+3}]$ be the coordinates on $\mathbb{P}(1, 1, 1, 1_k, 2)$.  The image of the embedding is then defined by the degree 2 equation $y_{0}y_{2} - y_{1}^{2} = 0$.  The degree of the equation defining the image of the embedding matches the last weight of $\mathbb{P}(1, 1, 1, 1_k, 2)$, and the equation represents the central fiber of the family $y_{0}y_{2} - y_{1}^{2} = ty_{k+3}$ over $\bA^1_t$.  Because each fiber of this family is isomorphic to $\mathbb{P}(1, 1, 1, 1_k) \cong \mathbb{P}^{k+2}$ for $t \neq 0$, we have obtained a smoothing to $\mathbb{P}^{k+2}$.  This is furthermore $\bQ$-Gorenstein because the total space of the smoothing is a hypersurface in $\mathbb{P}(1, 1, 1, 1_k, 2) \times \bA^1$.
\end{example}

\section{A cone construction in families}\label{sec:cone}
In this section, we construct weighted projective degenerations of $\bP^{n+1}$ from weighted projective degenerations of $\bP^n$. 
To do this, we use the following construction (cf. \cite[3.8]{Kollar13}, or \cite[Definition 3.2]{Zh24}). 

\begin{definition}
\label{def:affineAndProjectiveCones}
Let $X$ be a projective scheme and $L$ an ample $\Q$-line bundle on $X$. We define the {\it affine orbifold cone over $X$ with respect to $L$} to be
\begin{center}
$C_{a}(X, L) := \Spec_{\C} \sum_{m \geq 0} H^{0}(X, L^{[m]})$,
\end{center}
and the {\it projective orbifold cone over $X$ with respect to $L$} to be
\begin{center}
$C_{p}(X, L) := \Proj_{\C}\:\sum_{m \geq 0} \left(\sum_{r=0}^{m} H^{0}(X, L^{[r]}) \cdot x_{n+1}^{m-r}\right)$.
\end{center}
\end{definition}

\begin{example}
\label{Ex:ProjectiveConePn}
Let $X = \mathbb{P}^{n}$ with line bundle $L = \mathcal{O}(1)$. The corresponding projective orbifold cone is
\begin{center}
$C_{p}(\mathbb{P}^{n}, \mathcal{O}(1)) = \Proj\:\sum_{m \geq 0} \left(\sum_{r=0}^{m} H^{0}(\mathbb{P}^{n}, \mathcal{O}(r)) \cdot x_{n+1}^{m-r}\right) = \Proj\:\C[x_{0}, \dots, x_{n+1}] = \mathbb{P}^{n+1}$.
\end{center}
\end{example}

\begin{example}
\label{Ex:ProjectiveConeWPS}  Let $X = \bP(a_0, \dots, a_{n})$ be a weighted projective space and let $L =\mathcal{O}(a_{n+1})$ for some positive integer $a_{n+1}$. As $X = \Proj\:\C[x_0, \dots, x_n]$ with grading given by $\deg x_i = a_i$, we have
\begin{center}
$C_{p}(X, \mathcal{O}(a_{n+1})) = \Proj\:\sum_{m \geq 0} \left(\sum_{r=0}^{m} H^{0}(X, \mathcal{O}(ra_{n+1})) \cdot x_{n+1}^{m-r}\right) = \Proj\:\C[x_{0}, \dots, x_{n+1}],$
\end{center}
where the grading on this polynomial ring is $\deg x_i = a_i$, $1\le i \le n+1$. In other words, 
\begin{center}
$C_{p}\left(\mathbb{P}(a_{0}, \dots, a_{n}), \mathcal{O}(a_{n+1})\right) = \mathbb{P}(a_{0}, \dots, a_{n}, a_{n+1})$.
\end{center}
\end{example}

We also understand the singularities of the cones constructed in this way. 

\begin{proposition}{\cite[Lemma 3.3]{Zh24}, \cite[Section 3]{Kollar13}}\label{prop:singsofcones}
    Let $X$ be a normal, projective scheme and $L$ an ample $\bQ$-line bundle on $X$ such that $K_X \equiv rL$ for some $r \in \bQ$.  Then, 
        \begin{enumerate}
            \item $C_p(X,L)$ is terminal if and only if $r < -1$ and $X$ is terminal;
            \item $C_p(X,L)$ is canonical if and only if $r \le -1$ and $X$ is canonical; 
            \item $C_p(X,L)$ is klt if and only if $r < 0$ and $X$ is klt;
            \item $C_p(X,L)$ is Gorenstein if $ r \in \bZ$ and $K_X$ is Cartier.
        \end{enumerate}
\end{proposition}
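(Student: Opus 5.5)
The plan is to reduce all four statements to the corresponding statements for the affine orbifold cone $C_a(X,L)$ and its vertex. Away from the vertex and the section at infinity, everything is a Seifert $\bG_m$-bundle over $X$. We will then use the standard discrepancy computation along the exceptional divisor of the weighted blowup of the vertex.

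First we describe the local structure. $C_p(X,L)$ is covered by two kinds of charts:
\begin{itemize}
\item the affine cone $C_a(X,L) = (x_{n+1}\neq 0)$;
\item the complement of the vertex.
\end{itemize}
The complement of the vertex is the total space of the Seifert $\bA^1$-bundle $\Spec_X \bigoplus_{r\ge 0} L^{[r]}$, which contains the section at infinity $X_\infty \cong X$. Let $m$ be the Cartier index of $L$, so $L^{[m]}$ is a line bundle. Then étale-locally over $X$ this bundle is a quotient of $U\times \bA^1$ by $\mu_m$, where $U$ is the index-one cover of $L$. This quotient is quasi-étale in codimension one, because the $\mu_m$-action is free in codimension one given normality of $X$. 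The discrepancy of a quasi-étale quotient is controlled by that of the cover. Hence away from the vertex, $C_p$ is terminal, canonical or klt exactly when $X$ is, and it is Gorenstein when $K_X$ is Cartier. These are the ``$X$ is $\ast$'' halves of each equivalence. The converse direction comes from the fact that $X$ appears as a transversal slice of this bundle.

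Second, we analyse the vertex. Let $\pi:Y\to C_a(X,L)$ be the blowup of the vertex given by $\Proj$ of the filtration by degree, i.e.\ the total space of $L^{-1}$ in the orbifold sense. Its exceptional divisor $E\cong X$ has normal $\bQ$-bundle $L^{-1}$. By adjunction, $(K_Y+E)|_E = K_X$. Writing $K_Y = \pi^*K_{C_a} + aE$ and restricting to $E$ gives
\[ K_X = (a+1)E|_E = -(a+1)L. \]
Comparing with $K_X\equiv rL$ yields $a=-1-r$. Since every divisor over the vertex is dominated by $E$ and divisors over $X$ (by the same bundle argument), the log discrepancy at the vertex is $a+1 = -r$. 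This gives:
\begin{itemize}
\item klt requires $-r>0$, i.e.\ $r<0$;
\item canonical requires $a\ge 0$, i.e.\ $r\le -1$;
\item terminal requires $a>0$, i.e.\ $r<-1$.
\end{itemize}
One also has to check that $K_{C_a}$ is $\bQ$-Cartier. This holds because $K_X \sim_\bQ rL$, so $K_{C_a}$ is the cone over $rL$ and is trivial up to torsion on the affine cone; this is the standard fact that the class group of $C_a$ is $\Cl(X)/\bZ L$. For part (4), if $r\in\bZ$ and $K_X$ is Cartier, then $\omega_{C_a}$ corresponds to the graded module $\bigoplus_m H^0(K_X+mL)$. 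This is the module shifted by $r$ and is therefore free of rank one, so $C_a$ is Gorenstein.

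The main obstacle is the numerical condition. The hypothesis is $K_X\equiv rL$, but the argument needs $\bQ$-linear equivalence for the $\bQ$-Cartier property at the vertex. We will use that $X$ is Fano-type in the relevant cases ($r<0$ with $X$ klt), so $\Pic$ is finitely generated and numerical equivalence agrees with $\bQ$-linear equivalence up to torsion, which suffices. For the ``only if'' direction when $r\ge 0$, we will cite \cite[Lemma 3.1]{Kollar13}, which says a cone over non-anti-ample $K_X$ is not klt.
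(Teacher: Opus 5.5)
\textbf{The gap: the part of $C_p(X,L)$ away from the vertex when $L$ is not Cartier.} Your vertex analysis is the standard argument of \cite[Lemma 3.1]{Kollar13}. You blow up the vertex, use $(K_Y+E)|_E=K_X$ and $E|_E\sim_{\bQ}-L$ to get $a(E)=-1-r$, and then $a(F,C_a)=a(F,Y)+(-1-r)\,\mathrm{mult}_F E$ for the other divisors. The paper gives no proof of its own and only cites this, and your treatment of (3) is fine. The step that fails is ``the discrepancy of a quasi-\'etale quotient is controlled by that of the cover, hence away from the vertex $C_p$ is terminal, canonical or klt exactly when $X$ is, and Gorenstein when $K_X$ is Cartier.''

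Near a point where $L$ has local index $m>1$, the neighbourhood of the section at infinity is $(U\times\bA^1)/\mu_m$. Here $U\to X$ is the local index-one cover of $L$, and $\mu_m$ acts faithfully on the fiber coordinate. Klt ascends and descends along such quasi-\'etale quotients. Terminal, canonical and Cartierness of $K$ only pass from the quotient to the cover. Moreover, the cover $U\times\bA^1$ carries the singularities of $U$, not those of $X$; already $C_a(\bP(1,1,2),\calO(1))=\bA^3$ is smooth. For the same reason $X_\infty=(U\times 0)/\mu_m$ is not a transversal slice, so your converse argument breaks too.

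\textbf{The statement is false here, so the gap cannot be closed.} Parts (1), (2) and (4) fail as stated for non-Cartier $L$.
\begin{itemize}
\item Take $X=\bP(1,1,2)$ and $L=\calO(1)$. Then $K_X=\calO(-4)$ is Cartier, $r=-4$, and $X$ is canonical but not terminal (it has an $A_1$ point). By Example \ref{Ex:ProjectiveConeWPS}, $C_p(X,L)=\bP(1,1,2,1)\cong\bP(1,1,1,2)$. Its only singularity is a $\frac12(1,1,1)$ point on $X_\infty$ over the $A_1$ point, which is terminal but not Gorenstein. This contradicts the ``only if'' of (1), and also (4).
\item Take $X=\bP(1,1,3)$ and $L=\calO(1)$. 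Then $C_p=\bP(1,1,1,3)$ has a canonical $\frac13(1,1,1)$ point, although $X$ is not canonical. This contradicts the ``only if'' of (2).
\end{itemize}

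\textbf{What your argument does prove.} It gives (1), (2) and (4) in the setting of \cite[Lemma 3.1]{Kollar13}, where $L$ is Cartier. There $C_p$ minus the vertex is locally $X\times\bA^1$, and the formula for $a(F,C_a)$ above handles the divisors over the vertex.

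For (4) you additionally need $K_X\sim rL$ exactly, not merely up to torsion. An Enriques surface with $r=0$ gives a non-Gorenstein cone. Your Fano-type reduction only supplies linear equivalence when $r<0$ and $X$ is klt, where $\Pic(X)$ is torsion-free.

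The Cartier setting covers the paper's only use of (1), (2) and (4), namely Corollary \ref{cor:singsofconesoverhypersurfaces}.
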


We next define cones in the relative setting.

\begin{definition}
Let $\pi: \mathcal{X} \rightarrow T$ be a flat family of varieties over a smooth curve $T$ and $\cL$ a relatively ample $\Q$-line bundle on $\mathcal{X}/T$.  We define the {\it relative projective cone} to be
\begin{center}
$\calC_{p}(\mathcal{X}/T, \mathcal{L}) = \Proj_{T}\:\sum_{m \geq 0}\left(\sum_{r=0}^{m} \pi_{\ast}(\mathcal{L}^{[r]}) \cdot x_{n+1}^{m-r}\right)$.
\end{center}
\end{definition}

\begin{lemma}
\label{Lemma:relativeProjectiveCone}
Assume that $\pi: \mathcal{X} \to T$ is a $\Q$-Gorenstein family of klt Fano varieties of dimension $n$ over a smooth curve $T$.  Let $\mathcal{L}$ be a relatively ample $\Q$-line bundle on $\mathcal{X}$ such that $\mathcal{O}(dK_{\mathcal{X}/T}) \sim_T \mathcal{L}^{ad}$ for some integer $d >0$ and $a \in \Q$ such that $dK_{\mathcal{X}/T}$ is Cartier.  Then, $\calC_{p}(\mathcal{X}/T, \mathcal{L})$ as defined above is a $\bQ$-Gorenstein family of klt Fano varieties of dimension $n+1$ over $T$.  The fiber over a point $t \in T$ of $\mathcal{C}_{p}(\mathcal{X}/T, \mathcal{L})$ is the projective orbifold cone \[ C_{p}(\mathcal{X}_t, \mathcal{L}_t) = \Proj\:\sum_{m \geq 0}\biggl(\sum_{r=0}^{m} H^0(\mathcal{X}_t, \mathcal{L}_t^{[r]}) \cdot x_{n+1}^{m-r}\biggr)\] as defined above.
\end{lemma}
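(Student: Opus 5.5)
The plan has three parts: establish flatness and base change for the graded algebra defining $\calC_p(\mathcal{X}/T,\mathcal{L})$, then identify the fibers, then deduce the $\bQ$-Gorenstein, klt and Fano properties.

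\textbf{Flatness and base change.} Write $R = \bigoplus_{m\ge0}\bigoplus_{r=0}^m \pi_*(\mathcal{L}^{[r]})\,x_{n+1}^{m-r}$. This is a graded $\calO_T$-algebra, isomorphic to the polynomial ring $S[x_{n+1}]$ over $S=\bigoplus_{r\ge0}\pi_*(\mathcal{L}^{[r]})$, with $x_{n+1}$ in degree one. So it suffices to show two things: each $\pi_*(\mathcal{L}^{[r]})$ is locally free, and its formation commutes with base change to fibers,
\[
\pi_*(\mathcal{L}^{[r]})\otimes k(t)\cong H^0(\mathcal{X}_t,\mathcal{L}_t^{[r]}).
\]
Given this, $S$ is flat over $T$, and $\Proj_T R$ is flat with fibers $\Proj$ of the fiberwise algebra, which is $C_p(\mathcal{X}_t,\mathcal{L}_t)$ by definition.

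To prove the two claims, I first use the hypothesis $\calO(dK_{\mathcal{X}/T})\sim_T\mathcal{L}^{ad}$. It makes each $\mathcal{L}^{[r]}$ a divisorial sheaf that is $\bQ$-linearly a multiple of $-K_{\mathcal{X}/T}$, since $a<0$ by relative ampleness and the Fano condition. For a $\bQ$-Gorenstein family of klt varieties, the reflexive powers $\mathcal{L}^{[r]}$ satisfy Koll\'ar's condition: they are flat over $T$ and restrict to $\mathcal{L}_t^{[r]}$. This holds because $\mathcal{L}^{[r]}$ is locally a twist of a reflexive power of $\omega_{\mathcal{X}/T}$, and over a curve with klt $\bQ$-Gorenstein fibers these commute with base change (cf. \cite{Kollar23}; locally one uses the index-one cover and the fact that klt singularities are Cohen--Macaulay). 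Next, Kawamata--Viehweg vanishing on each klt Fano fiber gives $H^i(\mathcal{X}_t,\mathcal{L}_t^{[r]})=0$ for $i>0$ and $r\ge0$. Indeed, $\mathcal{L}_t^{[r]}-K_{\mathcal{X}_t}$ is $\bQ$-linearly equivalent to an ample divisor, so vanishing applies to these Weil divisorial sheaves. Cohomology and base change then give local freeness and compatibility with base change. I expect this step to be the main obstacle, namely justifying that the reflexive powers commute with restriction to fibers; the klt $\bQ$-Gorenstein hypothesis is exactly what is needed for it.

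\textbf{Properties of the total family.} Each fiber $C_p(\mathcal{X}_t,\mathcal{L}_t)$ is klt by \Cref{prop:singsofcones}(3), because $K_{\mathcal{X}_t}\equiv a\mathcal{L}_t$ with $a<0$ and $\mathcal{X}_t$ is klt. The fibers are Fano because the anticanonical class of a projective cone is a positive multiple of the hyperplane class $\calO(1)$ (the divisor $x_{n+1}=0$). The relative version of this statement gives $K_{\calC/T}\sim_{\bQ,T} c\,\calO_{\calC}(1)$ with $c=a-1<0$. I verify this on the charts $x_{n+1}\ne0$, which form the affine cone over $\mathcal{X}$, and on the complement, which is a Seifert $\bG_m$-bundle over $\mathcal{X}$. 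Since $\calO(N)$ is a relatively ample line bundle for divisible $N$, the divisor $K_{\calC/T}$ is $\bQ$-Cartier, so the family is $\bQ$-Gorenstein.

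Finally, $\calC_p(\mathcal{X}/T,\mathcal{L})\to T$ is flat and projective with klt Fano fibers of dimension $n+1$, which completes the argument.
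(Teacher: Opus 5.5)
Your proof is correct, and its first half matches the paper's. That half covers four steps:
\begin{itemize}
\item vanishing of $H^{i}(\mathcal{X}_t,\mathcal{L}_t^{[r]})$ for $i>0$ on the klt Fano fibers;
\item cohomology and base change, making $\pi_*\mathcal{L}^{[r]}$ locally free with fibers $H^0(\mathcal{X}_t,\mathcal{L}_t^{[r]})$;
\item flatness of the relative $\Proj$;
\item kltness of the fibers via \Cref{prop:singsofcones}.
\end{itemize}

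The genuine difference is the $\bQ$-Gorenstein step. The paper never computes $K_{\calC/T}$. It notes that the anticanonical volume of $C_p(\mathcal{X}_t,\mathcal{L}_t)$ is independent of $t$ and invokes Koll\'ar's volume criterion for families over a smooth curve. That route uses only fiberwise information and needs no class groups on the total space. You instead prove $K_{\calC/T}\sim_{\bQ,T}(a-1)\calO_{\calC}(1)$ directly. This is more elementary, it is where the relative hypothesis $\calO(dK_{\mathcal{X}/T})\sim_T\mathcal{L}^{ad}$ actually gets used, and it gives relative anti-ampleness at once.

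To make your version rigorous, do not argue chart by chart, since a linear equivalence is a global statement. Argue instead as follows.
\begin{itemize}
\item $\calC$ is normal, being flat over $T$ with normal fibers.
\item The vertex section has codimension $n+1\ge 2$.
\item The complement of the vertex section is a Seifert $\bA^1$-bundle $p$ over $\mathcal{X}$ (not a $\bG_m$-bundle), with zero section $\mathcal{X}_\infty=(x_{n+1}=0)$.
\item It is an honest $\bA^1$-bundle over the open set where $\mathcal{L}$ is Cartier, whose complement has codimension at least $2$.
\item Over that open set, $K_{\calC/T}\sim p^*(K_{\mathcal{X}/T}-\mathcal{L})$ and $\mathcal{X}_\infty\sim p^*\mathcal{L}$.
\end{itemize}
The hypothesis then gives $K_{\calC/T}\sim_{\bQ,T}(a-1)\mathcal{X}_\infty$ on all of $\calC$.

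One justification needs correcting. You claim $\mathcal{L}$ is locally a twist of a reflexive power of $\omega_{\mathcal{X}/T}$, which fails in general. For example, take $\calO(1)$ on $\bP(1,1,2)\times T$: there $\omega$ is Cartier but $\calO(1)$ is not. The right argument is the one your parenthesis gestures at, applied to $\mathcal{L}$ itself.
\begin{itemize}
\item By inversion of adjunction, $\mathcal{X}$ is klt.
\item So the $\bQ$-Cartier divisorial sheaf $\mathcal{L}^{[r]}$ is Cohen--Macaulay, via the index-one cover of $\mathcal{L}$.
\item Hence it is flat over $T$, and its restriction to $\mathcal{X}_t$ is $S_2$.
\item That restriction agrees with $\mathcal{L}_t^{[r]}$ in codimension one, hence everywhere.
\end{itemize}
The paper's proof passes over this point silently.
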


\begin{proof}
    Because $\mathcal{L}$ is a relatively ample $\Q$-line bundle, for each closed point $t \in T$, we have $H^i(\mathcal{X}_t, \mathcal{L}_t^{[m]}) = 0$ for all $m \in \mathbb{Z}$ and all $0 < i < \dim \mathcal{X}_t$.  Indeed, by Kodaira vanishing, this is clear for $m < 0$.  For $m \ge 0$, as $\mathcal{X}_t$ is Fano and Cohen Macaulay and $\mathcal{L}_t^{[-m]}(K_X)$ is antiample, the vanishing follows from Serre duality and Kodaira vanishing. 

    By cohomology and base change, the $i = 1$ case gives that $\pi_* \mathcal{L}^{[r]} $ is a vector bundle over $T$ whose fibers over $t \in T$ are precisely $H^0(\mathcal{X}_t, (\mathcal{L}_t)^{[r]})$.  Therefore, by \cite[Lemma 31.30.6]{stacksproject}, $\mathcal{C}_{p}(\mathcal{X}/T, \mathcal{L})$ is flat over $T$ and, as relative $\Proj$ commutes with base change, we have
    \begin{align*}
    \mathcal{C}_{p}(\mathcal{X}/T, \mathcal{L})_t &\cong \Proj\:\sum_{m \geq 0}\biggl(\sum_{r=0}^{m} \pi_{\ast} \mathcal{L}^{[r]} \cdot x_{n+1}^{m-r}\biggr)\Bigg|_t \\
    &\cong  \Proj\:\sum_{m \geq 0}\biggl(\sum_{r=0}^{m} H^0(\mathcal{X}_t, \mathcal{L}_t^{[r]}) \cdot x_{n+1}^{m-r}\biggr) \\
    &= C_p(\mathcal{X}_t, \mathcal{L}_t).
    \end{align*}

    By \cite[Lemma 3.3]{Zh24} (see also \cite[Proposition 3.14]{Kollar13}) and the assumption that $\mathcal{X}_t$ is klt Fano, it follows that $C_{p}(\mathcal{X}_t, \mathcal{L}_t)$ is klt.  Finally, as the intersection numbers $(K_{C_{p}(\mathcal{X}_t, \mathcal{L}_t)})^{n+1}$ are constant (\cite[Proposition 2.21]{ABB+}), by \cite[Theorem 5.8]{Kollar23}, this implies that the family $\mathcal{C}_{p}(\mathcal{X}/T, \mathcal{L})$ is $\Q$-Gorenstein.
\end{proof}

We can also study families of divisors in this construction. 

\begin{lemma}\label{lem:coneofdivisors}
    Let $\pi: \calX \to T$ be a $\bQ$-Gorenstein family of klt Fano varieties over a smooth curve $T$ and $\calL$ a relatively ample $\bQ$-line bundle on $\calX$.  Let $\calD \subset \calX$ be a flat family of divisorial subschemes \cite[Definition-Lemma 4.19]{Kollar23} such that $\calO(a\calD) \sim_T \calL^{[b]}$ for some $a,b \in \bZ_{>0}$.  Then, $\calC_p(\calD/T, \calL_\calD)$ is a flat family of divisorial subschemes of $\calC_p(\calX/T, \calL)$ whose fiber over $t \in T$ is $C_p(\calD_t, \calL_{\calD_t}) \subset C_p(\calX_t, \calL_t)$.
\end{lemma}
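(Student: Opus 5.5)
We follow the strategy of \Cref{Lemma:relativeProjectiveCone}, applied to $\calD$ with the restricted bundle $\calL_\calD := \calL|_\calD$. Since $\calD$ is a flat family of divisorial subschemes and $\calO(a\calD) \sim_T \calL^{[b]}$, the bundle $\calL_\calD$ is relatively ample on $\calD/T$. We therefore aim to show three things. First, the graded pieces $\pi_*(\calL_\calD^{[r]})$ are locally free and commute with base change. Second, restriction gives a surjection of graded $\calO_T$-algebras
\[ \sum_{m\ge 0}\sum_{r=0}^m \pi_*(\calL^{[r]})\, x_{n+1}^{m-r} \longrightarrow \sum_{m\ge 0}\sum_{r=0}^m \pi_*(\calL_\calD^{[r]})\, x_{n+1}^{m-r}, \]
so that $\calC_p(\calD/T,\calL_\calD)$ is a closed subscheme of $\calC_p(\calX/T,\calL)$. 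Third, this subscheme is a relative Mumford divisor, i.e.\ a flat family of divisorial subschemes.

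\textbf{Step 1 (vanishing and base change).} For each $t$, the fiber $\calD_t \subset \calX_t$ is a divisorial subscheme with $\calO(a\calD_t)\sim \calL_t^{[b]}$. We use the divisorial exact sequences
\[ 0 \to \calO_{\calX_t}(-\calD_t)\otimes \calL_t^{[r]} \to \calL_t^{[r]} \to \calL^{[r]}_{\calD_t} \to 0, \]
interpreting every sheaf reflexively, as in Koll\'ar's theory of divisorial sheaves. The reflexive sheaf $\calO(-\calD_t)\otimes\calL_t^{[r]}$ is a $\bQ$-Cartier twist of $\calL_t$ of degree $r-b/a$. Since $\calX_t$ is klt Fano, Kawamata–Viehweg vanishing applies, exactly as in the proof of \Cref{Lemma:relativeProjectiveCone}. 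It gives $H^1(\calX_t, \calO(-\calD_t)\otimes\calL_t^{[r]})=0$ for $r\ge 0$ and $0<i<\dim\calX_t$. Hence $H^0(\calX_t,\calL_t^{[r]})\to H^0(\calD_t,\calL^{[r]}_{\calD_t})$ is surjective, and $H^1(\calD_t,\calL^{[r]}_{\calD_t})=0$. Cohomology and base change then gives that $\pi_*\calL_\calD^{[r]}$ is a vector bundle with fibers $H^0(\calD_t,\calL_{\calD_t}^{[r]})$. Since relative $\Proj$ commutes with base change, the fiber of $\calC_p(\calD/T,\calL_\calD)$ over $t$ is $C_p(\calD_t,\calL_{\calD_t})$, and flatness follows from \cite[Lemma 31.30.6]{stacksproject}.

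\textbf{Step 2 (closed embedding and divisoriality).} The surjection above is fiberwise surjective, so it gives a closed embedding compatible with base change. Its image in each fiber $C_p(\calX_t,\calL_t)$ is the projective cone over $\calD_t$. This cone is of pure codimension one, because it is the closure of the $\bG_m$-saturation of $\calD_t$ in the affine cone. It has no embedded points, because $\calD_t$ is $S_1$ and the cone inherits depth. Outside the vertex and the section at infinity, the cone is locally $\calD_t\times\bG_m$-type, i.e.\ a $\mu$-quotient of a product. There it is Cartier generically along codimension-one points of fibers, since $a\calD$ is $\bQ$-Cartier by the relation $\calO(a\calD)\sim_T\calL^{[b]}$. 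Together with flatness from Step 1, this verifies the conditions of \cite[Definition-Lemma 4.19]{Kollar23}.

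\textbf{Main obstacle.} The hardest point is Step 1 in the non-Cartier situation. The restriction of reflexive powers $\calL^{[r]}$ to $\calD_t$ need not be the reflexive power $\calL_{\calD_t}^{[r]}$, and the divisorial exact sequence may fail to be exact on the right. We would handle this by defining $\calL_\calD^{[r]}$ as the image (equivalently, the divisorial restriction) and proving the needed vanishing directly. If necessary, we would instead use the $\bQ$-Cartier relation $\calO(a\calD)\sim\calL^{[b]}$ to present the cones of $\calD$ and $\calX$ as $\mu_a$-quotients of honest cones, reducing the claim to the Cartier case, where it is standard.
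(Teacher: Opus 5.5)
Your proposal follows the paper's proof essentially step for step:
\begin{itemize}
\item vanishing for $\calL_t^{[r]}(-\calD_t)$ via Kodaira/Kawamata--Viehweg and Serre duality, split by the sign of $ar-b$;
\item the resulting surjection $\pi_*\calL^{[r]}\to\pi_*\calL^{[r]}_\calD$, which gives the closed embedding;
\item cohomology and base change together with \cite[Lemma 31.30.6]{stacksproject} for flatness and the description of the fibers;
\item \cite[Definition-Lemma 4.19]{Kollar23} for divisoriality.
\end{itemize}
Your hands-on check in Step 2 goes beyond the paper, which simply invokes Koll\'ar once flatness and the fibers are known. One correction there: generic Cartierness at codimension-one points holds because $C_p(\calX_t,\calL_t)$ is normal, hence regular in codimension one. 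It does not follow from $a\calD$ being $\bQ$-Cartier.

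One step is false as stated, and the paper's proof asserts the same thing: $H^1(\calD_t,\calL^{[r]}_{\calD_t})=0$ for all $r\ge 0$. From the restriction sequence this needs $H^2(\calX_t,\calL_t^{[r]}(-\calD_t))=0$. When $\dim\calX_t=2$ and $ar<b$, that lies outside the range $0<i<\dim\calX_t$ you quote, and it genuinely fails. For example, take $\calX_t=\bP^2$, $\calL=\calO(1)$, $\calD_t$ a smooth cubic and $r=0$; then $H^1(\calO_{\calD_t})\neq 0$. The same happens in the paper's own use of the lemma in Lemma~\ref{lem:hyperellipticgenus4}, with a $(3,3)$ curve on $\bP^1\times\bP^1$. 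For $\dim\calX_t\ge 3$ your deduction is fine.

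Fortunately this vanishing is not needed.
\begin{itemize}
\item Since $\calL^{[r]}_\calD$ is flat over the smooth curve $T$, $\pi_*\calL^{[r]}_\calD$ is torsion-free, hence locally free.
\item The composite $\pi_*\calL^{[r]}\otimes k(t)\to\pi_*\calL^{[r]}_\calD\otimes k(t)\to H^0(\calD_t,\calL^{[r]}_{\calD_t})$ is the restriction map $H^0(\calX_t,\calL_t^{[r]})\to H^0(\calD_t,\calL^{[r]}_{\calD_t})$. This map is surjective by the vanishing $H^1(\calX_t,\calL_t^{[r]}(-\calD_t))=0$, which you did establish.
\item So the base-change map for $\pi_*\calL^{[r]}_\calD$ is surjective. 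By cohomology and base change, a surjective base-change map in degree $0$ is an isomorphism.
\end{itemize}
That is all the flatness statement and the identification of the fibers with $C_p(\calD_t,\calL_{\calD_t})$ actually require.
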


\begin{proof}
    For $r \ge 0$, consider the exact sequence 
    \[ 0 \to \calL^{[r]}(-\calD) \to \calL^{[r]} \to \calL^{[r]}_\calD\to 0\] and its pushforward 
    \[ 0 \to \pi_*\calL^{[r]}(-\calD) \to \pi_*\calL^{[r]} \to \pi_*\calL^{[r]}_\calD\to R^1\pi_*\calL^{[r]}(-\calD) .  \]
    For each closed point $t \in T$, if $ar - b < 0$, $\calL_t^{[r]}(-\calD_t)$ is antiample, so $H^i(\calX_t, \calL_t^{[r]}(-\calD_t)) = 0$ for $i > 0$ by Kodaira vanishing.  If $ar -b \ge 0$, as $\calX_t$ is klt Fano and Cohen Macaulay, $H^i(\calX_t, \calL_t^{[r]}(-\calD_t))$ is Serre dual to $H^{n-i}(\calX_t, \calL_t^{[-r]}(\calD_t)(K_{\calX_t}))$ and $\calL_t^{[-r]}(\calD_t)(K_{\calX_t})$ is antiample so this cohomology group vanishes again by Kodaira vanishing.  Therefore, by cohomology and base change, $R^1\pi_*\calL^{[r]}(-\calD)  = 0$ and there is a surjection
    \[  \pi_*\calL^{[r]} \to \pi_*\calL^{[r]}_\calD\to 0.  \]

    This gives an injection $\calC_p(\calD/T, \calL_\calD)\subset \calC_p(\calX/T, \calL)$, so $\calC_p(\calD/T, \calL_\calD)$ is in fact a subscheme of $ \calC_p(\calX/T, \calL)$. 

    Now, consider $\calC_p(\calD/T, \calL_\calD)$.  For any $t \in T$, we claim $H^1(\calD_t, \calL_{\calD_t}^{[r]}) = 0$.  Indeed, we have an exact sequence of sheaves on $X_t$
    \[ 0 \to \calL_t^{[r]}(-\calD_t) \to \calL_t^{[r]} \to \calL_{\calD_t}^{[r]} \to 0\] and the result follows from the vanishing $H^i(\calX_t, \calL_t^{[r]}(-\calD_t)) = 0$ from above and the vanishing $H^i(\calX_t, \calL_t^{[r]})=0$ from the previous proof.  By cohomology and base change, $\pi_*\calL^{[r]}_\calD$ is a vector bundle over $T$ whose fibers are $H^0(\calD_t, \calL_{D_t}^{[r]})$ so $\calC_p(\calD/T, \calL_\calD)$ is flat over $T$ by \cite[Lemma 31.30.6]{stacksproject} and its fibers are precisely $C_p(\calD_t, \calL_{\calD_t})$.  This is then a flat family of divisorial subschemes by \cite[Definition-Lemma 4.19]{Kollar23}.
\end{proof}

Finally, we combine the previous two results (\Cref{Lemma:relativeProjectiveCone} and \Cref{lem:coneofdivisors}) into the main theorem of this section. 

\begin{theorem}\label{thm:conesanddivisors}
    Suppose $X,Y$ are klt Fano varieties such that $Y$ is a $\bQ$-Gorenstein degeneration of $X$, i.e. there is a $\bQ$-Gorenstein family $\calX \to T$ over a smooth curve with special fiber $\calX_0 = Y$ and general fiber $\calX_t = X$.  Let $\calL$ be a relatively ample $\bQ$-line bundle on this family such that $\calL$ restricts to a multiple of the anticanonical divisor on each fiber. Then,
        \begin{enumerate}
            \item The cone
            $C_p(Y, \calL|_Y)$ is a $\bQ$-Gorenstein degeneration of $C_p(X, \calL|_X)$, and both $C_p(Y, \calL|_Y)$ and $C_p(X, \calL|_X)$ are klt Fano varieties.
            \item If $D_Y \subset Y$ deforms to a divisor $D_X \subset X$ in this degeneration, i.e. there exists a flat family of divisorial subschemes $\calD$ on $\calX$ such that $\calD_0 = D_Y$ and $\calD_t = D_X$ and $\calO(\calD)$ is $T$-equivalent to a multiple of $\calL$, then $C_p(D_Y, \calL|_{D_Y})$ is a degeneration of $C_p(D_X, \calL|_{D_X})$ inside the degeneration $C_p(X, \calL|_X) \rightsquigarrow C_p(Y, \calL|_Y)$. 
            \item If $\Delta_Y \in |\calO(C_p(D_Y, \calL|_{D_Y}))|$ is any effective integral divisor on $C_p(Y, \calL|_Y)$ linearly equivalent to $C_p(D_Y, \calL|_{D_Y})$, then $\Delta_Y$ deforms to a divisor $\Delta_X \in |\calO(C_p(D_X, \calL|_{D_X}))|$ on $C_p(X, \calL|_X) $. 
        \end{enumerate}
\end{theorem}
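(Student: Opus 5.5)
We derive the three parts from \Cref{Lemma:relativeProjectiveCone} and \Cref{lem:coneofdivisors}, plus a vanishing argument for part (3).

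For (1), since $\calL$ restricts to a multiple of $-K$ on each fiber, there are an integer $d>0$ with $dK_{\calX/T}$ Cartier and some $a\in\bQ$ with $\calO(dK_{\calX/T})\sim_T \calL^{ad}$. Here $a<0$, because $\calL$ is relatively ample and the fibers are Fano. If needed, we shrink $T$ around $0$ so the relative linear equivalence holds: on a smooth curve, two divisors that agree on the fibers differ by a pullback from $T$, and this is trivial locally. \Cref{Lemma:relativeProjectiveCone} then shows $\calC_p(\calX/T,\calL)\to T$ is a $\bQ$-Gorenstein family of klt varieties whose fibers are $C_p(\calX_t,\calL_t)$. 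This gives the degeneration $C_p(X,\calL|_X)\rightsquigarrow C_p(Y,\calL|_Y)$. Both cones are klt by \Cref{prop:singsofcones}(3), since $r=a<0$. To see they are Fano, note that the anticanonical class of a projective orbifold cone is a positive multiple of the class of the section at infinity $(x_{n+1}=0)$, which is ample, precisely because $r<0$. Concretely, $-K_{C_p}\sim_{\bQ}(1-r)H_\infty$ with $H_\infty$ the hyperplane class $\calO(1)$ on the Proj.

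For (2), we apply \Cref{lem:coneofdivisors} to $\calD$ with $\calO(a\calD)\sim_T\calL^{[b]}$. This gives a flat family of divisorial subschemes $\calC_p(\calD/T,\calL_\calD)\subset\calC_p(\calX/T,\calL)$ whose fibers are $C_p(D_Y,\calL|_{D_Y})$ and $C_p(D_X,\calL|_{D_X})$. This is exactly the statement that one cone degenerates to the other inside the degeneration of ambient cones.

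For (3), we show that sections of the relevant divisorial sheaf on the total cone lift from the special fiber. Write $\calC=\calC_p(\calX/T,\calL)$ and let $\calM=\calO_\calC(\calC_p(\calD/T,\calL_\calD))$. This is a reflexive rank one sheaf, $\bQ$-linearly equivalent to a positive multiple of $H_\infty$, namely $\tfrac{b}{a}$ times the degree-one class. Since the cone construction identifies $\calO_{C_p}(k)$ sections with $\bigoplus_{r\le k}H^0(L^{[r]})$, we expect $\calM$ to be $\calO_\calC(m)$ for a suitable $m$, or at least to have pushforward equal to a graded piece of the defining algebra. We take this identification as the first step. Then $\pi_*\calM$ is the direct sum over $r\le m$ of the vector bundles $\pi_*\calL^{[r]}$ (or of their twists by $\calD$). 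These were shown in the proof of \Cref{Lemma:relativeProjectiveCone} to be locally free, with fibers $H^0(\calX_t,\calL_t^{[r]})$, so cohomology and base change holds. Thus $H^0(C_p(Y),\calM_0)$ is the fiber of a vector bundle on $T$. After shrinking $T$, any section defining $\Delta_Y$ extends to a section of $\calM$ over $T$. Its zero locus is a flat family of divisors, because the section is nonzero on each fiber near $0$ and the fibers are integral. The general fiber is $\Delta_X\in|\calO(C_p(D_X,\calL|_{D_X}))|$.

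The main obstacle is the compatibility between the reflexive sheaf $\calM$ and the fiber restrictions. We need that $\calM|_{C_p(Y)}$ agrees with the reflexive sheaf of $C_p(D_Y)$ on the special fiber, which is the divisorial-subscheme/$S_2$ issue. If $\calM$ is not literally some $\calO(m)$, we would handle this by working on $\calX$ instead. Every section of $\calM$ decomposes by degree in $x_{n+1}$ into sections of $\calL^{[r]}(\cdot)$ on $\calX_t$, and their higher cohomology vanishes by the Kodaira–Serre duality argument already used in \Cref{lem:coneofdivisors}. That vanishing gives the required base change degree by degree.
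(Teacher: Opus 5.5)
Your parts (1) and (2) are the paper's argument: (1) is \Cref{Lemma:relativeProjectiveCone} and (2) is \Cref{lem:coneofdivisors}. Shrinking $T$ to get $\calO(dK_{\calX/T})\sim_T\calL^{ad}$, and checking $-K_{C_p}\sim_{\bQ}(1-r)H_\infty$ for the Fano property, only fill in details the paper leaves implicit.

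For (3) you take a genuinely different route. The paper works on the cone itself. By (1), $C_p(Y,\calL|_Y)$ is klt Fano, and $C_p(D_Y,\calL|_{D_Y})$ is an ample $\bQ$-Cartier divisor. So Kawamata--Viehweg gives $H^1(C_p(Y,\calL|_Y),\calO(C_p(D_Y,\calL|_{D_Y})))=0$, and sections extend by cohomology and base change for $\calC\to T$.

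You instead split sections on the cone by order of vanishing along $H_\infty$,
\[ H^0(C_p(\calX_t,\calL_t),\calM_t)=\bigoplus_{k\ge 0}H^0(\calX_t,\calO(\calD_t-k\calL_t)), \]
and do base change on $\calX$. The summands with $0\le k\le b/a$ are $\bQ$-linearly nef, so their $H^1$ vanishes because $-K_{\calX_t}$ is ample. The summands with $k>b/a$ have no sections on any fiber.

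Your route needs no vanishing on the cone. It reuses exactly the vector bundles already controlled in the two lemmas, and it gives an explicit description of $\pi_*\calM$. The price is the Seifert $\bA^1$-bundle bookkeeping, including your fallback when $\calO(\calD)$ is only a rational multiple of $\calL$, since then $\calM$ need not be any $\calO_\calC(m)$. The paper's argument is one line once (1) is known.

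The compatibility you flag as the main obstacle is used silently in the paper too, and it closes the same way in both routes. $\calX$ and $\calC$ are klt near the central fiber by inversion of adjunction, and the divisors involved are $\bQ$-Cartier. Hence their divisorial sheaves are Cohen--Macaulay \cite[Corollary 5.25]{KM98}. Their restrictions to the Cartier divisors $\calX_0$ and $C_p(Y,\calL|_Y)$ are therefore $S_2$, hence reflexive. So they agree with $\calO(\calD_0-k\calL_0)$ and $\calO(C_p(D_Y,\calL|_{D_Y}))$ respectively.
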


\begin{proof}
    Statement (1) follows from Lemma \ref{Lemma:relativeProjectiveCone}.  Statement (2) follows from Lemma \ref{lem:coneofdivisors}.  Finally, for statement (3), note that (2) gives a deformation of the ample $\bQ$-line bundle $\calO(C_p(D_Y, \calL|_{D_Y}))$ to $\calO(C_p(D_X, \calL|_{D_X}))$.  To show that any section of this $\bQ$-line bundle deforms in this family, we verify that $H^1(C_p(Y, \calL|_Y), \calO(C_p(D_Y, \calL|_{D_Y}))) = 0$, which follows because $C_p(Y, \calL|_Y)$ is a klt Fano variety. 
\end{proof}

In the remainder of this section, we collect several applications of this result, including the proof of \Cref{thm:infiniteFamiliesDegenerationsPn}. 

First, we explain an application of \Cref{thm:conesanddivisors} which will give infinitely many degenerations of $\mathbb{P}^{n}$. Suppose that there exists a $\mathbb{Q}$-Gorenstein smoothing of $\mathbb{P}(a_{0}, \dots, a_{n-1})$ to $\mathbb{P}^{n-1}$.  Because this is $\bQ$-Gorenstein, the canonical divisor $\calO(K_{\mathbb{P}^{n-1}}) = \calO(-n)$ must degenerate to $\calO(K_{\mathbb{P}(a_{0}, \dots, a_{n-1})}) = \mathcal{O}(-a_{0} - \dots - a_{n-1})$. Equivalently, the line bundle $\mathcal{O}(1)$ on $\mathbb{P}^{n-1}$ degenerates to the $\bQ$-line bundle $\mathcal{O}(a_n)$ on the weighted projective space $\mathbb{P}(a_{0}, \dots, a_{n-1})$, where
\begin{center}
$a_n := \displaystyle{\frac{a_{0} + \dots + a_{n-1}}{n}}$,
\end{center}
an integer by Proposition \ref{prop:relationsonweights}. Note that the limit of $\mathcal{O}(1)$ is determined by the degree of the line bundle as the central fiber has Picard rank one.  

Now, as a consequence of \Cref{thm:conesanddivisors},
we obtain
that $C_{p}(\mathbb{P}^{n-1}, \mathcal{O}(1))$ degenerates to $C_{p}(\mathbb{P}(a_{0}, \dots, a_{n-1}), \mathcal{O}(a_n))$.  By \Cref{Ex:ProjectiveConePn} and \Cref{Ex:ProjectiveConeWPS}, it follows that $\mathbb{P}^{n}$ degenerates to $\mathbb{P}(a_{0}, \dots, a_{n-1}, a_{n})$. We summarize this result in the following corollary.

\begin{corollary}\label{cor:degenofPn+1}
Suppose that $\mathbb{P}(a_{0}, \dots, a_{n-1})$ admits a $\mathbb{Q}$-Gorenstein smoothing to $\mathbb{P}^{n-1}$. Then,  $\mathbb{P}(a_{0}, \dots, a_{n-1}, a_{n})$ admits a $\mathbb{Q}$-Gorenstein smoothing to $\mathbb{P}^{n}$, where $ a_{n} := \frac{\sum_{i=0}^{n-1} a_{i}}{n}$.
\end{corollary}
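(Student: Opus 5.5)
We apply \Cref{thm:conesanddivisors}(1), or directly \Cref{Lemma:relativeProjectiveCone}, to a $\bQ$-Gorenstein smoothing $\calX \to T$ over a smooth pointed curve, with central fiber $\calX_0 \cong \bP(a_0,\dots,a_{n-1})$ and general fiber $\calX_t \cong \bP^{n-1}$. After shrinking $T$ we may assume every fiber other than $\calX_0$ is $\bP^{n-1}$. All fibers are klt Fano: weighted projective spaces have cyclic quotient singularities, and the central fiber is Fano because $-K$ is $\calO(a_0+\dots+a_{n-1})$, which is ample.

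The first step is to produce the relatively ample $\bQ$-line bundle $\calL$. Because $K_{\calX/T}$ is $\bQ$-Cartier, choose $d>0$ with $dK_{\calX/T}$ Cartier, and set $\calL$ to be the $\bQ$-line bundle $-\tfrac{1}{n}K_{\calX/T}$. Concretely, $\calL^{[m]} = \calO_\calX(-\tfrac{m}{n}K_{\calX/T})$ whenever this makes sense as a reflexive rank one sheaf. With this choice the hypothesis $\calO(dnK_{\calX/T}) \sim_T \calL^{-dn}$ of \Cref{Lemma:relativeProjectiveCone} holds by construction (take $a=-n$ there). Relative ampleness follows because each fiber is Fano.

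Next we identify the restrictions of $\calL$. On a general fiber $\calL_t = \calO(1)$, since $-K_{\bP^{n-1}} = \calO(n)$. On the central fiber, $\Cl(\calX_0)\cong \bZ$ by \Cref{thm:divisorsonwps}, so the limit is determined by its degree. As $-K_{\calX_0} = \calO(a_0+\dots+a_{n-1})$, we get $\calL_0 = \calO(a_n)$ with $a_n = \tfrac{1}{n}\sum a_i$, which is an integer by \Cref{prop:relationsonweights}.

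This identification is the step requiring care, and I expect it to be the main obstacle. The reflexive powers $\calL^{[r]}$ must be honest divisorial sheaves whose restriction to $\calX_0$ is $\calO(ra_n)$, and not merely agree up to torsion or fail to commute with restriction. I would handle this by noting that $\calL^{[nr]}$ is a power of $-K_{\calX/T}$, which is compatible with base change on a $\bQ$-Gorenstein family. For the intermediate powers I would appeal to the vanishing $H^1(\calX_t,\calL_t^{[r]})=0$ and the cohomology-and-base-change argument in the proof of \Cref{Lemma:relativeProjectiveCone}, which give $\pi_*\calL^{[r]}$ locally free with fibers $H^0(\calX_t,\calL_t^{[r]})$. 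If needed, I would pass to the index-one cover, or use the Koll\'ar condition that reflexive powers of $\omega_{\calX/T}$ commute with base change in a $\bQ$-Gorenstein family.

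The conclusion is then immediate. \Cref{Lemma:relativeProjectiveCone} gives a $\bQ$-Gorenstein family of klt Fano varieties $\calC_p(\calX/T,\calL)$ with fibers $C_p(\calX_t,\calL_t)$. By \Cref{Ex:ProjectiveConePn}, the general fiber is $C_p(\bP^{n-1},\calO(1)) = \bP^{n}$. By \Cref{Ex:ProjectiveConeWPS}, the central fiber is $C_p(\bP(a_0,\dots,a_{n-1}),\calO(a_n)) = \bP(a_0,\dots,a_n)$. This gives the desired $\bQ$-Gorenstein smoothing.

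For completeness, $\bP(a_0,\dots,a_n)$ is well-formed, so it can be compared with the lists above. Dropping $a_i$ for $i<n$ leaves weights including $a_n$ and the others. If a prime $p$ divided all of them, then $p$ would divide $\sum_{j\ne i,n}a_j$ and $na_n = \sum_{j<n} a_j$, hence $a_i$ as well, contradicting the well-formedness of the original space. Dropping $a_n$ leaves $a_0,\dots,a_{n-1}$, which already have no common factor.
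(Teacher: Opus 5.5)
Your proposal is correct and is essentially the paper's argument. Take $\calL$ to be the limit of $\calO(1)$ (equivalently $-\tfrac{1}{n}K_{\calX/T}$), identify its restriction to the central fiber as $\calO(a_n)$ by a degree computation on a variety with divisor class group $\bZ$, apply the relative cone construction of \Cref{Lemma:relativeProjectiveCone} (packaged in the paper as \Cref{thm:conesanddivisors}), and recognize the two fibers via \Cref{Ex:ProjectiveConePn} and \Cref{Ex:ProjectiveConeWPS}.
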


The previous corollary gave a global smoothing of 
$\bP(a_0, \dots, a_n)$ to $\bP^n$.  
In particular,
this implies that every cyclic quotient singularity on such $\bP(a_0, \dots, a_n)$ is smoothable.  

\begin{corollary}\label{cor:localsmoothings}
    A cyclic quotient singularity of the form $\frac{1}{a_n}(a_0, a_1, \dots, a_{n-1})$, where the weighted projective space $\bP(a_0, a_1, \dots, a_{n-1})$ is smoothable to $\bP^n$ and $a_{n} := \frac{\sum_{i=0}^{n-1} a_{i}}{n}$, is smoothable.  
\end{corollary}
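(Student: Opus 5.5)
Note first that, as stated, the hypothesis should read that $\bP(a_0,\dots,a_{n-1})$ is smoothable to $\bP^{n-1}$, in line with \Cref{cor:degenofPn+1}. With that reading, the plan is to deduce local smoothability of the vertex singularity from the global smoothing furnished by \Cref{cor:degenofPn+1}.

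By that corollary there is a $\bQ$-Gorenstein family $\calY \to T$ over a smooth pointed curve with $\calY_0 \cong \bP(a_0,\dots,a_n)$ and $\calY_t \cong \bP^n$ for $t \neq 0$. In $\calY_0$ consider the point $p=[0:\dots:0:1]$, the origin of the chart $U_n=(x_n\neq 0)$. By \Cref{subsec:WPSandCQS}, the germ of $\calY_0$ at $p$ is exactly the cyclic quotient singularity $\frac{1}{a_n}(a_0,\dots,a_{n-1})$.

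Restricting the family to an affine neighbourhood $\calU \subset \calY$ of $p$ gives a flat family $\calU \to T$ whose central fibre is a neighbourhood of $p$ in $\calY_0$. Its general fibres are open subsets of $\bP^n$, hence smooth. So the germ $(\calY,p)\to (T,0)$ is a smoothing of the singularity, and it is $\bQ$-Gorenstein because $K_{\calY/T}$ is $\bQ$-Cartier.

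The one point needing care is that the general fibre near $p$ is nonempty and smooth. Nonemptiness holds because $\calU$ is open in $\calY$ and contains $p$, and flatness over a curve means $\calU$ dominates $T$. Smoothness holds because the general fibres are open in $\bP^n$. Nothing beyond \Cref{cor:degenofPn+1} and the local description of weighted projective space is used, so I expect no real obstacle.
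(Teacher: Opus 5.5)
Your proposal is correct and is essentially the paper's own argument: the paper deduces the corollary by noting that the global $\bQ$-Gorenstein smoothing of $\bP(a_0,\dots,a_n)$ from \Cref{cor:degenofPn+1} restricts to a smoothing of the germ at $[0:\dots:0:1]$, and that germ is $\frac{1}{a_n}(a_0,\dots,a_{n-1})$. You are also right that the hypothesis should read ``smoothable to $\bP^{n-1}$'', as \Cref{ex:alocalsmoothings} confirms: there $\bP(1,4,25)$ smooths to $\bP^2$ with $n=3$.
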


This gives examples of Gorenstein non-isolated cyclic quotient singularities that are smoothable. 

\begin{example}\label{ex:alocalsmoothings}
    By \Cref{thm:wpdegensofP2}, the weighted projective space $\bP(1,4,25)$ is smoothable to $\bP^2$.  Because $10 = \frac{1+4+25}{3}$, the Gorenstein cyclic quotient singularity $\frac{1}{10}(1,4,25) = \frac{1}{10}(1,4,5)$ 
    is smoothable by Corollary \ref{cor:localsmoothings}.    
\end{example}

The results from this section prove the first part of \Cref{thm:infiniteFamiliesDegenerationsPn}.

\begin{theorem}
\label{thm:degensofP2toPn}
For $n \ge 2$, let $X =\bP(a^{2}, b^{2}, c^{2}, abc_{n-2})$ where $a^2 + b^2 + c^2 = 3abc$.  Then $X$ admits a $\mathbb{Q}$-Gorenstein smoothing to $\mathbb{P}^{n}$.
\end{theorem}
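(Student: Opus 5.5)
I will prove Theorem \ref{thm:degensofP2toPn} by induction on $n$, using Theorem \ref{thm:wpdegensofP2} as the base case and Corollary \ref{cor:degenofPn+1} as the inductive step.

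\textbf{Base case $n=2$.} Here $X = \bP(a^2,b^2,c^2)$. This space is well-formed, since Markov triples are pairwise coprime by the proof of Lemma \ref{lem:P2andmarkoveqn}. Theorem \ref{thm:wpdegensofP2} then gives a $\bQ$-Gorenstein smoothing of $X$ to $\bP^2$.

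\textbf{Inductive step.} Assume that for some $n \ge 2$ the space $\bP(a^2,b^2,c^2,abc_{n-2})$ admits a $\bQ$-Gorenstein smoothing to $\bP^n$. By Corollary \ref{cor:degenofPn+1}, applied with $n+1$ in place of $n$, the space $\bP(a^2,b^2,c^2,abc_{n-2},a_{n+1})$ smooths to $\bP^{n+1}$, where
\[ a_{n+1} = \frac{a^2+b^2+c^2+(n-2)abc}{n+1}. \]
Using the Markov equation $a^2+b^2+c^2 = 3abc$, this becomes
\[ a_{n+1} = \frac{3abc+(n-2)abc}{n+1} = abc. \]
So the new space is exactly $\bP(a^2,b^2,c^2,abc_{n-1})$, which closes the induction.

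\textbf{Well-formedness.} Corollary \ref{cor:degenofPn+1} rests on Theorem \ref{thm:conesanddivisors} and Example \ref{Ex:ProjectiveConeWPS}, which describe the cone as a weighted projective space, and the paper works with well-formed spaces throughout. So I will check that each $\bP(a^2,b^2,c^2,abc_{k})$ is well-formed. Since $a,b,c$ are pairwise coprime, any $n$ of the $n+1$ weights include at least two of $a^2,b^2,c^2$, or include $abc$ together with one of the squares.
\begin{itemize}
\item Two distinct squares are coprime.
\item A collection such as $\{b^2, c^2, abc,\dots\}$ already has coprime members $b^2$ and $c^2$.
\item For $n=2$, the set $\{a^2, abc\}$ does not occur, because $abc$ is not yet a weight.
\item For $n \ge 3$, omitting one weight leaves at least two of the three squares, so the gcd is $1$.
\end{itemize}
Hence the spaces are well-formed, and no rescaling of weights is needed when the cone is identified with a weighted projective space.

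\textbf{Main obstacle.} The step needing the most care is identifying the limit of $\calO(1)$ with $\calO(a_{n+1})$ on the central fiber, which is what lets the corollary apply. The paper handles this by a Picard rank one and degree argument. I would also confirm that $\calO(a_{n+1})$ is $\bQ$-linearly equivalent to $-K/(n+1)$ on the central fiber, so that the hypothesis of Theorem \ref{thm:conesanddivisors}, that $\calL$ is a multiple of the anticanonical class, is satisfied. Indeed $-K_X = \calO(a^2+b^2+c^2+(n-2)abc) = \calO((n+1)abc)$, so this holds.
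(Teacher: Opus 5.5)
Your proof is correct and is the same as the paper's: the base case is \Cref{thm:wpdegensofP2}, and the inductive step is \Cref{cor:degenofPn+1}, using that the weights of $\bP(a^2,b^2,c^2,abc_{n-2})$ sum to $(n+1)abc$, so the new weight is $abc$. Your well-formedness check is harmless extra care, since the only fact needed is that omitting any one weight leaves two of the pairwise coprime squares; the clause about $abc$ together with a single square is unnecessary (that configuration would have gcd $a$, not $1$) but it never arises.
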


\begin{proof}
    By \Cref{thm:wpdegensofP2}, $\bP(a^{2}, b^{2}, c^{2})$ admits a $\bQ$-Gorenstein smoothing to $\bP^2$.
    Since the average of the weights is precisely $abc$,
    using \Cref{cor:degenofPn+1} and induction we see that $\bP(a^{2}, b^{2}, c^{2},abc,\ldots,abc)$ also admits a $\bQ$-Gorenstein smoothing to $\bP^n$.
\end{proof}

\begin{remark}
    Applied to any of the partial smoothings of $\bP(a^2,b^2,c^2)$, this cone construction also yields degenerations of $\bP^n$ that are orbifold cones over partial smoothings.  
\end{remark}

We can also apply the cone construction to other degenerations of $\bP^n$. 
First, we include a well-known result for convenience of the reader. 

\begin{theorem}
\label{thm:degree-d-hypersurfaces}
Suppose $X_{d} \subset \mathbb{P}^{n}$ is a degree $d$ hypersurface. Then there exists a $\bQ$-Gorenstein degeneration of $\mathbb{P}^{n}$ to the cone $C_p(X_{d}, \mathcal{O}(d))$.
\end{theorem}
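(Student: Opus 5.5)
The plan is the classical degeneration to the cone, via the Rees/deformation-to-the-normal-cone family, written in the paper's weighted projective language. Let $f(x_0,\dots,x_n)$ be the degree $d$ equation of $X_d$. Consider $\bP(1,\dots,1,d) = \Proj \C[x_0,\dots,x_n,y]$ with $\deg y = d$, and inside $\bP(1_{n+1},d)\times \bA^1_t$ the hypersurface
\[ \calY := \{\, t y = f(x_0,\dots,x_n) \,\} \to \bA^1_t. \]
For $t\ne 0$ we may solve $y = f/t$, so the fiber is the graph of a function and hence isomorphic to $\bP^n$; this is the same trick as in \Cref{ex:VeroneseEmbeddingTrick}. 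For $t=0$ the fiber is $\{f=0\}\subset \bP(1_{n+1},d)$, which I identify with $C_p(X_d,\calO(d))$. Indeed, the homogeneous coordinate ring of $X_d$ is $\C[x_0,\dots,x_n]/(f)$. Since $X_d$ is projectively normal with $H^0(X_d,\calO(r)) = (\C[x]/(f))_r$ by the vanishing $H^1(\bP^n,\calO(r-d))=0$, the graded ring in \Cref{def:affineAndProjectiveCones} with $L=\calO(d)$ is generated by $\oplus_r H^0(X_d,\calO(rd))$ together with the new variable of degree $1$ in that grading. After regrading so that $\calO(1)$ on $X_d$ has degree $1$ and the cone variable $y$ has degree $d$, this is $\C[x_0,\dots,x_n,y]/(f)$ up to Veronese, exactly as in \Cref{Ex:ProjectiveConeWPS}. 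So the special fiber is the cone.

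Next comes flatness and the $\bQ$-Gorenstein property. Flatness is immediate because $\calY$ is a hypersurface in $\bP(1_{n+1},d)\times\bA^1$ cut out by an equation $ty - f$ that does not vanish identically on any fiber; the family is a Cartier divisor, relative to $\bA^1$, in a flat family. The $\bQ$-Gorenstein condition follows exactly as in \Cref{ex:VeroneseEmbeddingTrick} and \Cref{thm:wpdegensofP2}. The ambient space $\bP(1_{n+1},d)\times\bA^1$ is $\bQ$-Gorenstein, and $\calY$ is a $\bQ$-Cartier divisor of degree $d$ in it. By adjunction, $K_{\calY/\bA^1}$ is the restriction of $\calO(-(n+1)-d+d)=\calO(-(n+1))$, which is $\bQ$-Cartier.

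The main obstacle is a subtlety in the adjunction step. The total space $\calY$ passes through the singular point $[0:\dots:0:1]$ of $\bP(1_{n+1},d)$, which is the cone vertex. So adjunction must be applied to a $\bQ$-Cartier divisor in a variety with quotient singularities. I would handle this by working on the index-one cover, that is, the chart $y\neq 0$ as a $\bZ_d$-quotient of $\bA^{n+1}\times \bA^1$ in which $\calY$ lifts to the smooth hypersurface $t = f$. There the relative canonical sheaf is visibly invertible, and taking invariants shows that some multiple of it is Cartier.

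Alternatively, if $X_d$ is klt Fano, one can bypass this by noting that the degeneration is the relative cone of \Cref{lem:coneofdivisors}, or by invoking \cite[Theorem 5.8]{Kollar23} with constant anticanonical volume: $(n+1)^n$ on $\bP^n$, and $\frac{(n+1)^n}{d}\cdot d$ on the cone by \Cref{thm:intersections-on-wps} applied in $\bP(1_{n+1},d)$. However, the direct hypersurface argument works for arbitrary $X_d$, and that is the route I would take.
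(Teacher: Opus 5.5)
Your proposal is correct and is essentially the paper's argument written out in coordinates. The paper invokes the degeneration to the cone of \cite[Example 7.61]{KM98} for the $d$-th Veronese image of $\bP^n$; that total space is the blow-up of $C_p(\bP^n,\calO(d))=\bP(1_{n+1},d)$ along $\{y=f=0\}$, and your hypersurface $\{ty=f\}\subset\bP(1_{n+1},d)\times\bA^1$ is exactly the part of it lying over $\bA^1$. Your $\bQ$-Gorenstein check (adjunction away from the vertex, the $\mu_d$-quotient chart of the smooth hypersurface $t=f$ at the vertex) is a more self-contained version of the paper's one-line justification, and unlike the paper's phrase ``blow up of a smooth subvariety'' it does not rely on $X_d$ being smooth.
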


\begin{proof}
    Consider the degree $d$ embedding $|\calO(d)|: \bP^{n} \to \bP^N$. 
    This is projectively normal and $X_d \cong H$ is a hyperplane section of the image. 
 Therefore, by \cite[Example 7.61]{KM98}, $\bP^n$ admits a degeneration to the cone over $X_d$.  By \cite[\S 3]{Kollar13}, this cone is precisely $C_p(X_d, \calO(d))$.  By the construction in \cite[Example 7.61]{KM98}, the total space $\calX$ of this degeneration is a blow up of a smooth subvariety in $C_p(\bP^n, \calO(d))$, which is $\bQ$-Gorenstein.  
\end{proof}

This gives the following: 

\begin{corollary}
\label{cor:hypersurfaceDegeneration}
Suppose that
there is
a Fano hypersurface $X_{d}$ of degree $d$ in $\mathbb{P}^{n}$ 
that
degenerates to $\mathbb{P}(a_{0}, \dots, a_{n-1})$. Then $\mathbb{P}^{n}$ degenerates to $\mathbb{P}(a_{0}, \dots, a_{n-1}, da_{n})$, where $a_{n} = \frac{a_0 + \dots + a_{n-1}}{n+1-d} \in \bZ$ is the degree of the limit of $\mathcal{O}_{X_d}(1)$ on the weighted projective space $\bP(a_0, \dots, a_{n-1})$.
\end{corollary}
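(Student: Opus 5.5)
The plan is to concatenate two degenerations: first $\bP^n \rightsquigarrow C_p(X_d, \calO(d))$ from \Cref{thm:degree-d-hypersurfaces}, then $C_p(X_d,\calO_{X_d}(d)) \rightsquigarrow C_p(\bP(a_0,\dots,a_{n-1}), \calO(da_n))$ from \Cref{thm:conesanddivisors}(1). Finally, by \Cref{Ex:ProjectiveConeWPS}, the last cone is $\bP(a_0,\dots,a_{n-1},da_n)$. Degenerations compose up to passing to a limit in a family: the second family has general fiber isomorphic to $C_p(X_d,\calO(d))$, which is itself a limit of $\bP^n$. So either we read ``degenerates'' transitively, or we produce a single family by a standard argument, placing both in a common deformation space and using that the locus of $\bQ$-Gorenstein deformations is closed under specialization. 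I would state the conclusion in the transitive sense and remark on this.

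\textbf{Identifying $a_n$ and the line bundle.} In the given $\bQ$-Gorenstein family $\calX \to T$ with $\calX_t = X_d$ and $\calX_0 = \bP(a_0,\dots,a_{n-1})$, adjunction gives $K_{X_d} = \calO_{X_d}(d-n-1)$. Since $X_d$ is Fano, $n+1-d>0$. The relative canonical divisor specializes to $\calO(-a_0-\dots-a_{n-1})$ on the central fiber. The limit $L$ of $\calO_{X_d}(1)$ is a $\bQ$-line bundle whose class is determined by its degree, because $\Cl(\bP(a_0,\dots,a_{n-1}))\cong\bZ$ by \Cref{thm:divisorsonwps}. Comparing multiples of the canonical class forces $(n+1-d)\deg L = \sum a_i$, so $\deg L = a_n$.

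The main obstacle is justifying that $\calO(1)$ actually extends as a relatively ample $\bQ$-line bundle $\calL$ over $T$, and that $a_n$ is an integer. For the extension, I would take $\calL$ to be a suitable root of $-K_{\calX/T}$. Over the general fiber this agrees with $\calO(1)$ up to torsion, and torsion vanishes because $\Pic(X_d)$ is torsion-free for $n\ge 3$ (Lefschetz). If needed, I would shrink $T$ or base change. Integrality would follow from the well-formedness and reflexivity of the limit sheaf $\calL_0^{[1]}$: it is a rank one reflexive sheaf on the weighted projective space, hence some $\calO(k)$ with $k\in\bZ$, and the degree comparison gives $k=a_n$.

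\textbf{Applying the cone step.} With $\calL$ in hand, $\calL^{[d]}$ restricts to $\calO_{X_d}(d)$ on the general fiber and to $\calO(da_n)$ on the special fiber. Moreover $\calL^{[d]}$ is a rational multiple of $-K_{\calX/T}$, so the hypotheses of \Cref{thm:conesanddivisors}(1) (equivalently \Cref{Lemma:relativeProjectiveCone}) hold. That result gives a $\bQ$-Gorenstein klt family with general fiber $C_p(X_d,\calO(d))$ and special fiber $C_p(\bP(a_0,\dots,a_{n-1}),\calO(da_n)) = \bP(a_0,\dots,a_{n-1},da_n)$. Combining this with \Cref{thm:degree-d-hypersurfaces} completes the argument.
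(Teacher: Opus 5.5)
Your skeleton matches the paper's: \Cref{thm:degree-d-hypersurfaces}, then \Cref{thm:conesanddivisors}(1) applied with $\calL^{[d]}$, then \Cref{Ex:ProjectiveConeWPS}. Do not fall back on a ``transitive'' reading, though. The corollary asserts a single $\bQ$-Gorenstein family, and the paper obtains one from \Cref{lem:xtoytoz}: embed all three varieties into one $\bP^N$ by a common very ample $\omega^{[-m]}$, so they share a Hilbert polynomial, and use transitivity of specialization in the projective scheme $\Hilb(\bP^N)$.

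Where you genuinely differ is the integrality of $a_n$. The paper uses only constancy of the anticanonical volume. From $d(n+1-d)^{n-1} = (a_0+\cdots+a_{n-1})^{n-1}/(a_0\cdots a_{n-1})$ it gets $\bigl(\frac{a_0+\cdots+a_{n-1}}{n+1-d}\bigr)^{n-1} = d\,a_0\cdots a_{n-1}$, and the rational root theorem finishes. This never refers to any limit sheaf.

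You instead use $\Cl(\bP(a_0,\dots,a_{n-1})) = \bZ\cdot\calO(1)$: an integral limit of $\calO(1)$ must be some $\calO(k)$, and the canonical-class comparison gives $k=a_n$. This is more conceptual. It also justifies something the paper takes for granted, namely that $\calO_{X_d}(1)$ has a $\bQ$-line-bundle limit at all, which the cone construction needs.

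However, your argument only works if $\calL$ is an honest rank-one reflexive sheaf, and ``a suitable root of $-K_{\calX/T}$'' does not supply one. An integral $(n+1-d)$-th root of $-K_{\calX/T}$ would restrict to an integral root of $\calO(a_0+\cdots+a_{n-1})$ on the central fiber, which is exactly the divisibility you are trying to prove. Torsion-freeness of $\Pic(X_d)$ gives uniqueness of such a root, not existence.

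Build $\calL$ from the other end instead:
\begin{itemize}
\item After shrinking or base-changing $T$, as you allow, take $\calO(1)$ on $\calX\setminus\calX_0$. Let $\calL$ be its reflexive extension, i.e.\ the closure of a hyperplane section.
\item Since $\calX_0$ is an irreducible principal Cartier divisor after shrinking $T$, $\calL^{[n+1-d]}\otimes\omega_{\calX/T}$ is trivial. Hence $\calL^{[n+1-d]}\cong\omega_{\calX/T}^{[-1]}$ and $\calL$ is $\bQ$-Cartier.
\item $\calX$ is regular at the codimension-one points of the normal fiber $\calX_0$. So the reflexive hull of $\calL|_{\calX_0}$ is compatible with reflexive powers and with $\omega_{\calX_0}$. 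This is what makes your comparison $(n+1-d)k=a_0+\cdots+a_{n-1}$ legitimate.
\end{itemize}
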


\begin{proof}
From \Cref{thm:degree-d-hypersurfaces}, there exists a degeneration of $\mathbb{P}^{n}$ to $C_p(X_d, \calO(d))$. As $X_d$ admits a degeneration to $\bP(a_0, \dots, a_{n-1})$, by Theorem \ref{thm:conesanddivisors}, $C_p(X_d, \calO(d))$ admits a degeneration to $C_{p}(\mathbb{P}(a_{0}, \dots, a_{n-1}, da_{n}), \mathcal{O}(da_{n})) = \bP(a_0, \dots, a_{n-1}, da_{n})$.
It then follows from Lemma \ref{lem:xtoytoz} that $\bP^{n}$ admits a degeneration to $\bP(a_0, \dots, a_{n-1}, da_{n})$.

The weight $a_n$ can be explicitly computed as follows: by construction, the degeneration $X_d \rightsquigarrow \bP(a_0, \dots a_{n-1})$ is $\bQ$-Gorenstein, so $\calO(-K_{X_d})^{n-1} = \calO_{X_d}(n+1-d)^{n-1} = d(n+1-d)^{n-1}$.  This must equal $\calO(-K_{\bP(a_0,\dots,a_{n-1})})^{n-1} = \frac{(a_0 + \dots + a_{n-1})^{n-1}}{a_0\dots a_{n-1}}$, so we have 
\[ \frac{a_0 + \dots + a_{n-1}}{n+1-d} = \sqrt[n-1]{da_0\dots a_{n-1} }\]
and by the rational root theorem conclude $\frac{a_0 + \dots + a_{n-1}}{n+1-d} \in \bZ$. 

Denote by $\calO(a_{n})$ the limit of $\calO_{X_d}(1)$.  As the $(n+1-d)$th reflexive tensor power of these $\bQ$-line bundles must agree with the canonical, we have
\[ (n+1-d)a_{n} = a_0 + \dots + a_{n-1}\] and therefore 
\[ a_{n} = \frac{a_0 + \dots + a_{n-1}}{n+1-d},\]
which is an integer by the previous paragraph.
\end{proof}

This also has a local consequence:

\begin{corollary}\label{cor:localsmoothings2}
    A cyclic quotient singularity $\frac{1}{da_n}(a_0, a_1, \dots, a_{n-1})$, where the weighted projective space $\bP(a_0, a_1, \dots, a_{n-1})$ is smoothable to a degree $d$ hypersurface in $\bP^n$ and $a_n := {\frac{\sum_{i=0}^{n-1} a_{i}}{n+1-d}}$, is smoothable.  
\end{corollary}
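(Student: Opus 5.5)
The plan is to mirror the proof of \Cref{cor:localsmoothings}, now feeding in the degeneration produced by \Cref{cor:hypersurfaceDegeneration}. Let $X_d \subset \bP^n$ be a degree $d$ hypersurface that degenerates to $\bP(a_0,\dots,a_{n-1})$. (The smoothing of the weighted projective space is Fano and has the volume of $X_d$, so we take it to be such a hypersurface; this is the meaning of the hypothesis.) Then \Cref{cor:hypersurfaceDegeneration} gives a $\bQ$-Gorenstein degeneration of $\bP^n$ to $\bP(a_0,\dots,a_{n-1},da_n)$. Here $a_n = \frac{\sum a_i}{n+1-d}$ is an integer, and $\calO(a_n)$ is the limit of $\calO_{X_d}(1)$.

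First, locate the singularity. In the chart $U_n = (x_n \neq 0)$ of $\bP(a_0,\dots,a_{n-1},da_n)$, the origin $[0:\dots:0:1]$ is a cyclic quotient singularity of type $\frac{1}{da_n}(a_0,\dots,a_{n-1})$, as in \S\ref{subsec:WPSandCQS}. Geometrically, this point is the vertex of the orbifold cone $C_p(\bP(a_0,\dots,a_{n-1}),\calO(da_n))$. A well-formedness check may be needed, or one simply works with the chart as written, since the chart description holds regardless.

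Next, restrict the global family $\calX \to T$ (general fiber $\bP^n$) to an affine neighbourhood $\calU$ of this point in the total space. Its general fiber is an open subset of $\bP^n$, which is smooth. Since the family is flat and its central fiber near the point is the germ $\frac{1}{da_n}(a_0,\dots,a_{n-1})$, this gives a (even $\bQ$-Gorenstein) smoothing of the germ.

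The main subtlety is that \Cref{cor:hypersurfaceDegeneration} is built from two successive degenerations, $\bP^n \rightsquigarrow C_p(X_d,\calO(d)) \rightsquigarrow \bP(a_0,\dots,a_{n-1},da_n)$, combined via the lemma on composing degenerations. The resulting family may live over a base that is not a curve, or only produce the weighted projective space as a limit after a base change. I would address this in either of two ways. One option is to argue directly that the second step already smooths the vertex locally: the vertex of $C_p(X_d,\calO(d))$ is the cone over a smooth $X_d$ with respect to $\calO(d)$. However, that vertex is still singular, since it is the cone over $(\bP^{n-1}\supset X_d,\calO(d))$, so the first step is genuinely needed. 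The cleaner option is to use the composed family from the cited lemma, which gives some one-parameter deformation whose general fiber is $\bP^n$ and whose special fiber is $\bP(a_0,\dots,a_{n-1},da_n)$. Any such family localizes at the vertex to a smoothing, because smoothness of the general fiber is an open condition. So the only real input is the existence of the global smoothing; the rest is localization.
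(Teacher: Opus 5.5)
Your proposal is correct and is essentially the paper's argument. The paper gives no separate proof: it treats the corollary as the local consequence of the global $\bQ$-Gorenstein smoothing $\bP^n \rightsquigarrow \bP(a_0,\dots,a_{n-1},da_n)$ from \Cref{cor:hypersurfaceDegeneration}, restricted to a neighbourhood of $[0:\dots:0:1]$, whose germ is $\frac{1}{da_n}(a_0,\dots,a_{n-1})$; your discussion of the two-step composition is therefore extra, though your remark that the vertex of $C_p(X_d,\calO(d))$ is still singular (so the first step is needed) is correct.
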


\begin{example}
\label{ex:P(2,12,21,49)-ps}
    Consider a cyclic quotient singularity of type $\frac{1}{4}(1,1,2)$ or $\frac{1}{12}(1,2,9)$.  Observe that $4 = 1+ 1+ 2 = 2(\frac{1+1+2}{4-2})$ and $12 =1+ 2 + 9  = 2(\frac{1+2+9}{4-2})$.  By the following \Cref{thm:HP-quadricSurfaceP3} of Hacking--Prokhorov,
    the weighted projective spaces $\bP(1,1,2)$ and $\bP(1,2,9)$ are smoothable to $\bP^1 \times \bP^1$ (a quadric surface in $\bP^3$), so these cyclic quotient singularities are smoothable by Corollary \ref{cor:localsmoothings2}. 
\end{example}

By the following description of weighted projective degenerations of the quadric surface, Corollary \ref{cor:hypersurfaceDegeneration} will yield infinitely many additional weighted projective degenerations of $\bP^n$ for $n \ge 3$.

\begin{theorem}
{\cite{HP10}}
\label{thm:HP-quadricSurfaceP3}
 A weighted projective surface admits a $\bQ$-Gorenstein smoothing to a smooth quadric surface in $\mathbb{P}^{3}$ if and only if it is of the form $\mathbb{P}(a^{2}, b^{2}, 2c^{2})$, where the weights satisfy $a^{2} + b^{2} + 2c^{2} = 4abc$.
\end{theorem}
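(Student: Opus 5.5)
The plan mirrors the proof of Theorem \ref{thm:wpdegensofP2}, with two extra steps: a different arithmetic lemma, and an identification of the smooth fiber (degree $8$ del Pezzo surfaces are not unique).

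\emph{Only if.} Suppose $\bP(a_0,a_1,a_2)$ is well formed and admits a $\bQ$-Gorenstein smoothing to a smooth quadric $Q$. Then the weights are pairwise coprime. Since the anticanonical volume is constant in $\bQ$-Gorenstein families, Theorem \ref{thm:intersections-on-wps} gives
\[ 8a_0a_1a_2=(a_0+a_1+a_2)^2. \]
Hence $8a_0a_1a_2$, and therefore $2a_0a_1a_2$, is a perfect square. Because the weights are pairwise coprime, the $2$-adic valuation of the product is odd for exactly one weight and even for the others. All odd primes appear to even powers in their respective weights. So, after reordering, $a_0=a^2$, $a_1=b^2$, $a_2=2c^2$. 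Taking square roots then gives $a^2+b^2+2c^2=\sqrt{16a^2b^2c^2}=4abc$.

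\emph{If, local part.} First I check the arithmetic: such triples are pairwise coprime, with $a,b$ odd. This should follow by reducing mod $2$, and by descending via the mutations $c\mapsto 2ab-c$, $a\mapsto 4bc-a$, $b\mapsto 4ac-b$ to the minimal solution $(1,1,1)$, exactly as in Lemma \ref{lem:P2andmarkoveqn}. Next I smooth each singular point with a degree-$c$ (resp. degree-$a$, degree-$b$) embedding, as in the $\bP^2$ case. For the point of index $2c^2$, put $c'=2ab-c$, so that $a^2+b^2=2cc'$. Then $[x:y:z]\mapsto[x^{2c}:y^{2c}:xy:z]$ should embed $\bP(a^2,b^2,2c^2)$ as $x_0x_1=x_2^{2c}$ in a weighted $\bP^3$ whose last weight is adapted to $c'$. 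Deforming this to $x_0x_1=x_2^{2c}+t\,x_3^{c'}$ (up to adjusting exponents so the equation is homogeneous) removes that singular point. It is a $\bQ$-Gorenstein family because it is a hypersurface in a weighted projective $3$-fold times $\bA^1$. The points of index $a^2$ and $b^2$ are treated symmetrically. Alternatively, I would simply note that all three singularities are $T$-singularities, since $1/(a^2)(b^2,2c^2)$ has the form $\frac{1}{n^2}(1,na-1)$ by the Markov-type relation. The singularities are isolated and $H^2(X,T_X)=0$ by Theorem \ref{thm:cohomologyofwps}. So \cite[Proposition 2.3]{PetracciToric} globalizes these local smoothings to a $\bQ$-Gorenstein smoothing $\calX\to T$. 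Since ampleness is open, the general fiber $X_t$ is a smooth del Pezzo surface of degree $8$, hence either $\bP^1\times\bP^1$ or $\mathbb F_1$.

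\emph{Main obstacle: ruling out $\mathbb F_1$.} I would show that $\calO_X(2abc)$, which is "$-\tfrac12K_X$", extends to a line bundle on $X_t$. It suffices to check that at each singular point $\calO_X(2abc)$ is, in the local class group, a reflexive power of $\omega_X$. At the point of index $a^2$ the local class group is $\bZ/a^2$ and $K_X$ has order $a$. Since $a$ is odd, $2$ is invertible mod $a$. So the class of $\calO(2abc)$, which equals $\tfrac12[K_X]$ in the subgroup generated by $K_X$, is $m[K_X]$ for some integer $m$; the same holds at the point of index $b^2$. At the point of index $2c^2$, I would verify directly that $2abc$ lies in the subgroup generated by $4abc$ modulo $2c^2$; this is the computation I expect to be fiddliest. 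Given this, the $\bQ$-Gorenstein condition (reflexive powers of $\omega_{\calX/T}$ commute with base change) makes $\calO_{\calX}(m K_{\calX/T})$ a flat deformation of $\calO_X(2abc)$ near the singular points. Hence $\calO_X(2abc)$ deforms to a line bundle $H$ on $X_t$ with $-K_{X_t}\equiv 2H$. On $\mathbb F_1$ the anticanonical class is primitive in $\Pic$, so $X_t\cong\bP^1\times\bP^1$, a smooth quadric.
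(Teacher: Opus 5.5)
The paper does not prove this statement; it quotes it from \cite{HP10}, where it is a special case of Hacking--Prokhorov's classification of $\bQ$-Gorenstein smoothable del Pezzo surfaces with quotient singularities and Picard number one. Your argument takes a genuinely different, self-contained route. The volume computation and the local smoothings follow the paper's proof of Theorem \ref{thm:wpdegensofP2}, and the details you left vague do work out:
\begin{itemize}
\item the ambient space is $\bP(a^2,b^2,c',c)$;
\item the deformation is $x_0x_1=x_2^{2c}+t\,x_3^{2c'}$;
\item $\mu_c$ acts freely on $x_0x_1=x_2^{2c}+t$, because $a^2$, $b^2$ and $c'$ are all prime to $c$.
\end{itemize}
Note that the index-$2c^2$ point is a $T$-singularity with $d=2$, so it is not of the form $\frac{1}{n^2}(1,na-1)$. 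You then pin down the smooth fiber by transporting the half-anticanonical class $\calO_X(2abc)$. The citation gives the full classification for free. Your route gives an elementary proof that uses only tools already in the paper. It also shows directly that $\calO(1,1)$ degenerates to $\calO_X(2abc)$, which is what Example \ref{ex:quadricdegensofP3} uses.

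Two steps still need an argument.

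First, your check at the point of index $2c^2$ requires $c$ odd, which you did not record. Since $a,b$ are odd, $a^2+b^2\equiv 2\pmod 8$; if $c$ were even, then $a^2+b^2=4abc-2c^2\equiv 0\pmod 8$, a contradiction. With $c$ odd, $\gcd(4abc,2c^2)=2c$. So $K_X$ generates the multiples of $2c$ in $\bZ/2c^2$, and $2abc$ is one of them.

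Second, the \emph{hence} in your last paragraph does not follow as written. Local isomorphisms $\calO_X(2abc)\cong\omega_X^{[m_p]}$ near the three points, with a priori different $m_p$, do not glue to a sheaf on $\calX$. Fix this by choosing one exponent globally:
\begin{itemize}
\item With $m=(abc-1)/2$ one has $\calO_X(2abc)\cong\omega_X^{[m]}\otimes\calO_X(2a^2b^2c^2)$ on all of $X$, and the last factor is Cartier.
\item Since $H^2(X,\calO_X)=0$, that line bundle extends to some $\calL$ on $\calX$, after an \'etale or formal base change of $T$.
\item Then $\omega_{\calX/T}^{[m]}\otimes\calL$ is the deformation you want, and its general fiber is a line bundle $H$.
\end{itemize}
Likewise, $2H\equiv -K_{X_t}$ has to be proved on $X_t$, not read off from $X$. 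By constancy of intersection numbers, $(2H+K_{X_t})^2=0$ and $(2H+K_{X_t})\cdot K_{X_t}=0$. Since $K_{X_t}^2=8>0$, the Hodge index theorem gives $2H+K_{X_t}\equiv 0$. You can even skip this step: $H^2=(2abc)^2/(2a^2b^2c^2)=2$, whereas every class $xh+ye$ on $\mathbb F_1$ has square $x^2-y^2\not\equiv 2\pmod 4$. With these additions the proof is complete.
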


\begin{example}\label{ex:quadricdegensofP3}
By the previous theorem, 
every weighted projective space of the form $\mathbb{P}(a^{2}, b^{2}, 2c^{2})$, such that $a^{2} + b^{2} + 2c^{2} = 4abc$, is a $\bQ$-Gorenstein degeneration of a smooth quadric surface $X\subset\bP^3$.
Now \Cref{cor:hypersurfaceDegeneration} implies that there exists a $\bQ$-Gorenstein degeneration of $\mathbb{P}^{3}$ to $\mathbb{P}(a^{2}, b^{2}, 2c^{2}, 2a_{3})$, where $a_{3}$ is the degree of the limit of $\mathcal{O}_X(1)$ on $\mathbb{P}(a^{2}, b^{2}, 2c^{2})$, which is 
\[ a_3 = \frac{a^2 + b^2 + 2c^2}{4 - 2} = \frac{4abc}{2} = 2abc.\]
Therefore, $\mathbb{P}^{3}$ degenerates to $\mathbb{P}(a^{2}, b^{2}, 2c^{2}, 4abc)$.
\end{example}

 Now we can prove the second part of Theorem \ref{thm:infiniteFamiliesDegenerationsPn}. 

 \begin{theorem}
\label{thm:degensofP1xP1toPn}
For $n \ge 3$, let $X =\bP(a^{2}, b^{2}, 2c^{2}, 2abc_{n-3}, 4abc)$ where $a^2 + b^2 + 2c^2 = 4abc$.  Then $X$ admits a $\mathbb{Q}$-Gorenstein smoothing to $\mathbb{P}^{n}$.
\end{theorem}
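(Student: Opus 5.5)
The argument is an induction on $n$. Example \ref{ex:quadricdegensofP3} supplies the base case $n = 3$, and \Cref{cor:degenofPn+1} supplies the inductive step. The only point that needs care is checking that the new weight produced by \Cref{cor:degenofPn+1} at each step is exactly $2abc$.

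\textbf{Base case.} By \Cref{thm:HP-quadricSurfaceP3}, $\bP(a^2,b^2,2c^2)$ admits a $\bQ$-Gorenstein smoothing to a smooth quadric surface in $\bP^3$. \Cref{cor:hypersurfaceDegeneration} with $d=2$ then gives a $\bQ$-Gorenstein smoothing of $\bP(a^2,b^2,2c^2,4abc)$ to $\bP^3$, as computed in Example \ref{ex:quadricdegensofP3}. This is the case $n=3$, where the block of weights $2abc$ is empty.

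\textbf{Inductive step.} Assume $X_n := \bP(a^2,b^2,2c^2,(2abc)_{n-3},4abc)$ admits a $\bQ$-Gorenstein smoothing to $\bP^n$. The sum of its weights is
\[
a^2+b^2+2c^2 + 2abc(n-3) + 4abc = 4abc + 2abc(n-3) + 4abc = 2abc(n+1),
\]
using $a^2+b^2+2c^2 = 4abc$. So the average of the $n+1$ weights is $2abc$. \Cref{cor:degenofPn+1} therefore shows that $\bP(a^2,b^2,2c^2,(2abc)_{n-3},4abc,2abc)$, which after reordering is $X_{n+1}$, admits a $\bQ$-Gorenstein smoothing to $\bP^{n+1}$.

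\textbf{Well-formedness.} The only potential obstacle is that \Cref{cor:degenofPn+1} and the surrounding discussion assume well-formed spaces. I would check this directly. By \Cref{thm:HP-quadricSurfaceP3}, $\bP(a^2,b^2,2c^2)$ is well-formed, so $a^2$, $b^2$, $2c^2$ are pairwise coprime; in particular $a$ and $b$ are odd and coprime to $c$.
\begin{itemize}
\item Removing a weight other than $a^2$ or $b^2$ leaves both $a^2$ and $b^2$, which are coprime, so the remaining weights have gcd $1$.
\item Removing $a^2$ leaves $b^2$ and $2c^2$, which are coprime; symmetrically when removing $b^2$.
\end{itemize}
Hence every space in the induction is well-formed, and \Cref{cor:degenofPn+1} applies at each step.

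In any case, \Cref{Lemma:relativeProjectiveCone} itself does not need well-formedness. The cone over $\bP(a_0,\dots,a_{n-1})$ with respect to $\calO(a_n)$ is $\bP(a_0,\dots,a_n)$ by Example \ref{Ex:ProjectiveConeWPS}, and the hypothesis $\calO(dK)\sim \calL^{ad}$ holds because the class group has rank one.
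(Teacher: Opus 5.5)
Your proposal is correct and follows the paper's proof: the base case $\bP(a^2,b^2,2c^2,4abc)$ comes from Example~\ref{ex:quadricdegensofP3}, and because the weights average to $2abc$, Corollary~\ref{cor:degenofPn+1} together with induction gives the result. The paper checks the average only at $n=3$, relying on the fact that appending the average preserves it, and leaves the well-formedness check implicit; your explicit well-formedness check is a harmless addition.
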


\begin{proof}
    By \Cref{ex:quadricdegensofP3}, $\bP(a^2,b^2,2c^2,4abc)$ admits a $\bQ$-Gorenstein smoothing to $\bP^3$. The average of the weights is $\frac{a^2+b^2+2c^2+4abc}{4}=2abc$, so \Cref{cor:degenofPn+1} and induction give the result.
\end{proof}

\section{Deformations of weighted projective threefolds}\label{sec:deformations}

In this section, we prove that a weighted projective threefold with mild assumptions that is locally smoothable everywhere is in fact globally smoothable.  This will provide an additional tool for smoothing weighted projective spaces that are not cones over lower dimensional smoothable weighted projective spaces.

Throughout this section, we will make the following assumptions.

\begin{assumption}\label{assumptions-on-wps}
Let $X = \bP(a_0,a_1,a_2,a_3)$ be a well-formed weighted projective threefold and assume that $X$ has only singularities of type $A_n$ in codimension 2, which necessarily occur along the curves $C_{ij} = (x_i = x_j = 0)$.  Equivalently, this condition is precisely that $a_k + a_l \equiv 0 \mod (\gcd(a_i,a_j))$ for every permutation $(i,j,k,l)$ of $(0,1,2,3)$ (so that $X$ has $A_{\gcd(a_i,a_j)-1}$ singularities along $C_{ij}$). Assume further for each $C_{ij}$ such that $X$ is singular at the generic point of $C_{ij}$ that there exists a section $s \in H^0(X, \mathcal{\omega}_X^{\vee})$ that does not generically vanish along $C_{ij}$.  Such a section exists, for example, if $2a_l + 2a_k + a_i + a_j > \frac{a_ia_j}{\gcd(a_i,a_j)}$.
\end{assumption}

\begin{remark}
    These assumptions are satisfied for the two families of weighted projective threefolds of the form $\bP(a^2,b^2,c^2,abc)$ such that $a^2 + b^2 + c^2 = 3abc$ and $\bP(a^2,b^2,2c^2, 4abc)$ such that $a^2 +b^2 +2 c^2 = 4abc$.  The one-dimensional singular strata are $C_{i3}$ for $i = 0,1,2$ in both cases.  For the second assumption, note that the anticanonical divisor has degree $4abc$ in the first case (respectively, $8abc$ in the second case), and the section $x_3^4$ (respectively, $x_3^2$) does not vanish generically along the one-dimensional singular strata.  They are also satisfied for any Gorenstein weighted projective threefold.
\end{remark}

Write $X = \cup_{i = 0}^3 U_i$, where $U_i = (x_i \ne 0 )$.  The goal of this section is to prove that, given any deformations of the sets $U_i$ that agree on the intersections $U_i \cap U_j$, they glue together to form a deformation of $X$.  The deformation space of $X$ is given by $\Ext^1(\Omega_X, \calO_X)$ and the obstruction space is given by $\Ext^2(\Omega_X, \calO_X)$.  We will utilize the following exact sequence from the local-to-global spectral sequence for $\Ext$, where $T_X = \SHom(\Omega_X,\calO_X)$: 

\begin{equation}
\begin{split} \label{eqn:localtoglobalExt}
    0 &\to H^1(X, T_X) \to \Ext^1(\Omega_X, \calO_X) \to H^0(X, \SExt^1(\Omega_X, \calO_X)) \to H^2(X,T_X) \\
    &\to K \to H^1(X, \SExt^1(\Omega_X, \calO_X)) \to H^3(X,T_X)
\end{split}
\end{equation}

where $K = \ker \ (\Ext^2(\Omega_X, \calO_X) \to H^0(X, \SExt^2(\Omega_X, \calO_X))) $.

We begin with some preliminary results. 

\begin{lemma}\label{lem:AnExt2}
    If $V$ is a surface with only $A_n$ singularities, then  $\SExt^2(\Omega_V,\calO_V) = 0$.  Consequently, if $X$ is a weighted projective space with only $A_n$ singularities in codimension 2, then $\SExt^2(\Omega_X,\calO_X)$ is supported in codimension 3.  Explicitly, writing $X = \bP(a_0,a_1,a_2,a_3)$, then $\SExt^2(\Omega_X,\calO_X)$ is supported on the singular points $[1:0:0:0], [0:1:0:0], [0:0:1:0]$, and $[0:0:0:1]$.
\end{lemma}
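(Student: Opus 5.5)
The plan is to reduce everything to a hypersurface computation. An $A_n$ surface singularity is locally analytically (indeed étale locally) a hypersurface $V = (xy - z^{n+1} = 0) \subset \bA^3$. For a hypersurface $V = (f=0)$ in a smooth ambient $M$, the conormal sequence
\[ 0 \to \calO_V \xrightarrow{df} \Omega_M|_V \to \Omega_V \to 0 \]
is exact on the left, because $V$ is reduced and $df$ is nonzero at generic points of $V$, so the map $\calO_V \to \Omega_M|_V$ is injective. This gives a locally free resolution of $\Omega_V$ of length one. Applying $\SHom(-,\calO_V)$ yields a two-term complex $T_M|_V \to \calO_V$, so $\SExt^i(\Omega_V,\calO_V)=0$ for $i\ge 2$. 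In particular $\SExt^2(\Omega_V,\calO_V)=0$. Since $\SExt$ sheaves are coherent and their formation is compatible with étale (or completed/analytic) localization, vanishing on the local models gives vanishing on $V$; at smooth points $\Omega_V$ is locally free and there is nothing to prove.

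For the weighted projective threefold, the plan is to stratify $X$. On the smooth locus, $\Omega_X$ is locally free and $\SExt^2$ vanishes. Along a codimension 2 stratum $C_{ij}$, away from the torus-fixed points, $X$ is locally isomorphic to $\frac{1}{d}(a_k,a_l)\times \bA^1$, as recalled in \S\ref{subsec:WPSandCQS}. By hypothesis this is $V\times \bA^1$ with $V$ an $A_{d-1}$ surface singularity. Then $\Omega_{V\times\bA^1} = p^*\Omega_V \oplus \calO$. Using flat base change for $\SExt$ along the flat projection $p: V\times \bA^1\to V$, we get $\SExt^2(\Omega_{V\times \bA^1},\calO) = p^*\SExt^2(\Omega_V,\calO_V)\oplus 0 = 0$. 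Alternatively, $V\times\bA^1$ is still a hypersurface $(xy-z^{n+1}=0)\subset\bA^4$, and the first paragraph's argument applies directly.

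It then remains to identify the leftover locus. The singular locus of $X$ consists of the curves $C_{ij}$ together with the four coordinate points: every point lies in some $U_i$, and the isotropy at a point with at least two nonzero coordinates only comes from a gcd of at least two weights. Removing the curves, where $\SExt^2$ now vanishes except possibly at their endpoints, leaves only the points $[1:0:0:0],\dots,[0:0:0:1]$. So $\SExt^2(\Omega_X,\calO_X)$ is supported there, which has codimension 3.

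The only delicate step is justifying that the étale-local models compute $\SExt$. I would handle it by noting that $\SExt$ commutes with flat base change, in particular with passing to the étale-local or completed local rings. This is routine, so I expect no real obstacle.
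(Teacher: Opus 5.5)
Your proposal is correct and follows the same route as the paper: an $A_n$ singularity is locally the hypersurface $uv=t^{n+1}\subset\bA^3$, so the conormal sequence is a two-term locally free resolution of $\Omega_V$, and therefore $\SExt^2(\Omega_V,\calO_V)=0$. You also correctly fill in the steps the paper leaves implicit, namely the passage to $A_n\times\bA^1$ along the open part of each singular curve (via flat base change, or as a hypersurface in $\bA^4$) and the identification of the remaining support with the four coordinate points.
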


\begin{proof}
    Let $V$ be a surface with only $A_n$ singularities.  It suffices to work locally in a neighborhood of each $A_{n}$ singularity, each of which are of the form $(uv - t^{n+1} = 0) \subset \bA^3$.  Because this is a hypersurface singularity, there is a locally free resolution of $\Omega_V$ by only two terms 
    \[ 0 \to I/I^2 \to \Omega_{\bA^3}|_{V} \to \Omega_V \to 0\]
    and hence $\SExt^2(\Omega_V,\calO_V) = 0$.
\end{proof}

\begin{corollary}\label{cor:globaldefspaceislocal}
    For $X =  \cup_{i = 0}^3 U_{i}$ a weighted projective threefold that has only $A_n$ singularities in codimension 2, the vector space
    \begin{center}
    $H^0(X, \SExt^2(\Omega_X,\calO_X))$
    \end{center}
    is identified with the obstruction space for the local deformations of each $U_i$. 
\end{corollary}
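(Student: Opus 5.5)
By \Cref{lem:AnExt2}, the sheaf $\SExt^2(\Omega_X,\calO_X)$ has zero-dimensional support: it is supported on the four coordinate points $P_i$, with $P_i$ the origin of the chart $U_i$. The plan is to use this to split the global sections into local pieces. For a coherent sheaf $\calF$ with finite support $Z$, there is a canonical decomposition
\[ H^0(X,\calF) \cong \bigoplus_{P \in Z} \calF_P, \]
and for any open $U$ we have $H^0(U,\calF) \cong \bigoplus_{P\in Z\cap U}\calF_P$. Each $P_i$ lies only in $U_i$, since every other coordinate vanishes at $P_i$. Hence
\[ H^0(X,\SExt^2(\Omega_X,\calO_X)) \cong \bigoplus_{i=0}^3 H^0(U_i,\SExt^2(\Omega_{U_i},\calO_{U_i})). \]
This uses that $\SExt$ commutes with restriction to opens, so that $\SExt^2(\Omega_X,\calO_X)|_{U_i}=\SExt^2(\Omega_{U_i},\calO_{U_i})$. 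It also uses that $H^0(U_i,-)$ of this sheaf is exactly the stalk-type contribution at $P_i$: the sheaf vanishes on $U_i\setminus\{P_i\}$.

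Next I will identify each summand with the obstruction space for the local deformations of $U_i$. Each $U_i \cong \bA^3/\bZ_{a_i}$ is affine, so the local-to-global spectral sequence on $U_i$ degenerates: $H^p(U_i,\SExt^q)=0$ for $p>0$. This gives $\Ext^2_{U_i}(\Omega_{U_i},\calO_{U_i}) = H^0(U_i,\SExt^2(\Omega_{U_i},\calO_{U_i}))$. The space $\Ext^2(\Omega_{U_i},\calO_{U_i})$, that is $T^2_{U_i}$ for the reduced affine variety $U_i$, receives the obstructions to the deformations of $U_i$ via the standard cotangent complex theory. More precisely, $T^2$ is an obstruction space for the affine deformation functor; the comparison of $T^2$ with $\Ext^2(\Omega,\calO)$ holds in the sense used throughout this section, where deformations are governed by $\Ext^i(\Omega_X,\calO_X)$.

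Finally, I will check compatibility: the identification should be natural with respect to restriction. The map $\Ext^2(\Omega_X,\calO_X)\to H^0(X,\SExt^2)$ from \eqref{eqn:localtoglobalExt}, composed with the decomposition above, should equal the product of the restriction maps $\Ext^2(\Omega_X,\calO_X)\to\Ext^2(\Omega_{U_i},\calO_{U_i})$. This follows from functoriality of the local-to-global spectral sequence under restriction to opens.

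The main (minor) obstacle is keeping track of which intersections contain which points, which is handled by the observation that the $P_i$ are pairwise in distinct charts. The other point needing care is justifying that $\Ext^2(\Omega,\calO)$ rather than $T^2$ of the cotangent complex is the correct obstruction space. For the latter I would simply appeal to the convention already adopted in this section, noting that on each $U_i$ the two agree in the relevant sense, or at least that obstructions map injectively.
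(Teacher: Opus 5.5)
Your proposal is correct and is essentially the paper's argument. Affineness of $U_i$ gives $\Ext^2(\Omega_{U_i},\calO_{U_i}) = H^0(U_i,\SExt^2(\Omega_{U_i},\calO_{U_i}))$, and since $\SExt^2(\Omega_X,\calO_X)$ is supported at the four coordinate points, each lying in exactly one chart, the global sections split as $\oplus_{i} H^0(U_i,\SExt^2(\Omega_{U_i},\calO_{U_i}))$. Your closing worry about $T^2$ is unnecessary, because the paper simply takes $\Ext^2(\Omega,\calO)$ as the obstruction space by convention (and for reduced schemes the natural map is $\Ext^2(\Omega,\calO)\hookrightarrow T^2$, so ``obstructions map injectively'' would not by itself justify that convention).
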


\begin{proof}
    For each $U_i$, the obstruction space $\Ext^2(\Omega_{U_i}, \calO_{U_i}) = H^0(U_i, \SExt^2(\Omega_{U_i}, \calO_{U_i}))$ as $U_i$ is affine.  Because $\Ext^2(\Omega_{U_i}, \calO_{U_i})$ is supported only at the origin $(0,0,0) \in U_i$, we have 
    \[ H^0(X, \SExt^2(\Omega_X,\calO_X)) = \oplus_{i=0}^3H^0(U_i, \SExt^2(\Omega_{U_i}, \calO_{U_i})).  \]
\end{proof}

Combining the Sequence \ref{eqn:localtoglobalExt} with Theorem \ref{thm:cohomologyofwps} (implying that $H^i(X,T_X) = 0$ for $i \geq 1$) and Lemma \ref{lem:AnExt2}, we obtain the following:

\begin{corollary}\label{cor:wpssmoothing}
    Let $X$ be a well-formed weighted projective threefold such that $X$ has only $A_{n}$ singularities in codimension 2.  Then, 
        \begin{enumerate}
            \item $\Ext^1(\Omega_X, \calO_X) \cong H^0(X, \SExt^1(\Omega_X, \calO_X))$ and 
            \item there is an exact sequence 
            \[ 0 \to H^1(X, \SExt^1(\Omega_X, \calO_X)) \to \Ext^2(\Omega_X, \calO_X) \to H^0(X, \SExt^2(\Omega_X, \calO_X)) .\]
        \end{enumerate}
        
    If $H^1(X, \SExt^1(\Omega_X, \calO_X)) = 0$, this gives an injection on obstruction spaces
    \begin{center}
    $\Ext^2(\Omega_X, \calO_X) \hookrightarrow H^0(X, \SExt^2(\Omega_X, \calO_X))$.
    \end{center}
    In particular, if a deformation of each $U_i$ exists and they agree on the overlaps $U_i \cap U_j$, these glue together to give a deformation of $X$.
\end{corollary}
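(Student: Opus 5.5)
The statement to prove is Corollary \ref{cor:wpssmoothing}. My plan is to read off both parts from the exact sequence \eqref{eqn:localtoglobalExt}, inserting the vanishing $H^i(X,T_X)=0$ for $i\ge 1$ from Theorem \ref{thm:cohomologyofwps}.

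\textbf{Part (1).} In \eqref{eqn:localtoglobalExt}, the term $H^1(X,T_X)$ on the left and the term $H^2(X,T_X)$ after $H^0(X,\SExt^1(\Omega_X,\calO_X))$ both vanish. Hence
\[ \Ext^1(\Omega_X,\calO_X)\cong H^0(X,\SExt^1(\Omega_X,\calO_X)). \]

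\textbf{Part (2).} Since $H^2(X,T_X)=0$ and $H^3(X,T_X)=0$, the tail of \eqref{eqn:localtoglobalExt} gives an isomorphism $K\cong H^1(X,\SExt^1(\Omega_X,\calO_X))$. By definition $K$ is the kernel of the edge map $\Ext^2(\Omega_X,\calO_X)\to H^0(X,\SExt^2(\Omega_X,\calO_X))$. Substituting $K$ yields the exact sequence
\[ 0\to H^1(X,\SExt^1(\Omega_X,\calO_X))\to \Ext^2(\Omega_X,\calO_X)\to H^0(X,\SExt^2(\Omega_X,\calO_X)). \]
One point to check is that $K$ injects into $\Ext^2$ compatibly with the edge map, that is, that the five-term-type sequence is the standard one coming from the filtration on $\Ext^2$. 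The filtration on $\Ext^2$ has graded pieces $E_\infty^{2,0}$, $E_\infty^{1,1}$ and $E_\infty^{0,2}$. Here $E_2^{2,0}=H^2(X,T_X)=0$, and $E_\infty^{1,1}$ is the kernel of $d_2\colon H^1(\SExt^1)\to H^3(T_X)=0$. So $K$, the filtration piece $F^1$, equals $H^1(X,\SExt^1(\Omega_X,\calO_X))$.

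\textbf{Injectivity and gluing.} If $H^1(X,\SExt^1(\Omega_X,\calO_X))=0$, the sequence shows that $\Ext^2(\Omega_X,\calO_X)$ injects into $H^0(X,\SExt^2(\Omega_X,\calO_X))$. By Lemma \ref{lem:AnExt2} and Corollary \ref{cor:globaldefspaceislocal}, the target is $\oplus_i \Ext^2(\Omega_{U_i},\calO_{U_i})$. For gluing, I would argue order by order. Given a compatible system of local liftings, the first-order data glue by (1), because every section of $\SExt^1$ is global. At each higher step, the obstruction class in $\Ext^2(\Omega_X,\calO_X)$ maps to the local obstructions, which vanish because the local deformations exist. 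By injectivity the global obstruction vanishes.

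\textbf{Main obstacle.} The subtle step is matching the global lift with the prescribed local ones. Since $H^1(X,T_X)=0$, global liftings are determined by their images in $H^0(\SExt^1)$, that is, by local data. So I can choose the global lift inducing the given local lifts, provided the local lifts agree on overlaps, which is exactly the hypothesis.
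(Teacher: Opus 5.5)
Your proposal is correct and follows the paper's argument. Both read (1) and (2) off the local-to-global $\Ext$ sequence using $H^i(X,T_X)=0$ for $i\ge 1$. Both then deduce gluing from the injectivity of $\Ext^2(\Omega_X,\calO_X)$ into $H^0(X,\SExt^2(\Omega_X,\calO_X))=\oplus_i H^0(U_i,\SExt^2(\Omega_{U_i},\calO_{U_i}))$. Your filtration check on $\Ext^2$ and your order-by-order lifting spell out what the paper's short proof leaves implicit; the latter uses that $\Ext^1(\Omega_X,\calO_X)\to H^0(X,\SExt^1(\Omega_X,\calO_X))$ is an isomorphism to match the prescribed local lifts at each step.
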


\begin{proof}
    The only part that must be verified is the last sentence.  If a deformation of each $U_i$ exists and they agree on the overlaps, this gives an element of $H^0(X, \SExt^1(\Omega_X, \calO_X))$ which gives a first-order deformation in $\Ext^1(\Omega_X, \calO_X)$.  However, because the deformation of $U_i$ exists for each $i$, the corresponding obstruction in 
    \begin{center}
    $H^0(X, \SExt^2(\Omega_X,\calO_X)) = \oplus_{i=0}^3H^0(U_i, \SExt^2(\Omega_{U_i}, \calO_{U_i}))$
    \end{center}
    (the equality above coming from \Cref{cor:globaldefspaceislocal}) 
    vanishes and hence there is no obstruction in $\Ext^2(\Omega_X, \calO_X)$ to extending the deformation. 
\end{proof}

Now, we wish to verify that a weighted projective threefold satisfying \Cref{assumptions-on-wps} has $H^1(X, \SExt^1(\Omega_X, \calO_X)) = 0$.  The main tool will be to compare $\Omega_X$, the sheaf of Kahler differentials, with its double dual $\Omega_X^{[1]}$.  The sheaf $\Omega_X$ can be viewed as the differentials of invariant functions on $\bC[x_0, x_1, x_2, x_3 ]$, and by \cite{Kni73}, $\Omega_X^{[1]}$ is the sheaf of invariant differentials.  We will show that the cokernel of the natural map $\Omega_X \to \Omega_X^{[1]}$ determines this cohomology group and it can be explicitly computed. To do this, we begin with preparatory lemmas.  The first lemma is classically known but we include a proof that will be generalized to the case of interest.

\begin{lemma}\label{lem:extofAn}
    Let $V = \frac{1}{n+1}(1,n) = \bA^2_{(z,w)}/\mu_{n+1} $ be an $A_n$ singularity. Then, the cokernel of the double dual morphism $\Omega_V \to \Omega_V^{[1]}$ is a torsion sheaf at $(0,0)$ of length $n$ and can be identified with the $\mu_{n+1}$-invariant differential forms $(zw)^i z dw$ for $0 \le 1 \le n-1$.  This identification implies that $\Ext^1(\Omega^1_V, \calO_V) = \oplus_{i = 0}^{n-1} k$.
\end{lemma}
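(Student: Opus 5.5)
The plan is to compute both sheaves explicitly on the invariant ring and compare them. Write $R = \bC[z,w]^{\mu_{n+1}} = \bC[u,v,t]/(uv - t^{n+1})$ with $u = z^{n+1}$, $v = w^{n+1}$, $t = zw$. The Kähler differentials $\Omega_V$ are generated over $R$ by $du, dv, dt$, subject to the single relation $v\,du + u\,dv - (n+1)t^n dt = 0$. By \Cref{thm:wps-dualizingprops}(2) and Knighten's description \cite{Kni73}, $\Omega_V^{[1]}$ is the module of $\mu_{n+1}$-invariant $1$-forms $f\,dz + g\,dw$ on $\bA^2$. Invariance forces $f$ to have weight $-1 \equiv n$ and $g$ to have weight $1 - n \equiv 1$ modulo $n+1$. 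A monomial basis is therefore $z^a w^b\,dz$ with $a - b \equiv n$ and $z^a w^b\,dw$ with $a - b \equiv 1$ (mod $n+1$).

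Next I would compute the image of $\Omega_V \to \Omega_V^{[1]}$. It is the $R$-submodule generated by
\[ du = (n+1)z^n dz, \quad dv = (n+1)w^n dw, \quad dt = w\,dz + z\,dw. \]
To find the cokernel, I would list the invariant forms of lowest degree. The forms $(zw)^i z\,dw$ and $(zw)^i w\,dz$ are invariant of the required weights. Using $t^i\,dt = (zw)^i(w\,dz + z\,dw)$, each $(zw)^i w\,dz$ is congruent modulo the image to $-(zw)^i z\,dw$. For $i = n$, the relation $v\,du + u\,dv = (n+1)t^n dt$ together with $z\,du = (n+1)z^{n+1}dz$ should show that $(zw)^n z\,dw$ already lies in the image; the same holds for anything divisible by $u$ or $v$ beyond the minimal monomials. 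Hence the cokernel is spanned by the classes of $(zw)^i z\,dw$ for $0 \le i \le n-1$.

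To show these classes are linearly independent, I would use the $\bZ^2$-grading by $(\deg_z, \deg_w)$, under which both modules are graded. The form $(zw)^i z\,dw$ has bidegree $(i+1, i+1)$. In that bidegree the image is spanned by $t^i\,dt$ alone, since $du$ and $dv$ have bidegrees $(n+1,0)$ and $(0,n+1)$ and so can only contribute in bidegree $(i+1,i+1)$ when $i \ge n$. The invariant part in that bidegree is $2$-dimensional, spanned by $(zw)^i w\,dz$ and $(zw)^i z\,dw$. So the cokernel is $1$-dimensional there for each $0 \le i \le n-1$. A short check shows every other bidegree is exhausted, giving total length $n$, supported at the origin. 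I expect this bookkeeping, in particular confirming that nothing survives in off-diagonal bidegrees, to be the main obstacle.

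Finally, for the Ext statement I would use the defining presentation $0 \to R \to R^3 \to \Omega_V \to 0$, given by the relation vector $(v, u, -(n+1)t^n)$, which is injective because $R$ is a domain. Applying $\Hom(-, R)$ gives
\[ \Ext^1(\Omega_V, R) = R/(u, v, t^n) = \bC[t]/(t^n) \cong \oplus_{i=0}^{n-1} k. \]
This has basis $t^i$, which matches the forms $(zw)^i z\,dw$ of the cokernel. An alternative route to the same identification is to apply $\SHom(-,\calO_V)$ to $0 \to K \to \Omega_V \to \Omega_V^{[1]} \to Q \to 0$.
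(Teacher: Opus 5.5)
Your proposal is correct. The cokernel part follows the paper's strategy: you compare the images of $du$, $dv$, $dt$ with the invariant $1$-forms. Your $\bZ^2$-grading turns the paper's ``straightforward to verify'' into an actual argument, and the bookkeeping you flag as the main obstacle is immediate. Invariant forms live in bidegrees $(p,q)$ with $p\equiv q \pmod{n+1}$, and in such a bidegree the image contains:
\begin{itemize}
\item $z^{p-1}w^{q-1}\,dt$ whenever $p,q\ge 1$;
\item $z^{p-n-1}w^{q}\,du$ when $p\ge n+1$;
\item $z^{p}w^{q-n-1}\,dv$ when $q\ge n+1$.
\end{itemize}
The last two cases also cover the axes $p=0$ and $q=0$. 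Finally, $1\le p,q\le n$ together with $p\equiv q$ forces $p=q$, which leaves exactly the $n$ one-dimensional bidegrees $(i+1,i+1)$.

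There are two small slips:
\begin{itemize}
\item The weight condition for $g\,dw$ is $-n\equiv 1$, not ``$1-n\equiv 1$''.
\item Since $z\notin R$, the form $z\,du$ is not in the image. The case $i=n$ is simply $(zw)^n z\,dw=\tfrac{1}{n+1}u\,dv$, which your grading argument covers anyway.
\end{itemize}
Your grading actually shows more than the statement asks for. Multiplication by $t$ sends the class of $(zw)^i z\,dw$ to that of $(zw)^{i+1}z\,dw$, so $Q\cong R/(u,v,t^n)$ is cyclic. The paper's remark that $Q$ is ``$n$ copies of $k$'' is therefore accurate only as a statement about length, but length is all that is needed.

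For the $\Ext$ statement you take a genuinely different route. The paper argues as follows:
\begin{itemize}
\item it uses torsion-freeness of $\Omega_V$ to get $0\to\Omega_V\to\Omega_V^{[1]}\to Q\to 0$;
\item it kills $\Ext^{i}(\Omega_V^{[1]},\calO_V)$ for $i>0$, since a reflexive module on a normal surface is maximal Cohen--Macaulay and $V$ is Gorenstein;
\item it concludes $\Ext^1(\Omega_V,\calO_V)\cong\Ext^2(Q,\calO_V)$, which has the same length as $Q$.
\end{itemize}
You instead resolve $\Omega_V$ by the conormal sequence and read off the Tjurina algebra $R/(u,v,t^n)\cong k[t]/(t^n)$.

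Your route is more elementary: it needs no torsion-freeness and no Cohen--Macaulay or duality input, and it also gives the module structure. However, it uses that $V$ is a hypersurface and never passes through $Q$. So it proves the $\Ext$ formula independently rather than deducing it from the cokernel identification, as the statement phrases it. Your closing remark about the four-term sequence would need exactly the paper's two inputs to go through: control of $K$, and the vanishing of $\Ext^{>0}(\Omega_V^{[1]},\calO_V)$.

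The paper's route is the one that generalizes. In Claim 1 of the proof of \Cref{thm:Ext10}, the same mechanism with $\calO_X$ replaced by $\omega_X$ gives $\SExt^1(\Omega_X,\omega_X)\cong\SExt^2(\calQ/T,\omega_X)$. There, the explicit cokernel generators $(zw)^i z\,dw$ are what drive the transition-function computation in \Cref{prop:computingextoncurves}. A local Jacobian computation would not by itself give the degrees of these sheaves along the curves $C_{ij}$.
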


\begin{proof}
    We utilize two classical descriptions of $V$. As above, it can be expressed as the quotient $ \bA^2_{(z,w)}/\mu_{n+1}$, where $\mu_{n+1}$ acts by $\zeta_{n+1} \cdot (z,w) = (\zeta_{n+1} z, \zeta_{n+1}^{-1}w)$.  From this description, $V = \Spec  \ k [z,w]^{\mu_{n+1}}$.  Computing the invariant ring, $V$ can also be expressed as the Gorenstein hypersurface $uv - t^{n+1} = 0$ inside $\bA^3_{(u,v,t)}$, i.e. $V = \Spec \ k [u,v,t]/(uv - t^{n+1})$, where the isomorphism $k[u,v,t]/(uv - t^{n+1}) \to k[z,w]^{\mu_{n+1}}$ is given by $u \mapsto z^{n+1}$, $v \mapsto w^{n+1}$, $t \mapsto zw$.  Denote by $R = k [u,v,t]/(uv - t^{n+1})$.

    Consider the double dual morphism $\Omega^1_V \to \Omega_V^{[1]}$.  This is an isomorphism on the smooth locus of $V$ and by \cite[Proposition 9.7, Corollary 9.8]{Ku86}, $\Omega^1_V$ is torsion free, and hence $\Omega^1_V \to \Omega_V^{[1]}$ is injective.  Let $Q$ be the cokernel, yielding the exact sequence 
    \[ 0 \to \Omega^1_V \to \Omega_V^{[1]} \to Q \to 0.\]
    By definition, $\Omega^1_V$ is the $R$-module generated by the differential forms $du, dv, dt$ subject to the relation $vdu + u dv = (n+1)t^n dt$.  By \cite[Theorem 3]{Kni73}, the module $\Omega_V^{[1]}$ is precisely the $\mu_{n+1}$-invariant differential forms on $\bA^2$, i.e. $\Omega_V^{[1]}$ is the $R$-module generated by $z^{n} dz$, $w^{n}dw$, $zdw$ and $w dz$.  By the identification of $R$ with $k[z,w]^{\mu_{n+1}}$, the map $\Omega^1_V \to \Omega_V^{[1]}$ is given by $du \mapsto (n+1)z^n dz$, $dv \mapsto (n+1)w^n dw$, and $dt \mapsto zdw + wdz$.  It is then straightforward to verify that $Q$ is generated by the $\mu_{n+1}$-invariant differential forms $(zw)^i z dw$ for $0 \le i \le n-1$.  
    As a module, $Q$ is just $n$ copies of the field $k$ at the singular point of $V$, and therefore $\Ext^2(Q, \calO_V) = \oplus_{i = 0}^{n-1} k$.  From the exact sequence
    \[ 0 \to \Omega^1_V \to \Omega_V^{[1]} \to Q \to 0\]
    there is a long exact sequence of $\Ext$-groups 
    \[ \dots \to  \Ext^1(\Omega_V^{[1]}, \calO_V) \to \Ext^1(\Omega^1_V,\calO_V) \to \Ext^2(Q, \calO_V) \to \Ext^2(\Omega^{[1]}_V, \calO_V) \to \dots .\]
    By \cite[Theorem 3.3.10]{BH98}, as $\Omega^{[1]}_V$ is reflexive and $\dim V = 2$, it is Cohen-Macaulay, and $V$ is Gorenstein, and therefore $\Ext^i(\Omega^{[1]}_V, \calO_V) = \Ext^i(\Omega^{[1]}_V, \omega_V) \otimes \omega_V^\vee = 0$ for all $i > 0$. Therefore,
    \begin{center}
    $\Ext^1(\Omega^1_V,\calO_V) \cong  \Ext^2(Q, \calO_V)$,
    \end{center}
    proving the lemma.
\end{proof}

To show that $H^1(X, \SExt^1(\Omega_X, \calO_X)) = 0$, we will mimic the previous proof and compare $\Omega_X$ and $\Omega_X^{[1]}$.  However, if $X$ is not Gorenstein, we will not necessarily have vanishing of $\SExt^i(\Omega_X^{[1]}, \calO_X)$, so will work instead with $\SExt^i(\Omega_X^{[1]}, \omega_X)$ as it will still hold that $\SExt^i(\Omega_X^{[1]}, \omega_X) = 0$ for $i > 0$.  Indeed, from Theorem \ref{thm:wps-dualizingprops}, because $\Omega_X^{[1]}$ is Cohen-Macaulay whose support is all of $X$, the vanishing follows from \cite[Theorem 3.3.10]{BH98}.

To compare $\SExt^i(\Omega_X^{[1]}, \omega_X) $ with $\SExt^i(\Omega_X^{[1]}, \calO_X)$, we use the following: 

\begin{lemma}\label{lem:extdoubledual}
    Let $\calF$ be a sheaf on a projective variety and let $\calR$ be a reflexive sheaf.  Then, for all $i > 0$, there is a natural map $\SExt^i(\calF, \calR) \otimes \calR^\vee \to \SExt^i(\calF, \calO)$ which is an isomorphism where $\calR$ is locally free.
\end{lemma}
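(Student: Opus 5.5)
The plan is to build the map at the level of $\SHom$ and then derive it for the higher $\SExt^i$ by functoriality in the second variable, using that a local section of $\calR^\vee$ is a morphism $\calR \to \calO$.

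\textbf{Construction.} Work on an open set $U$ and take a local section $\phi \in \calR^\vee(U) = \SHom(\calR,\calO)(U)$. Since $\SExt^i(\calF,-)$ is a functor, the morphism $\phi\colon \calR|_U \to \calO_U$ induces $\phi_*\colon \SExt^i(\calF,\calR)|_U \to \SExt^i(\calF,\calO)|_U$. The assignment $(e,\phi) \mapsto \phi_*(e)$ is $\calO$-bilinear, because $\SExt^i(\calF,-)$ is an $\calO$-linear functor. It is also compatible with restriction to smaller opens, since $\SExt^i$ commutes with restriction. So it defines a morphism of presheaves
\[ \SExt^i(\calF,\calR) \otimes_{\calO} \calR^\vee \to \SExt^i(\calF,\calO), \]
and sheafifying gives the natural map. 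Naturality in $\calF$ and $\calR$ follows from the naturality of $\phi_*$.

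\textbf{Isomorphism where $\calR$ is locally free.} Both sides are compatible with restriction, so it suffices to check on an open $U$ where $\calR|_U \cong \calO_U^{\oplus r}$. There $\calR^\vee|_U \cong \calO_U^{\oplus r}$, with basis the coordinate projections $\pi_j$. By additivity, $\SExt^i(\calF,\calO^{\oplus r}) \cong \SExt^i(\calF,\calO)^{\oplus r}$, with the $j$th component given by $(\pi_j)_*$. The left side is therefore $\SExt^i(\calF,\calO)^{\oplus r} \otimes \calO^{\oplus r}$. An element $e \otimes \phi$ with $\phi = \sum_j c_j \pi_j$ is sent to $\sum_j c_j e_j$, where $e_j = (\pi_j)_* e$.

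Here is the one point to watch. Literally, the left side has rank $r^2$ over the right side. So the isomorphism has to be understood with $\calR$ of rank one, i.e. a reflexive rank-one sheaf such as $\omega_X$ or $\calO_X(d)$, which is the case used afterwards. For rank one, choose a local generator $\sigma$ of $\calR$ and the dual generator $\sigma^\vee$ of $\calR^\vee$. Then $\SExt^i(\calF,\calR) \cong \SExt^i(\calF,\calO)$ via $\sigma$, and $e \otimes \sigma^\vee$ maps to the corresponding class. This is visibly an isomorphism, with inverse $e' \mapsto (\sigma \circ e') \otimes \sigma^\vee$, and it is independent of the choice of $\sigma$ because changing $\sigma$ by a unit $u$ changes $\sigma^\vee$ by $u^{-1}$.

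I expect no real obstacle beyond this bookkeeping: the well-definedness of the tensor product map and the rank-one interpretation. The substantive point, that the map need not be an isomorphism at the non-Cartier locus, is exactly what the lemma allows, since it asserts the isomorphism only where $\calR$ is locally free.
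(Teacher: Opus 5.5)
Your proof is correct, and the map you construct is the same one the paper uses. The paper composes the canonical map $\SExt^i(\calF,\calR)\otimes\calR^\vee\to\SExt^i(\calF,\calR\otimes\calR^\vee)$ from the Stacks project with the map induced by the contraction $\calR\otimes\calR^\vee\to\calO$. Evaluating that composite on a local section $\phi$ of $\calR^\vee$ gives exactly your $\phi_*$.

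The routes differ only in the isomorphism step. The paper cites \cite[Proposition III.6.7]{Hartshorne}, but that result only shows the first arrow is an isomorphism where $\calR$ is locally free. The second arrow comes from the contraction $\calR\otimes\calR^\vee\to\calO$. For $\calR$ locally free of rank $r$, this contraction is the trace $\SEnd(\calR)\to\calO$, which is an isomorphism only when $r=1$.

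So your observation is right: the left side is $r^2$ copies of the right side, and the lemma must be read for rank-one $\calR$. That is the only case used later, namely $\calR=\omega_X$ in the proof of Theorem \ref{thm:Ext10}. Your direct check in a local trivialization, with explicit inverse $e'\mapsto\sigma_*e'\otimes\sigma^\vee$, supplies the rank-one argument that the paper's citation leaves implicit.
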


\begin{proof}
    By \cite[Proof of Lemma 15.66.3]{stacksproject}, there is a canonical map 
    \[ \SExt^i(\calF, \calR) \otimes \calR^\vee \to \SExt^i(\calF, \calR \otimes \calR^{\vee}).\]
    From the canonical map $\calR \otimes \calR^{\vee} \to \calO$ and the induced map on $\SExt$, we get a map 
    \[ \SExt^i(\calF, \calR) \otimes \calR^\vee \to \SExt^i(\calF, \calR \otimes \calR^{\vee}) \to \SExt^i(\calF, \calO).\]
    Finally, where $\calR$ is locally free, by \cite[Proposition III.6.7]{Hartshorne}, this is an isomorphism.
\end{proof}

\begin{proposition}\label{prop:computingextoncurves}
    Let $X$ be a well-formed weighted projective threefold $\bP(a_0, a_1,a_2,a_3)$ and let $C_{ij}$ be one of the one-dimensional singular strata of $X$ such that $X$ has an $A_n$ singularity at the generic point of $C_{ij}$.  Denote by $\{k,l\}$ the indices $\{0,1,2,3\} - \{i,j\}$.  For each $0 \le m \le n-1$, let $d_{i,m}$ be the smallest positive integer such that \[a_id_{i,m} + (m+1)(a_k + a_l) \equiv 0 \mod a_j\] and similarly let $d_{j,m}$ be the smallest positive integer such that  \[  a_jd_{j,m} + (m+1)(a_k + a_l) \equiv 0 \mod a_i.\] 
    
    \noindent Let $\calQ$ be the cokernel of the double dual map 
    \[ \Omega_X \to \Omega_X^{[1]} \to \calQ \to 0.\]
    Then, the sheaf $Q_{ij}$ defined by $Q_{ij} = (\calQ|_{C_{ij}})/\tau$, where $\tau = Tors(\calQ|_{C_{ij}})$ is given by a sum of line bundles $Q_{ij} = \oplus_{m = 0}^{n-1} \calO(-b_m)$ where \[b_m = \frac{a_id_{i,m} + a_jd_{j,m} + (m+1)(a_k+ a_l)}{\frac{a_ia_j}{n+1}} > 0.\]  
    
    \noindent Furthermore, each $\calO(-b_m) = \calL_m|_{C_{ij}}$ for some $\bQ$-line bundle $\calL_m$ on $X$ that is Cartier in a neighborhood of $C_{ij}$.
\end{proposition}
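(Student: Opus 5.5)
Work chart by chart along $C_{ij}$. The curve $C_{ij}=(x_i=x_j=0)$ is a copy of $\bP(a_k,a_l)\cong\bP^1$. It is covered by $U_k\cap C_{ij}$ and $U_l\cap C_{ij}$, and on each such chart $X$ is locally $\frac{1}{n+1}(a_i,a_j)\times\bA^1$ near the generic point of $C_{ij}$, with $n+1=\gcd(a_k,a_l)$. By Assumption \ref{assumptions-on-wps} this is an $A_n$ singularity times a line. Away from the finitely many points of $C_{ij}$ with nontrivial extra isotropy, namely the torus-fixed points $P_k,P_l$, the computation of Lemma \ref{lem:extofAn} applies in families. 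There $\calQ$ is, generically along $C_{ij}$, a locally free $\calO_{C_{ij}}$-module of rank $n$. Its fiberwise generators are the invariant forms $(x_ix_j)^m x_i\,dx_j$, $0\le m\le n-1$, suitably made invariant. Taking $\calQ|_{C_{ij}}$ modulo torsion then kills the extra junk supported at $P_k,P_l$. It leaves a rank $n$ torsion-free, hence locally free, sheaf on $C_{ij}\cong\bP^1$, so it splits as a sum of line bundles. The real task is to identify the degrees.

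To do this, I will work globally on the cone $\bA^4\setminus 0$, using the graded module description. Invariant differentials are the $\G_m$-invariant, i.e. degree $0$, forms in $\Omega_{\bA^4}$ contracted against the Euler field, by Theorem \ref{thm:wps-dualizingprops}(2),(3) and \cite{Kni73}. The summand indexed by $m$ is generated by the $\mu_{n+1}$-invariant form $(x_ix_j)^m x_i\,dx_j$, multiplied by a monomial in $x_k,x_l$ to make it homogeneous of degree $0$. That monomial must absorb degree $(m+1)(a_i+a_j)$ modulo the stabilizer conditions. Concretely, on $U_k$ the local coordinates are $x_\bullet/x_k^{a_\bullet/a_k}$, and transition to $U_l$ multiplies the generator by a power of $x_k/x_l$ (in the orbifold sense). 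Recording the weight defect of the generator gives the degree of the corresponding graded piece. The numbers $d_{i,m},d_{j,m}$ arise as the minimal extra powers of $x_i$ (resp. $x_j$) needed so that, in the chart at $P_j$ (resp. $P_i$), the generator lies in the invariant ring, i.e. the congruences in the statement. I should double-check the exact index roles. The total degree of the graded $\calO_{C_{ij}}$-module generator is then $a_id_{i,m}+a_jd_{j,m}+(m+1)(a_k+a_l)$. This degree is measured in units of the degree of the generator of $\Pic$ of the curve stratum. That generator is $\calO(a_ia_j/(n+1))$, the lcm of the weights on the stratum, so the quotient gives $b_m$, with a negative sign since these are quotient classes of forms. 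Positivity of $b_m$ is immediate because all terms are positive.

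For the last assertion, I will take $\calL_m=\calO_X(-B_m)$ with $B_m=a_id_{i,m}+a_jd_{j,m}+(m+1)(a_k+a_l)$ (degree in the weighting of $X$). On $U_k$ and $U_l$ near $C_{ij}$, the local class group is $\bZ/(n+1)$, generated by the image of $\calO(1)$. So $\calO(B)$ is Cartier along $C_{ij}$ iff $(n+1)\mid B$ modulo the appropriate identification. I would check this divisibility using the defining congruences together with the $A_n$ condition $a_k+a_l\equiv 0\pmod{n+1}$. The restriction of $\calL_m$ to $C_{ij}$ then has degree $-B_m/(a_ia_j/(n+1))=-b_m$, which matches.

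The main obstacle is the bookkeeping in the middle step. One has to verify that the torsion-free quotient is exactly generated by these minimally-twisted forms, and that the congruences match the correct charts. I expect the cleanest route is to compute $\calQ$ on each of the two charts $U_k,U_l$ as an explicit module over $\bC[\text{coordinate on } C_{ij}]$ and read off the transition function. I would try this first in an example such as $\bP(1,1,2,4)$ to fix conventions.
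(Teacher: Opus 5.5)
Your skeleton matches the paper's: the torsion-free quotient has rank $n$ on $C_{ij}\cong\bP^1$, so it splits, and the degrees come from comparing local generators on the two charts through $C_{ij}$. However, the middle step is set up on the wrong curve, and its key identity is asserted rather than proved.

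The formula in the statement forces $C_{ij}=\bP(a_i,a_j)=\{x_k=x_l=0\}$ with $n+1=\gcd(a_i,a_j)$. This curve is covered by $U_i$ and $U_j$, and the transverse forms are $\omega_m=(x_kx_l)^m x_k\,dx_l$. The local generators are $\sigma_i=x_j^{d_{j,m}}\omega_m$ on $U_i$ (the congruence modulo $a_i$ is precisely $\mu_{a_i}$-invariance) and $\sigma_j=x_i^{d_{i,m}}\omega_m$ on $U_j$. Your convention $C_{ij}=\{x_i=x_j=0\}$ is why the degree you say must be absorbed, the charts you use, and your final numerator do not fit together.

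More importantly, rewriting $\sigma_i$ in the coordinates of $U_j$ does \emph{not} multiply it by a monomial in $\Omega^{[1]}_X$. The factor $d(x_l/x_i^{a_l/a_i})$ produces a $dx_i$-term; on the cone, a degree-zero form is basic only after a $dx_i/x_i$ correction. The idea you are missing, which is how the paper's proof handles this, is the following. On $U_i\cap U_j$ such terms are regular multiples of $d\log$ of an invariant unit: in the chart $U_j$, $dx_i/x_i=d(x_i^p)/(p\,x_i^p)$ with $x_i^p$ invariant. So they lie in the image of $\Omega_X\to\Omega^{[1]}_X$ and vanish in $\calQ$. Only then does one get
\[ \sigma_j=\frac{x_i^{B_m/a_i}}{x_j^{B_m/a_j}}\,\sigma_i=\Bigl(\frac{X}{Y}\Bigr)^{b_m}\sigma_i \ \text{ in } \calQ, \qquad B_m=a_id_{i,m}+a_jd_{j,m}+(m+1)(a_k+a_l), \]
with $X=x_i^{a_j/(n+1)}$ and $Y=x_j^{a_i/(n+1)}$, so the $m$th summand is $\calO(-b_m)$. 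You also need that $\sigma_i$ generates at $P_i$ and not only generically. This holds because sections over the affine $U_i\cap C_{ij}$ are images of $\mu_{a_i}$-invariant polynomial forms, and in the relevant torus weight these reduce to the $x_j^p\omega_m$ modulo torsion and the image of $\Omega_X$.

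Your Cartier argument fails as written. The local class group is $\bZ/(n+1)$ only at points of $C_{ij}$ other than the two torus-fixed points; at $P_i$ and $P_j$ it is $\bZ/a_i$ and $\bZ/a_j$. Since $n+1$ divides $a_i$, $a_j$ and $a_k+a_l$, the condition $(n+1)\mid B_m$ holds for \emph{any} exponents. So it cannot be what the congruences are for, and it gives neither Cartierness at $P_i,P_j$ nor $b_m\in\bZ$. What is needed is $a_i\mid B_m$ and $a_j\mid B_m$, which are exactly the congruences defining $d_{j,m}$ and $d_{i,m}$. Then $\calO_X(-B_m)$ is Cartier on $U_i\cup U_j\supset C_{ij}$ and restricts to $\calO_{\bP^1}(-b_m)$ (your sign is the right one).

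Finally, a warning for your test case $\bP(1,1,2,4)$, where $C=\bP(2,4)$ and $n=1$. On $U_2$, which is an $A_1$ singularity times $\bA^1$, the sheaf $\calQ$ is generated by $x_0\,dx_1$ itself. So the minimal admissible exponent there is $0$, and the computation above gives $\calO_{\bP^1}(-1)$. By contrast, ``smallest positive'' as in the statement gives $b_0=2$. The minimum should be taken over non-negative integers; positivity of $b_m$ survives because $(m+1)(a_k+a_l)>0$.
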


\begin{proof}
We fix some notation for the proof: we will let $X = \bP(a_0,a_1,a_2,a_3)$ be a well-formed weighted projective space with coordinates $[x_0:x_1:x_2:x_3]$, and $C_{ij}$ a one-dimensional singular stratum corresponding to a nontrival common factor $n+1=\gcd(a_i,a_j)$. Without loss of generality, assume $i = 0$ and $j=1$, so $C_{ij} = C_{01}=\bP(a_0,a_1) $ is given by $ (x_2 = 0) \cap(x_3 = 0)$.  Then, $C_{01} \cong \bP^1_{[X:Y]}$ via the isomorphism  $X = x_0^{a_1/(n+1)}$ and $Y = x_1^{a_0/(n+1)}$.  Assume that $X$ has an $A_n$ singularity at the generic point of $C_{01}$, or equivalently, $a_2 +a_3 = 0 \mod n+1$. 

Let $U_0 = D(x_0) = \frac{1}{a_0}(a_1,a_2,a_3)$. Consider the exact sequence 
\[ \Omega_{U_0} \to \Omega_{U_0}^{[1]} \to \calQ|_{U_0} \to 0.\]
Because $U_0 = \bA^3/\mu_{a_0}$, $U_0 = \Spec(\bC[x_1,x_2,x_3])^{\mu_{a_0}}$, and $\Omega_{U_0}$ is therefore generated as a $\bC[x_1,x_2,x_3]^{\mu_{a_0}}$ by the differentials of the $\mu_{a_0}$-invariant functions, i.e. over any open set $V \subset U_0$, an element of $\Omega_{U_0}(V)$ is of the form $g\ df$, where $g,f \in \calO_{U_0}(V)$ are $\mu_{a_0}$-invariant functions.  By \cite{Kni73}, $\Omega_{U_0}^{[1]}$ is instead the $\mu_{a_0}$-invariant differentials, i.e. the invariants of $(\Omega_{\bA^3})^{\mu_{a_0}}$, where the action of $\mu_{a_0}$ on $dx_i$ is given by $\zeta_{a_0} \cdot dx_i = d(\zeta_{a_0}x_i) = \zeta_{a_0} d x_i$, where $\zeta_{a_{0}} \in \mu_{a_{0}}$. We will choose a generating set for $\calQ|_{U_i}$ containing the images of the invariant differentials $x_1^{d_{1,m}} (x_2x_3)^m x_2 dx_3$ for each $0 \le m \le n-1$, where $d_{1,m}$ is the minimal positive integer such that this is invariant.  These are independent along $C_{01}$ after inverting $x_i$ by Lemma \ref{lem:extofAn}, and along the (potentially) singular strata $C_{02}$ and $C_{03}$, the image of these differentials is supported only at $p_0 = [1:0:0:0]$.  Indeed, on $U_0 \cap U_3$, if $k$ is such that $x_3^k$ is an invariant function, $\frac{x_1^{d_{1,m}}(x_2x_3)^mx_2x_3}{kx_3^k}d(x_3^k) = x_1^{d_{1,m}}(x_2x_3)^mx_2dx_3$ is in the image of $\Omega_{U_0} \to \Omega_{U_0}^{[1]}$ so is equal to $0$ in $\calQ|_{U_0 \cap U_3}$.  A similar argument holds on $U_0 \cap U_2$.

    Since $Q_{01}$ is a torsion free coherent sheaf on $C_{01} \cong \bP^1$, it is a sum of line bundles.  At the generic point $\eta$ of $C_{01}$, by Lemma \ref{lem:extofAn}, this has rank $n$, and $(Q_{01})_\eta = \oplus_{m=0}^{n-1} \calO_{\eta} \cdot (x_2x_3)^m x_2 dx_3$.  Therefore, we must have $Q_{01} \cong \oplus_{m=0}^{n-1} \calO_{\bP^1}(\ell_m)$, and we compute $\ell_m$ by directly computing the transition functions.  

    Consider $U_0 = D(x_0) = \frac{1}{a_0}(a_1,a_2,a_3)$.  On this set, as in the set-up, there exist invariant differential forms $x_1^{d_{1,m}}(x_2x_3)^m x_2 dx_3$ where $d_{1,m}$ is the smallest positive integer such that $d_{1,m}a_1 + (m+1)(a_2+a_3) = 0 \mod a_0$.  Denoting by $U_1 = D(x_1)$, the restriction of this invariant form to $U_0 \cap U_1$ is a generator of the $m$th summand of $Q_{01}$, given above by $\calO(U_0 \cap U_1) \cdot (x_2x_3)^m x_2 dx_3$, and has image $0$ in all other summands.  Similarly, on $U_1$, a generator for the $m$th summand is given by $x_0^{d_{0,m}}(x_2x_3)^m x_2 dx_3$ where $d_{0,m}$ is the smallest positive integer such that $d_{0,m}a_0 + (m+1)(a_2+a_3) = 0 \mod a_1$.  To determine what line bundle the $m$th summand is, we compute the transition functions.  On the weighted projective space $\bP(a_0, a_1, a_2, a_3)$, the transition functions from $U_0$ to $U_1$ (i.e. the image of the restriction map $U_0 \to U_0 \cap U_1$ in the coordinates on $U_1$) are given by sending $x_1 \mapsto \frac{1}{x_0^{a_1/a_0}}$, $x_2 \mapsto \frac{x_2}{x_0^{a_2/a_0}}$, and $x_3 \mapsto \frac{x_3}{x_0^{a_3/a_0}}$.  Applying this to the invariant differentials generating $Q_{01}$ on $U_0$, we obtain the differential form 
    \[ x_1^{d_{1,m}}(x_2x_3)^m x_2 dx_3 \mapsto \frac{1}{x_0^{d_{1,m}a_1/a_0}} \frac{(x_2x_3)^m}{x_0^{m(a_2+a_3)/a_0}} \frac{x_2}{x_0^{a_2/a_0}} d\left( \frac{x_3}{x_0^{a_3/a_0}} \right).\] Note here that the fractional powers represent the transition functions in coordinates on the quotient $\bA^3/\mu_{a_1}$, see e.g. \cite[I.2.3]{Fletcher}.

    On $U_1$, for any power $p$ such that $x_0^p$ is an invariant function,  $\frac{1}{px_0^p}d(x_0^p) = \frac{dx_0}{x_0}$ is in the image of the map $\Omega_{U_1} \to \Omega_{U_1}^{[1]}$, so equal to $0$ in $\calQ$.  Therefore, on $U_0 \cap U_1$, any invariant differential $f dx_0 = 0 \in \calQ$, so we may simplify the above expression to  
    \[ x_1^{d_{1,m}}(x_2x_3)^m x_2 dx_3 \mapsto \frac{1}{x_0^{(d_{1,m}a_1+(m+1)(a_2+a_3))/a_0}} (x_2x_3)^m x_2dx_3,\] where $(d_{1,m}a_1+(m+1)(a_2+a_3))/a_0 \in \bZ$ by assumption.

    On $Q_{01} = \bP(a_0, a_1)$, the isomorphism $Q_{01} \cong \bP^1_{[X:Y]}$ is given by $X = x_0^{a_1/(n+1)}$ and $Y = x_1^{a_0/(n+1)}$.  On $\bA^1_Y = D(X)$, the $m$th summand of $Q_{01}$ is identified with $k[Y]$, where the isomorphism is given by
    \begin{align*}
        k[Y]  &\to  k[x_1^{a_0/(n+1)}] \cdot x_1^{d_{1,m}}(x_2x_3)^m x_2 dx_3 \\
        f(Y) &\mapsto f(x_1^{a_0/(n+1)})x_1^{d_{1,m}}(x_2x_3)^m x_2 dx_3.
    \end{align*}

    Similarly, on $\bA^1_X = D(Y)$, the $m$th summand of $Q_{01}$ is identified with $k[X]$, where the isomorphism is given by
    \begin{align*}
        k[X]  &\to  k[x_0^{a_1/(n+1)}] \cdot x_0^{d_{0,m}}(x_2x_3)^m x_2 dx_3 \\
        f(X) &\mapsto f(x_0^{a_1/(n+1)})x_0^{d_{0,m}}(x_2x_3)^m x_2 dx_3.
    \end{align*}

    Writing the transition function above as 
    \begin{align*}
    x_1^{d_{1,m}}(x_2x_3)^m x_2 dx_3 &\mapsto \frac{1}{x_0^{(d_{1,m}a_1+(m+1)(a_2+a_3))/a_0}} (x_2x_3)^m x_2dx_3 \\
    &= \frac{1}{(x_0^{a_1/(n+1)})^{(d_{0,m}a_0+d_{1,m}a_1+(m+1)(a_2+a_3))/(a_0a_1/(n+1))}} x_0^{d_{0,m}}(x_2x_3)^m x_2dx_3
    \end{align*}
    we see that the transition function $k[Y] \to k[X]$ is given by sending $1$ to $\frac{1}{Y^{b_m}}$ where $b_m = (d_{0,m}a_0+d_{1,m}a_1+(m+1)(a_2+a_3))/(a_0a_1/(n+1))$.  Therefore, the $m$th summand of $Q_{01}$ is precisely $\calO_{\bP^1}(-b_m)$.  

    By construction, $\calO_{\bP^1}(-b_m)$ is the restriction of 
    $\calO_{X}(d_{0,m}a_0+d_{1,m}a_1+(m+1)(a_2+a_3))$, which has degree divisible by $a_0$ and $a_1$, so is Cartier in a neighborhood of $C_{01}$. 
\end{proof}

\begin{lemma}\label{lem:descriptionofQ}
    Let $X = \bP(a_0, a_1,a_2,a_3)$ be a well-formed weighted projective threefold and let $C = \cup_{i,j}C_{ij}$ be one of the one-dimensional singular strata of $X$ such that $X$ has only $A_n$ singularities in codimension 2.  Denote by $f_{ij}: C_{ij} \to C$ the inclusion.
    
    Let $\calQ$ be the cokernel of the double dual map 
    \[ \Omega_X \to \Omega_X^{[1]} \to \calQ \to 0.\]
    Let $T = Tors(\calQ)$.  Then, $\calQ/T \cong \oplus_{i,j}  Q_{ij}$, and $T$ is a torsion sheaf supported at the codimension 3 singular points and the sheaf $Q_{ij}$ is defined as $Q_{ij} = (\calQ|_{C_{ij}})/\tau$, where $\tau = Tors(\calQ|_{C_{ij}})$ as above. 
\end{lemma}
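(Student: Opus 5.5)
We first pin down the support of $\calQ$ and then split it into a part living on the curves $C_{ij}$ and a part living at points. The map $\Omega_X \to \Omega_X^{[1]}$ is an isomorphism on the smooth locus, so $\calQ$ is supported on $\mathrm{Sing}(X)$. By well-formedness, this locus is the union of the one-dimensional strata $C_{ij}$ (where $\gcd(a_i,a_j)>1$) together with the four coordinate points $p_0,\dots,p_3$. Away from the $p_k$, every singular point lies on exactly one $C_{ij}$. Indeed, two distinct curves $C_{ij}$ and $C_{ik}$ meet only at a coordinate point, since $C_{ij}\cap C_{ik}=\{x_i=x_j=x_k=0\}$.

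Near the generic point of $C_{ij}$, $X$ is locally (étale or analytically) $V\times\bA^1$ with $V$ an $A_n$ surface singularity. By Lemma \ref{lem:extofAn} and flat base change of Kähler differentials along the $\bA^1$ factor, $\calQ$ there is $Q_V\boxtimes\calO_{\bA^1}$. This sheaf is pure of dimension one and is locally free of rank $n$ as a module over the curve. So on $X\setminus\{p_0,\dots,p_3\}$, $\calQ$ has no zero-dimensional subsheaves, and it is scheme-theoretically supported on the reduced curve. The latter holds because the explicit generators $(zw)^i z\,dw$ of Lemma \ref{lem:extofAn} are killed by the ideal of the singular point in $V$; the key check is that $u,v,t$ act by zero on the cokernel.

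It follows that $T=Tors(\calQ)$, the maximal zero-dimensional subsheaf, is supported on $\{p_0,\dots,p_3\}$, which are the codimension 3 points. Moreover $\calQ/T$ is a pure one-dimensional sheaf whose restriction to $X\setminus\{p_k\}$ is $\bigoplus_{ij}\calQ|_{C_{ij}\setminus\{p_k\}}$, a disjoint union.

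Next we compare $\calQ/T$ with $\bigoplus_{ij} f_{ij*}Q_{ij}$. There is a natural map $\calQ\to\bigoplus \calQ|_{C_{ij}} \to \bigoplus Q_{ij}$. The target is torsion free, so the map kills $T$ and induces $\phi:\calQ/T\to\bigoplus Q_{ij}$. Away from the $p_k$, $\phi$ is an isomorphism by the disjointness above. Its kernel is supported at points, and since $\calQ/T$ is pure it has no such subsheaf; hence $\phi$ is injective. It remains to show that $\phi$ is surjective at each $p_k$, and this is the main obstacle.

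For surjectivity, work on the chart $U_k=\bA^3/\mu_{a_k}$ and use the explicit generators from the proof of Proposition \ref{prop:computingextoncurves}. For each curve $C_{kj}$ through $p_k$, the invariant differentials $x_j^{d_{j,m}}(x_lx_{l'})^m x_l\,dx_{l'}$ generate $Q_{kj}$ over $U_k$. We showed there that these forms vanish in $\calQ$ on the other strata through $p_k$. Hence their images in $\calQ/T$ map onto the generators of the $Q_{kj}$ summand and to zero in the others. This hits every summand independently, so $\phi$ is surjective, giving $\calQ/T\cong\bigoplus f_{ij*}Q_{ij}$.

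If that vanishing argument turns out to be delicate, the fallback is to check $\mu_{a_k}$-invariance directly. Each invariant 1-form of the shape above that is supported on a single axis can be lifted from $Q_{kj}$ into $\Omega^{[1]}_{U_k}$ without acquiring components along the other axes.
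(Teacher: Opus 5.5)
Your architecture is the paper's. Purity of $\calQ/T$ gives injectivity of $\calQ/T\to\bigoplus Q_{ij}$, and surjectivity at the coordinate points comes from the invariant forms $x_j^{d_{j,m}}(x_lx_{l'})^m x_l\,dx_{l'}$, which vanish generically along the other strata through $p_k$.

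\textbf{The gap.} The step you call the key check is false for $n\ge 2$: $u,v,t$ do not all act by zero on the cokernel of $\Omega_V\to\Omega_V^{[1]}$. Your own list of generators shows this, since $t\cdot z\,dw=(zw)\,z\,dw$ is the $i=1$ generator. Concretely, bigrade forms by $z$- and $w$-degree, with $dz$ and $dw$ of bidegree $(1,0)$ and $(0,1)$. In bidegree $(2,2)$ the image of $\Omega_V$ is spanned by $t\,dt=z^2w\,dw+zw^2\,dz$ alone, because multiples of $z^n dz$ and $w^n dw$ cannot land there. Hence $z^2w\,dw\notin\mathrm{im}(\Omega_V)$.

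In fact, with $R=\bC[u,v,t]/(uv-t^{n+1})$, the cokernel is the cyclic module $R/(u,v,t^n)\cong\bC[t]/(t^n)$ generated by $z\,dw$. It has length $n$ but is not killed by the maximal ideal; the phrase ``$n$ copies of the field'' in \Cref{lem:extofAn} is right only as a length count. So near a general point of $C_{ij}$, étale locally, $\calQ\cong\bC[t]/(t^n)\boxtimes\calO_{\bA^1}$. This is not annihilated by the ideal sheaf $I_{C_{ij}}$ of the reduced curve. The sheaf $\calQ\otimes\calO_{C_{ij}}$ has length $1$ at the generic point of $C_{ij}$, and $\calQ\to\calQ\otimes\calO_{C_{ij}}$ has the nonzero, pure one-dimensional kernel $t\calQ$.

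Thus your claim that $\phi$ is an isomorphism away from the $p_k$ fails, and the injectivity argument fails with it. Indeed, if $Q_{ij}$ is read literally as $(\calQ\otimes\calO_{C_{ij}})/\tau$, the asserted isomorphism is impossible for $n\ge 2$, because the lengths at the generic point of $C_{ij}$ are $n$ and $1$.

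\textbf{The fix.} Do not pass through the restriction to the reduced curve. The paper tacitly avoids it too, treating $\calQ$ near $C_{ij}$ as having rank $n$ along the curve; see $(Q_{01})_\eta=\oplus_m\calO_\eta\cdot(x_2x_3)^m x_2\,dx_3$ in the proof of \Cref{prop:computingextoncurves}. Define $Q_{ij}$ as the quotient of $\calQ/T$ by its subsheaf of sections supported on the union of the other singular curves. Then your argument goes through:
\begin{itemize}
\item Off the coordinate points, the supports along different curves are disjoint, so $\calQ/T\to\bigoplus Q_{ij}$ is tautologically an isomorphism there, with no claim about annihilators needed.
\item Its kernel is therefore supported at points, and it vanishes by the purity you correctly derived from the local product structure and flatness over the $\bA^1$ factor.
\item Surjectivity at each $p_k$ is your generator argument, exactly as in the paper.
\end{itemize}
The price is that $Q_{ij}$ is annihilated only by $I_{C_{ij}}^n$, not by $I_{C_{ij}}$. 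This has to be taken into account wherever $Q_{ij}$ is later treated as a sheaf on $C_{ij}$. When every codimension-2 stratum is of type $A_1$, your original claim is true and your proof works as written.
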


\begin{proof}
Let $T = Tors(\calQ)$.  Consider the natural map 
\[ \calQ/T \to \oplus_{i,j} Q_{ij} .\]
Because $\calQ/T$ is torsion free, this map is injective, as its kernel can only be supported at the codimension 3 singular points of $X$, which are the (isolated) singular points of the curve $C$.  So, it suffices to prove it is surjective, which will follow from proving $\calQ \to \oplus_{i,j} Q_{ij}$ is surjective.  We do this by restricting the invariant differentials in $\calQ$ to the curves $C_{ij}$.  Let $U_i = D(x_i)$ and let (the image of) $x_i^{d_{i,m}}(x_kx_l)^m x_k dx_l \in \calQ \vert_{U_{i}}$ be a generator for the $m$th summand of $Q_{ij}$ as in the previous proof.  Then, the restriction of this invariant differential (thought of as an element of $\Omega_{U_i}^{[1]}$) to $C_{il}$ or $C_{ik}$ is torsion, as shown in the previous proof, so the restriction to $Q_{il}$ or $Q_{ik}$ is 0. Therefore, a generator of each summand of $Q_{ij}$ is in the image of $\Omega_X^{[1]}$, so in particular in the image of $\calQ$.  This holds for any $i,j$, so the map $\calQ \to \oplus_{i,j}Q_{ij}$ is surjective as desired.
\end{proof}

Finally, we combine the previous results into the desired theorem. 

\begin{theorem}\label{thm:Ext10}
    Let $X$ be a well-formed weighted projective threefold $X = \bP(a_0, a_1, a_2, a_3)$.  Assume that $X$ satisfies the assumptions in \ref{assumptions-on-wps}, i.e. has only singularities of type $A_n$ in codimension 2, and for each one dimensional singular strata $C_{ij}$ such that $X$ is singular at the generic point of $C_{ij}$, there exists a section $s_{ij} \in H^0(X, \mathcal{\omega}_X^{\vee})$ that does not generically vanish along $C_{ij}$.  Then, $H^1(X, \SExt^1(\Omega_X, \calO_X)) = 0$.
\end{theorem}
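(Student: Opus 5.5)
We compare $\Omega_X$ with its reflexive hull, as in Lemma \ref{lem:extofAn}. Since $\Omega_X$ has rank $3$ and is locally free on the smooth locus, the kernel of $\Omega_X \to \Omega_X^{[1]}$ is torsion. The plan is to split the map into the two short exact sequences
\[ 0 \to \calK \to \Omega_X \to \calI \to 0, \qquad 0 \to \calI \to \Omega_X^{[1]} \to \calQ \to 0, \]
where $\calK$ is the torsion kernel and $\calI$ is the image. Applying $\SHom(-,\omega_X)$ and using $\SExt^i(\Omega_X^{[1]},\omega_X)=0$ for $i>0$ (Cohen--Macaulayness, as noted before Lemma \ref{lem:extdoubledual}), we get $\SExt^1(\calI,\omega_X)\cong \SExt^2(\calQ,\omega_X)$. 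Away from the four torus-fixed points, $\calK$ vanishes: there $X$ has only $A_n$ singularities times a smooth factor, and \cite{Ku86} gives torsion-freeness as in Lemma \ref{lem:extofAn}. It follows that $\SExt^1(\Omega_X,\omega_X)$ and $\SExt^2(\calQ,\omega_X)$ agree up to sheaves supported on finitely many points.

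Next we pass from $\omega_X$ to $\calO_X$ via Lemma \ref{lem:extdoubledual} with $\calR = \omega_X$. The natural map
\[ \SExt^1(\Omega_X,\omega_X)\otimes\omega_X^\vee \to \SExt^1(\Omega_X,\calO_X) \]
is an isomorphism wherever $\omega_X$ is invertible. Along $C_{ij}$, $\omega_X$ is invertible near the generic point because $A_n$ singularities are Gorenstein. So both sheaves are supported on $C=\cup C_{ij}$ and agree there up to finite-length sheaves at the fixed points. Sheaves supported at points have no $H^1$, so by the long exact sequences it suffices to prove
\[ H^1\bigl(C, \SExt^2(\calQ,\omega_X)\otimes\omega_X^\vee\bigr)=0 \]
modulo point-supported pieces. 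By Lemma \ref{lem:descriptionofQ}, $\calQ$ is an extension of $\oplus Q_{ij}$ by a point-supported $T$. Hence we reduce to computing $\SExt^2(Q_{ij},\omega_X)\otimes\omega_X^\vee$ on each curve $C_{ij}$.

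For the local computation, near the generic point of $C_{ij}$, $X$ looks like $A_n\times\bA^1$, and $Q_{ij}=\oplus_m \calL_m|_{C_{ij}}$ with $\calL_m$ Cartier near $C_{ij}$ (Proposition \ref{prop:computingextoncurves}). Grothendieck duality for the codimension-2 curve $C_{ij}$ then gives
\[ \SExt^2(\calO_{C_{ij}}\otimes\calL_m,\omega_X)\cong \omega_{C_{ij}}\otimes\calL_m^{-1}. \]
Carrying this out with the degree bookkeeping, $\SExt^1(\Omega_X,\calO_X)$ restricted to $C_{ij}$ is, modulo torsion, $\oplus_m \omega_{C_{ij}}\otimes\calO(b_m)\otimes\omega_X^\vee|_{C_{ij}}$. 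This is a sum of line bundles on $\bP^1$ of degree
\[ -2 + b_m + \deg(\omega_X^\vee|_{C_{ij}}). \]
Since $b_m>0$, this degree is at least $-1$ provided $\omega_X^\vee|_{C_{ij}}$ has nonnegative degree in the appropriate sense. That is exactly what the section $s_{ij}$ of Assumption \ref{assumptions-on-wps} supplies: it restricts to a nonzero section along $C_{ij}$, so the reflexive restriction modulo torsion is effective. Line bundles of degree $\ge -1$ on $\bP^1$ have vanishing $H^1$.

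The main obstacle will be making the identification at the non-Gorenstein points rigorous. There, Lemma \ref{lem:extdoubledual} only gives a map, not an isomorphism, and the restriction of $\omega_X^\vee$ to $C_{ij}$ must be taken modulo torsion. I expect to handle this by arguing with the composite map and a kernel/cokernel diagram. The key inputs are that all discrepancies are supported at the four points, and that $H^1$ of a sheaf on a curve surjects onto $H^1$ of its torsion-free quotient, so point-supported kernels and cokernels never affect $H^1$. Care is also needed with the sign of the twist: we must check that the dualization turns $\calO(-b_m)$ into $\calO(b_m)$ rather than the other way around, since that is what makes $b_m>0$ useful.
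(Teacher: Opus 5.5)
Your proposal is correct and follows the paper's proof essentially step for step. The paper likewise compares $\Omega_X$ with $\Omega_X^{[1]}$ using $\SExt^i(\Omega_X^{[1]},\omega_X)=0$ (its Claim 1), then applies Grothendieck duality on $C_{ij}$ together with Proposition \ref{prop:computingextoncurves} and the section of $\omega_X^\vee$ to get degrees $-2+b_m+f\ge -1$ (Claim 2), and finally passes from $\omega_X$ to $\calO_X$ via Lemma \ref{lem:extdoubledual} (Claim 3).

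The only difference is in Claim 1, where the paper uses Ischebeck's lemma to get honest isomorphisms instead of working up to point-supported sheaves. Your version also works, but point-supported cokernels can change $H^1$ in general (e.g.\ $\calO(-2)\hookrightarrow\calO$ on $\mathbb{P}^1$), so your remark that they "never affect $H^1$" is too strong. What actually makes your argument go through is that every natural map in your chain points from $\oplus\,\SExt^2(Q_{ij},\omega_X)\otimes\omega_X^\vee$ toward $\SExt^1(\Omega_X,\calO_X)$, so you only need surjectivity on $H^1$.
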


\begin{proof}
First, observe that $\SExt^1(\Omega_X, \calO_X)$ is supported on the singular locus of $X$, which is the union of the curves $C_{ij}$.  We will first compute the first cohomology of the restriction to these curves: 

\noindent \textbf{Claim 1.} Let $\calQ$ be the cokernel of the double dual map $\Omega_X \to \Omega_X^{[1]} \to \calQ$ as in Lemma \ref{lem:descriptionofQ}.  Then, \[\SExt^1(\Omega_X, \omega_X) \cong \SExt^2(\calQ/T, \omega_X) = \oplus_{i,j}(\SExt^2(Q_{ij}, \omega_X)).\]  

From the exact sequence 
\[ \Omega_X \to \Omega_X^{[1]} \to \calQ \to 0\]
which is injective in codimension 2 by the proof of Lemma \ref{lem:extofAn}, we see that the kernel $K$ is supported on the singular points $[1:0:0:0], [0:1:0:0], [0:0:1:0]$, and $[0:0:0:1]$ of $X$. From Ischebeck's lemma (see, e.g. \cite[Lemma 15.2]{Matsumura}), as $\dim \Supp(K) = 0$, $\SExt^0(K, \omega_X) = \SExt^1(K, \omega_X) = 0$. Applying this vanishing to the long exact sequence in $\SExt$ arising from the short exact sequence 
\[ 0 \to K \to \Omega_X \to \Omega_X/K \to 0 \] and applying $\SHom(-, \omega_X)$, we conclude that $\SExt^1(\Omega_X, \omega_X) = \SExt^1(\Omega_X/K, \omega_X)$.  Next, by Theorem \ref{thm:wps-dualizingprops} and \cite[Theorem 3.3.10]{BH98}, we have $\SExt^i(\Omega_X^{[1]}, \omega_X) = 0$ for $i > 0$.  This implies  $\SExt^1(\Omega_X, \omega_X) \cong \SExt^2(\calQ, \omega_X)$ by the usual long exact sequence.  

Now, let $C$ be the singular locus of $X$, so $C$ is a union of $\bP^1$s.  Viewing $Q$ as a sheaf on $C$, let $T = Tors(\calQ)$.  Then, $T$ is supported in dimension $0$ by Lemma \ref{lem:descriptionofQ}.  Again by Ischebeck's Lemma, we have $\SExt^1(T, \omega_X) = \SExt^2(T, \omega_X) = 0$, so applying the same argument as above to the short exact sequence 
\[ 0 \to T \to \calQ \to \calQ/T \to 0, \] we conclude $\SExt^2(\calQ, \omega_X) \cong \SExt^2(\calQ/T, \omega_X)$.  Together with Lemma \ref{lem:descriptionofQ}, this yields \[\SExt^1(\Omega_X, \omega_X) \cong \SExt^2(\calQ/T, \omega_X)=\oplus_{i,j}(\SExt^2(Q_{ij}, \omega_X)),\] and the claim is proved.  

Next, we restrict this sheaf to $C_{ij}$ to compute its cohomology.  

\noindent \textbf{Claim 2.} $H^1(C_{ij}, \SExt^2(Q_{ij}, \omega_X)\otimes\omega_X^{\vee}) = 0.$

By \Cref{prop:computingextoncurves}, $Q_{ij} = \oplus \calO(-b_m) $ where $b_m > 0$ is as in the statement and each line bundle $\calO(-b_m)$ is the restriction of a divisor $\calL_m$ on the threefold $X$ that is Cartier in a neighborhood of $C_{ij}$.  Therefore, we can identify 
\[ \SExt^2(Q_{ij}, \omega_X) = \SExt^2((\oplus \calL_m)\otimes\calO_{C_{ij}}, \omega_X) =\SExt^2(\calO_{C_{ij}}, \omega_X) \otimes (\oplus \calL_m)^{\vee} .\]

By Grothendieck duality, $\SExt^2(\calO_{C_{ij}}, \omega_X) \cong \omega_{C_{ij}} = \calO(-2)$, and $\calL_m^\vee|_{C_{ij}} = \calO(b_m)$, and therefore   
\[ \SExt^2(Q_{ij}, \omega_X) \cong \calO(-2) \otimes (\oplus \calO(b_m)) = \oplus_{m=0}^{n-1}\calO(-2+b_m).\]

Finally, by assumption, there exists a section of $\omega_X^\vee$ not vanishing at the generic point of $C_{ij}$, which implies that the map $\calO_X \to \omega_X^{\vee}$ corresponding to this section restricts to an injection $\calO_{C_{ij}} \to \omega_X^{\vee}|_{C_{ij}}$, and hence $\omega_X^{\vee}|_{C_{ij}} = \calO(f) \oplus T'$ for some $f \ge 0$ and $T'$ a torsion sheaf.  Therefore,
\[ \SExt^2(\calQ_{ij}, \omega_X) \otimes \omega_X^\vee = (\oplus \calO(-2+b_m)) \otimes (\calO(f) \oplus T') = (\oplus \calO(-2+b_m+f)) \oplus T''\]
where $b_m > 0$, $f \ge 0$, and $T''$ is torsion.  In particular, $-2+b_m + f \ge -1$, and therefore $H^1(C_{ij},\SExt^2(Q_{ij}, \omega_X) \otimes \omega_X^\vee) = 0$ and the proof of the claim is complete.

\noindent \textbf{Claim 3.} $H^1(X, \SExt^1(\Omega_X, \calO_X)) =0$.  

By Lemma \ref{lem:extdoubledual}, there is a natural map 
\[ \SExt^1(\Omega_X, \omega_X) \otimes \omega_X^{\vee} \to \SExt^1(\Omega_X, \calO_X) \] that is an isomorphism at the generic point of $C_{ij}$, so has kernel and cokernel supported only in dimension 0.  Therefore, 
\[ H^1(X,\SExt^1(\Omega_X, \omega_X) \otimes \omega_X^{\vee}) \cong H^1(X,\SExt^1(\Omega_X, \calO_X))\] so it suffices to show $H^1(X,\SExt^1(\Omega_X, \omega_X) \otimes \omega_X^{\vee}) = 0$. By Claim 1, 
\[\SExt^1(\Omega_X, \omega_X) \otimes \omega_X^{\vee} \cong \Bigl( \oplus_{i,j} \SExt^2(Q_{ij}, \omega_X) \Bigr) \otimes \omega_X^\vee,\] and by Claim 2,
\[H^1(C_{ij}, \SExt^2(Q_{ij}, \omega_X)\otimes\omega_X^{\vee}) = 0.\]  Therefore, $H^1(X,\SExt^1(\Omega_X, \omega_X) \otimes \omega_X^{\vee} ) = 0$, as desired.
\end{proof}

Finally, we give the proof of Theorem \ref{introthm:defofthreefolds}. 

\begin{proof}{(Proof of Theorem \ref{introthm:defofthreefolds}.)}
    This follows directly from Theorem \ref{thm:Ext10}, Corollary \ref{cor:wpssmoothing}, and Corollary \ref{cor:globaldefspaceislocal}.
\end{proof}

\section{Two additional examples}

The aim of this section is to prove the following theorem.

\begin{theorem}
    The two weighted projective threefolds $\bP(2,3,3,4)$ and $\bP(2,12,21,49)$ are each smoothable to a cubic threefold. Moreover, the weighted projective spaces $\bP(2,3,3,4,6_{n-4},18)$ and $\bP(2,12,21,42_{n-4},49,126)$ are smoothable to $\bP^{n}$ for any $n \ge 4$.
\end{theorem}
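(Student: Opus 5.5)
The plan is to treat the two threefolds with the deformation theory of \Cref{sec:deformations}, and then to derive the statements about $\bP^n$ formally from the cone constructions of \Cref{sec:cone}.

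\textbf{Numerical check.} First I check that both threefolds have the right anticanonical volume. For $\bP(2,3,3,4)$ we have $-K=\calO(12)$, and $12^3/(2\cdot 3\cdot 3\cdot 4)=24$. For $\bP(2,12,21,49)$ we have $-K=\calO(84)$, and $84^3/(2\cdot 12\cdot 21\cdot 49)=24$. In both cases this equals $(-K_{V_3})^3=3\cdot 2^3$ for a smooth cubic threefold $V_3$. Once a $\bQ$-Gorenstein smoothing exists, its general fibre is a smooth Fano threefold of Picard rank one, index $2$ (since $-K$ is twice the generator of the limit polarization) and volume $24$. Hence it is a cubic threefold, exactly as in the last step of the proof of \Cref{thm:wpdegensofP2}.

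\textbf{Hypotheses of Theorem \ref{introthm:defofthreefolds}.} Next I verify \Cref{assumptions-on-wps}.
\begin{itemize}
\item For $\bP(2,3,3,4)$, the curve where $\gcd(2,4)=2$ carries $\frac12(3,3)=A_1$. The curve where $\gcd(3,3)=3$ carries $\frac13(2,4)=\frac13(1,2)=A_2$.
\item For $\bP(2,12,21,49)$, the three singular curves carry $\frac12(21,49)=A_1$, $\frac13(2,49)=A_2$ and $\frac17(2,12)=\frac17(2,5)=A_6$.
\end{itemize}
In each case I would exhibit a monomial of degree $12$ (respectively $84$) that does not vanish generically along the relevant curve. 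For example $x_1^2x_2^2$ works for the $\gcd(3,3)$ curve in the first case; the remaining curves are a finite check.

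Theorem \ref{introthm:defofthreefolds} then reduces global smoothability to two things. The first is smoothability of each chart $U_i$. The second is that the local smoothings can be chosen compatibly on the overlaps, since $\Ext^1(\Omega_X,\calO_X)=H^0(X,\SExt^1(\Omega_X,\calO_X))$ and the obstructions inject into the local ones.

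\textbf{Local smoothings of the charts.} For $\bP(2,3,3,4)$ the only chart origin that is an isolated codimension $3$ point is $\frac14(2,3,3)$. Multiplying the weights by $3$ gives $\frac14(2,3,3)=\frac14(2,1,1)$, which is smoothable by \Cref{ex:P(2,12,21,49)-ps}. The other chart origins, $\frac12(3,3,4)$ and $\frac13(2,3,4)$, are products of $A_n$ singularities with a line, hence smoothable. For $\bP(2,12,21,49)$ the relevant chart origins are $\frac1{12}(2,21,49)=\frac1{12}(2,9,1)$, which is smoothable by \Cref{ex:P(2,12,21,49)-ps}, and also $\frac1{21}(2,12,49)$ and $\frac1{49}(2,12,21)$. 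For the latter two I would look for a presentation as cone points via \Cref{cor:localsmoothings} or \Cref{cor:localsmoothings2}, or else write down an explicit equation as in the proof of \Cref{thm:wpdegensofP2}. Concretely, this means embedding the chart as a quasi-homogeneous hypersurface in a quotient of $\bA^4$ and perturbing the equation by a monomial in the extra variable.

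\textbf{Main obstacle: compatibility.} The step I expect to be hardest is gluing. The smoothings of the codimension $3$ points restrict, on $U_i\cap U_j$, to deformations of the transverse $A_n$ singularities along the curves $C_{ij}$. These must agree with a single global section of $\SExt^1(\Omega_X,\calO_X)$, which by Claim 1 of \Cref{thm:Ext10} is a sum of line bundles $\calO(b_m-2)$ on each $C_{ij}$. My approach is to construct the smoothings as global quasi-homogeneous perturbations rather than purely locally. Since each explicit local family is $\mathbb{G}_m$-equivariant, I would choose the perturbation monomials to have consistent weighted degree. Their images in $H^0(C_{ij},\oplus\calO(b_m-2))$ then automatically patch, and the vanishing $H^1=0$ of Theorem \ref{thm:Ext10} lets me lift any remaining discrepancy.

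\textbf{Passage to $\bP^n$.} The cubic threefold is a degree $3$ hypersurface in $\bP^4$, so \Cref{cor:hypersurfaceDegeneration} with $d=3$, $n=4$ applies. For $\bP(2,3,3,4)$ it gives $a_4=\frac{12}{2}=6$, so $\bP^4$ degenerates to $\bP(2,3,3,4,18)$. For $\bP(2,12,21,49)$ it gives $a_4=\frac{84}{2}=42$, so $\bP^4$ degenerates to $\bP(2,12,21,49,126)$. The weight averages are $30/5=6$ and $210/5=42$ respectively. Iterating \Cref{cor:degenofPn+1} therefore adjoins the repeated weights $6_{n-4}$ and $42_{n-4}$, which gives the stated degenerations of $\bP^n$ for all $n\ge 4$.
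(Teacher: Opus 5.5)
Your passage to $\bP^n$ agrees with the paper: \Cref{cor:hypersurfaceDegeneration} with $d=3$ gives the weights $18$ and $126$, and \Cref{cor:degenofPn+1} then adds the repeated weights. Your chart-by-chart treatment of $\bP(2,3,3,4)$ via \Cref{thm:Ext10} also matches the paper's first argument. The argument for $\bP(2,12,21,49)$, however, has a genuine gap.

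\textbf{The chart $U_2$ is never smoothed.} \Cref{introthm:defofthreefolds} only moves the problem to the charts, and you give no smoothing of $U_2$, whose origin is $\frac{1}{21}(2,12,7)$.
\begin{itemize}
\item[(i)] Neither \Cref{cor:localsmoothings} nor \Cref{cor:localsmoothings2} applies. The number $21$ is not $abc$ for a Markov triple, it is not $4abc$, and it is not $3(a_0+a_1+a_2)$ for a weighted plane of volume $3$.
\item[(ii)] Your explicit-equation fallback also fails, because $U_2$ contains both the $A_2$ curve through $p_1,p_2$ and the $A_6$ curve through $p_2,p_3$. In the model $U_2\cong\{y_0y_4=y_1^7\}/\mu_3$, with weights $(2,2,1,0)$ on $(y_0,y_1,y_3,y_4)$, every equivariant perturbation has weight $2$. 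It therefore vanishes on the fixed $y_4$-axis, and the $A_2$ curve survives. In the other natural model, $\{uv=w^3\}/\mu_7$ with weights $(2,0,3,4)$, the $A_6$ curve survives for the same reason.
\item[(iii)] By contrast, $U_3$ is harmless: $y_0y_4=y_1^7+t$ is a $\mu_7$-invariant smoothing.
\item[(iv)] Your compatibility step is not an argument either. The vanishing of $H^1(X,\SExt^1(\Omega_X,\calO_X))$ lets you correct local first-order data by a \v{C}ech coboundary. It does not guarantee that the corrected data are still smoothing directions at the codimension-$3$ points.
\end{itemize}

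\textbf{How the paper handles $\bP(2,12,21,49)$.} This is why the paper never argues chart by chart here.
\begin{itemize}
\item[(1)] It realizes $\bP(2,12,21,49)$ as the isotrivial limit of $y_0y_4=y_1^7+ty_3^2+sy_2^4y_1$ in $\bP(2,2,3,7,12)$.
\item[(2)] Via the embedding into $\bP(1,1,3,5,6,7)$, it turns this into the degree-$10$ hypersurface $X_{10}\subset\bP(1,1,3,5,6)$ (\Cref{lem:psofP2122149}).
\item[(3)] It proves that $Y_{10}\subset\bP(1,1,3,5,9)$ smooths to a quadric threefold (\Cref{lem:deformationtoquadric}, built on $\bP(1,2,9)\rightsquigarrow\bP^1\times\bP^1$ plus a separate $\SExt$ computation).
\item[(4)] It recognizes $X_{10}$ as the degree-$9$ divisor $z_4=z_2z_5$ on the cone $C_p(Y_{10},\calO(6))$, which is a degeneration of $\bP^4$ through the quadric cone. \Cref{thm:conesanddivisors} then deforms $X_{10}$ to a cubic in $\bP^4$, and \Cref{lem:xtoytoz} assembles the chain.
\end{itemize}
Your proposal has nothing playing this role.

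\textbf{Identifying the general fibre.} Separately, your claim that the general fibre is a cubic is unjustified.
\begin{itemize}
\item[(a)] Picard rank can go up under a $\bQ$-Gorenstein smoothing; only $\rho(X_t)\ge\rho(X_0)$ is automatic. For example, $\bP(1,1,2)\rightsquigarrow\bP^1\times\bP^1$.
\item[(b)] Volume $24$ does not single out the cubic: $\bP^1\times S$, with $S$ a degree-$4$ del Pezzo surface, also has $(-K)^3=24$.
\item[(c)] ``Index $2$'' presupposes that the non-Cartier class $\calO(6)$ (resp.\ $\calO(42)$) extends over the family as the hyperplane class. That is exactly the input \Cref{cor:hypersurfaceDegeneration} needs.
\end{itemize}
The paper gets the cubic and its hyperplane class from explicit constructions. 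For $\bP(2,3,3,4)$ it uses the family $y_0y_2-y_1^2=ty_5$, $y_3y_5-y_4^3=tf_3$ in $\bP(1,1,1,1,1,2)$. For $\bP(2,12,21,49)$ it uses the divisor-in-a-cone argument above.
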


\begin{proof}
    We proceed in several parts.  First, assume that we have already shown that both $\bP(2,3,3,4)$ and $\bP(2,12,21,49)$ are smoothable to a cubic threefold $X \subset \bP^4$.  In the notation of Corollary \ref{cor:hypersurfaceDegeneration}, $n = 4$ and $d = 3$, so by Corollary \ref{cor:hypersurfaceDegeneration}, $\bP(2,3,3,4,18)$ and $\bP(2,21,21,49,126)$ are each smoothable to $\bP^4$. Now by Corollary \ref{cor:degenofPn+1}, this implies 
    that
    $\bP(2,3,3,4,6_{n-4},18)$ and $\bP(2,12,21,42_{n-4},49,126)$ are smoothable to $\bP^{n}$ for any $n \ge 4$.

    Therefore, to prove the theorem, it suffices to show $\bP(2,3,3,4)$ and $\bP(2,12,21,49)$ are smoothable to a cubic threefold as claimed.  

    For $\bP(2,3,3,4)$, denote its coordinates by $[x_0:x_1:x_2:x_3]$. As it has only $A_1$ and $A_2$ singularities in codimension 2 and $-K_{\bP(2,3,3,4)}$ is very ample, by Theorem \ref{thm:Ext10} and Corollary \ref{cor:wpssmoothing}, it suffices to verify that there is a local smoothing at each of the following points that agree on the overlaps:
    \begin{center}
    $[1:0:0:0], [0:1:0:0], [0:0:1:0], [0:0:0:1]$.
    \end{center}
    The first three of these points are $A_n \times \bA^1$ singularities for $n = 1,2$, so they are smoothable, and the last point is a $\frac{1}{4}(1,1,2)$ singularity, which is smoothable by either Example \ref{ex:VeroneseEmbeddingTrick} or Example \ref{ex:P(2,12,21,49)-ps}.  It is straightforward to show these smoothings can be chosen to agree on the overlaps of the sets $U_i$.  Therefore, by Theorem \ref{thm:Ext10} and Corollary \ref{cor:wpssmoothing}, this is smoothable.  One could also write the smoothing explicitly: consider the `degree 6 embedding' 
    \[ \bP(2,3,3,4) \hookrightarrow \bP(1,1,1,1,1,2)\] given by 
    \[ [x_0:x_1:x_2:x_3] \mapsto [x_1^2:x_1x_2:x_2^2:x_0^3:x_0x_3:x_3^3].\]
    Denoting the coordinates on $\bP(1,1,1,1,1,2)$ by $[y_i]$, the image is defined by $y_0 y_2 - y_1^2$ and $y_3y_5 - y_4^3$.  Perturbing these equations over $\bA^1_t$ by $y_0 y_2 - y_1^2 = ty_5$ and $y_3y_5 - y_4^3 = tf_3$ for a generic degree 3 equation $f_3=f_3(y_0,y_1,y_2,y_3,y_4)$, the central fiber $t = 0$ is the image of $\bP(2,3,3,4)$ and the general fiber $t \ne 0$ is 
    \[ (y_3(y_0 y_2 - y_1^2) - ty_4^3 = t^2f_3(y_0,y_1,y_2,y_3,y_4)) \subset \bP(1,1,1,1,1) = \bP^4. \] For generic $t$, this is a smooth cubic threefold. 

    The proof that $\bP(2,12,21,49)$ is smoothable to a cubic will occupy the rest of this section.
\end{proof}

Consider the weighted projective space $\mathbb{P}(2, 12, 21, 49)$. First, we describe the singularities of this space. 

At the point $p_0 = [1:0:0:0]$, there is a $\frac{1}{2}(0,1,1)$ singularity; at 
$p_1 = [0:1:0:0]$, there is a $\frac{1}{12}(1,2,9)$ singularity; at the point $p_2 = [0:0:1:0]$, there is a $\frac{1}{21}(2,12,7)$ singularity; and at the point $p_3 = [0:0:0:1]$, there is a $\frac{1}{49}(2,12,21)$ singularity.  There are curves of singularities joining these points: along the curve $C_{01}$ joining the points $p_0$ and $p_1$, the threefold has $\frac{1}{2}(1,1) \times \bA^1$ singularities.  Along the curve $C_{12}$ joining the points $p_1$ and $p_2$, the threefold has $\frac{1}{3}(1,2) \times \bA^1$ singularities.  Finally, along the curve $C_{23}$ joining $p_2$ and $p_3$, the threefold has $\frac{1}{7}(1,6) \times \bA^1$ singularities. 

Now, we aim to show that $\bP(2,12,21,49)$ is smoothable to a cubic, through a series of lemmas.  In what follows, the notation $\bP(a_0,\dots,a_n)_{[x_i]}$ indicates that the coordinates on the weighted projective space are given by $[x_0:\dots:x_n]$. 

\begin{lemma}\label{lem:psofP2122149}
    The weighted projective space $\bP(2,12,21,49)$ admits a partial smoothing to a degree 10 
    hypersurface $X_{10} \subset \bP(1,1,3,6,5)_{[w_i]}$ of the form $w_0w_2w_4 = w_1^{10} + aw_1^7w_2 + bw_1w_2^3 +w_3^2$ for some $a,b \in \bC$.
\end{lemma}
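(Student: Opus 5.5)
The plan mirrors the proof of Theorem \ref{thm:wpdegensofP2} and the explicit smoothing of $\bP(2,3,3,4)$. I would first realize $\bP(2,12,21,49)$ as a (non-general) hypersurface in $\bP(1,1,3,6,5)_{[w_i]}$ via a ``degree $N$ embedding'' given by monomials, and then perturb its defining equation inside the linear system of degree $10$ hypersurfaces.

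\textbf{Step 1: the embedding.} Choose $N$ and monomials $m_0,\dots,m_4$ in $x_0,\dots,x_3$ whose degrees are $N$ times the weights $(1,1,3,6,5)$. They should generate the Veronese subring $\bigoplus_k \C[x_0,\dots,x_3]_{kN}$ up to one relation, so that the map $[x]\mapsto[m_0:\dots:m_4]$ is a closed embedding whose image is a binomial (toric) hypersurface. The natural candidates for the $m_i$ are monomials built from the pairs whose gcds produce the codimension $2$ singular curves $C_{01},C_{12},C_{23}$, namely the factors $2$, $3$ and $7$. The powers should be chosen so that $w_1^{10}$ and $w_3^2$ restrict to monomials in the $x_i$, and so that the image is cut out by one relation of weighted degree $10$ of binomial type.

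I would verify that the image is exactly this hypersurface in two ways. First, compare anticanonical volumes: $84^3/(2\cdot12\cdot21\cdot49)=24$, which equals the volume of a degree $10$ hypersurface in $\bP(1,1,3,6,5)$, namely $6^3\cdot10/90$, and also equals the volume of a cubic threefold. Second, check that the Veronese subring is generated by the chosen monomials, using well-formedness of both spaces together with the semigroup of exponent vectors.

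\textbf{Step 2: the perturbation.} Once the central fiber is written as a binomial equation, say $w_0w_2w_4 = g(w)$ with $g$ a monomial, consider the one-parameter family obtained by adding $t$ times the remaining monomials of degree $10$. After completing the square in $w_3$ and absorbing terms by coordinate changes (rescaling, and substitutions such as $w_4\mapsto w_4+\dots$ that are allowed by the degrees), the general member can be put in the normal form
\[
w_0w_2w_4 = w_1^{10} + a w_1^7 w_2 + b w_1 w_2^3 + w_3^2 .
\]
The monomials $w_1^7w_2$ and $w_1w_2^3$ are exactly those that cannot be removed by such coordinate changes, which is why the parameters $a,b$ survive.

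The family is a hypersurface in $\bP(1,1,3,6,5)\times\bA^1$, so it is $\bQ$-Gorenstein, just as in Example \ref{ex:VeroneseEmbeddingTrick}. Flatness follows because all members are Cartier divisors of the same degree. To justify calling it a \emph{partial} smoothing, I would compute the singular locus of the general fiber from the Jacobian, checked chart by chart on the quotient charts of $\bP(1,1,3,6,5)$. This should show that some of the singular strata of $\bP(2,12,21,49)$, in particular the bad point $\frac{1}{49}(2,12,21)$, disappear or become milder.

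\textbf{Main obstacle.} I expect the hardest part to be Step 1: finding the correct monomials and proving that the map is a closed embedding onto the stated hypersurface. Concretely, this means showing that the degree-$N$ Veronese subring of $\C[x_0,x_1,x_2,x_3]$ is generated by those five monomials with exactly one relation. I would first try $N=7$ or $N=14$, guided by the codimension $2$ singularities. I would then confirm the result by checking, on each affine chart $U_i$, that the induced map of invariant rings is an isomorphism onto the corresponding chart of the hypersurface.
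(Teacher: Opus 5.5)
\textbf{The gap is in Step 1: no choice of $N$ and monomials works.} Note first that the normal form is homogeneous only if $\deg w_3=5$ and $\deg w_4=6$, so the ambient space is really $\bP(1,1,3,5,6)$. Suppose $\bP(2,12,21,49)$ were the image of a degree $N$ monomial embedding as a degree $10$ hypersurface $X\subset\bP(1,1,3,5,6)$. Then $\calO_X(1)=\calO(N)$, and comparing $\calO_X(1)^3=10/90$ with $N^3/(2\cdot12\cdot21\cdot49)$ forces $N=14$.

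The degree-$14$ Veronese subring consists of the monomials with $e_0\equiv e_1 \bmod 7$ and $e_2\equiv e_3 \bmod 2$. It is minimally generated by the six monomials $x_0^7,\ x_0x_1,\ x_2^2,\ x_2x_3,\ x_1^7,\ x_3^2$, of degrees $14\cdot(1,1,3,5,6,7)$, with the two relations $w_0w_4=w_1^7$ and $w_2w_5=w_3^2$. The generator $x_3^2$ is indecomposable, since $x_3$ has degree $49\not\equiv 0 \bmod 14$. If you drop it, the map is undefined exactly at $[0:0:0:1]$, the $\frac{1}{49}(2,12,21)$ point.

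More intrinsically, a degree $10$ hypersurface in $\bP(1,1,3,5,6)$ has section ring $\bC[w_0,\dots,w_4]/(f)$, generated in degrees $\le 6$. On $\bP(2,12,21,49)$, the class $\calO(14)$ (forced, since $\Cl\cong\Z$ and the anticanonical classes must match) needs a generator in degree $7$. So $\bP(2,12,21,49)$ is not a member of $|\calO(10)|$ on $\bP(1,1,3,5,6)$ at all, and Step 2 has nothing to perturb. The equality of volumes ($=24$) is only a necessary condition. Your $N=7$ attempt does give a binomial hypersurface, $y_0y_4=y_1^7$, but it lives in $\bP(2,2,3,7,12)$.

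Separately, the Step 2 claim that a general degree $10$ hypersurface can be brought to the stated form is false. Its $w_4$-part $w_4(w_2\ell(w_0,w_1)+q_4(w_0,w_1))$ reduces only to $w_4(w_0w_2+c\,w_1^4)$ with $c\neq 0$ in general.

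\textbf{The missing idea is that the degeneration must change the embedding codimension.} This is the same elimination trick as in Example~\ref{ex:VeroneseEmbeddingTrick} and in the $\bP(2,3,3,4)$ smoothing ($y_0y_2-y_1^2=t\,y_5$), which you cite but do not use. The paper proceeds as follows.
\begin{itemize}
\item It writes $\bP(2,12,21,49)\cong\{y_0y_4=y_1^7\}\subset\bP(2,2,3,7,12)$ and deforms this to $y_0y_4=y_1^7+t\,y_3^2+s\,y_2^4y_1$. This family is isotrivial for $st\neq0$ and specializes to $\bP(2,12,21,49)$.
\item It re-embeds by $[y]\mapsto[y_0:y_1:y_2^2:y_2y_3:y_4:y_3^2]$ into $\bP(1,1,3,5,6,7)$. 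There the family becomes the complete intersection $\{w_2w_5=w_3^2\}\cap\{w_0w_4=w_1^7+t\,w_5+s\,w_1w_2^2\}$. At $t=s=0$ this is exactly your $N=14$ complete intersection.
\item Because the degree-$7$ variable $w_5$ now enters linearly, it can be eliminated when $t\neq 0$. This leaves the hypersurface $w_2(w_0w_4-w_1^7-s\,w_1w_2^2)=t\,w_3^2$ in $\bP(1,1,3,5,6)$.
\item Perturbing by $r\,w_1^{10}$ and rescaling gives the stated normal form, with $a,b$ determined by $s$ and the rescaling.
\item Lemma~\ref{lem:xtoytoz} composes the isotrivial specialization with this deformation.
\end{itemize}
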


\begin{proof}
    Consider the embedding $\bP(2,12,21,49)_{[x_i]} \to \bP(2,2,3,7,12)_{[y_i]}$ given by 
    \[ [x_0:x_1:x_2:x_3] \mapsto [x_0^7:x_0x_1:x_2:x_3:x_1^7].\]
    The image is defined by the degree 14 equation $y_0y_4 = y_1^7$.  Let $X_{14}(t,s)$ be the degree 14 weighted hypersurface $y_0y_4 = y_1^7 + t y_3^2 + s y_2^4y_1$, which admits an isotrivial specialization to $\bP(2,12,21,29)$ sending $t, s \to 0$.

    Now, consider the embedding $\bP(2,2,3,7,12)_{[y_i]} \to \bP(1,1,3,5,6,7)_{[w_i]}$ given by 
    \[[y_0:y_1:y_2:y_3:y_4] \mapsto [y_0:y_1:y_2^2:y_2y_3:y_4:y_3^2] \]
    with image defined by the degree 10 equation $w_2w_5 = w_3^2$.  Let $Y_{7}(t,s)$ be the weighted hypersurface given by $w_0w_4 = w_1^7 + tw_5 + s w_2^2w_1$, so the image of $X_{14}(t,s)$ is given by the intersection of $Y_{7}(t,s)$ and $w_2w_5 = w_3^2$.  For $t \ne 0$, this is isomorphic to the degree 10 weighted hypersurface $X_{10}(t,s)$ given by $w_2(w_0w_4 - w_1^7 - sw_2^2 w_1) = tw_3^2$ in the weighted projective space $\bP(1,1,3,5,6)_{[w_i]}$.  For any $t \ne 0$, this is isomorphic to the same equation with $t = 1$.  Perturbing this equation by $w_1^{10}$, we see that $X_{10}(t,s)$ admits a smoothing to a hypersurface of the form $X_{10}$ as claimed.

    We conclude $\bP(2,12,21,49)$ admits a partial smoothing to $X_{10}$ by the following lemma.
\end{proof}

\begin{lemma}\label{lem:xtoytoz}
    Let $X,Y,Z$ be Fano varieties.  Suppose $X$ admits a $\bQ$-Gorenstein degeneration to $Y$ and $Y$ admits an isotrivial $\bQ$-Gorenstein degeneration to $Z$.  Then, $X$ admits a $\bQ$-Gorenstein degeneration to $Z$.
\end{lemma}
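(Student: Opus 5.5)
The plan is to use openness of versality (equivalently, the local structure of the moduli stack of $\bQ$-Gorenstein klt Fano varieties, or of the Hilbert scheme after a fixed pluri-anticanonical embedding) to trade an isotrivial degeneration at the special point of $Y$ for a small perturbation in the family degenerating $X$. Concretely, fix an integer $N>0$ such that $-NK$ is Cartier and very ample with vanishing higher cohomology uniformly on all fibers involved. By the $\bQ$-Gorenstein condition and Kodaira vanishing on klt Fano fibers, the spaces $H^0(-NK)$ have constant dimension, so both families embed relatively into a fixed $\bP^M$. We thus obtain two curves in the locally closed subscheme $H$ of the Hilbert scheme parametrizing $\bQ$-Gorenstein (Kollár-condition) Fano subschemes. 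The first curve $B_1$ joins $[X]$ to $[Y]$, with general point in the $PGL_{M+1}$-orbit of $X$ and special point $[Y]$. The second curve $B_2$ has general point in the orbit of $Y$ and special point $[Z]$.

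The key step is to show that $[Z]$ lies in the closure of the orbit $PGL\cdot[X]$. Since $B_2\setminus\{0\}\subset PGL\cdot[Y]$, it suffices to show that $PGL\cdot[Y]\subset \overline{PGL\cdot[X]}$. This holds because $[Y]\in\overline{PGL\cdot[X]}$ (via $B_1$) and the closure of an orbit is $PGL$-invariant. Hence $[Z]\in\overline{PGL\cdot[Y]}\subset\overline{PGL\cdot[X]}$. Since the orbit closure is irreducible and the orbit is open and dense in it, we can pick a smooth curve $B\to\overline{PGL\cdot[X]}$ through $[Z]$ whose generic point maps into the orbit. We do this by normalizing a curve through $[Z]$ that meets the dense orbit.

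It remains to check that the pulled-back family over $B$ is $\bQ$-Gorenstein. We use that the locus in the Hilbert scheme where the family satisfies Kollár's condition (or, over a smooth curve with klt Fano fibers, the constancy of anticanonical volume criterion \cite[Theorem 5.8]{Kollar23} already used in Lemma~\ref{Lemma:relativeProjectiveCone}) is a locally closed subscheme representing the $\bQ$-Gorenstein functor. All fibers over $B$ are isomorphic to $X$ or $Z$, and these have equal anticanonical volume since volume is constant along both degenerations. Moreover, $Z$ is klt Fano (it is a fiber of the $\bQ$-Gorenstein family degenerating from $Y$, and is Fano by hypothesis). Therefore the volume criterion gives that $K_{\calX_B/B}$ is $\bQ$-Cartier.

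The main obstacle is the last step: ensuring that the family over the chosen curve $B$ is $\bQ$-Gorenstein rather than merely flat. Indeed, $\overline{PGL\cdot[X]}$ is taken inside the Hilbert scheme, and flat limits need not have $K$ $\bQ$-Cartier. I would handle this by working inside the $\bQ$-Gorenstein locus $H$ from the start (taking closures in $H$, which is valid since $[Y],[Z]\in H$). Alternatively, I would invoke the volume criterion as above, which requires knowing the fibers are klt, hence normal and $S_2$, so that Kollár's theorem applies. Some care is needed to choose a uniform $N$ embedding all three varieties with the same Hilbert polynomial; this follows from the $\bQ$-Gorenstein families having constant $\chi(-mK)$.
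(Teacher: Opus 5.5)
Your proposal is correct and follows essentially the same route as the paper: embed $X$, $Y$, $Z$ into a common $\Hilb(\bP^N)$ via a fixed pluri-anticanonical embedding, and use that specialization (closure of the $PGL$-orbit) is transitive. Your version is more careful than the paper's sketch, which leaves implicit both the orbit-closure formulation and the check that the family over a curve through $[Z]$ is $\bQ$-Gorenstein. Your volume-criterion check for that step needs $Z$ to be klt. This does not follow from $Z$ being a fiber of a $\bQ$-Gorenstein family, as you suggest; it comes from reading ``Fano'' as klt Fano, which holds in all of the paper's applications.
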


\begin{proof}
    Choose a sufficiently divisible $m \gg 0$ such that $\omega_{X}^{[m]}$, $\omega_{Y}^{[m]}$, and $\omega_{Z}^{[m]}$ are very ample.  By flatness of $\bQ$-Gorenstein degenerations, using these very ample line bundles, $X$, $Y$, and $Z$ all admit embeddings into the same projective space $\bP^N$ with the same Hilbert polynomial.  Therefore, they correspond to points $[X], [Y], [Z]$ of $\Hilb(\bP^N)$.  By assumption, $[Y]$ is a specialization of $[X]$ and $[Z]$ is a specialization of $[Y]$.  As $\Hilb(\bP^N)$ is projective, this implies $[Z]$ is a specialization of $[X]$.  
\end{proof}

To prove $\bP(2,12,21,49)$ is smoothable, again using the previous lemma, it suffices to show $X_{10}$ is smoothable. This will be accomplished by the next few statements.  First, we prove an auxillary result that a hypersurface of a similar form admits a smoothing to a quadric threefold.   

\begin{lemma}\label{lem:deformationtoquadric}
    Let $Y_{10}$ be a degree 10 hypersurface in $\bP(1,1,3,5,9)$ with equation of the form $(z_0z_4 = z_1^{10} + az_1^7z_2 + bz_1z_2^3+ z_3^2)$ for some $a,b \in \bC$.  Then, $Y_{10}$ admits a smoothing to a quadric threefold in $\bP^4$. 
\end{lemma}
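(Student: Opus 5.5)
The plan is to write down an explicit one-parameter $\bQ$-Gorenstein family, in the style of Example \ref{ex:VeroneseEmbeddingTrick} and of the proof of Lemma \ref{lem:psofP2122149}. First I would check the numerics, which also tells us what the target must be. On $Y_{10}$ we have $-K_{Y_{10}} = \calO(1+1+3+5+9-10) = \calO(9)$, so by Theorem \ref{thm:intersections-on-wps} adapted to hypersurfaces,
\[ (-K_{Y_{10}})^3 = \frac{10 \cdot 9^3}{1\cdot 1\cdot 3\cdot 5\cdot 9} = 54 = 2\cdot 3^3 = (-K_Q)^3 \]
for a quadric threefold $Q \subset \bP^4$. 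So any $\bQ$-Gorenstein smoothing whose general fiber is a smooth Fano threefold of index $3$ and this volume must be a quadric, because ampleness of $-K$ is open. Moreover, the limit of $\calO_Q(1)$ must be $\calO(3)$, since $-K_Q = \calO_Q(3)$ corresponds to $\calO(9)$.

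The key steps are as follows. Since $\calO(3)$ is the limit of the hyperplane class, I would re-embed $\bP(1,1,3,5,9)$ by sections of degree multiples of $3$, in the same way that $\bP(2,3,3,4)$ was embedded into $\bP(1,1,1,1,1,2)$ by a "degree 6 embedding." Rather than using the full linear system, I would introduce new coordinates only for the monomials needed to "unweight" $z_2$, $z_3$, $z_4$. In doing so, the equation $z_0z_4 = G(z_1,z_2,z_3)$ together with the Veronese-type binomial relations among the new coordinates present $Y_{10}$ as a codimension $\geq 2$ subvariety. The degrees of these binomial relations should match the degree of some ambient coordinate (such as $z_4$ or $z_3$). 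I would then perturb each binomial relation by $t$ times that coordinate, exactly as $y_0y_2 - y_1^2 = ty_5$ was used before. For $t \neq 0$, this eliminates the high-weight coordinates and leaves a single degree-$2$ equation in a copy of $\bP^4$ with all weights $1$. The coefficients $a,b$ and the term $z_1^{10}$ should become lower-order terms of the resulting quadric, and I would check that for generic $t$ the quadric has rank $5$, possibly after adding a generic perturbation by $t^2 q$. The $\bQ$-Gorenstein property then follows as in Theorem \ref{thm:wpdegensofP2}: the total space is a complete intersection in (weighted projective space)$\times \bA^1$. The identification of the general fiber with a smooth quadric follows either from direct inspection or from the volume computation above.

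The main obstacle is finding the correct embedding and perturbation explicitly, because the monomial $z_3^2$ has degree $10$, which is not a multiple of $3$. As a first attempt, I would use the fact that on the chart $z_0 \neq 0$ the variable $z_4$ is eliminated, and that $Y_{10}$ projects birationally onto $\bP(1,1,3,5)$. This suggests first degenerating a quadric to a hypersurface $z_0 w = G'$ in a space with smaller weights, and then applying Lemma \ref{lem:xtoytoz} to chain isotrivial degenerations (for example, sending $a,b \to 0$ or rescaling). If no explicit family can be found, the fallback is a local-to-global argument. I would compute the singularities of $Y_{10}$: the point $[0:0:0:0:1]$, locally $\frac{1}{9}(1,3,5)$ with a $\frac13$-curve along $z_1=z_3=0$, and the point $[0:0:1:0:0]$, a hypersurface in a $\frac13$-quotient. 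I would show that each is locally $\bQ$-Gorenstein smoothable, for instance via Corollary \ref{cor:localsmoothings} and Corollary \ref{cor:localsmoothings2}, and then show that the local-to-global obstruction vanishes by an adaptation of Theorem \ref{thm:Ext10} to quasi-smooth-away-from-these-points hypersurfaces, using $H^i(T)=0$ from the Euler-type sequence.
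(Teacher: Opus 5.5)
There is a genuine gap: as written, the proposal does not produce a smoothing. You name the obstacle to the explicit route yourself, and you do not resolve it. It is also not a routine Veronese trick here. The five sections $z_0^3, z_0^2z_1, z_0z_1^2, z_1^3, z_2$ of $\calO(3)$ do not embed $Y_{10}$. So the flat limit of the embedded quadric lives in a weighted space with extra generators $z_0z_3, z_1z_3$ (degree $6$) and $z_4$ (degree $9$), and you exhibit no perturbation of the resulting codimension-$4$ ideal.

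Everything therefore rests on the fallback, and its decisive step fails. You need a local $\bQ$-Gorenstein smoothing of the $\frac19(1,3,5)$ point $[0:0:0:0:1]$. When $a=b=0$ you also need one for the non-quotient singularity at $[0:0:1:0:0]$. Neither Corollary~\ref{cor:localsmoothings} nor Corollary~\ref{cor:localsmoothings2} supplies these:
\begin{enumerate}
\item A singularity $\frac19(w_0,w_1,w_2)$ arises from Corollary~\ref{cor:localsmoothings} only if $w_0+w_1+w_2 = 27 = 3abc$ for a Markov triple, i.e.\ $abc=9$. This is impossible.
\item It arises from Corollary~\ref{cor:localsmoothings2} with $d=2$ only if $w_0+w_1+w_2 = 9 = 4abc$, again impossible. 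With $d=3$ it would force $w_0+w_1+w_2=3$.
\end{enumerate}

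The missing idea is that for $a=b=0$ the variety $Y_{10}=(z_0z_4=z_1^{10}+z_3^2)$ is itself an orbifold cone. It is $C_p(Z_{10},\calO(3))$, where $Z_{10}=(z_0z_3=z_1^{10}+z_2^2)\subset\bP(1,1,5,9)$ is a partial smoothing of $\bP(1,2,9)$. By Hacking--Prokhorov, $Z_{10}$ smooths to $\bP^1\times\bP^1$, and $\calO_{Z_{10}}(3)$ is the limit of $\calO(1,1)$ because $-K_{Z_{10}}=\calO(6)$. Theorem~\ref{thm:conesanddivisors} then degenerates the quadric cone $C_p(\bP^1\times\bP^1,\calO(1,1))$ to this $Y_{10}$. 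Lemma~\ref{lem:xtoytoz} then gives a smoothing of this special $Y_{10}$ to a quadric, and in particular shows the $\frac19(1,3,5)$ point is smoothable.

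Only after that does your local-to-global plan go through for general $(a,b)$:
\begin{enumerate}
\item $Y_{10}$ and the general member agree on the chart $z_4\neq0$, so the general member's $\frac19(1,3,5)$ point inherits the smoothing.
\item The remaining singularities are $A_2\times\bA^1$ along $C=\bP(3,9)$.
\item One glues using $H^1(\SExt^1)=0$ together with $H^2(T)=0$. Here $\calQ|_C/\mathrm{tors}\cong\calO(-1)\oplus\calO(-2)$ and $\omega|_C\cong\calO(-1)$.
\end{enumerate}

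You would also still owe the identification of the general fiber. Volume $54$ and ampleness of $-K$ do not force a quadric: $\bP^1\times\bP^2$ and the blow-up of $\bP^3$ along a line also have $(-K)^3=54$. So you must show $\calO(3)$ extends over the smoothing, which gives index $3$. The paper does this by noting that the obstruction to extending $\calF=\calO(3)$ lies in $\SExt^2(\calF,\calF)$. That sheaf is supported at the $\frac19$ point, where the deformation agrees with the cone deformation, over which $\calF$ does extend.
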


\begin{proof}
    By \cite{HP10}, the weighted projective space $\bP(1,2,9)$ and its partial smoothings admit a smoothing to $\bP^1 \times \bP^1$.  One partial smoothing is given by the degree 10 equation $Z_{10} = (z_0z_3 = z_1^{10} + z_2^2) \subset \bP(1,1,5,9)_{[z_i]}$.  This smoothing comes from the degree 2 embedding of $\bP(1,2,9)_{[x_i]} \hookrightarrow\bP(1,1,5,9)_{[z_i]}$ given by $[x_0:x_1:x_2] \mapsto [x_0^2: x_1:x_0x_2: x_2^2]$.  The image is defined by $z_0z_3 = z_2^{2}$ and a partial smoothing is given by $z_0 z_2 = tz_1^{10} + z_2^2$.  For any $t \ne 0$, this is isomorphic to $Z_{10}$. 

    We note that
    in this deformation, any divisor in the linear system $\calO_{Z_{10}}(3k)$ may be deformed to a divisor in the linear system $\calO_{\bP^1 \times \bP^1}(k,k)$. Indeed, recall that the smoothing of $Z_{10}$ to $\bP^1 \times \bP^1$ is $\bQ$-Gorenstein, and by adjunction, the anticanonical divisor on $Z_{10}$ is $\calO_{\bP(1,1,5,9)}(6)|_{Z_{10}} = \calO_{Z_{10}}(6)$.  Because the anticanonical divisor on $\bP^1 \times \bP^1$ is $\calO_{\bP^1 \times \bP^1}(2,2)$, this implies that divisors in the linear system $\calO_{Z_{10}}(6)$ deform to divisors in the linear system $\calO_{\bP^1 \times \bP^1}(2,2)$.  As $Z_{10}$ has Picard rank one, divisors are determined by their degree, and this implies divisors in the linear system $\calO_{Z_{10}}(3k)$ deform to divisors in the linear system $\calO_{\bP^1 \times \bP^1}(k,k)$.  (Note that all sections of these linear systems deform because $H^1(Z_{10}, \calO_{Z_{10}}) = 0$.)
    
    As $\bP^1 \times \bP^1$ is a quadric hypersurface in $\bP^3$, by Theorem \ref{thm:conesanddivisors}, $C(\bP^1 \times \bP^1, \calO(1))$ admits a degeneration to the cone $C_p(Z_{10}, \calO(3))$, which is given by 
    \[
    Y_{10} = (z_0z_4 = z_1^{10} + z_3^2)\subset \bP(1,1,3,5,9)_{[y_i]}.
    \] As the cone $C(\bP^1 \times \bP^1, \calO(1))$ is simply a singular quadric threefold, by Lemma \ref{lem:xtoytoz}, we conclude that a smooth quadric threefold admits a degeneration to $Y_{10}$.  

    Unfortunately, $Y_{10}$ is not sufficiently generic: the previous argument does not imply a more generic equation $Y_{10}' = (z_0z_4 = z_1^{10} + az_1^7z_2 + bz_1z_2^3+ z_3^2)\subset \bP(1,1,3,5,9)_{[y_i]}$ admits a smoothing to the quadric threefold.  To verify the existence of a smoothing, we first compute $H^1(Y_{10}', \SExt^1(\Omega_{Y_{10}'}, \calO_{Y_{10}'}))$ with the same methods as in Section \ref{sec:deformations}.  Observe that $Y_{10}'$ is quasismooth and only singular along the curve $C = \bP(3,9)_{[z_2:z_4]}\cong \bP^1_{[z_2^3:z_4]}$ given by $z_0 = z_1 = z_3 = 0$ with a $\frac{1}{3}(1,2,0)$ singularity at all points where $z_2 \ne 0$ and a $\frac{1}{9}(1,3,5)$ singularity at the point $[0:0:0:0:1] \in Y_{10}' \subset \bP(1,1,3,5,9)$.  By adjunction, the canonical divisor of $Y_{10}'$ has (weighted) degree 9, which is Cartier at all points of $Y_{10}'$, so $Y_{10}'$ is Gorenstein.  

    Where $z_2z_4 \ne 0$, the equation for $Y_{10}'$ realizes $z_0$ in terms of the remaining coordinates, and along $C \cap(z_2z_4 \ne 0)$, by Lemma \ref{lem:extofAn}, we have $z_1 dz_3$ and $(z_1z_3) z_1 dz_3$ are generators for the sheaf $\SExt^1(\Omega_{Y_{10}'}, \calO_{Y_{10}'})|_{(z_2z_4 \ne 0)}$. Furthermore, both $z_1 dz_3$ and $(z_1z_3) z_1 dz_3$ are invariant differentials on the affine open set $(z_2 \ne 0)$.  Along the open set $(z_4 \ne 0)$, these can be realized as the restriction of the invariant differentials $z_2 z_1 dz_3$ and $z_2^2(z_1z_3) z_1 dz_3$.  As in Proposition \ref{prop:computingextoncurves}, computing the transition functions between the charts $(z_2 \ne 0)$ and $(z_4 \ne 0)$, we find that the cokernel $\calQ$ of the double dual map 
    \[ \Omega_{Y_{10}'} \to \Omega_{Y_{10}'}^{[1]} \to \calQ \to 0\] is $\calQ|_C/(Tors(\calQ|_C)) = \calO_{\bP^1}(-1) \oplus \calO_{\bP^1}(-2)$.  By direct computation, on $C$, $\omega_{Y_{10}}|_C \cong \calO_{\bP^1}(-1)$.  Now, following the claims in the proof of Theorem \ref{thm:Ext10}, from Claim 1 we conclude $\SExt^1(\Omega_{Y_{10}'}, \omega_{Y_{10}'})\cong \SExt^2(\calQ|_C/(Tors(\calQ|_C)), \omega_{Y_{10}'}))$, and from the argument in Claim 2, using that $Y_{10}'$ is Gorenstein, we conclude 
    \[\SExt^1(\Omega_{Y_{10}'}, \calO_{Y_{10}'}) \cong \SExt^1(\Omega_{Y_{10}'}, \omega_{Y_{10}'}) \otimes \omega_{Y_{10}'}^\vee = \calO_{\bP^1}(-2+1+1) \oplus \calO_{\bP^1}(-2+2+1) = \calO_{\bP^1} \oplus \calO_{\bP^1}(1).   \]
    This implies that $H^1(\SExt^1(\Omega_{Y_{10}'}, \calO_{Y_{10}'})) = 0$.

    Because $Y_{10}'$ is a Gorenstein Fano hypersurface in weighted projective space, we also have $H^2(Y_{10}', T_{Y_{10}'}) = 0$.  Indeed, because $Y_{10}'$ is Gorenstein, by \cite[Proposition 4.3]{GKP}, we have $H^1(Y_{10}', \Omega_{Y_{10}'}^{[1]} \otimes \omega_{Y_{10}'}) = 0$, which is Serre Dual to $\Ext^2(\Omega_{Y_{10}'}^{[1]} \otimes \omega_{Y_{10}'}, \omega_{Y_{10}'})$.  However, $Y_{10}'$ has only cyclic quotient singularities, so $\Omega_{Y_{10}'}^{[1]}$ is Cohen-Macaulay (this is a local statement, so follows from Theorem \ref{thm:wps-dualizingprops}(4)) and therefore $0 = \Ext^2(\Omega_{Y_{10}'}^{[1]} \otimes \omega_{Y_{10}'}, \omega_{Y_{10}'}) = H^2(Y_{10}', T_{Y_{10}'})$.  Thus, Sequence \ref{eqn:localtoglobalExt} implies that there is a surjection 
    \[ \Ext^1(\Omega_{Y_{10}'}, \calO_{Y_{10}'}) \to H^0(Y_{10}', \SExt^1(\Omega_{Y_{10}'}, \calO_{Y_{10}'}))\] and an injection 
    \[ \Ext^2(\Omega_{Y_{10}'}, \calO_{Y_{10}'}) \hookrightarrow H^0(Y_{10}', \SExt^2(\Omega_{Y_{10}'}, \calO_{Y_{10}'})).\]

    Because $Y_{10}'$ has only $\frac{1}{3}(1,2,0)$ singularities away from the point $[0:0:0:0:1]$, by the proof of Lemma \ref{lem:AnExt2} the obstruction $\SExt^2(\Omega_{Y_{10}'}, \calO_{Y_{10}'})$ is supported only at the $\frac{1}{9}(1,3,5)$ singular point $[0:0:0:0:1]$.  Letting $U_2 = (z_2 \ne 0)$ and $U_4 = (z_4 \ne 0)$, this says given any deformation of $U_2$ and $U_4$ that agree on their intersection, there is a corresponding deformation of $Y_{10}'$.  Now, we know the singularity $\frac{1}{9}(1,3,5)$ in the chart $U_4$ is smoothable because $Y_{10}$ was smoothable (and $Y_{10}$ and $Y_{10}'$ are isomorphic on the chart $U_4$).  The restriction of this deformation to $U_2 \cap U_4$ is some element of $H^0(U_2 \cap U_4, \SExt^1(\Omega_{U_2 \cap U_4}, \calO_{U_2 \cap U_4}))$  On the chart $U_2$, $Y_{10}'$ has only singularities of the form $A_2 \times \bA^1$ which are smoothable and unobstructed.  Choosing a smoothing of $U_2$ that restricts to the same element of $H^0(U_2 \cap U_4, \SExt^1(\Omega_{U_2 \cap U_4}, \calO_{U_2 \cap U_4}))$, which exists because $H^1(\SExt^1(\Omega_{Y_{10}'}, \calO_{Y_{10}'})) = 0$ and $U_2$ is smoothable and unobstructed, we conclude there is a compatible smoothing of $U_2$.  Therefore, there exists a smoothing of $Y_{10'}$.  

    Finally, this must give a smoothing of $Y_{10'}$ to the quadric, because by construction the smoothing is $\bQ$-Gorenstein and by computation $Y_{10'}$ has anticanonical volume $54$, which is preserved by the smoothing.  This implies the smoothing $V$ of $Y_{10'}$ is a Fano threefold with anticanonical volume $54$.  Also, observe that there are no obstructions to extending the divisor $\calO_{Y_{10}'}(3)$ in this smoothing: by Theorem 5.24 of \cite{TV}, obstructions to deforming a sheaf $\calF$ are contained in $\SExt^{2}(\mathcal{F}, \mathcal{F})$.  For $\calF = \calO_{Y_{10}'}(3)$, this vanishes at all points other than the $\frac{1}{9}(1,3,5)$ singular point $[0:0:0:0:1]$ because $\calF$ is Cartier away from that point.  At that point, our deformation coincides with the smoothing of $Y_{10}$ to the quadric threefold, and the divisor $\calF$ extends to this deformation (as $\calO_{Y_{10}}(3)$ is the limit of $\calO(1)$ on the quadric).  Therefore, the obstructions also vanish at this point. 

    Because $\calO_{Y_{10}'}(3 \cdot 3) = \calO_{Y_{10}'}(-K_{Y_{10}'})$, we conclude the Cartier index of $V$ is at least 3, and therefore $V$ must be a quadric threefold. 
\end{proof}

Now, we will finally show that $X_{10}$ admits a smoothing to a cubic threefold.

\begin{proposition}\label{prop:smoothingtocubic}
    Let $X_{10}$ be a degree 10 hypersurface in $\bP(1,1,3,6,5)$.  Then, $X_{10}$ admits a smoothing to a cubic threefold in $\bP^4$. 
\end{proposition}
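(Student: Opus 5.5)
Here $X_{10}$ is the hypersurface $w_0w_2w_4 = w_1^{10} + aw_1^7w_2 + bw_1w_2^3 + w_3^2$ produced in Lemma \ref{lem:psofP2122149}. The plan mirrors the proof of Lemma \ref{lem:deformationtoquadric}: produce a local smoothing of the worst singular point using a cone construction, and show that the local-to-global obstructions vanish, so that this local smoothing glues.

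\textbf{Step 1: singularities and invariants.} Since $\bP(1,1,3,6,5)_{[w_i]}$ has singular strata only where $\gcd$'s of weights are nontrivial, $X_{10}$ can be singular only along the locus $w_0=w_1=w_3=0$ inside $\bP(3,6)_{[w_2:w_4]}$, together with the point $[0:0:0:1:0]$ of index $5$. Note that $X_{10}$ does not contain $[0:0:0:1:0]$ because of the $w_3^2$ term, so the codimension $3$ point of index $5$ is avoided. Along $C=\bP(3,6)\cong\bP^1$, I expect $A_2\times\bA^1$ singularities where $w_2\neq 0$, and a single worse cyclic quotient point at $p=[0:0:0:0:1]$. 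That point should be of type $\frac{1}{6}(1,1,3)$ or $\frac{1}{6}(1,3,5)$ after eliminating $w_2$ via the equation, since there $w_2 = (\cdots)/w_0w_4$.

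By adjunction $K_{X_{10}}=\calO(10-16)=\calO(-6)$, which is Cartier at index $\le 6$ points, so $X_{10}$ is Gorenstein. Its volume is $6^3\cdot 10/(1\cdot1\cdot3\cdot6\cdot5)=24=(-K)^3$ of a cubic threefold. Moreover $\calO(2)$ satisfies $\calO(2)^{\otimes 3}=\calO(-K)$, so $\calO(2)$ is the expected limit of the hyperplane class.

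\textbf{Step 2: global unobstructedness.} Following Claims 1--3 of Theorem \ref{thm:Ext10}, I would compute $\calQ|_C/\mathrm{Tors}=\calO_{\bP^1}(-b_0)\oplus\calO_{\bP^1}(-b_1)$ with $b_m>0$ via the transition functions, as in Proposition \ref{prop:computingextoncurves}. I would also compute $\omega^\vee|_C$ of nonnegative degree; note that $w_4$ gives a section of $\calO(6)$ not vanishing on $C$. Together these give $H^1(\SExt^1(\Omega,\calO))=0$. Then $H^2(T)=0$ follows exactly as in Lemma \ref{lem:deformationtoquadric}, using \cite[Proposition 4.3]{GKP} together with the Gorenstein and Cohen--Macaulay properties. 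Sequence \ref{eqn:localtoglobalExt} then shows that compatible deformations of $U_2=(w_2\ne0)$ and $U_4=(w_4\ne0)$ glue, with the obstruction supported only at $p$.

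\textbf{Step 3: local smoothing of $p$.} This is the main obstacle. I would realize the germ at $p$ as the vertex of a cone: $\frac{1}{6}(1,1,3)$ is smoothable by Example \ref{ex:P(2,12,21,49)-ps}. Alternatively, I would find a degenerate member $X_{10}^0$ (for example with $a=b=0$) that is the cone $C_p(S,\calO(k))$ over a surface $S$ smoothable to a cubic surface, and apply Theorem \ref{thm:conesanddivisors} together with Theorem \ref{thm:degree-d-hypersurfaces}. This would give a smoothing of $X_{10}^0$ to a cone over a cubic surface, hence (by Lemma \ref{lem:xtoytoz}) to a cubic threefold, and in particular a smoothing of the germ at $p$. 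Since $X_{10}$ and $X_{10}^0$ agree on $U_4$, as in Lemma \ref{lem:deformationtoquadric}, the smoothing of $U_4$ transfers. Choosing a compatible smoothing of the unobstructed $A_2\times\bA^1$ chart $U_2$ then yields a global smoothing $V$.

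\textbf{Step 4: identification.} The smoothing is $\bQ$-Gorenstein, so $V$ is a smooth Fano threefold with $(-K_V)^3=24$. Deforming $\calO(2)$ as in the end of Lemma \ref{lem:deformationtoquadric}, with obstruction only at $p$ where it is handled by the local model, shows that $-K_V$ is divisible by $2$, i.e.\ the index of $V$ is $2$. Hence $H^3=3$ for $H=-\tfrac12 K_V$, so $V$ is a del Pezzo threefold of degree $3$, i.e.\ a cubic threefold in $\bP^4$.
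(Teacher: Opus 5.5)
The gap is in Step 3, the local smoothing at $p=[0:0:0:0:1]$, and it starts with a misidentification in Step 1.

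In the chart $w_4=1$ the equation reads $w_0w_2=w_1^{10}+aw_1^7w_2+bw_1w_2^3+w_3^2$. It has no linear term, so $X_{10}$ is not quasi-smooth at $p$: you cannot solve for $w_2$ there, because $w_0(p)=0$. Replace $w_0$ by the semi-invariant $w_0-aw_1^7-bw_1w_2^2$. Then the germ at $p$ is the quotient of the isolated $cA_1$ point $\{w_0w_2=w_3^2+w_1^{10}\}$ by $\mu_6$ acting with weights $(1,1,3,5)$. It is neither $\frac16(1,1,3)$ nor $\frac16(1,3,5)$, and Example \ref{ex:P(2,12,21,49)-ps} only treats $\frac14(1,1,2)$ and $\frac1{12}(1,2,9)$ anyway.

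Its smoothability is not visible in this local model either. The equation is semi-invariant of weight $4$ modulo $6$, while $w_2^k$ has weight $3k\not\equiv 4$ modulo $6$. So every equivariant perturbation still contains the $w_2$-axis, which is pointwise fixed by $\mu_3\subset\mu_6$.

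Your fallback does not exist. Every member of the family involves all five variables, so none is an orbifold cone $C_p(S,\calO(k))$; that would require a coordinate absent from the equation. Without a local smoothing of $p$ there is nothing to glue, and the control of the hyperplane class at $p$ in Step 4 has nothing to rest on.

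There are also secondary errors:
\begin{itemize}
\item $q=[1:0:0:0:0]$ is a further isolated $cA_1$ point ($w_2w_4=w_3^2+\cdots$ in the chart $w_0=1$). It lies outside $U_2\cup U_4$.
\item The $H^2(T)=0$ argument of Lemma \ref{lem:deformationtoquadric} uses that $\Omega^{[1]}$ is Cohen--Macaulay because all singularities are cyclic quotients. This fails at $q$ and is unjustified at $p$.
\item The limit of the hyperplane class is $\calO(3)$, not $\calO(2)$: $\calO(3)^{\otimes 2}=\calO(6)=-K$ and $\calO(3)^3=3$, whereas $\calO(2)^3=8/9$. Your relation $\calO(2)^{\otimes 3}=-K$ would indicate index $3$, contradicting your own Step 4.
\end{itemize}

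The missing idea, which is how the paper argues, is not to look for a cone structure on $X_{10}$. Instead, exhibit $X_{10}$ as a \emph{divisor} in a fourfold cone that is itself a degeneration of $\bP^4$.
\begin{itemize}
\item By Lemma \ref{lem:deformationtoquadric}, $Y_{10}\subset\bP(1,1,3,5,9)$ smooths to a quadric threefold $V$, with $\calO_{Y_{10}}(3)$ deforming to $\calO_V(1)$.
\item Theorem \ref{thm:degree-d-hypersurfaces} gives $\bP^4\rightsquigarrow C_p(V,\calO_V(2))$.
\item Theorem \ref{thm:conesanddivisors} gives $C_p(V,\calO_V(2))\rightsquigarrow C_p(Y_{10},\calO(6))=Z_{10}$, where $Z_{10}=\{z_0z_4=z_1^{10}+az_1^7z_2+bz_1z_2^3+z_3^2\}\subset\bP(1,1,3,5,9,6)$.
\item Lemma \ref{lem:xtoytoz} combines these into $\bP^4\rightsquigarrow Z_{10}$.
\end{itemize}
By Theorem \ref{thm:conesanddivisors}(3), members of $|\calO_{Z_{10}}(3k)|$ deform to degree $k$ hypersurfaces in $\bP^4$. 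The degree $9$ divisor $\{z_4=z_2z_5\}\cap Z_{10}$ is exactly $X_{10}$: substituting gives $z_0z_2z_5=z_1^{10}+az_1^7z_2+bz_1z_2^3+z_3^2$ in $\bP(1,1,3,5,6)$. Hence $X_{10}$ deforms to a cubic threefold.

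Note that $p$ is precisely the vertex of the cone $Z_{10}$ lying on this divisor. So this global construction is what supplies the smoothing of $p$ that your local approach lacks. It also makes the vanishing of $H^1(\SExt^1)$ and $H^2(T)$, and the index computation, unnecessary.
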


\begin{proof}
    By Lemma \ref{lem:deformationtoquadric}, for $Y_{10}$ a degree 10 hypersurface in $\bP(1,1,3,5,9)$ with equation of the form $(z_0z_4 = z_1^{10} + az_1^7z_2 + bz_1z_2^3+ z_3^2)$ for some $a,b \in \bC$, $Y_{10}$ admits a smoothing to a quadric threefold $V$ in $\bP^4$.  Furthermore, in this smoothing, the $\bQ$-Cartier divisor $\calO_{Y_{10}}(3)$ deforms to the divisor $\calO_V(1)$ on the quadric. 
    
    As $V$ is a quadric hypersurface in $\bP^4$, by Theorem \ref{thm:degree-d-hypersurfaces}, $\bP^4$ admits a degeneration to the cone $C_p(V, \calO_V(2))$.  By Theorem \ref{thm:conesanddivisors}, $C_p(V, \calO_V(2))$ admits a degeneration to the cone $C_p(Y_{10}, \calO_{Y_{10}}(6))$.  By Lemma \ref{lem:xtoytoz}, we conclude $\bP^4$ admits a degeneration to $C(Y_{10}, \calO(6))$, which is given by $Z_{10} = (z_0z_4 = z_1^{10} + az_1^7z_2 + bz_1z_2^3+ z_3^2) \subset \bP(1,1,3,5,9,6)_{[z_i]}$. By Theorem \ref{thm:conesanddivisors}, any divisor in the linear system $\calO_{Z_{10}}(3k)$ deforms to a divisor of degree $k$ on $\bP^4$.  Then, for any $V$ a degree 9 hypersurface on $Z_{10}$, $V$ may be deformed to a cubic threefold on $\bP^4$.  But, the degree 9 hypersurface $z_4 = z_2z_5$ on $Z_{10}$ is just the threefold $X_{10}$.  Therefore, $X_{10}$ may be deformed to a cubic threefold. 
\end{proof}

There is an alternative geometric proof that `nodal' elements $X_{10}^n$ given by an equation of the form
\begin{center}
$z_0(z_2z_4 + g_9(z_0,z_1)) = z_3^2 + z_1^{10}$
\end{center}
admit smoothings to nodal cubic threefolds. The idea of the proof is to mimic the standard rationality construction of a nodal cubic threefold.  Let $Y \subset \bP^4_{[x_i]}$ be a nodal cubic threefold, given in coordinates by $x_4 f_2(x_0,x_1,x_2,x_3) = f_3(x_0,x_1,x_2,x_3)$ where $f_i$ is homogeneous of degree $i$.  Projecting from the node yields a degree one rational map $Y \dashrightarrow \bP^3$.  This map is resolved by blowing up the node in a morphism $Z \to Y$, yielding a morphism $Z \to \bP^3$ that is the blow-up of the $(2,3)$ complete intersection curve given by $f_2 = f_3 = 0$ in $\bP^3$.  This $(2,3)$ complete intersection curve is a canonical genus 4 curve.  We will show that $X_{10}$ can be constructed in the same way where $\bP^3$ and the canonical genus 4 curve are replaced by a klt degeneration of $\bP^3$ containing a hyperelliptic genus 4 curve.  Starting from $\bP^3$ and its degeneration and blowing up the family of curves will produce the degeneration of the cubic threefold to $X_{10}^n$. We include the argument below.

First, we realize hyperelliptic genus 4 curves in a klt degeneration of $\bP^3$.  

\begin{lemma}\label{lem:hyperellipticgenus4}
    Let $C$ be a hyperelliptic genus 4 curve.  Then, $C$ admits an embedding into a degree $10$ hypersurface $W_{10} \subset \bP(1,1,5,6,9)$ as a $(6,9)$ weighted complete intersection.  Furthermore, $W_{10}$ is smoothable to $\bP^3$ and, in this smoothing, the $(6,9)$ complete intersection deforms to a $(2,3)$ complete intersection.
\end{lemma}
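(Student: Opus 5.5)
The plan is to obtain $W_{10}$ as a projective orbifold cone over the surface $Z_{10} = (z_0z_3 = z_1^{10} + z_2^2) \subset \bP(1,1,5,9)$, as in the proof of \Cref{lem:deformationtoquadric}. From that proof I will use two facts: $Z_{10}$ is a $\bQ$-Gorenstein partial smoothing of $\bP(1,2,9)$ which itself smooths to $\bP^1\times\bP^1$, and $\calO_{Z_{10}}(3)$ is the limit of $\calO_{\bP^1\times\bP^1}(1,1)$. The curve will then be cut out by a degree $6$ and a degree $9$ divisor, which deform to a quadric and a cubic in $\bP^3$.

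\textbf{Step 1: $W_{10}$ degenerates from $\bP^3$.} Let $Q \subset \bP^3$ be a smooth quadric surface. By \Cref{thm:degree-d-hypersurfaces}, $\bP^3$ degenerates to $C_p(Q, \calO_Q(2))$. Apply \Cref{thm:conesanddivisors}(1) to the $\bQ$-Gorenstein family $Q \rightsquigarrow Z_{10}$ with $\calL$ the limit of $\calO_Q(2)$, which is $\calO_{Z_{10}}(6) = -K_{Z_{10}}$. This gives a degeneration $C_p(Q,\calO_Q(2)) \rightsquigarrow C_p(Z_{10}, \calO(6))$. By \Cref{Ex:ProjectiveConeWPS} applied to the ambient space, $C_p(Z_{10},\calO(6))$ is the hypersurface
\[ W_{10} = (z_0 z_4 = z_1^{10} + z_3^2) \subset \bP(1,1,5,6,9)_{[z_0:z_1:z_3:w:z_4]}, \]
where $w$ is the new coordinate of degree $6$. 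Composing with \Cref{lem:xtoytoz} then gives a $\bQ$-Gorenstein smoothing of $W_{10}$ to $\bP^3$. Strictly, \Cref{lem:xtoytoz} needs the second degeneration to be isotrivial, so I would instead compose the families directly in a Hilbert scheme, as in the proof of \Cref{prop:smoothingtocubic}. The same argument shows that $\calO_{W_{10}}(3k)$ is the limit of $\calO_{\bP^3}(k)$: on $C_p(Q,\calO_Q(2))$, the cone over $\calO_Q(1)$ is the hyperplane class, and it specializes to the cone over $\calO_{Z_{10}}(3)$, which has degree $3$.

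\textbf{Step 2: embed $C$.} Write $C$ as $y^2 = f_{10}(u,v)$ in $\bP(1,1,5)$ with $f_{10}$ squarefree. After a linear change of $(u,v)$ I may assume $v^{10}$ has nonzero coefficient, and after rescaling write $f_{10} = u\,g_9(u,v) - v^{10}$. Up to a sign on $y$, which rescaling absorbs, take the $(6,9)$ weighted complete intersection $\{w = h_6,\ z_4 = g_9(z_0,z_1)\} \cap W_{10}$, where $h_6$ is any degree $6$ form in $z_0,z_1,z_3$ (for instance $h_6 = z_0 z_3$). Eliminating $w$ and $z_4$ leaves $z_3^2 = z_0 g_9 - z_1^{10}$ in $\bP(1,1,5)$, which is exactly $C$. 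I need to check that these are honest Cartier-in-a-neighbourhood, or at least divisorial, sections. This holds because the complete intersection avoids the singular points: the points with $z_0=z_1=z_3=0$ are forced off by the two equations, since there $w$ and $z_4$ would both have to vanish.

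\textbf{Step 3: deform the divisors.} By \Cref{thm:conesanddivisors}(3), together with $H^1$-vanishing on the klt Fano fibers, every section of $\calO_{W_{10}}(6)$ and $\calO_{W_{10}}(9)$ extends to sections of $\calO(2)$ and $\calO(3)$ on the nearby $\bP^3$. The two extended sections form a relative complete intersection. The central fiber is a one-dimensional complete intersection in the Cohen--Macaulay total space, so the two local equations form a regular sequence there and hence in a neighbourhood, and the resulting family of curves is flat. The general fiber is therefore a $(2,3)$ complete intersection in $\bP^3$.

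I expect the main obstacle to be Step 1. The difficulty is rigorously identifying the limit line bundle, namely that the cone over $\calO_Q(1)$ specializes to $\calO(3)$ and not merely to some $\bQ$-divisor of the same volume, and making the composition of degenerations legitimate when the intermediate step is not isotrivial. For the first point I would use that $W_{10}$ has class group $\bZ$, so the limit is determined by its degree; computing that degree from $-K = \calO(4)$ on $\bP^3$ versus $\calO(1+1+5+6+9-10) = \calO(12)$ on $W_{10}$ gives the ratio $3$.
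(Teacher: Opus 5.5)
Your proposal follows the paper's proof. The paper also takes $W_{10}=C_p(Z_{10},\calO(6))$ and realizes $C$ as $\{x_3=g_9\}\subset Z_{10}$, that is, as the intersection in $W_{10}$ of the degree-$6$ section at infinity with the cone over that curve. It then deforms both divisors via \Cref{thm:conesanddivisors}, first moving $\{x_3=g_9\}$ to a $(3,3)$-curve on $\bP^1\times\bP^1$ and then coning. Your choice $w=h_6$ differs only by the automorphism $w\mapsto w-h_6$. Your regular-sequence argument for flatness of the family of curves fills in a step the paper leaves implicit, as does your check that the complete intersection misses $z_0=z_1=z_3=0$; that locus is a curve of singularities through the vertex rather than a set of points, but your check covers all of it. The isotriviality worry is moot: in both degenerations of Step 1, all nearby fibres are isomorphic (to $\bP^3$, respectively to the cone over $\bP^1\times\bP^1$).

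The one thing to correct is the justification in your last paragraph, because $\Cl(W_{10})$ is not $\bZ$. Already on $Z_{10}$, $\{z_0=0\}$ splits into $D_\pm=\{z_0=0,\ z_2=\pm i z_1^5\}$.
\begin{itemize}
\item These two curves meet only at the $\frac19(1,5)$ point, where $D_+\cdot D_-=\frac59$.
\item Each has degree $\frac19$ against $\calO(1)$. So if the class group had rank one, then $D_\pm\equiv\frac12\calO(1)$ and $D_+\cdot D_-=\frac14\cdot\frac29=\frac1{18}$, a contradiction.
\item Hence $Z_{10}$ has class group of rank $2$. This is as it must be: the Wahl point has a rational homology ball Milnor fibre and $Z_{10}$ smooths to $\bP^1\times\bP^1$.
\item The cone $W_{10}$ likewise has class group of rank $2$, since its $\{z_0=0\}$ splits into the cones over $D_\pm$.
\end{itemize}
The paper's sentence ``$Z_{10}$ has Picard rank one'' in \Cref{lem:deformationtoquadric} has the same flaw, and that is the source of your input $\calO(1,1)\rightsquigarrow\calO_{Z_{10}}(3)$.

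Rank one is not needed, because you only compare against multiples of $-K$. Let $\mathcal{H}$ be the closure of a hyperplane in the total space $\mathcal{W}$. Then $4\mathcal{H}+K_{\mathcal{W}/T}$ is principal on the generic fibre, so after shrinking $T$ it is linearly equivalent to a multiple of the irreducible, principal central fibre. Hence the limit $M$ satisfies $4M\sim -K_{W_{10}}=\calO(12)$. This gives $M\sim\calO(3)$ as soon as $\Cl(W_{10})\cong\Cl(Z_{10})$ is torsion-free. That holds because the smooth locus of $Z_{10}$ is simply connected: it is obtained from the simply connected smooth locus of $\bP(1,2,9)$ by gluing in the Milnor fibre of the $A_1$ point. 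The same argument, using $2\cdot\calO(1,1)=-K_{\bP^1\times\bP^1}$, justifies the fact you import on $Z_{10}$.
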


\begin{proof}
    Every hyperelliptic curve $C$ of genus $4$ admits an embedding into $\bP(1,1,5)_{[x_0:x_1:x_2]}$ given by a degree $10$ equation of the form $x_2^2 = f_{10}(x_0,x_1)$ (see, e.g. \cite[Theorem 3.3]{ADLW}).  Up to change of coordinates, we may assume the polynomial $f_{10}(x,y)$ is given by $f_{10}(x_0,x_1) = x_0g_9(x_0,x_1) - x_1^{10}$, so the curve $C$ is given by $x_0 g_9 = x_1^{10} + x_2^2$.  

    Let $Z_{10} = (x_0x_3 = x_1^{10} + x_2^2) \subset \bP(1,1,5,9)_{[x_i]}$ be the partial smoothing of $\bP(1,2,9)$ as in Lemma \ref{lem:deformationtoquadric}, which is smoothable to $\bP^1 \times \bP^1$.  By construction, $C$ is the curve on $Z_{10}$ given by $x_3 = g_9$.  By Theorem \ref{thm:conesanddivisors} and the same argument in Proposition \ref{prop:smoothingtocubic}, if $W_{10} = C_p(Z_{10}, \calO(6))$ which is given by $W_{10} = (y_0 y_4 = y_1^{10} + y_2^2) \subset \bP(1,1,5,6,9)$, we see that $W_{10}$ is smoothable to $\bP^3$, and $Z_{10} \subset W_{10}$ as a divisor of degree 6.
    
    Furthermore, the deformation of the degree 10 equation to $\bP^1 \times \bP^1$, the line bundle $\calO_{Z_{10}}(3)$ deforms to $\calO_{\bP^1 \times \bP^1}(1,1)$.  There are no obstructions to deforming the Cartier divisor $x_3 = g_{9}(x_0,x_1)$ on $Z_{10}$ to $\bP^1 \times \bP^1$ and therefore this Cartier divisor may be deformed to a generic $(3,3)$ curve on $\bP^1 \times \bP^1$.  By Theorem \ref{thm:conesanddivisors}, the cones over these divisors also deform, and therefore, we obtain $C$ as a $(6,9)$ complete intersection in $W_{10}$ which deforms to a $(2,3)$ complete intersection in the smoothing of $W_{10}$ to $\bP^3$.
\end{proof}

Now, we prove that we may project away from a point $X_{10} \dashrightarrow W_{10}$.  We define the projection and collect several facts in the following lemma, the proof of which is straightforward computation.

\begin{lemma}
Let $X_{10}$ be a degree 10 hypersurface on $\bP(1,1,3,5,6)_{[z_i]}$ of the form
\[  z_1^{10} + z_3^2 = z_0(z_2z_4 + g_9(z_0,z_1,z_4))\] for some choice of $g_9$ that only depends on $z_0,z_1,z_4$.  

Consider the rational map $\bP(1,1,3,5,6)_{[z_i]} \dashrightarrow \bP(1,1,5,6,9)_{[y_i]}$ given by \[ [z_0:z_1:z_2:z_3:z_4] \mapsto [z_0:z_1:z_3:z_4:z_2z_4]. \]  

This is undefined only where $z_0 = z_1 = z_3 = z_4 = 0$, which is only the point $[0:0:1:0:0]$.  The image of $X_{10}$ is then given by 
\[ y_1^{10} + y_2^2 = y_0y_4 +y_0g_9(y_0,y_1,y_3)\]
so the image of $X_{10}$ is the threefold $W_{10}$ constructed in Lemma \ref{lem:hyperellipticgenus4}.

The locus on which this map has positive dimensional fibers is given by
the equations
$z_4 = 0$ and $z_1^{10} + z_3^2 = z_0g_9(z_0,z_1))$ in $\bP(1,1,3,5,6)_{[z_i]}$, which is a cone over the hyperelliptic curve $C_0$ defined by $z_1^{10} + z_3^2 = z_0g_9(z_0,z_1))$ in $\bP(1,1,5)_{[z_0:z_1:z_3]}$.  The image of these fibers is the locus $y_3 = y_4 = 0$ in $Y_{10}$, which is just the image of the hyperelliptic curve $C_0$. 

This rational map may be resolved by the $(1,1,5,9)$ weighted blow-up in coordinates 
\begin{center}
$(z_0,z_1,z_3,z_4) = (0,0,0,0)$ on $X_{10}$,
\end{center}
followed by the contraction of (the strict transform of) the rulings of the cone over the $C_0$.
\end{lemma}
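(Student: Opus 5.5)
The lemma is a direct computation, and I would organize it in three steps: indeterminacy and the image, the positive-dimensional fibres, and the resolution.

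\emph{Indeterminacy and image.} First check that the map is well defined. Each coordinate of the target has the right weight: $z_0,z_1,z_3,z_4$ have weights $1,1,5,6$, and $z_2z_4$ has weight $3+6=9$. For $\lambda\in\bG_m$ the image of $\lambda\cdot z$ is $\lambda\cdot$ (image of $z$), so the map descends to the weighted projective spaces. It is undefined exactly where all five target coordinates vanish, i.e. where $z_0=z_1=z_3=z_4=0$ (then $z_2z_4=0$ automatically). This is the single point $[0:0:1:0:0]$, and it lies on $X_{10}$ since the defining equation holds there. For the image, substitute $y_0=z_0$, $y_1=z_1$, $y_2=z_3$, $y_3=z_4$, $y_4=z_2z_4$ into the equation of $X_{10}$. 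This gives $y_1^{10}+y_2^2=y_0y_4+y_0g_9(y_0,y_1,y_3)$. The image is therefore contained in this hypersurface, which is irreducible, three-dimensional, and of degree $10$ in $\bP(1,1,5,6,9)$. Once we know the map is birational (next step), the image equals this hypersurface. After the coordinate change $y_4\mapsto y_4+g_9$, which is allowed because $g_9$ has degree $9$ and does not involve $y_4$, it is exactly $W_{10}=(y_0y_4=y_1^{10}+y_2^2)$ from Lemma \ref{lem:hyperellipticgenus4}.

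\emph{Fibres.} Where $z_4\neq 0$, the coordinate $z_2$ is recovered as $y_4/y_3$, so the map is injective there and hence birational onto its image. Where $z_4=0$, the image point is $[z_0:z_1:z_3:0:0]$ and forgets $z_2$. Restricting the equation of $X_{10}$ to $z_4=0$ gives $z_1^{10}+z_3^2=z_0g_9(z_0,z_1,0)$, in which $z_2$ is free. This locus is therefore the cone, with vertex $[0:0:1:0:0]$, over the curve $C_0\subset\bP(1,1,5)_{[z_0:z_1:z_3]}$, and each ruling through a point of $C_0$ is contracted to a point of $\{y_3=y_4=0\}$. So the exceptional locus is exactly this cone, and its image is the copy of $C_0$ inside the image threefold (written $Y_{10}$ in the statement; it is the threefold $W_{10}$ of Lemma \ref{lem:hyperellipticgenus4}).

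\emph{Resolution.} Near $p=[0:0:1:0:0]$ we work in the chart $z_2=1$, which is $\bA^4/\mu_3$ with weights $(1,1,2,0)$ on $(z_0,z_1,z_3,z_4)$. There the equation $z_0(z_4+g_9)=z_1^{10}+z_3^2$ lets us eliminate nothing cleanly, so we keep $(z_0,z_1,z_3,z_4)$ as coordinates and perform the weighted blow-up with weights $(1,1,5,9)$. In the $z_4$-chart, write $z_0=s^{1}u_0$, $z_1=s u_1$, $z_3=s^5u_3$, $z_4=s^9$. The quantities $y_0,\dots,y_4$ become $s^{w}$ times regular functions of total weights $1,1,5,6,9$, up to the twist coming from $\mu_3$. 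This shows that the composite extends across the exceptional divisor and maps it birationally onto $W_{10}$ near the image of $p$; in the other charts the same weight bookkeeping gives a morphism. On the blow-up, the strict transforms of the rulings of the cone still map to points, so the resulting morphism factors through their contraction; that is the resolution claimed.

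The main obstacle is this last step. One has to check the weights precisely, taking into account the $\mu_3$ quotient and the term $g_9$, so that the weighted blow-up exactly resolves the indeterminacy rather than leaving base points. To handle it, I would follow the classical projection of a nodal cubic threefold from its node (blow up the node, then contract the lines through it) and verify each chart explicitly.
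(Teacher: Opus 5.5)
Your route is the same as the paper's, which only says the lemma is a straightforward computation, and your first two steps are fine. You could even note that $\{z_4\neq 0\}\cong\{y_3\neq 0\}$ already on the ambient level: both charts are $\frac16(1,1,3,5)$ up to reordering, matching $z_2$ with $y_4$ because $9\equiv 3 \bmod 6$. Your weight count in the $z_4$-chart also correctly shows that the map extends over the $(1,1,5,9)$ blow-up, with $y_3=s^6\cdot s^3$ vanishing on $E$.

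The gap is in the last claim, that the resulting morphism $\tilde X_{10}\to W_{10}$ \emph{is} the contraction of the rulings. Knowing that the strict transforms of the rulings go to points gives only one direction. You must also show that nothing else is contracted, in particular not the exceptional divisor. Your sentence that $E$ maps ``birationally onto $W_{10}$'' is not correct as written, since $E\cap\tilde X_{10}$ is a surface. Likewise ``factors through their contraction'' presupposes what is to be proved.

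The missing computation is short.
\begin{itemize}
\item Since $z_0,z_1,z_4$ have degrees $1,1,6$, you can write $g_9=k_9(z_0,z_1)+z_4h_3(z_0,z_1)$ with $k_9=g_9(z_0,z_1,0)$.
\item For the weights $(1,1,5,9)$ the term $z_0z_4h_3$ has weight $13$. So the initial form of the local equation at $p$ is $z_0(z_4+k_9)-z_1^{10}-z_3^2$, of weight $10$.
\item Hence $E\cap\tilde X_{10}$ is this surface in $E=\bP(1,1,5,9)$. The residual $\mu_3$ acts trivially on $E$ because $(1,1,5,9)\equiv(1,1,2,0)\bmod 3$.
\item On this surface the extended map is $[u_0:u_1:u_3:u_4]\mapsto[u_0:u_1:u_3:0:u_4]$. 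This is an isomorphism onto $W_{10}\cap\{y_3=0\}$, which is $Z_{10}$ after $y_4\mapsto y_4+k_9$.
\item Together with the isomorphism $X_{10}\cap\{z_4\neq0\}\cong W_{10}\cap\{y_3\neq0\}$, the only positive-dimensional fibres are the strict transforms of the rulings. The ruling over $[a:b:c]\in C_0$ meets $E$ only at $[a:b:c:0]$ and is sent to $[a:b:c:0:0]$.
\end{itemize}
As $W_{10}$ is normal (it is the orbifold cone $C_p(Z_{10},\calO(6))$), Zariski's main theorem then identifies $\tilde X_{10}\to W_{10}$ with the contraction of these curves onto $C_0=\{y_3=y_4=0\}$.
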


We can perform this construction in a family starting from the degeneration of $\bP^3$ to $W_{10}$.  After blowing up a family of canonical genus 4 curves degenerating to a smooth hyperelliptic curve on $W_{10}$, there is a contraction of a family of surfaces realizing the partial smoothing of a nodal hypersurface $X_{10}^n$ to a nodal cubic threefold. 

\section{Applications}
\label{sec:FurtherApplications}
In this section, we explain some applications of the results from this paper, as well as some further possible directions and open questions.

\subsection{Terminal and canonical degenerations of $\bP^n$}

While the results in this paper are mostly focused on weighted projective spaces and klt degenerations of $\bP^n$, in this section we collect some results on terminal and canonical degenerations arising as both cones and weighted projective spaces. 

First, we have a complement to the classification of terminal degenerations of $\bP^3$ in higher dimensions.  

\begin{theorem}\label{thm:terminalgorensteindegens}
    For any $n \ge 4$, there exist terminal degenerations of $\bP^n$ with isolated singularities.  Furthermore, suppose $n+1$ is composite and $n > 4$.  Then, there exist terminal Gorenstein degenerations of $\bP^n$ with isolated singularities. 
\end{theorem}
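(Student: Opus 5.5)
The degenerations will be projective orbifold cones over smooth hypersurfaces, built with \Cref{thm:degree-d-hypersurfaces}. Let $X_d \subset \bP^{n}$ be a smooth hypersurface of degree $d$ with $2 \le d \le n$, so $X_d$ is a smooth Fano variety of dimension $n-1$ and $K_{X_d} = \calO_{X_d}(d-n-1)$. By \Cref{thm:degree-d-hypersurfaces}, $\bP^n$ admits a $\bQ$-Gorenstein degeneration to $C_p(X_d, \calO(d))$. The cone has an isolated singularity at the vertex, since $X_d$ is smooth and the cone is smooth away from the vertex. Only two things remain to check: the discrepancy condition at the vertex, and the Gorenstein condition.

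For the discrepancies, write $K_{X_d} \equiv rL$ with $L = \calO(d)$, so that $r = -\frac{n+1-d}{d}$. By \Cref{prop:singsofcones}(1), and since $X_d$ is smooth and hence terminal, the cone is terminal if and only if $r<-1$. This is equivalent to $n+1-d > d$, i.e.\ $d < \frac{n+1}{2}$. For $n \ge 4$ we have $\frac{n+1}{2} \ge \frac{5}{2} > 2$, so $d = 2$ is allowed. Concretely, the cone over a smooth quadric $Q \subset \bP^n$ with respect to $\calO_Q(2)$ is a terminal degeneration of $\bP^n$ with a single isolated singular point, which proves the first assertion. (For $n=3$ the bound fails, which is consistent with the classification of terminal degenerations of $\bP^3$ in \cite{DeVleming}.)

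For the Gorenstein statement, \Cref{prop:singsofcones}(4) requires $r \in \bZ$; $K_{X_d}$ is automatically Cartier since $X_d$ is smooth. The condition $r = -\frac{n+1-d}{d} = 1 - \frac{n+1}{d} \in \bZ$ says exactly that $d \mid n+1$. Combining this with terminality, we need a divisor $d$ of $n+1$ with $2 \le d < \frac{n+1}{2}$. If $n+1$ is composite, take $d$ to be its smallest prime factor $p$. Then $n+1 = pq$ with $q \ge p \ge 2$, and we need $p<\frac{pq}{2}$, i.e.\ $q>2$.

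This inequality fails only when $p = q = 2$, that is $n+1 = 4$, $n=3$, which is excluded by the hypothesis $n>4$. (When $n=4$, $n+1=5$ is prime anyway.) So $X_p \subset \bP^n$ smooth of degree $p$ gives a Gorenstein terminal cone with isolated singularity.

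The only step I expect to need care is the sufficiency direction of \Cref{prop:singsofcones}(4), that integrality of $r$ makes the cone Gorenstein. Here the concern is that the cone must be formed with respect to $\calO(d)$ itself and not a root of it, so that $L$ is a genuine line bundle and $K_{X_d} = rL$ holds linearly, not just numerically. Since $\Pic X_d = \bZ\calO(1)$ for $n-1 \ge 3$ by Lefschetz, numerical and linear equivalence agree, and the condition applies cleanly.
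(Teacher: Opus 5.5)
Your proposal is correct and is essentially the paper's argument: it uses cones $C_p(X_d,\calO(d))$ over smooth hypersurfaces via Theorem \ref{thm:degree-d-hypersurfaces}, with $d=2$ for the terminal case and, in the Gorenstein case, a divisor $d$ of $n+1$ whose cofactor exceeds $2$. Your smallest-prime-factor choice is a special case of the paper's factorization $n+1=ab$ with $a>2$, $d=b$. Your worry about linear versus numerical equivalence is harmless, since adjunction $K_{X_d}=\calO_{X_d}(d-n-1)$ already gives the required linear equivalence without invoking Lefschetz.
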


The result will follow easily from the following Corollary of Proposition \ref{prop:singsofcones}. 

\begin{corollary}\label{cor:singsofconesoverhypersurfaces}
    Let $X_d \subset \bP^n$ be a smooth degree $d$ Fano hypersurface, $d \le n$.  Then, $C_p(X_d, \calO(d))$ is:
        \begin{enumerate}
            \item terminal if and only if $n+1 >2d$;
            \item canonical if and only if $n+1  \ge 2d$;
            \item Gorenstein if and only if $n +1 = md$ where $m \in \bZ$.
        \end{enumerate}
\end{corollary}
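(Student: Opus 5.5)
The plan is to apply Proposition \ref{prop:singsofcones} with $X = X_d$ and $L = \calO_{X_d}(d)$. To do so, we first compute the constant $r$ with $K_{X_d} \equiv rL$. Since $X_d \subset \bP^n$ is a smooth hypersurface of degree $d$, adjunction gives
\[ K_{X_d} = \calO_{X_d}(d - n - 1). \]
Writing this in terms of $L = \calO_{X_d}(d)$ gives
\[ r = \frac{d-n-1}{d} = 1 - \frac{n+1}{d}. \]
The hypothesis $d \le n$ guarantees $r < 0$, so $X_d$ is Fano. Moreover $X_d$ is smooth, hence terminal, canonical and klt. This means the conditions on $X$ in Proposition \ref{prop:singsofcones} are automatically satisfied, and only the numerical conditions on $r$ remain to be checked.

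For (1), $C_p(X_d,\calO(d))$ is terminal if and only if $r < -1$. This reads $1 - \frac{n+1}{d} < -1$, that is, $n+1 > 2d$. For (2), canonicity is equivalent to $r \le -1$, which is $n+1 \ge 2d$.

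For (3), I would first note that $X_d$ is smooth, so $K_{X_d}$ is Cartier. If $n+1 = md$ with $m \in \bZ$, then $r = 1 - m \in \bZ$, and Proposition \ref{prop:singsofcones}(4) gives that the cone is Gorenstein.

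The converse in (3) is the main obstacle, since Proposition \ref{prop:singsofcones}(4) only states one direction. My approach is to use the description of the cone vertex as the affine cone $C_a(X_d, \calO(d))$. Its canonical divisor is the cone over $K_{X_d}$, and the class group of the affine cone is $\Cl(X_d)/\bZ L$. By Lefschetz, for $n \ge 4$ we have $\Pic(X_d) = \bZ\,\calO(1)$, so this class group is $\bZ/d\bZ$ and $K$ corresponds to the class of $d-n-1$ modulo $d$. Thus $K$ is Cartier at the vertex exactly when $d \mid n+1$.

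This argument needs care in low dimension. For $n = 3$, $\Pic(X_d)$ can be larger, but $\calO(1)$ is still primitive, so divisibility can still be tested on the class of $K_{X_d}$, which is a multiple of $\calO(1)$. Finally, away from the vertex the cone is a $\bG_m$-quotient, and in fact a $\mu_d$-orbifold along the divisor at infinity. There $K$ is $\bQ$-Cartier, with index controlled by the same divisibility condition, so the condition $d \mid n+1$ governs the Gorenstein property globally as well.
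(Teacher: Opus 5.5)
Your proofs of (1), (2) and the ``if'' half of (3) are exactly the paper's: a direct application of Proposition~\ref{prop:singsofcones} with $L=\calO(d)$ and $r=-\frac{n+1-d}{d}$. You are right that part (4) of that proposition only gives one direction, and the paper's one-line proof does not address the converse of (3). Your argument for the converse fills that gap correctly. It uses $\Cl(C_a(X_d,\calO(d)))\cong\Cl(X_d)/\bZ\,\calO(d)$ and $K_{C_a}=C_a(K_{X_d})$. It also implicitly uses $\Pic(C_a(X_d,\calO(d)))=0$, which is what lets you test Cartierness at the vertex on the global class group. This is the standard argument from \cite[Section 3]{Kollar13}.

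There are two corrections, neither of which the proof depends on.

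\begin{enumerate}
\item The Lefschetz theorem and the primitivity of $\calO(1)$ are unnecessary. Write $H$ for the hyperplane class, so $K_{X_d}=(d-n-1)H$ and $L=dH$. Since $H$ is ample, it has infinite order in $\Pic(X_d)$. Hence $K_{X_d}\in\bZ L$ if and only if $d\mid n+1$, in every dimension. Your case split also skipped $n=2$: there $\calO(1)$ restricts to $\calO_{\bP^1}(2)$ on a smooth conic, so it is not primitive, yet the divisibility test still works.

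\item Your last paragraph is wrong. Since $L=\calO_{X_d}(d)$ is an honest very ample line bundle, $C_p(X_d,\calO(d))$ is the hypersurface $(f=0)\subset\bP(1,\dots,1,d)$. This hypersurface is smooth away from the vertex $[0:\dots:0:1]$. The $\mu_d$-quotient structure sits at the vertex, not along the divisor at infinity, and the canonical divisor is Cartier at infinity regardless of $d$. So the Gorenstein question is purely local at the vertex, which is all your converse argument actually uses. That paragraph should simply be replaced by this observation.
\end{enumerate}
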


\begin{proof}
    This is a direct application of Proposition \ref{prop:singsofcones}, where $L = \calO(d)$ and $r = -\frac{n+1-d}{d}$.
\end{proof}

Now, we prove Theorem \ref{thm:terminalgorensteindegens}.

\begin{proof}
    By Theorem \ref{thm:degree-d-hypersurfaces}, $C_p(X_d, \calO(d))$ is a $\bQ$-Gorenstein degeneration of $\bP^n$ with isolated singularities.  By the previous Corollary, for any $d$ such that $1<d < \frac{n+1}{2}$ (for example, we may take $d = 2$ for any $n  \ge 4$), $C_p(X_d, \calO(d))$ has an isolated terminal singularity. 

    If additionally $n+1$ is composite, then by assumption there exist positive integers $a,b > 1$ such that $n+ 1 = ab$, and if $n > 4$, at least one of $a,b > 2$.  Without loss of generality, assume $a > 2$ and let $d = b$.  Then, $n+1 > 2d$ and $n+1 = ad$ where $a \in \mathbb{Z}$, so (1) and (3) of \Cref{cor:singsofconesoverhypersurfaces} imply that $C_p(X_d, \calO(d))$ has an isolated terminal Gorenstein singularity.
\end{proof}

We also mention canonical singularities. 

\begin{theorem}
    For any $n \ge 3$, there exist canonical degenerations of $\bP^n$ with isolated singularities.  For any odd number $n \ge 3$, there exist canonical, non-terminal Gorenstein degenerations of $\bP^n$ with isolated singularities.  
\end{theorem}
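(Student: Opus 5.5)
This is the canonical analogue of Theorem \ref{thm:terminalgorensteindegens}, and the plan is the same: use Theorem \ref{thm:degree-d-hypersurfaces} together with Corollary \ref{cor:singsofconesoverhypersurfaces}. For a smooth Fano hypersurface $X_d\subset\bP^n$ of degree $d\le n$, Theorem \ref{thm:degree-d-hypersurfaces} gives a $\bQ$-Gorenstein degeneration of $\bP^n$ to the cone $C_p(X_d,\calO(d))$. Since $X_d$ is smooth, the only singularity of this cone is its vertex, so the singularities are isolated.

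For the first statement, take $d=2$ and let $X_2\subset\bP^n$ be a smooth quadric with $n\ge 3$. Then $n+1\ge 4=2d$, so Corollary \ref{cor:singsofconesoverhypersurfaces}(2) shows that $C_p(X_2,\calO(2))$ is canonical. (For $n\ge 4$ one could instead quote Theorem \ref{thm:terminalgorensteindegens}, since terminal implies canonical. The case $n=3$ is where $d=2$ is needed; there the cone is the quadric cone, i.e.\ the anticanonical cone over $\bP^1\times\bP^1$ from \cite{ADL23}.)

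For the second statement, let $n\ge 3$ be odd, write $n+1=2k$ with $k\ge 2$, and choose $d=k=\frac{n+1}{2}$. This is a legitimate choice because $2\le d\le n$ and a smooth hypersurface of degree $d$ in $\bP^n$ exists (for example the Fermat hypersurface), and it is Fano since $d<n+1$. We then have $n+1=2d$ exactly. By Corollary \ref{cor:singsofconesoverhypersurfaces}(2) the cone is canonical; by (1) it is not terminal, because $n+1>2d$ fails; and by (3) it is Gorenstein, with $m=2$. So $C_p(X_d,\calO(d))$ is a canonical, non-terminal Gorenstein degeneration of $\bP^n$ with an isolated singularity. For $n=3$ this again recovers the quadric cone.

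The main point needing care is that the vertex really is a singular point and not merely a candidate for one. Otherwise "non-terminal" would be vacuous and the claim about isolated singularities would be degenerate. The "if and only if" in Proposition \ref{prop:singsofcones}(1) already excludes terminality, so the vertex cannot be a smooth point. Alternatively, one can observe directly that the cone over $X_d$ with $d\ge 2$ is singular at the vertex, since the embedding by $\calO(d)$ is not the embedding by $\calO(1)$ of $\bP^{n-1}$. The remaining steps are direct substitutions into the corollary.
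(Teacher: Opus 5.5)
Your proposal is correct and follows the paper's proof: degenerate $\bP^n$ to $C_p(X_d,\calO(d))$ via Theorem~\ref{thm:degree-d-hypersurfaces} and read off the singularity type from Corollary~\ref{cor:singsofconesoverhypersurfaces}, taking $d=2$ (an instance of the paper's range $1<d\le\frac{n+1}{2}$) for the first claim and $d=\frac{n+1}{2}$ for the second. Your extra check that the vertex is genuinely singular is a point the paper leaves implicit; note also that for $n=3$ the cone is $\{q=0\}\subset\bP(1,1,1,1,2)$ rather than the ordinary quadric cone in $\bP^4$, which your ``i.e.'' correctly identifies.
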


\begin{proof}
    The first statement follows from applying \Cref{thm:degree-d-hypersurfaces}, and then (2) of Corollary \ref{cor:singsofconesoverhypersurfaces} for any $1 < d \le \frac{n+1}{2}$; the resulting cone $C_p(X_d, \calO(d))$ has an isolated canonical singularity.  The second follows from the same construction letting $d = \frac{n+1}{2}$.  
\end{proof}

In addition to cones, the weighted projective spaces constructed in this paper also give examples of weighted degenerations of $\bP^n$ with non-isolated canonical singularities (unlike the previous examples of isolated singularities, these have singularities in codimension 2).  

\begin{theorem}
    Let $X$ be one of the weighted projective spaces 
        \begin{enumerate}
            \item $\bP(a^2,b^2,c^2)$, where $a^2 + b^2 + c^2 = 3abc$
            \item $\bP(a^2,b^2, 2c^2, 4abc)$, where $a^2 + b^2 + 2c^2 = 4abc$
            \item $\bP(2,3,3,4,18)$
            \item $\bP(2,12,21, 49, 126)$
        \end{enumerate}

    Let $X_n$ be the associated weighted projective space that admits a smoothing to $\bP^n$, which are (respectively) 
        \begin{enumerate}
            \item $\bP(a^2,b^2,c^2,(abc)_{n-2})$, where $a^2 + b^2 + c^2 = 3abc$
            \item $\bP(a^2,b^2, 2c^2, 4abc, (2abc)_{n-3})$, where $a^2 + b^2 + 2c^2 = 4abc$
            \item $\bP(2,3,3,4,6_{n-4},18)$
            \item $\bP(2,12,21,42_{n-4}, 49, 126)$
        \end{enumerate}
        
    For each choice of $X$, there exist infinitely many $n > 0$ such that $X_n$ is a degeneration of $\bP^n$ with canonical Gorenstein singularities. 
\end{theorem}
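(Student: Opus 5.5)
Since each $X_n$ in the statement is already known to be a $\bQ$-Gorenstein degeneration of $\bP^n$ (by \Cref{thm:degensofP2toPn}, \Cref{thm:degensofP1xP1toPn}, and the theorem of the previous section), it only remains to show that $X_n$ is canonical and Gorenstein for infinitely many $n$. I would reduce both properties to one divisibility condition on the weights.

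\emph{Gorenstein.} By \Cref{thm:wps-dualizingprops}(1), $\omega_{X_n}\cong\calO(-\sum a_i)$. By \Cref{thm:divisorsonwps}(3), $\calO(d)$ is Cartier exactly when $\mathrm{lcm}(a_0,\dots,a_n)\mid d$. So for a well-formed $X_n$, being Gorenstein is equivalent to $\mathrm{lcm}(a_i)\mid\sum a_i$.

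\emph{Canonical.} All singularities of $X_n$ are cyclic quotient singularities. A Gorenstein quotient singularity is a quotient by a subgroup of $\mathrm{SL}$, since the generator of $\omega$ is then invariant. By the Reid--Tai criterion (equivalently Watanabe's theorem), such singularities are canonical. So it suffices to find infinitely many $n$ with $\mathrm{lcm}(a_i)\mid\sum a_i$.

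Next I compute the weight sums, using that the repeated weight is the average of the others:
\begin{itemize}
\item Case (1): $\sum a_i=(n+1)abc$.
\item Case (2): $\sum a_i=2(n+1)abc$.
\item Case (3): $\sum a_i=30+6(n-4)=6(n+1)$, and $\mathrm{lcm}(2,3,4,6,18)=36$.
\item Case (4): $\sum a_i=210+42(n-4)=42(n+1)$, and $\mathrm{lcm}(2,12,21,42,49,126)=1764=42\cdot 42$.
\end{itemize}

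For cases (3) and (4), the condition holds whenever $6\mid n+1$, respectively $42\mid n+1$. For cases (1) and (2), I avoid any coprimality analysis of the Markov-type triples. Every weight divides $4a^2b^2c^2$, so it suffices that $\sum a_i$ be divisible by $4a^2b^2c^2$. This holds as soon as $4abc\mid n+1$: in case (1) we get $(n+1)abc$ divisible by $4a^2b^2c^2$, and in case (2) likewise for $2(n+1)abc$. Each case therefore yields an infinite arithmetic progression of admissible $n$, each also satisfying the lower bound on $n$ required for $X_n$ to be defined.

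The main point needing care is well-formedness, since the Gorenstein criterion via $\Pic$ was stated for well-formed spaces. In cases (3) and (4) this is checked directly for $n\ge 4$. In cases (1) and (2) it is exactly the well-formedness hypothesis under which the smoothings in \Cref{thm:infiniteFamiliesDegenerationsPn} were stated, so it can be assumed. Given that, the argument is routine arithmetic plus the standard fact that Gorenstein quotient singularities are canonical.
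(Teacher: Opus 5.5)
Your proof is correct, but it is organized differently from the paper's.

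The paper argues locally. It lists the cyclic quotient singularities of $X_n$: in case (1), the three vertices $\frac{1}{a^2}(b^2,c^2,(abc)_{n-2})$, etc., and the stratum $\frac{1}{abc}(a^2,b^2,c^2)\times\bA^{n-3}$. It applies the Reid--Tai sum criterion to each, using the Markov relation to rewrite, e.g., $b^2+c^2\equiv 3abc \pmod{a^2}$. It then combines the resulting congruences $n+1\equiv 0$ modulo $a$, $b$, $c$ via the Chinese Remainder Theorem, which needs pairwise coprimality.

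You instead work globally. You observe that $K_{X_n}=\calO(-\sum a_i)$ is Cartier iff $\mathrm{lcm}(a_i)\mid \sum a_i$. This is equivalent to $a_i\mid\sum_j a_j$ for every $i$, which is exactly the Reid--Tai condition at every vertex, so the underlying divisibility is the same. The identity $\sum a_i=(n+1)\cdot(\text{repeated weight})$ then turns this into a one-line check in all four cases.

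What each route buys:
\begin{itemize}
\item Your route is shorter and uniform. It needs no enumeration of strata, and, thanks to the crude common multiple $4a^2b^2c^2$, no coprimality.
\item The paper's local route gives less restrictive progressions: $abc\mid n+1$ in case (1), and $n+1$ divisible by $a$, $b$, $c$ and $2$ in case (2). You need $4abc\mid n+1$. Both give infinitely many $n$.
\end{itemize}

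For the canonicity step, the cleanest justification is that quotient singularities are klt, and a Gorenstein klt singularity is canonical because its discrepancies are integers. Your ``subgroup of $\mathrm{SL}$'' phrasing also works, but it tacitly uses that the local actions are small. That follows from well-formedness.

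Finally, your case (3) condition $6\mid n+1$ is the correct one. The paper's sketch says $n\equiv 0 \pmod 2$. However, at the vertex of type $\frac14(2,3,3,6_{n-4},18)$ the weight sum is $6n+2$, which is divisible by $4$ only when $n$ is odd. For instance, $X_8$ has $\sum a_i=54$, not divisible by $\mathrm{lcm}=36$. So your global computation catches a slip in the paper's sketch.
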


\begin{proof}
    We include the proof for $X = \bP(a^2,b^2,c^2)$ as the other cases are similar.  We wish to show there exist infinitely many $n > 0$ such that $X_n = \bP(a^2,b^2,c^2,(abc)_{n-2})$ has canonical Gorenstein singularities.  By the Reid-Tai criterion (see, for example  \cite{R80} and \cite{T82}), a cyclic quotient singularity $\frac{1}{b}(a_1, \dots, a_n)$ has canonical Gorenstein singularities if $a_1 + \dots + a_n \equiv 0 \pmod b$. 

    The singularities of $X_n$ are of the form
    \begin{align*}
    &\frac{1}{a^2}(b^2, c^2, (abc)_{n-2}) \\
    &\frac{1}{b^2}(a^2, c^2, (abc)_{n-2}) \\
    &\frac{1}{c^2}(a^2, b^2, (abc)_{n-2}); \text{ and } \\
    &\frac{1}{abc}(a^2, b^2, c^2, 0_{n-3}) 
    \end{align*}
    
    The last singularity is canonical Gorenstein by the assumption that $a^2 + b^2 + c^2 = 3abc$ and by the Reid-Tai criterion. So it suffices to find $n > 0$ such that the first three are canonical Gorenstein. By the Reid-Tai criterion, $\frac{1}{a^2}(b^2, c^2, (abc)_{n-2})$ is canonical Gorenstein if $b^2 + c^2 + (n-2)(abc) \equiv 0 \pmod{a^2}$, and using that $b^2 + c^2 \equiv 3abc \pmod{a^2}$, this is equivalent to $(n+1)abc \equiv 0 \pmod{a^2}$, which holds if $n+1 \equiv 0 \pmod a$.  Similarly, $\frac{1}{b^2}(a^2, c^2, (abc)_{n-2})$ is canonical Gorenstein if $n+1 \equiv 0 \pmod b$ and $\frac{1}{c^2}(a^2, b^2, (abc)_{n-2})$ is canonical Gorenstein if $n+1 \equiv 0 \pmod c$.  Because $a,b,c$ are relatively prime, by the Chinese Remainder Theorem, there exist infinitely many integers $n$ such that $n+1 \equiv 0 \pmod a$ and $\equiv 0 \pmod b$ and $\equiv 0 \pmod c$ (e.g. $n = k(abc) - 1$ for any integer $k > 0$). Therefore there exist infinitely many $n$ such that $X_n$ is a degeneration of $\bP^n$ with canonical Gorenstein singularities. 

    The remaining cases can similarly be turned into a system of congruences on $n$ modulo relatively prime integers and thus yield infinitely many solutions: for the second case, we find $n+1 = 0$ modulo $a,b,c$ and $2$ are sufficient; for the third we find $n = 0 \pmod 2$ and $n = 2 \pmod 3$ are sufficient; and for the last we find $n =1 \pmod 2$, $n = 2 \pmod 3$, and $n = 6 \pmod 7$ are sufficient. 
\end{proof}

\subsection{Embedding curves in degenerations of $\mathbb{P}^{g-1}$} 
As a second application, we prove that low genus hyperelliptic curves embed naturally in degenerations of $\bP^{g-1}$.

Any non-hyperelliptic smooth genus $g$ curve embeds as a degree $2g-2$ curve in $\bP^{g-1}$ via the canonical linear system (see, e.g. \cite[Proposition IV.5.2]{Hartshorne}). When $g = 3$, these curves are realized as quartics in $\bP^2$ and when $g = 4$ these curves are realized as $(2,3)$ complete intersections in $\bP^3$. 

Given a non-canonical (hyperelliptic) curve $C$ of genus $g$, it is natural to ask if $C$ admits a natural embedding into a mildly singular degeneration of $\bP^{g-1}$.  We verify this for $g = 3, 4$. 

\begin{theorem}\label{thm:hyperellipticcurves}
    For $g = 3,4$, every smooth hyperelliptic genus $g$ curve embeds in a klt degeneration $X$ of $\bP^{g-1}$ as a (weighted) complete intersection.  The divisors defining the complete intersection deform in the smoothing of $X$ to $\bP^{g-1}$, and the resulting complete intersection on $\bP^{g-1}$ is a canonical genus $g$ curve. 
\end{theorem}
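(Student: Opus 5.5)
We treat the two genera separately. In each case the plan is the same: write down an explicit weighted model of the hyperelliptic curve, and then use \Cref{thm:conesanddivisors} to carry divisors along a known smoothing of the ambient space.

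For $g=4$ the statement is exactly \Cref{lem:hyperellipticgenus4}. A smooth hyperelliptic genus $4$ curve $C$ is given by $x_2^2=f_{10}(x_0,x_1)$ in $\bP(1,1,5)$. After a coordinate change we may write $f_{10}=x_0g_9-x_1^{10}$. Then $C$ is the degree-$9$ divisor $x_3=g_9$ on the partial smoothing $Z_{10}=(x_0x_3=x_1^{10}+x_2^2)\subset\bP(1,1,5,9)$ of $\bP(1,2,9)$. Next we pass to the cone $W_{10}=C_p(Z_{10},\calO(6))\subset\bP(1,1,5,6,9)$. This cone is a klt degeneration of $\bP^3$: it is klt by \Cref{prop:singsofcones}, and it degenerates from $\bP^3$ via \Cref{thm:degree-d-hypersurfaces} (applied to the quadric surface), \Cref{thm:conesanddivisors} and \Cref{lem:xtoytoz}. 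In $W_{10}$ the curve $C$ is cut out by the degree-$6$ hyperplane $y_3=0$ and the degree-$9$ divisor $y_4=g_9$. These deform to a quadric and a cubic respectively, by part (3) of \Cref{thm:conesanddivisors} together with the fact that $\calO(3)$ is the limit of $\calO(1)$. Their intersection on $\bP^3$ is a $(2,3)$ complete intersection curve, which is a canonical genus $4$ curve.

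For $g=3$ we argue analogously, now using the degeneration $\bP^2\rightsquigarrow\bP(1,1,4)$. This degeneration is the $k=0$ case of \Cref{ex:VeroneseEmbeddingTrick}, and it is also the Markov case $(1,1,2)$ of \Cref{thm:wpdegensofP2}. A smooth hyperelliptic genus $3$ curve is $y^2=f_8(x_0,x_1)$ in $\bP(1,1,4)$; since $f_8$ has degree $8$, this is a degree-$8$ divisor in the surface $X=\bP(1,1,4)$, which is klt. The limit of $\calO_{\bP^2}(1)$ is $\calO(2)$, because $-K_X=\calO(6)$ is the limit of $\calO(3)$. Hence a degree-$8$ divisor is $\bQ$-linearly the limit of a quartic, and it is Cartier, since $\mathrm{lcm}(1,1,4)=4$ divides $8$. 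It remains to show that every section of $\calO_X(8)$ deforms to a section of $\calO_{\bP^2}(4)$ in the smoothing family. For this we use that $H^1(X,\calO_X(8))=0$ by \Cref{thm:cohomologyofwps}, and likewise $H^1$ vanishes on $\bP^2$. By cohomology and base change, $\pi_*\calL^{[4]}$ is then a vector bundle on the base, where $\calL$ is the relative limit line bundle with $\calL|_X=\calO_X(2)$. Its fibre over $0$ is $H^0(X,\calO_X(8))$, so the equation of $C$ extends to a flat family of divisors whose general member is a plane quartic. Any plane quartic is a canonical genus $3$ curve.

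The main obstacle is justifying that the relative $\bQ$-line bundle $\calL$ exists on the family, with $\calL^{[4]}$ Cartier so that the divisor family is flat. For $\bP(1,1,4)$ I would avoid abstract arguments and use the explicit family $y_0y_2-y_1^2=ty_3$ in $\bP(1,1,1,2)\times\bA^1$ from \Cref{ex:VeroneseEmbeddingTrick}. On this family $\calO(4)$ restricts to $\calO_X(8)$ and to $\calO_{\bP^2}(4)$. Moreover, every section of $\calO_X(8)$ is a polynomial of degree $4$ in the $y_i$, so it lifts directly to the total space. For $g=4$ the analogous flatness is supplied by \Cref{lem:coneofdivisors}.
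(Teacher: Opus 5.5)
Your proposal is correct and follows the paper's proof: for $g=4$ the paper simply invokes Lemma~\ref{lem:hyperellipticgenus4}, and for $g=3$ it uses the smoothing $y_0y_2-y_1^2=ty_3$ of $\bP(1,1,4)$ from Example~\ref{ex:VeroneseEmbeddingTrick}, where your observation that every section of $\calO_X(8)$ is a quartic form in the $y_i$ (and hence lifts to the total space) is just a spelled-out version of the paper's one-line argument. If you want the nearby plane quartic to be smooth rather than merely a quartic, this follows from openness of smoothness in the resulting flat proper family of curves, whose central fibre is smooth.
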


\begin{proof}
    First, suppose $g = 3$.  Hyperelliptic curves of genus $3$ naturally embed as divisors of degree $8$ in $\bP(1,1,4)$ (see, e.g. \cite[Theorem 3.3]{ADLW}).  In the smoothing of $\bP(1,1,4)$ to $\bP^2$ from Example \ref{ex:VeroneseEmbeddingTrick}, the defining equation of the hyperelliptic curve deforms to an equation of degree 4 in $\bP^2$. Now, suppose $g = 4$.  The desired statement follows from Lemma \ref{lem:hyperellipticgenus4}. 
\end{proof}

For higher genus curves, one may also ask if they embed in a klt degeneration of $\bP^{g-1}$ in a natural way.  We leave this for future investigation.

\subsection{Special test configurations and K-stability}
K-stability is an algebro-geometric notion of stability for Fano varieties which has proven essential in the past decade to construct well behaved moduli spaces of Fano varieties (see, e.g. \cite{ChenyangKstabilitybook} for more details).  K-stability itself is measured by an invariant called the \textit{Futaki invariant} associated to the weight of a $\G_m$ action on an isotrivial degeneration of the given Fano variety, called a \textit{test configuration}.  

\begin{definition}
    Let $X$ be a klt Fano variety and let $L = \calO(-K_X)$.  For $r>0$ such that $L^{[r]}$ is a Cartier divisor, a \emph{normal test configuration} of index $r$ of $(X;L)$, denoted by $(\cX;\cL_r)/\bA^1$, consists of the following data:
\begin{itemize}
 \item a normal variety $\cX$ together with a flat projective morphism $\pi:\cX\to \bA^1$;
 \item a $\pi$-ample line bundle $\cL_r$ on $\cX$;
 \item a $\bG_m$-action on $(\cX;\cL_r)$ such that $\pi$ is $\bG_m$-equivariant with respect to the standard action of $\bG_m$ on $\bA^1$ via multiplication;
 \item $(\cX\setminus\cX_0;\cL_r|_{\cX\setminus\cX_0})$
 is $\bG_m$-equivariantly isomorphic to $(X;L^{[r]})\times(\bA^1\setminus\{0\})$.
\end{itemize}

A normal test configuration $(\cX;\cL_r)/\bA^1$ is called a \emph{special test configuration} if 
we have
$\cL_r\sim_{\bQ}-r(K_{\cX/\bA^1})$ and $(\cX,\cX_0)$ is plt. In this case, $\cX_0$ is necessarily a klt Fano variety.  As $\cL_r$ is uniquely determined in this case, we drop it from the notation and simply write $\cX/\bA^{1}$ or $\pi:\calX \to \bA^{1} $.  In this case, we say $\calX_0$.
\end{definition}

By \cite{LX14}, to determine the K-(semi/poly)stability of a Fano variety $X$, it is sufficient to compute the Futaki invariant of all special test configurations.  This motivates the following question: which degenerations of $\mathbb{P}^{n}$ in this article correspond to special test configurations?

\begin{corollary}
\label{cor:GmEquivariantDegenerations}
Let $\pi: \calX \to \bA^1$ be a special test configuration of a klt Fano variety $X$ and let $\calM$ be a relatively ample $\bQ$-line bundle on $\calX$ that is $\pi$-equivalent to a rational multiple of the anticanonical divisor $-K_{\calX/\bA^1}$.  Then, $\calC_{p}(\mathcal{X}/\bA^1, \mathcal{M}) \to \bA^1$ is a special test configuration of $C_p(X, \calM|_{X})$.  
\end{corollary}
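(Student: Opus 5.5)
We will check the defining conditions of a special test configuration one at a time for $\calC := \calC_p(\calX/\bA^1, \calM) \to \bA^1$.

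\textbf{Flatness and fibers.} A special test configuration is in particular a $\bQ$-Gorenstein family of klt Fano varieties over the smooth curve $\bA^1$: the central fiber $\calX_0$ is klt Fano, and $K_{\calX/\bA^1}$ is $\bQ$-Cartier since $-rK_{\calX/\bA^1}\sim_\bQ \cL_r$. Moreover $\calM$ is $\pi$-equivalent to a rational multiple of $-K_{\calX/\bA^1}$. Hence \Cref{Lemma:relativeProjectiveCone} applies and gives the following.
\begin{itemize}
\item[(i)] $\calC \to \bA^1$ is a flat, $\bQ$-Gorenstein family of klt Fano varieties.
\item[(ii)] Its fibers are $C_p(\calX_t, \calM_t)$.
\end{itemize}

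\textbf{$\bG_m$-action and triviality away from $0$.} The $\bG_m$-action on $(\calX,\cL_r)$ lifts canonically to $\calM$, after replacing $\calM$ by a multiple and using $\calM \sim_{\bQ,\pi} -cK_{\calX/\bA^1}$, since $K_{\calX/\bA^1}$ is canonically linearized. It therefore acts on each $\pi_*\calM^{[r]}$. We let $\bG_m$ act trivially on the extra variable $x_{n+1}$. This makes the graded $\calO_{\bA^1}$-algebra defining $\calC$ equivariant, so $\calC\to\bA^1$ is $\bG_m$-equivariant. The trivialization $\calX\setminus\calX_0 \cong X\times(\bA^1\setminus 0)$ is equivariant, and $\pi_*$ commutes with flat base change, so over $\bA^1\setminus 0$ the algebra becomes $\bigl(\sum_m\sum_r H^0(X,\calM_1^{[r]})x_{n+1}^{m-r}\bigr)\otimes \calO_{\bA^1\setminus0}$. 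Consequently $\calC\setminus\calC_0\cong C_p(X,\calM|_X)\times(\bA^1\setminus0)$ equivariantly.

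\textbf{Anticanonical polarization and normality.} On $C_p(Y,L)$ with $K_Y\sim_\bQ sL$, one has $-K_{C_p} \sim_\bQ (1-s)\calO(1)$, where $\calO(1)$ is the tautological $\bQ$-line bundle. In the family, $\calO_{\calC}(1)$ is relatively ample, and $-K_{\calC/\bA^1}\sim_{\bQ,\bA^1}(1-s)\calO_\calC(1)$. This holds because both sides are $\bQ$-Cartier (the family is $\bQ$-Gorenstein), they agree on the general fiber, and $\Cl$ of $\calC$ over $\bA^1$ is controlled by the irreducible central fiber. So we may take $\cL_r:=r(-K_{\calC/\bA^1})$ for divisible $r$, and it is relatively ample. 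For normality, the central fiber $\calC_0=C_p(\calX_0,\calM_0)$ is normal (indeed klt), and $\calC$ is flat over a smooth curve with normal fibers, so $\calC$ is normal.

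\textbf{Plt condition — the main obstacle.} We need $(\calC,\calC_0)$ to be plt. Since $\calC_0$ is Cartier and klt, and $K_{\calC}+\calC_0$ is $\bQ$-Cartier, inversion of adjunction (Kollár–Shokurov) gives that $(\calC,\calC_0)$ is plt near $\calC_0$. Away from $\calC_0$ the total space is $C_p(X,\calM|_X)\times\bG_m$, which is klt. Hence $(\calC,\calC_0)$ is plt everywhere.

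The delicate points are the canonical linearization of the action on $\calM$ and the identification of $-K_{\calC/\bA^1}$ with a multiple of $\calO_\calC(1)$ globally rather than just fiberwise. For the latter, we would check that the canonical divisor of the cone is computed by the relative version of \cite[Section 3]{Kollar13}: on the complement of the vertex and infinity section, $\calC$ is a $\bG_m$-orbibundle over $\calX$, so the formula holds there, and it extends since the complement has codimension at least $2$.
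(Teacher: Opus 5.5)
Your proposal is correct and follows the paper's route: it induces the $\bG_m$-action with trivial weight on $x_{n+1}$, uses the equivariant trivialization over $\bA^1\setminus 0$, and gets klt Fano fibers from \Cref{Lemma:relativeProjectiveCone}; it also makes explicit what the paper's ``hence this is a special test configuration'' leaves implicit, namely normality of $\calC$, the anticanonical polarization, and plt-ness of $(\calC,\calC_0)$ via inversion of adjunction. Two small slips do not affect the argument: first, linearizing a multiple $\calM^{[k]}$ does not linearize $\calM$ itself, and replacing $\calM$ by a multiple would change the cone, but $\calM$ is linearizable directly because $\bG_m$ is a torus acting on a normal variety, with the character chosen so the trivialization over $\bA^1\setminus 0$ is equivariant; second, in your last paragraph the infinity section is a divisor, so only the vertex section can be discarded as codimension $\ge 2$, though relative ampleness of $-K_{\calC/\bA^1}$ already follows from $\bQ$-Gorensteinness plus fiberwise ampleness, so that identification is not needed.
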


\begin{proof}
Suppose that we have a special test configuration $\pi: \calX \rightarrow \mathbb{A}^{1}$ of $X$ and $\calM$ a relatively ample $\bQ$-line bundle on $\calX$ such that $\calM \sim_{\bQ}-a(K_{\cX/\bA^1})$ for some $a \in \bQ$.  Consider the relative projective cone $\calC_{p}(\mathcal{X}/\bA^1, \mathcal{M}) = \Proj_{T}\:\sum_{m \geq 0}\left(\sum_{r=0}^{m} \pi_{\ast}(\mathcal{M}^{[r]}) \cdot x_{n+1}^{m-r}\right)$.  The $\bG_m$-action on $\calX$ induces a $\bG_m$-action on $\calC_{p}(\mathcal{X}/\bA^1, \mathcal{M})$ via the given action on $\calX$ and the trivial action on $x_{n+1}$.  Furthermore, via the  $\mathbb{G}_{m}$-equivariant isomorphism
\[ \calX \times_{\bA^1} (\bA^1 - \{ 0\}) \cong X \times (\bA^1 - \{ 0\}) ,\]
we obtain a $\mathbb{G}_{m}$-equivariant isomorphism
\[ \calC_p(\calX/\bA^1,\calM) \times_{\bA^1} (\bA^1 - \{ 0\}) \cong C_p(X,\calM|_X) \times (\bA^1 - \{ 0\}). \]
For all $t \in \bA^1$, the fiber $\calC_p(\calX/\bA^1,\calM)_t \cong C_p(\calX_t, \calM_t)$ is a klt Fano variety by Lemma \ref{Lemma:relativeProjectiveCone} and hence this is a special test configuration.
\end{proof}

We apply this corollary to find infinitely many special degenerations of $\bP^n$, $n \ge 2$. 

\begin{theorem}
    For $n \ge 1$, $k \ge 0$, the weighted projective spaces $\bP(1,F_{2n-1}^2,F_{2n+1}^2,(F_{2n-1}F_{2n+1})_k)$ are special degenerations of $\bP^{k+2}$.
\end{theorem}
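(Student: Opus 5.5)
The plan is to reduce to $k=0$ and then apply the cone construction repeatedly. For $k=0$ we need a special test configuration of $\bP^2$ with central fiber $\bP(1,F_{2n-1}^2,F_{2n+1}^2)$. For $k\ge 1$ we use \Cref{cor:GmEquivariantDegenerations} with $\calM$ the limit of $\calO(1)$. Each application turns a special degeneration into a special degeneration of the projective cone. By \Cref{Ex:ProjectiveConePn} and \Cref{Ex:ProjectiveConeWPS}, this produces a degeneration of $\bP^{m+1}$ to the weighted projective space with one extra weight equal to the average of the previous ones. Since $F_{2n-1}^2+F_{2n+1}^2+1=3F_{2n-1}F_{2n+1}$ (the triples $(1,F_{2n-1},F_{2n+1})$ are Markov triples), the average is always $F_{2n-1}F_{2n+1}$, exactly as in \Cref{thm:degensofP2toPn}. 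On $\bP(1,F_{2n-1}^2,F_{2n+1}^2,\dots)$, $\calO(F_{2n-1}F_{2n+1})$ is $\bQ$-linearly a positive rational multiple of $-K_{\calX/\bA^1}$ relatively, so the hypotheses of \Cref{cor:GmEquivariantDegenerations} are met. Induction on $k$ then gives the theorem.

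The main step is the base case $k=0$. Here I would use toric degenerations: the moment polygon of $\bP(1,F_{2n-1}^2,F_{2n+1}^2)$ is a triangle mutation-equivalent to that of $\bP^2$. Alternatively, and more concretely, I would build the special test configuration explicitly by induction on $n$. This would follow the embedding trick from the proof of \Cref{thm:wpdegensofP2}.

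Write $a=1$, $b=F_{2n-1}$, $c=F_{2n+1}$ and $c'=3b-c=F_{2n-3}$. Embed $\bP(1,b^2,c^2)$ into $\bP(1,b^2,c',c)$ via $x_0x_1=x_2^{c}$. Then $\bP(1,b^2,c'^2)$ (the previous member) is isotrivially related to the deformation $x_0x_1=x_2^c+tx_3^{c'}$, which is $\bG_m$-equivariant for a suitable weighting of $t$. Reading the family in the other direction, with the $\bG_m$ action scaling $t$, gives a $\bG_m$-equivariant degeneration of $\{x_0x_1=x_2^c+x_3^{c'}\}$ to $\bP(1,b^2,c^2)$. The general fiber of this family is a partial smoothing that is toric-isomorphic to $\bP(1,c'^2,b^2)$ (mutation). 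To close the induction I would instead realize the special degeneration directly: a two-step argument composes $\bP^2\rightsquigarrow \bP(1,F_{2n-3}^2,F_{2n-1}^2)$ with a degeneration to $\bP(1,F_{2n-1}^2,F_{2n+1}^2)$. I expect this to be the hard step, because compositions of test configurations are not test configurations.

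So the cleanest route is probably to cite that the $\bG_m$-equivariant degenerations of $\bP^2$ to these Fibonacci-Markov weighted projective planes are known to be special: they appear as K-semistable/special degenerations in the literature on $\bP^2$, with the Fibonacci branch being those through weight $1$. Alternatively, one can check directly that the Rees-type family from the filtration induced by a toric valuation $v$ on $\bP^2$ (monomial weights giving the weighted blowup) has integral, klt central fiber $\bP(1,F_{2n-1}^2,F_{2n+1}^2)$. The plt condition on $(\calX,\calX_0)$ then follows by inversion of adjunction from $\calX_0$ klt, which is automatic for a weighted projective space. The remaining verifications, namely normality of $\calX$ and $\calL\sim_\bQ -rK_{\calX/\bA^1}$, follow because $\calX_0$ is integral with Picard rank one and the family is $\bQ$-Gorenstein by \Cref{thm:wpdegensofP2}.
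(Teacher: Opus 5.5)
Your main line is the paper's proof. The paper takes $k=0$ directly from \cite[Theorem A.3]{ABB+} and then applies \Cref{cor:GmEquivariantDegenerations} repeatedly. Your check that the appended weight is always $F_{2n-1}F_{2n+1}$ (via $1+F_{2n-1}^2+F_{2n+1}^2=3F_{2n-1}F_{2n+1}$) makes explicit a step the paper leaves implicit.

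For the base case you should simply cite that result, because the self-contained alternatives you float would fail as written:

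\begin{enumerate}
\item A valuation invariant under a maximal torus of $\bP^2$ is monomial. It therefore induces a weight grading on $\bC[x_0,x_1,x_2]$ whose associated graded ring is again the polynomial ring. Such filtrations only give product test configurations with central fiber $\bP^2$. Already $\bP(1,1,4)$ comes from $\mathrm{ord}_C$ for a smooth conic $C$, which is not toric.
\item Once $c'=F_{2n-3}>1$, the general fiber of $x_0x_1=x_2^c+tx_3^{c'}$ is not $\bP(1,c'^2,b^2)$. It is the common non-toric partial smoothing of $\bP(1,b^2,c^2)$ and $\bP(1,b^2,c'^2)$, and it has a single singular point, of index $b^2$.
\end{enumerate}
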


\begin{proof}
    By \cite[Theorem A.3]{ABB+}, the weighted projective surfaces $\bP(1,F_{2n-1}^2,F_{2n+1}^2)$ are special degenerations of $\bP^{2}$.  The result now follows immediately from Corollary \ref{cor:GmEquivariantDegenerations}.
\end{proof}

\subsection{Moduli spaces of hypersurfaces}

In this section, we apply the results in this paper to construct examples of singular varieties parametrized by the boundary of moduli spaces of either canonically polarized hypersurfaces or K-polystable Fano hypersurfaces.  As degenerations of projective space give rise to degenerations of hypersurfaces, it is natural to use the degenerations of $\bP^n$ in this paper to construct degenerations of hypersurfaces.  For example, a hypersurface $D \subset \bP^n$ of degree $d = ab$ admits a degeneration to a degree $a$ cover of a degree $b$ hypersurface as follows \cite[Example 4.3]{Mori75} via the degeneration of $\bP^n$ to $C(X_b, \calO(b))$, the cone over a hypersurface of degree $b$ (see Theorem \ref{thm:degree-d-hypersurfaces}).  The limit of the family of hypersurfaces is a degree $a$ cover of a degree $b$ hypersurface by projection away from the vertex of the cone.  In \cite{DS24}, the authors ask if the existence of certain smooth (non-hypersurface) limits of prime degree $d$ hypersurfaces correspond to the existence of klt degenerations of $\bP^n$ and study the problem for curves in $\bP^2$.  

Indeed, in the plane curve case, every smooth limit of a family of quartic plane curves is a genus 3 curve, so is either planar (and embeds in $\bP^2$) or hyperelliptic, which by Theorem \ref{thm:hyperellipticcurves} and its proof embeds in $\bP(1,1,4)$.  We see directly that every smooth limit of quartic plane curves thus embeds in a klt degeneration of $\bP^2$.  The same phenomenon persists for degree 5.  In \cite[Main Corollary 1, Example 1.11]{DS24}, it is shown that all smooth limits of plane quintics are either planar or hyperelliptic, and furthermore every hyperelliptic genus 6 curve embeds in a partial smoothing of $\bP(1,4,25)$.  

The same phenomenon was studied 
in \cite{ADL23}
for moduli of quartic surfaces in $\bP^3$, 
which are quartic K3 surfaces.  There are two other types of (smooth, algebraic) K3 surfaces: those which are hyperelliptic and those admitting an elliptic fibration.  In fact, in \cite{ADL23}, it is shown that all smooth K3 surfaces of degree 4 admit an embedding into a klt degeneration of $\bP^3$, so again the existence of particular degenerations of $\bP^3$ captures the existence of smooth (non-hypersurface) limits of these quartic surfaces.

In this section, we construct limits of families of hypersurfaces with canonical singularities contained in the weighted projective spaces that admit smoothings to $\bP^n$.  Let $d, n$ be positive integers such that $d \ne n+1$.

For $d > n+1$, consider the KSB moduli space $M_{n-1,d(d-n-1)^{n-1}}^{KSB}$ parameterizing slc canonically polarized varieties $V$ of degree $d$, dimension $n-1$ and canonical volume $(K_V)^{n-1} = d(d-n-1)^{n-1}$ (see, e.g. \cite{Kollar23}).  As a smooth degree $d > n+1$ hypersurface $V \subset \bP^n$ has dimension $n-1$ and volume $d(d-n-1)^{n-1}$ and small deformations of hypersurfaces are again hypersurfaces, there exists an irreducible component $M^{hyp}_d \subset M_{n-1,d(d-n-1)^{n-1}}^{KSB}$ generically parameterizing smooth degree $d$ hypersurfaces in $\bP^n$.  

If instead $2 < d < n+1$, we consider the K-moduli space $M_{n-1,d(n+1-d)^{n-1}}^{K}$ parameterizing K-polystable Fano varieties $V$ of dimension $n-1$ and anticanonical volume $(-K_V)^{n-1} = d(n+1-d)^{n-1}$ (see, e.g. \cite{ChenyangKstabilitybook}).  As a generic degree $2 < d < n+1$ hypersurface $V$ is K-stable (see, e.g. \cite[Corollary 1.4]{Zh21}), has dimension $n-1$ and anticanonical volume $d(n+1-d)^{n-1}$ and small deformations of hypersurfaces are again hypersurfaces, there exists an irreducible component $M^{hyp}_d \subset M_{n-1,d(n+1-d)^{n-1}}^{K}$ parameterizing K-polystable hypersurfaces and their limits.  

In both cases, we will show that divisors on the weighted projective spaces in this paper yield examples of KSB-stable and K-stable limits of hypersurfaces.  One could also use the techniques in what follows to show the existence of degenerations of Calabi-Yau hypersurfaces of degree $n+1$ in $\bP^n$ with canonical singularities, but there is no notion of projective moduli of these objects in general. 

We include a well-known proposition.

\begin{proposition}
    Let $X= \bP(a_0, \dots, a_n)$ be a weighted projective space and $m = \mathrm{lcm}(a_0, \dots, a_n)$.  For any $\ell > 0$, the linear system $|\calO_X(\ell m)|$ contains quasismooth Cartier divisors.  
\end{proposition}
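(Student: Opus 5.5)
The natural candidate is the Fermat-type divisor. Let $m = \mathrm{lcm}(a_0,\dots,a_n)$, fix $\ell>0$, and take
\[ D = \Bigl( \sum_{i=0}^n x_i^{\ell m/a_i} = 0 \Bigr) \subset X, \]
where each exponent $\ell m/a_i$ is a positive integer because $a_i \mid m$. Then $D$ is a section of $\calO_X(\ell m)$. By Theorem \ref{thm:divisorsonwps}(3), $\calO_X(m)$ generates $\Pic(X)$, so $\calO_X(\ell m)$ is a line bundle and every member of $|\calO_X(\ell m)|$, in particular $D$, is Cartier. A second route, which also shows a general member works, is to note that $|\calO_X(\ell m)|$ is base point free: the sections $x_i^{\ell m/a_i}$ have no common zero on $X$. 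I would nonetheless use the explicit Fermat equation.

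The remaining task is quasismoothness. By definition, $D$ is quasismooth if its affine cone $C_D = \{F=0\}\subset \bA^{n+1}$, with $F = \sum_i x_i^{e_i}$ and $e_i = \ell m/a_i$, is smooth away from the origin. The partials are $\partial F/\partial x_i = e_i x_i^{e_i-1}$.
\begin{itemize}
\item If $e_i = 1$ for some $i$, then $\partial F/\partial x_i = 1$ never vanishes.
\item Otherwise, all partials vanish simultaneously only when every $x_i=0$, that is, only at the origin.
\end{itemize}
Either way, the singular locus of $C_D$ is contained in $\{0\}$, so $D$ is quasismooth.

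The only step needing care is to confirm that quasismoothness is the right notion here: one should not expect genuine smoothness, because $D$ passes through singular strata of $X$ in general. Quasismoothness of $D$ means $D$ has only the quotient singularities inherited from $X$. This is the standard Jacobian check, so I expect no real obstacle. The result is then upgraded to a general member by openness, since the quasismooth members form a nonempty open subset of $|\calO_X(\ell m)|$ and the Fermat divisor lies in it.
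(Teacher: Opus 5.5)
Your proposal is correct and is essentially the paper's own proof: the paper likewise notes that $\calO_X(\ell m)$ is Cartier by Theorem \ref{thm:divisorsonwps} and exhibits the Fermat-type section $x_0^{\ell m/a_0}+\dots+x_n^{\ell m/a_n}$ as a quasismooth member of the linear system. Your explicit Jacobian check, and your remark that quasismoothness is open so a general member also works, are harmless additions that the paper leaves implicit.
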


\begin{proof}
    The sheaf $\calO_X(\ell m)$ is Cartier by Theorem \ref{thm:divisorsonwps}, and as $m = \mathrm{lcm}(a_0, \dots, a_n)$, we have $b_i= \ell m /a_i \in \bZ$ for each $0 \le i \le n$.  Then, the weighted hypersurface $x_0^{b_0} + \dots + x_n^{b_n}$ is quasismooth and defines a section of $\calO_X(\ell m)$.
\end{proof}

We can use this to construct limits of canonically polarized hypersurfaces contained in weighted projective spaces parameterized by the KSB moduli space.

\begin{proposition}
\label{prop:moduli-degree-d-hypersurfaces}
    For any $d,n > 0$ such that $n \ge 2$ and Markov triple $(a,b,c)$, if $abc \mid d$, there exist quasismooth weighted hypersurfaces $V \subset \bP(a^2, b^2, c^2, (abc)_{n-2})$ of weighted degree $dabc$.  Furthermore, the general such weighted hypersurface $V$ has only canonical singularities.  For $d > n+1$, these weighted hypersurfaces correspond to limits of canonically polarized hypersurfaces in the KSB moduli space $M^{hyp}_d$.  
\end{proposition}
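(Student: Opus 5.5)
Let $X_n = \bP(a^2,b^2,c^2,(abc)_{n-2})$, which admits a $\bQ$-Gorenstein smoothing $\calX \to T$ to $\bP^n$ by \Cref{thm:degensofP2toPn}. The plan has three steps: produce a quasismooth member of the relevant linear system, show the general member is canonical, and deform it along the smoothing to a hypersurface of degree $d$.

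\textbf{Quasismooth members.} Since $a,b,c$ are pairwise coprime (proof of \Cref{lem:P2andmarkoveqn}), we have $m = \mathrm{lcm}(a^2,b^2,c^2,abc) = a^2b^2c^2$. Writing $d = kabc$, the weighted degree is $dabc = k\,a^2b^2c^2 = km$. Hence $\calO_{X_n}(dabc)$ is Cartier by \Cref{thm:divisorsonwps}, and the preceding proposition shows that the Fermat-type sum $\sum x_i^{km/a_i}$ is a quasismooth member. This settles existence.

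\textbf{Canonical singularities.} A general member $V$ of the very ample Cartier system $|\calO(km)|$ (very ample for $k \ge 1$ on a well-formed weighted projective space of this Picard type, or at least basepoint free) is a general element of a basepoint-free linear system on the klt variety $X_n$. By Bertini for singularities of pairs, $V$ inherits the singularity type of $X_n$ along its intersection with $\mathrm{Sing}(X_n)$, and it is smooth elsewhere.

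Because $V$ is Cartier, locally along each stratum $V$ is a general hyperplane-type slice through a quotient singularity of the form $\frac{1}{r}(\dots)\times\bA^{s}$. Since $V$ is quasismooth, $V$ itself has only cyclic quotient singularities. To check that these are canonical, I would verify the Reid--Tai criterion on the induced quotients. For general $V$, the section does not vanish at the generic point of any positive-dimensional stratum of dimension at least $1$ (each $x_i^{km/a_i}$ appears), so $V$ meets the vertices $p_i$ not at all and meets each stratum transversally. The singularities of $V$ are therefore of the form $\frac{1}{r}(\text{weights})\times \bA^{s-1}$, where $r = \gcd$ of the weights spanning the stratum. These are the codimension-$\geq 2$ strata such as $\frac{1}{abc}(a^2,b^2,c^2)$ and products with affine space. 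The stratum $\frac{1}{abc}(a^2,b^2,c^2)$ is Gorenstein canonical because $a^2+b^2+c^2 = 3abc \equiv 0 \bmod abc$. The isolated vertex points are avoided, which is what removes the non-canonical $\frac{1}{a^2}(\dots)$ cases.

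I expect this case-check of which strata $V$ actually meets, using quasismoothness and Reid--Tai, to be the main obstacle. If a vertex cannot be avoided, I would instead fall back on the ages computed in the proof of the canonical Gorenstein theorem above.

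\textbf{Membership in $M^{hyp}_d$.} Let $\calL$ be the limit of $\calO(1)$. It has degree $abc$ on $X_n$, and $\calO(dabc) = \calL^{[d]}$ is Cartier. Since $X_n$ is klt Fano, $H^1(X_n,\calO(dabc)) = 0$ by \Cref{thm:cohomologyofwps}. Therefore, exactly as in \Cref{thm:conesanddivisors}(3), every section deforms: $V$ is the limit of a family of degree $d$ hypersurfaces in $\bP^n$. The total family of divisors is flat, and $K_V = (K_{X_n}+V)|_V$ has degree $(d-n-1)abc > 0$ when $d > n+1$. So $V$ is a canonically polarized variety with canonical singularities, hence slc, and its volume equals $d(d-n-1)^{n-1}$ by flatness. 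It is thus a KSB-stable limit of smooth degree $d$ hypersurfaces, and so lies in $M^{hyp}_d$ by separatedness of KSB moduli.
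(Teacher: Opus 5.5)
Your proposal is correct and is essentially the paper's argument. The paper likewise takes the Fermat member of the Cartier system $|\calO(dabc)|$, notes that it avoids the three non-canonical vertices and meets the remaining strata transversally, and concludes that its only singularities are $A_{a-1}$, $A_{b-1}$, $A_{c-1}$ and $\frac{1}{abc}(a^2,b^2,c^2)$ times smooth factors, all canonical by Reid--Tai. It then deforms the section along the smoothing and uses adjunction to get $K_V$ ample. Working with a general basepoint-free member instead of the Fermat one is a harmless, slightly cleaner variant.

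The only case you leave implicit is the strata $\frac{1}{a}(b^2,c^2)\times\bA^{n-3}$ and their $b$, $c$ analogues. These are Du Val of type $A_{a-1}$ since $b^2+c^2=3abc-a^2\equiv 0 \pmod a$, so your fallback for unavoidable vertices is never needed.
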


\begin{proof}
    By Theorem \ref{thm:infiniteFamiliesDegenerationsPn}, $\bP(a^2, b^2, c^2, (abc)_{n-2})$ admits a smoothing to $\bP^n$ and, given this smoothing, there are no obstructions to extending this to a smoothing of the pair 
    \[
    (\bP(a^2, b^2, c^2, (abc)_{n-2}), V),
    \]
    where $V$ is a Cartier divisor with weighted degree $dabc$.  By the previous proposition, a generic $V$ is quasismooth, and hence has only cyclic quotient klt singularities.  To show the general such weighted hypersurface has only canonical singularities, consider the hypersurface $x_0^{dabc/a^2} + x_1^{dabc/b^2} + x_2^{dabc/c^2} + x_3^{d} \dots + x_n^{d}$.  By computation, the only singularities of this hypersurface are of the form $A_{n-1} \times W$, where $W$ is a smooth $(n-3)$-fold where $n = a,b,c$ and $A_{n-1}$ denotes the surface singularity $\frac{1}{n}(1,n-1)$, and of the form $\frac{1}{abc}(a^2,b^2,c^2) \times Z$ where $Z$ is a smooth $(n-4)$-fold, both of which are canonical by the Reid-Tai criterion.  This proves the general weighted hypersurface in this linear system has canonical singularities.  Finally, if $d > n+1$, by construction, $K_V$ is ample and $V$ deforms to a hypersurface of degree $d$, so $V$ corresponds to a point in $M^{hyp}_d$.
\end{proof}

\begin{remark}
    An analogous result holds for the other weighted projective spaces that admit smoothings to $\bP^n$, and the assumption that $abc \mid d$ can be relaxed to only the assumption that there exists a weighted hypersurface of degree $dabc$ that has log canonical singularities.  We state only the result above as the singularity condition is guaranteed if $abc \mid d$ by the previous proposition. 
\end{remark}

The previous proposition produces elements of the boundary of the KSB moduli space of hypersurfaces for infinitely many degree $d$.  We now show an analogous result for the K-moduli space of Fano hypersurfaces. 

\begin{proposition}
    For any $n \ge 2$ and Markov triple $(a,b,c)$ ordered so $a \le b \le c$, if $d \ge 2$ is such that $abc \mid d$, and $(1 - \frac{a}{bc})n + 1 < d <n+1$\footnote{Existence of $(n,d)$ satisfying these constraints will be discussed in the remarks following the proof.}, there exist K-polystable Fano weighted hypersurfaces $V \subset \bP(a^2, b^2, c^2, (abc)_{n-2})$ of weighted degree $dabc$ parametrized by the K-moduli space $M^{hyp}_d$.  Furthermore, the general such weighted hypersurface $V$ has only canonical singularities. 
\end{proposition}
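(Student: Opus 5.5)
To prove the proposition, I would first produce the hypersurfaces and check their singularities exactly as in Proposition \ref{prop:moduli-degree-d-hypersurfaces}, and then prove K-polystability with a lower bound on the delta invariant. Set $X = \bP(a^2,b^2,c^2,(abc)_{n-2})$. Since $abc \mid d$, the integer $dabc$ is divisible by $a^2,b^2,c^2$ and $abc$, because $a,b,c$ are pairwise coprime. Hence $\calO_X(dabc)$ is Cartier, and the Fermat-type section $x_0^{dabc/a^2}+x_1^{dabc/b^2}+x_2^{dabc/c^2}+x_3^d+\dots+x_n^d$ is quasismooth. The Reid--Tai computation from the previous proposition then shows that the general $V$ has only canonical singularities. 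By Theorem \ref{thm:infiniteFamiliesDegenerationsPn}, $X$ smooths $\bQ$-Gorenstein to $\bP^n$, with $\calO_X(abc)$ the limit of $\calO(1)$. Because $V$ is Cartier and $H^1(X,\calO_X(dabc))=0$ by Theorem \ref{thm:cohomologyofwps}, the pair $(X,V)$ deforms to $(\bP^n, V_d)$ with $V_d$ a degree $d$ hypersurface. By adjunction, $-K_V = \calO_V((n+1-d)abc)$, which is ample since $d<n+1$. So $V$ is a klt Fano variety lying in the closure of the locus of degree $d$ hypersurfaces.

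It remains to show that $V$ is K-polystable. It then lies in $M^{hyp}_d$ by separatedness and properness of K-moduli: the general fibre is K-stable, and the K-polystable limit is unique. The plan is to prove $\delta(V)\ge 1$, in fact $\delta(V)>1$ for general $V$, which gives K-stability. The tool is the Abban--Zhuang/Zhuang estimate, or more simply the bound $\delta(V)\ge \frac{n}{n+1}\cdot\frac{\mathrm{lct}(V;|{-K_V}|_{\bQ})}{\cdot}$-type comparisons. Concretely, I would use the inequality $\delta(V) \ge \frac{\dim V+1}{\dim V}\,\alpha(V)$ together with an explicit bound on $\alpha(V)$. The index of $V$ is $r=(n+1-d)abc$ in units of $\calO_V(1)$. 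For any effective $\bQ$-divisor $D\sim_\bQ -K_V$, write $D=\frac{r}{m}H$ with $H\in|\calO_V(m)|$. The worst singular points are the orbifold points of largest index, namely the $\frac{1}{c^2}$ point, since $c$ is the largest. There $\mathrm{lct}$ is controlled by the weight $a^2$ (the smallest weight): locally $D$ has multiplicity, measured in orbifold coordinates, at most $\frac{r}{c^2}\cdot\frac{c^2}{a^2}$, up to the weight normalization. The standard computation for quasismooth weighted hypersurfaces (Johnson--Kollár style) gives roughly $\alpha(V)\ge \frac{a^2}{(n+1-d)abc}\cdot(\text{const})$. The hypothesis $(1-\frac{a}{bc})n+1<d$ rewrites as $(n+1-d)bc<an$, i.e.\ $\frac{a}{(n+1-d)bc}>\frac1n$. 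This is exactly what makes $\frac{n}{n-1}\alpha(V)>1$ after the bookkeeping, so I expect the intended estimate is $\alpha(V)\ge \frac{a}{(n+1-d)bc}=\frac{a^2}{r}$. The argument would use the smallest-weight coordinate hyperplane $x_0=0$ as the most singular anticanonical divisor, together with the inversion of adjunction/Tian-type argument for general $V$.

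The main obstacle is this $\alpha$-invariant bound $\alpha(V)\ge a^2/r$ at the non-quasismooth-looking orbifold strata, in particular along the $\frac{1}{abc}$ codimension-2 strata and at the cyclic quotient points. I would handle it by passing to the orbifold chart (the index-one cover $\bA^{n}\to U_i$), where $V$ is smooth. There a $\bQ$-divisor of degree $r$ pulls back to a divisor whose multiplicity at the origin is bounded by $r/a_{\min}$ via intersecting with a curve of low degree (a weighted line through the point). This gives $\mathrm{lct}\ge a^2/r$ locally, since lct is preserved under quasi-étale covers. Generality of $V$ is used to ensure that the relevant low-degree curves are not contained in $\mathrm{Supp}\,D$, or to reduce to that case by convexity.
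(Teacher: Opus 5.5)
\textbf{There is a genuine gap in the K-polystability step.} Your construction of $V$, the Reid--Tai check, the deformation of the pair $(X,V)$ to $(\bP^n,V_d)$, and placing $V$ in $M^{hyp}_d$ once it is K-polystable all match the paper. The failure is the $\alpha$-invariant argument, which cannot prove K-stability under the stated hypothesis.

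Here is why. Write $r=(n+1-d)abc$ and $E=V\cap\{x_0=0\}$. Then $-K_V\sim_{\bQ}\frac{r}{a^2}E$, so unconditionally
\[
\alpha(V)\le \frac{a^2}{r}=\frac{a}{(n+1-d)bc}.
\]
Even your hoped-for equality $\alpha(V)=\frac{a^2}{r}$ does not help. Tian's criterion $\delta(V)\ge\frac{n}{n-1}\alpha(V)$ needs $\alpha(V)>\frac{n-1}{n}$, but the hypothesis only says $\frac{a}{(n+1-d)bc}>\frac1n$. So your claim that the bookkeeping makes $\frac{n}{n-1}\alpha(V)>1$ is false.

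A concrete case: take $(a,b,c)=(1,1,2)$ and $n=d=4$, which satisfies $3<d<5$. Then $V_8\subset\bP(1,1,4,2,2)$ has $r=2$ and $\alpha(V)\le\frac12$, while Tian needs $\alpha(V)>\frac34$. More generally, $\frac{a}{bc}\le\frac12$ for every Markov triple other than $(1,1,1)$, so this route never works when $n\ge3$. For $(1,1,1)$ it only reaches $d=n$.

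What the hypothesis actually encodes is sharper. Since $V$ has Picard rank one and $A_V(E)=1$, one has $S_V(E)=\frac{1}{n}\cdot\frac{r}{a^2}$, and $(1-\frac{a}{bc})n+1<d$ is exactly the condition $A_V(E)/S_V(E)=\frac{na^2}{r}>1$. This is $\beta$-positivity of one specific divisor, which is a necessary condition for K-stability. Proving it is also sufficient needs a criterion sharp enough to see only that divisor, and no global $\alpha$-bound can do this.

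The paper gets it as follows. It works with the Fermat hypersurface
\[
x_0^{kb^2c^2}+x_1^{ka^2c^2}+x_2^{ka^2b^2}+x_3^{kabc}+\dots+x_n^{kabc}=0,\qquad d=kabc.
\]
The power map sends $V$ onto the hyperplane $\{\sum y_i=0\}\cong\bP^{n-1}$ as a cover branched along the coordinate hyperplanes $H_i$, with ramification orders $m_i$ equal to the exponents above. By Liu--Zhu, K-polystability of $V$ is equivalent to K-polystability of $(\bP^{n-1},\sum_i(1-\frac{1}{m_i})H_i)$. Fujita's criterion for $n+1$ hyperplanes in general position reduces this to $\frac1n\sum_i\alpha_i>\alpha_{\max}=1-\frac{1}{kb^2c^2}$, which unwinds exactly to the stated inequality on $d$. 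Openness then handles the general $V$. To repair your proof, replace the $\alpha$-estimate by this cover-plus-hyperplane-arrangement reduction, or by any other criterion that tests only $E$.
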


\begin{proof}
    By Theorem \ref{thm:infiniteFamiliesDegenerationsPn}, $\bP(a^2, b^2, c^2, (abc)_{n-2})$ admits a smoothing to $\bP^n$ and, given this smoothing, there are no obstructions to extending this to a smoothing of the pair 
    \[
    (\bP(a^2, b^2, c^2, (abc)_{n-2}), V),
    \]
    where $V$ is a Cartier divisor with weighted degree $dabc$.  By the previous proposition, a generic $V$ is quasismooth, and hence has only cyclic quotient klt singularities. By construction, $-K_V$ is ample and $V$ deforms to a hypersurface of degree $d$, so if we can show that a general such $V$ is K-stable, it corresponds to a point of the K-moduli space $M^{hyp}_d$.  To show the general such weighted hypersurface is K-stable, consider the hypersurface $V$ given by 
    \[
    x_0^{dabc/a^2} + x_1^{dabc/b^2} + x_2^{dabc/c^2} + x_3^{d} \dots + x_n^{d} \subset \bP(a^2, b^2, c^2, (abc)_{n-2}).
    \]
    We will prove this is K-stable under the given assumptions. 

    As $abc \mid d$, write $d = k abc$, so the hypersurface $V$ is given by the vanishing of 
    \[
    x_0^{kb^2c^2} + x_1^{ka^2c^2} + x_2^{ka^2b^2} + x_3^{kabc} \dots + x_n^{kabc}.
    \]
    Consider the finite map 
    \[
    \bP(a^2, b^2, c^2, (abc)_{n-2})_{x_i} \dashrightarrow \bP^n_{y_i}
    \]
    given by 
    \[
    [x_0: x_1: x_2: x_3 : \dots : x_n] \mapsto [ x_0^{kb^2c^2}: x_1^{ka^2c^2}: x_2^{ka^2b^2}: x_3^{kabc} : \dots : x_n^{kabc}].
    \]
    This is a morphism on $V$ and the image of $V$ is the hyperplane $(\sum y_i = 0) \cong \bP^{n-1}$.  The branch locus of this morphism on $V$ is the union of the hyperplanes $H_i = \{ y_i = 0\}$ and by \cite[Theorem 1.2]{LZ22}, the K-polystability of $V$ is equivalent to the K-polystability of $(\bP^{n-1}, \Delta)$ where 
    \[
    \Delta = \Bigl( 1 - \displaystyle{\frac{1}{kb^2c^2}}\Bigr)H_0 + \Bigl( 1 - \displaystyle{\frac{1}{ka^2c^2}}\Bigr)H_1 + \Bigl( 1 - \displaystyle{\frac{1}{ka^2b^2}}\Bigr)H_2  + \sum_{i = 3}^n \Bigl(1 - \displaystyle{\frac{1}{kabc}}\Bigr) H_i.
    \]
    To verify the K-stability of this pair, we use Fujita's criterion for K-stability of hyperplane arrangements.  Writing $\Delta = \sum_{i = 0}^{n} \alpha_i H_i$ for simplicity, by \cite[Corollary 1.6]{F21}, $(\bP^{n-1}, \Delta)$ is K-stable if and only if 
    \[ \frac{1}{n} \sum_{i=0}^{n} \alpha_i > \frac{1}{\ell} \sum_{j = 1}^\ell \alpha_{i_j}\] for every $1 \le\ell  \le n-1$ and $0 \le i_{1} < \dots < i_\ell \le n$.  As the term on the right is the average of the coefficients over any set of size $\ell$, this holds if and only if the $\ell=1$ case holds for the largest coefficient $\alpha$, i.e. 
    \[ \frac{1}{n} \sum_{i=0}^{n} \alpha_i >  \alpha_{max}\] where $\alpha_{max} = \max\{ \alpha_0, \dots, \alpha_n \}$.  Assuming $a \le b \le c$, we have $\alpha_{max} = \alpha_0 = 1 - \displaystyle{\frac{1}{kb^2c^2}}$.  The pair is then K-stable if and only if 
    \[ \sum_{i=0}^n \alpha_i =  n+1 - \frac{(a^2 + b^2 + c^2 + (n-2)abc)}{ka^2b^2c^2} > n \alpha_0 = n \Bigl(1 - \frac{1}{kb^2c^2}\Bigr) \] or equivalently 
    \[ na^2 > a^2 + b^2 + c^2 + (n-2)abc - ka^2b^2c^2.\]
    Because $(a,b,c)$ is a Markov triple, $a^2 + b^2 + c^2 = 3abc$ so this is equivalent to
    \[ na^2 > (n+1)abc - ka^2b^2c^2 = (abc)(n+1-kabc)\]
    or 
    \[ n > \frac{bc}{a}(n+1-d)\] which is equivalent to 
    \[ d > \Bigl(1 - \frac{a}{bc}\Bigr) n+1\]
    Therefore, by assumption, the hyperplane arrangement $(\bP^{n-1}, \Delta)$ is K-polystable and hence $V$ is K-polystable.  

    The statement on the singularities of $V$ follows as in the previous proposition.
\end{proof}

\begin{remark}
    If $d > 2$ in the previous proposition, as $abc \mid d$, then $V$ is a Fano weighted hypersurface of degree $ \ge \max\{a^2b^2c^2, 3\}$ in $\bP(a^2,b^2,c^2,(abc)_{n-2})$.  For $(a,b,c) = (1,1,1)$, by \cite{MM63}, $\Aut(V)$ is finite, and for $(a,b,c) \ne (1,1,1)$, \cite[Theorem 3.1]{Esser24} implies that $\Aut(V)$ is finite.  Therefore, the K-polystability of $V$ implies the K-stability of $V$ (see, for example, \cite[Corollary 2.2.5]{Cheltsov+}). 
    
\end{remark}

\begin{remark}
    If $d$ is an integer and $(a,b,c)$ a Markov triple ordered such that $a\le b \le c$ such that $d = kabc$, provided $ka^2 > 1$, there exist integers $n$ such that the assumptions of the previous proposition are satisfied.  If $(a,b,c) = (1,1,1)$, the assumptions hold for any choice of $n > d-1$, so assume $(a,b,c) \ne (1,1,1)$.  Then, $(1 - \frac{a}{bc})n + 1 < d <n+1$ can be re-written as $d -1 < n < \frac{bc}{bc-a}(d-1) = d-1 + \frac{a}{bc-a}(d-1)$, and any such $n$ will suffice.  To verify existence of an integer $n$ in this range, we observe that because $ka^2 > 1$, we have $da > bc$ so $(d-1)a > bc - a$ and hence $\frac{a}{bc-a}(d-1) > 1$.  Therefore, the integer $d$ is between $d-1 < d < d-1 + \frac{a}{bc-a}(d-1)$, so we may take $n = d$.  

    Interestingly, for fixed values of $d$ and $(a,b,c)$, when $n$ grows too large (beyond the upper threshold), the Fermat-like hypersurface used in the previous proof is K-unstable.  For example, suppose $d = 4$ and $(a,b,c) = (1,1,2)$, so we are considering limits of quartic hypersurfaces on $\bP(1,1,4, 2_{n-2})$.  In the notation of the previous proof, for $n > 6$, the hypersurface $x_0^8 + x_1^8 + x_2^2 + x_3^4 + \dots + x_n^4 = 0$ is K-unstable: the associated hyperplane arrangement is $(\bP^{n-1}, \frac{7}{8} H_0 + \frac{7}{8}H_1 + \frac{1}{2} H_2 + \sum_{i=3}^{n} \frac{3}{4}H_i)$.  By Fujita's criterion above, this pair (and hence the original hypersurface) is K-semistable if and only if 
    \[ \frac{1}{n} \sum_{i=0}^{n} \alpha_i >  \alpha_{max},\] i.e.,
    \[ \frac{1}{n}\Biggl(\frac{7}{8} + \frac{7}{8} + \frac{1}{2} + (n-2) \frac{3}{4}\Biggr) \ge \frac{7}{8}\] or equivalently 
    \[ (7+7+4+(n-2))(6) \ge 7n \] or 
    \[ 6 \ge n .\]
    Therefore, this Fermat hypersurface is a K-unstable limit of quartic hypersurfaces in $\bP^n$ for $n > 6$. 
\end{remark}

\subsection{Smooth limits of complete intersections}

In the last section, we use degenerations of $\bP^4$ to construct smooth limits of families of complete intersections.  We will construct a smooth limit of a family of complete intersections of multidegree $(3,7)$ in $\bP^4$ that is not itself a complete intersection.

\begin{theorem}
    There exists a smooth family of projective surfaces $\calS \to T$ over a pointed curve such that the generic fiber $\calS_t$ is a $(3,7)$ complete intersection in $\bP^4$ but the special fiber $\calS_0$ is a smooth surface that is not a complete intersection. 
\end{theorem}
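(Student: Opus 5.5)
The plan is to realize $\calS \to T$ as a family of divisorial complete intersections inside a $\bQ$-Gorenstein degeneration $\calX \to T$ of $\bP^4$, chosen so that the limits of the cubic and septic equations cut out a smooth surface in the singular central fiber $\calX_0$, but one whose numerics (in particular the divisibility of $K_{\calS_0}$ in $\Pic$) are incompatible with being a complete intersection.

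First I fix a degeneration of $\bP^4$ with only isolated singularities. I take $\calX_0 = C_p(V, \calO_V(d))$ for a smooth hypersurface $V \subset \bP^4$, which is a $\bQ$-Gorenstein degeneration of $\bP^4$ by Theorem \ref{thm:degree-d-hypersurfaces}. The natural choice is the cone over a smooth quadric threefold, realized as $(q(x_0,\dots,x_4)=0) \subset \bP(1_5,2)$. As with the weight computations in Corollary \ref{cor:hypersurfaceDegeneration}, the degree of the limit $\calO_{\calX_0}(e)$ of $\calO_{\bP^4}(1)$ is read off from $K_{\calX_0}$, which has degree $-(5+2)+2$. I then use Theorem \ref{thm:conesanddivisors}(3) together with $H^1$-vanishing on the klt Fano $\calX_0$. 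This shows that every member of $|\calO_{\calX_0}(3e)|$ and of $|\calO_{\calX_0}(7e)|$ deforms to a cubic, respectively a septic, in $\bP^4$. The two divisors deform independently, and the intersection stays flat because both vanishings persist on the deformed divisor. Hence any complete intersection $S_0 = D_{3e} \cap D_{7e} \subset \calX_0$ of the expected dimension is the special fiber of a flat family whose general fiber is a $(3,7)$ complete intersection in $\bP^4$.

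Second, smoothness of $S_0$. Away from the vertex, $\calX_0$ is smooth and the relevant linear systems are basepoint free, since they are generated by monomials in $x_i$ and in $y$ of the right degree. Bertini then gives that a general $S_0$ is smooth away from the vertex. The main obstacle is the vertex. If $3e$ or $7e$ is not a multiple of the vertex index, every member of the corresponding system passes through it, and the local computation in the chart $y \ne 0$ must show that $S_0$ is smooth there. Note that the quotient of the local cover of $S_0$ by the cyclic group must be smooth, so the group action on its tangent space has to be generated by reflections. My first attempt is to check directly, in the cover $\bA^5 \supset (q=0)$, whether suitable choices of the linear terms of the two equations make the preimage of $S_0$ a smooth surface on which $\mu_2$ acts as a reflection. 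If this fails, which I expect may happen since the quadric cone tends to produce a $\frac{1}{4}(1,1)$ point, I would switch to a different degeneration of $\bP^4$ with isolated singularities from this section, chosen so that the limit degrees of $\calO(3)$ and $\calO(7)$ are multiples of the vertex index. Then a general $S_0$ misses the vertex entirely and is smooth by Bertini. I must rule out the case where the degree-$3e$ divisor is forced to be isomorphic to $V$ itself: there $S_0$ becomes a complete intersection again, as happens for the cone over a cubic. So in the cone description the divisor must genuinely mix the cone coordinate $y$ with the $x_i$.

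Finally, I show $S_0$ is not a complete intersection. If $S_0 \subset \bP^N$ were a complete intersection of multidegree $(d_1,\dots,d_{N-2})$ with hyperplane class $h$, then $K_{S_0} = (\sum d_i - N - 1)h$ and $h^2 = \prod d_i$. Here $K_{S_0}^2 = 21 \cdot 25 = 525$ and $\chi(\calO_{S_0})$ are fixed by the smooth fiber, since they are constant in the smooth family. On the other hand, $K_{S_0} = \calO_{\calX_0}(K_{\calX_0} + D_{3e} + D_{7e})|_{S_0}$ is an explicit multiple of the restriction $L$ of the generator $\calO_{\calX_0}(1)$, which is Cartier on $S_0$ because $S_0$ avoids the non-Cartier locus. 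So $K_{S_0} = mL$ with $L^2 = 525/m^2$ computed by Theorem \ref{thm:intersections-on-wps}-style intersection numbers on the weighted ambient space. I then compare the finitely many pairs $(N,(d_i))$ compatible with $K^2 = 525$ and the given $\chi(\calO_{S_0})$ against this divisibility, and against the integrality and parity of $L^2$ and of $\chi$ of multiples of $L$ by Riemann--Roch. The expected outcome is that the only numerically allowed complete intersection is the $(3,7)$ type in $\bP^4$, where $K$ is $5h$ with $h$ primitive. That type is excluded because $S_0$ carries a Cartier class $L$ with $K_{S_0}$ a strictly larger multiple of $L$, which forces a different value of $h^2$ or of the embedding degree.
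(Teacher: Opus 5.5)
\textbf{The non-complete-intersection step cannot work.} Your family $\calS\to T$ is smooth, so by Ehresmann the fibres are diffeomorphic. The identification $H^2(S_0,\bZ)\cong H^2(S_t,\bZ)$ carries $c_1(K_{S_0})$ to $c_1(K_{S_t})$. Hence $K^2$, $\chi(\calO)$, the intersection form and the divisibility of $c_1(K)$ on $S_0$ all coincide with those of a $(3,7)$ complete intersection.

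For $S_t$ one has $K_{S_t}=5h$ with $h^2=21$ squarefree, and $H^2(S_t,\bZ)$ is torsion free by Lefschetz. So $h$ is primitive and $c_1(K)$ is divisible by exactly $5$. Any relation $K_{S_0}=mL$ with $L$ Cartier therefore forces $m\mid 5$. The ``strictly larger multiple'' you are counting on never occurs for a smooth fibre, and every numerical test you list will accept the $(3,7)$ type in $\bP^4$. (Consistently, every cone $C_p(X_d,\calO(d))\subset\bP(1_5,d)$ has $K$ of degree $-5$, so $e=1$.)

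Excluding the $(3,7)$ type needs a non-topological input. The paper's argument runs as follows. $\calS_0$ is a smooth Cartier divisor in $|7H_0|$ on a \emph{non-Gorenstein} degeneration $V$ of the cubic threefold, and it misses the singular point of $V$. If $\calS_0$ were a $(3,7)$ complete intersection, the unique cubic through $\calS_t$ would give a second family of threefolds, with limit a cubic $\calX_0\supset\calS_0$. Then $(V,\calS_0)$ and $(\calX_0,\calS_0)$ would be two distinct stable limits of the same family of canonically polarized pairs, contradicting separatedness.

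\textbf{The construction of a smooth $S_0$ also does not go through.} As you suspected, on the quadric cone both $D_3$ and $D_7$ contain the vertex, and $S_0$ acquires a $\frac14(1,1)$ point. Your fallback is either empty or self-defeating:
\begin{itemize}
\item For the cones $C_p(X_d,\calO(d))$ the vertex index is $d$, so requiring $d\mid 3$ and $d\mid 7$ gives $d=1$.
\item In general, if a general $D_{3e}$ misses the singular points, it is a smooth Fano threefold with $-K=2H$ and $H^3=3$, i.e.\ a smooth cubic. Then $S_0$ is again a $(3,7)$ complete intersection.
\end{itemize}
What is actually needed is an asymmetric situation: the limit $V$ of the cubic is forced through a non-Gorenstein point at which $7H_0$ is nevertheless Cartier. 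The paper produces exactly this from $\bP(2,12,21,49)$, using three ingredients:
\begin{itemize}
\item the partial smoothing $X_{14}(0,1)\subset\bP(2,2,3,7,12)$;
\item the vanishing $H^2(X,T_X)=0$, which lets one smooth the curve of Gorenstein singularities while keeping the index-$7$ point;
\item the choice $\calS_0\in|\calO(294)|$, which is Cartier at that point since $294=7\cdot 42$.
\end{itemize}
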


\begin{proof}
    We recall the notation and construction in Lemma \ref{lem:psofP2122149}.  Consider the embedding $\bP(2,12,21,49)_{[x_i]} \to \bP(2,2,3,7,12)_{[y_i]}$ given by 
    \[ [x_0:x_1:x_2:x_3] \mapsto [x_0^7:x_0x_1:x_2:x_3:x_1^7]\]
    with image is defined by the degree 14 equation $y_0y_4 = y_1^7$.  Let $X_{14}(t,s)$ be the degree 14 weighted hypersurface $y_0y_4 = y_1^7 + t y_3^2 + s y_2^4y_1$, which admits an isotrivial specialization to $\bP(2,12,21,49)$ sending $t, s \to 0$.

    Next, consider the embedding $\bP(2,2,3,7,12)_{[y_i]} \to \bP(1,1,3,5,6,7)_{[z_i]}$ given by 
    \[[y_0:y_1:y_2:y_3:y_4] \mapsto [y_0:y_1:y_2^2:y_2y_3:y_4:y_3^2] \]
    with image defined by $z_2z_5 = z_3^2$.  Let $Y_{7}(t,s)$ be the weighted hypersurface given by $z_0z_4 = z_1^7 + tz_5 + s z_2^2z_1$, so the image of $X_{14}(t,s)$ is given by the intersection of $Y_{7}(t,s)$ and $z_2z_5 = z_3^2$.  For $t \ne 0$, this is isomorphic to the degree 10 weighted hypersurface given by $z_2(z_0z_4 - z_1^7 - sz_2^2 z_1) = tz_3^2$ in the weighted projective space $\bP(1,1,3,5,6,7)_{[z_i]}$, and perturbing this equation by a scalar multiple of $z_1^{10}$, we have constructed a partial smoothing from $\bP(2,12,21,49)$ to the weighted hypersurface $Z(t,s,r) = \{z_2(z_0z_4 - z_1^7 - sz_2^2 z_1) = tz_3^2 + rz_1^{10} \}$.  Note that the perturbation by $rz_1^{10}$, which is exactly the hypersurface $Z(t, s, r)$, is equivalent to the intersection of $Y_{7}(t,s)$ and $z_2z_5 = z_3^2 + rz_1^{10}$ in the weighted projective space $\bP(1,1,3,5,6,7)_{[z_i]}$.  By direct computation, the only singularities of $Z(t,s,r)$ for generic $s,t,r$ are along the curve $z_0 = z_1 = z_3 = 0$ and $Z(t,s,r)$ has a $\frac{1}{3}(1,2,0)$ singularity at the generic point of this curve.  Because we've shown that the singularity is of type $A_{2} \times \mathbb{A}^{1}$ (an $A_{2}$ singularity along a curve), \Cref{prop:smoothingtocubic} applies to prove that $Z(t,s,r)$ is smoothable to a cubic threefold in $\bP^4$. 

    Now, we wish to consider the threefold $X=X_{14}(0,1) \subset \bP(2,2,3,7,12)$. Clearly $X$ is a partial smoothing of $\bP(2,12,21,49)$ and by construction is smoothable to $Z(0,1,r)$, which is smoothable to a cubic threefold.  Therefore, by Lemma \ref{lem:xtoytoz}, $X$ is smoothable to a cubic.  By construction, $X$ is a degree $14$ hypersurface in $\bP(2,2,3,7,12)_{[y_i]}$, has an (isolated) non-Gorenstein singular point at $p = [0:0:0:1:0]$, and has a reducible curve $C$ of Gorenstein cyclic quotient singularities elsewhere.   Because the curve of singularities $C$ and the point $p$ are both smoothable, we wish to show that it is possible to smooth them individually.  Smoothing the curve first will yield a partial smoothing $V$ of $X$ that is not Gorenstein whose only singularity is an isolated singular point (of the same type as $X$), and $V$ can be further smoothed to a cubic threefold. 

    To verify that they can be smoothed individually, we will show that $ H^2(X,T_X) = 0$, which will imply that we can choose any unobstructed deformation of the point (along with an unobstructed smoothing of the curve) and find a corresponding unobstructed deformation of $X$.  In particular, the smoothing of the curve coming from the smoothing of $X$ to a cubic threefold and the trivial deformation of the point will induce a deformation of $X$. 
    
    To show this let $\bP = \bP(2,2,3,7,12)$ and consider the exact sequence 
    \[ 0 \to T_X \to T_{\bP}|_X \to \calN_{X/\bP} \to \SExt^1(\Omega^{[1]}_{{X}}, \calO_{{X}}).\]
    First observe that $H^1(T_{\bP}|_X) = H^2(T_{\bP}|_X) = 0$, which follows from the exact sequence in cohomology associated to the exact sequences 
    \[ 0 \to T_{\bP}(-X) \to T_{\bP} \to T_\bP|_X \to 0,\] and \[ 0 \to \calO_{\bP} \to \calO_{\bP}(2) \oplus \calO_{\bP}(2) \oplus \calO_{\bP}(3) \oplus \calO_{\bP}(7) \oplus \calO_{\bP}(12)  \to T_\bP \to 0,\] and the vanishing $H^i(\bP, \calO_{\bP}(n)) = 0$ for all $i \ge 1, n \ge 0$ (cf. Theorem \ref{thm:cohomologyofwps}). 
    
    Because $X$ has only Gorenstein cyclic quotient singularities away from $p$, by \cite[Theorem 3.3.10]{BH98}, $\SExt^1(\Omega^{[1]}_{\tilde{X}}, \calO_X)$ is $0$ away from $p$.  To show $H^2(X,T_X) = 0$, denoting by $\calQ = \im (\calN_{X/\bP} \to \SExt^1(\Omega^{[1]}_{{X}}, \calO_{{X}}))$, it suffices to show that there is a surjection on global sections $H^0(X, \calN_{X/\bP}) \to H^0(X, \calQ)$.  Indeed, letting $\calK = \ker (\calN_{X/\bP} \to \calQ)$, this implies $H^1(X, \calK) = 0$ from the long exact sequence in cohomology, but exactness of 
    \[ 0 \to T_X \to T_{\bP}|_X \to \calK \to 0 \] and the vanishing $H^1(T_{\bP}|_X) = H^2(T_{\bP}|_X) = 0$ implies $H^2(X,T_X) = H^1(X, \calK) = 0$.  Therefore, we need to verify the surjection $H^0(X, \calN_{X/\bP}) \to H^0(X, \calQ)$.
    
    As $\calQ$ is supported only on $p$, we work locally in the affine open neighborhood of $p$ $U_p = (y_3 \ne 0) \subset \bP=\bP(2,2,3,7,12)$.  By definition, $U_p$ is the quotient $\bA^4/\mu_7 = \frac{1}{7}(2,2,3,12)$ and on this neighborhood, $X$ is given by the equations $X = (y_0y_4 - y_1^7 - y_2^4y_1 = 0)$, so $X$ is in fact a Cartier divisor.  Let $\tilde{X}$ be given by the same equation on $\bA^4_{(y_0,y_1,y_2,y_4)}$.  Working on the affine space, there is an exact sequence
    \[ 0 \to T_{\tilde{X}} \to T_{\bA^4}|_{\tilde{X}} \to \calN_{\tilde{X}/\bA^4} \to \SExt^1(\Omega^{1}_{\tilde{X}}, \calO_{\tilde{X}}) \]
    whose $\mu_7$-invariants induces the corresponding exact sequence (see, e.g. \cite[Appendix B]{GKKP}, using the fact that $X$ is Cartier near $p$) near $p$
    \[ 0 \to T_{X} \to T_{\bP}|_{{X}} \to \calN_{{X}/\bP} \to \calQ . \]
    Therefore, it suffices to show that each $\mu_7$-invariant element of $\SExt^1(\Omega^{1}_{\tilde{X}}, \calO_{\tilde{X}})$ comes from an element of $H^0(X, \calN_{X/\bP})$.  This calculation can be done directly: because $\tilde{X} = (f = 0)$ is a hypersurface in $\bA^4$, the elements of $\SExt^1(\Omega^{1}_{\tilde{X}}, \calO_{\tilde{X}})$ are the generators of the Jacobian algebra $\bC[y_0,y_1,y_2,y_4]/(f,\partial f/\partial y_0, \partial f/\partial y_1, \partial f/\partial y_2, \partial f/\partial y_4)$, which we compute directly and find the $\mu_7$-invariant elements to be $\{ 1, y_1^2 y_2, y_1^4y_2^2\}$.  These are the precisely the images of the elements of $\{ y_3^2, y_1^2y_2y_3, y_1^4 y_2^2\} \in H^0(X,\calN_{X/\bP})$ (which represent embedded deformations of the degree $7$ hypersurface in $\bP$).  Therefore, $H^0(X, \calN_{X/\bP})$ surjects onto $H^0(X,\calQ)$ and therefore $H^2(X,T_X) = 0$. 
    
    Now, define the surface $\calS_0$ to be a smooth section of the line bundle $L$ that is the deformation of $\calO(294)$ on $\bP(2,12,21,49)$.  Because $\calO(294)$ has degree $7 \cdot 42$, in the deformation of $\bP(2,12,21,49)$ to the cubic threefold, it deforms to $\calO(7)$ (as $\calO(42)$ deforms to $\calO(1)$).  Because a generic section of $\calO(294)$ on $\bP(2,12,21,49)$ misses the singular locus $[0:0:x_2:x_3]$, the generic section of the line bundle $L$ on $V$ misses the singular point of $V$ and will be smooth.  Indeed, $L$ is very ample as it is the deformation of a very ample sheaf $\calO(294)$ on $\bP(2,12,21,49)$, and $H^i(\bP(2,12,21,49), \calO(294m)) = 0$ for all $i, m > 0$ by \cite[8.38]{Kollar23}, so by Bertini's Theorem, a generic section $\calS_0$ of $L$ on $V$ is smooth. 

    Furthermore, because $V$ is smoothable to a cubic and $L$ deforms to $\calO(7)$ in this deformation, $\calS_0$ deforms to a smooth $(3,7)$ complete intersection in $\bP^4$.  Let $\calS$ be the corresponding family of surfaces.

    Finally, as the pair $(V,\calS_0)$ is plt by construction, we claim that $\calS_0$ cannot be a $(3,7)$ complete intersection.  If it were, then $\calS$ could be realized in a family $\calX$ of cubic threefolds, and as $\calS$ is a smooth family, all of the threefolds must have isolated singularities and hence must be klt.  However, this implies $(\calX_0, \calS_0)$ and $(V,\calS_0)$ are both limits of the same family, but $V$ is not Gorenstein so cannot be a cubic hypersurface, and therefore this would contradict the separatedness of moduli of canonically polarized varieties. 
\end{proof}

We expect that the degenerations of $\bP^n$ constructed in this paper and their partial smoothings can be used to construct many families of smooth limits of complete intersections.

\bibliography{wps.bib}{}
\bibliographystyle{alpha}

\end{document}